\newtheorem{theorem}{Theorem}[section]
\theoremstyle{plain}
\newtheorem{corollary}[theorem]{Corollary}
\newtheorem{lemma}[theorem]{Lemma}
\newtheorem{proposition}[theorem]{Proposition}
\newtheorem{quest}[theorem]{Question}
\newtheorem{remark}[theorem]{Remark}
\newtheorem{thmx}{Theorem}
\newtheorem{corx}[thmx]{Corollary}
\numberwithin{equation}{section}
\newcommand{\C}{\mathbb{C}}
\newcommand{\G}{\mathcal{G}}
\newcommand{\N}{\mathbb{N}}
\newcommand{\Z}{\mathbb{Z}}
\newcommand{\GG}{\mathcal{G}}
\newcommand{\tr}{\textup{tr}}
\newcommand{\Rep}{\textup{Rep}}
\newcommand{\Irr}{\textup{Irr}}
\newcommand{\Prim}{\textup{Prim}}
\newcommand{\Ker}{\textup{Ker}}
\newcommand{\supp}{\textup{supp}}
\newcommand{\id}[0]{\operatorname{id}}
\renewcommand{\ker}[0]{\operatorname{ker}}
\newcommand{\rank}[0]{\operatorname{rank}}
\newcommand{\Hil}{\mathsf{H}}
\newcommand{\Kil}{\mathsf{K}}
\begin{document}
\author{Tatiana Shulman}
\address{Department of Mathematical Sciences, Chalmers University of Technology, Chalmers tvärgata 3, 412 96 Gothenburg, Sweden}
\email{tatshu@chalmers.se}

\author{Adam Skalski}
\address{Institute of Mathematics of the Polish Academy of Sciences,
		ul.~\'Sniadeckich 8, 00--656 Warszawa, Poland}
\email{a.skalski@impan.pl}

\title[RFD for groupoids and crossed products]{RFD property for groupoid C*-algebras of amenable groupoids and for crossed products  by amenable actions}

\begin{abstract} By Bekka's theorem the group C*-algebra of an amenable group $G$ is residually finite dimensional (RFD) if and only if $G$ is  maximally almost periodic (MAP). We generalize this result in two directions of dynamical flavour. Firstly, we completely characterize the RFD property for crossed products by amenable actions of discrete groups on C*-algebras in terms of the action. The characterisation can be formulated in various terms,  such as primitive ideals, (pure) states and approximations of representations, and the latter can be viewed as a dynamical version of Exel-Loring characterization of RFD C*-algebras.
As byproduct of our methods we characterize the property FD of Lubotzky and Shalom for semidirect products by amenable groups and obtain characterizations of the properties MAP  and RF for general semidirect products of groups. These descriptions allow us to obtain the properties  MAP, RF, RFD and FD for various new examples and generalize some results of Lubotzky and Shalom.

Secondly,  as another generalization of Bekka's theorem, we
provide a sufficient condition and a necessary condition for  the C*-algebra of an amenable \'etale groupoid to be RFD.
\end{abstract}

\subjclass[2010]{Primary 46L05; Secondary  20E26,  46L55}

\keywords{RFD property; \'etale groupoids; crossed products; residually finite; MAP groups; FD property}
\maketitle
\section{Introduction}
A C*-algebra is said to be {\it residually finite-dimensional} (RFD) if its finite-dimensional representations separate its points. This natural `finiteness' property can be viewed as a C*-analogue of the \emph{maximal almost periodicity} (MAP) of groups, the property which requires that finite-dimensional representations separate points of a group. This analogy however has to  be taken with a grain of salt, as for example $SL_3(\Z)$ is a MAP group and yet $C^*(SL_3(\Z))$ is not RFD, as follows from the main result of \cite{BekkaInv} (and if we wanted to work with reduced group $C^*$-algebras, already $F_2$ is MAP and $C^*_r(F_2)$ is not RFD). The perfect symmetry is however recovered when one sticks to discrete amenable groups: as shown in \cite{BekkaLouvet}, following \cite{BekkaForum}, the group C*-algebra of an amenable group $G$ is RFD if and only if $G$ is MAP.
In general the question of what groups have RFD C*-algebras is old and challenging (see \cite[Chapter 9]{LZ} dedicated to this problem). Moreover
the property of being RFD lies in the heart of several long-standing problems in operator algebras. E.g.\ Kirchberg's conjecture or,  equivalently, the Connes Embedding Problem, states that $C^*(F_2\times F_2)$ is RFD (see \cite{Taka}). On purely C*-theoretic side,  the RFD property is central in the developments on the UCT conjecture \cite{Dadarlat1} and the problem of characterising subalgebras of AF-algebras \cite{Dadarlat2}, \cite{BO}.
 Recently RFD C*-algebras have also started to play a prominent role in the study of non-self-adjoint operator algebras (\cite{CDO}, \cite{Hartz}) and of decidability algorithms  \cite{FNT}, as well as in the purely algebraic questions \cite{ANT}.  Behavior of RFD under various constructions has  therefore always been a topic of considerable attention (a list of numerous results on this topic  can be found e.g.\ in \cite{Tatiana} and references therein).

This paper investigates the RFD property for two prominent classes of C*-algebras - crossed products by amenable actions of discrete groups  and C*-algebras of amenable \'etale groupoids.

Crossed products by amenable actions are among the most notable and well studied classes of C*-algebras.   In particular there are examples of RFD crossed products, but the most general result that has been obtained up to now is a result by Tomiyama  \cite{TomiyamaBook} that characterizes the RFD property of crossed products $C(X)\rtimes G$ of commutative C*-algebras    under the assumption that all irreducible representations of $G$ are finite-dimensional (in other words, by \cite{Thoma}, $G$ is virtually abelian).  In this paper we obtain a complete characterisation, for an arbitrary, not necessarily commutative, C*-algebra and any amenable action.  Below the notation $S(A)/_\approx$ and $\Rep(A)/_\approx$ is used for the state space and the collection of  all representations up to unitary equivalence.

\begin{thmx} \label{thm:A} Let $G$ be a discrete  group, $A$ a C*-algebra and $\alpha$ an amenable action of $G$ on $A$. Then the following conditions are equivalent:
	\begin{itemize}
		\item [(i)] $A\rtimes G$ is RFD;
				\item[(ii)] G is MAP and finite-dimensional elements of $\Prim(A)$ whose orbits are finite are dense in Prim(A).
		\item[(iii)] $G$ is MAP and every pure state of $A$ can be approximated in weak$^*$-topology by finite-dimensional  states whose orbits in $S(A)/_\approx$ are finite;
\item[(iv)] $G$ is MAP and any representation of $A$ is a corner of a representation which can be approximated in the SOT-topology by finite-dimensional representations whose orbits in $\Rep(A)/_\approx$ are finite.
	\end{itemize}
\end{thmx}

In particular in the case when $A$ is commutative and $G$ is virtually abelian we obtain Tomiyama's theorem, and in the case of trivial $A$ we recover Bekka's theorem. Note that in (iii) and (iv) above we can also replace `finite' by `trivial'.

The equivalence $(i)\Leftrightarrow (ii)$ is inspired by Bekka's theorem and his idea of showing RFD by approximating the $\delta$-trace by finite-dimensional states. We note though that finding a right way of implementing this idea for crossed products finds significant obstacles when the C*-algebra is non-commutative. While finite-dimensional representations of $C(X)\rtimes G$ are completely described by Tomiyama, his construction simply does not work in the non-commutative case. We had to involve projective representations to be able to adjust Tomiyama's construction for general crossed products. The collection of representations we construct turn out to be sufficiently rich to allow the analysis of  the RFD property of crossed products by amenable actions.

The equivalences $(i)\Leftrightarrow (iii) \Leftrightarrow (iv)$ can be considered as a dynamical version of Exel-Loring characterization of RFD C*-algebras \cite{ExelLoring} (with a proof using different methods than \cite{ExelLoring}).  As Exel and Loring show, the possibility to describe RFD as a formally different approximation property for representations is very useful in certain situations, e.g. when dealing with free products. The equivalences above give such possibility for crossed products (see Subsection \ref{subsec:free} for an application of this).

Moreover as a byproduct we establish a new, even in the absence of a group action,  characterisation of the RFD property in the spirit of \cite{ExelLoring}: a $C^*$-algebra $A$ is RFD if and only if any irreducible representation of $A$ can be approximated pointwise in the SOT-topology by finite-dimensional irreducible  representations (Corollary \ref{RFDnew}).

As a particular case of the theorem above we obtain a characterization of RFD property for C*-algebras of semidirect products of groups by amenable groups. What is more, our methods apply also directly in the purely group-theoretic context of semidirect products of groups, and lead to characterisations of the MAP property,  the \emph{RF (residual finiteness) property} and  \emph{property FD} of Lubotzky and Shalom (\cite{LubotzkyShalom}). We mention a few of the examples  obtained from  the resulting statements, referring for further  results of this type to Sections \ref{sec:Semidirect} and \ref{sec:Examples}.

\begin{corx} \label{corB}
	
	Let $H$ be a finitely generated group with the property FD (for example, the free group $F_n$ or a surface group or any finitely generated amenable RF group). Then
 \begin{itemize}
\item [(i)] for any amenable RF group $G$, $H\rtimes G$ has FD;

\item [(ii)] for any  amenable MAP group $G$, the C*-algebra
$C^*(H\rtimes G)$ is RFD;

\item [(iii)] for any MAP group $G$, the group $H\rtimes G$ is MAP.
\end{itemize}

\medskip

On the other hand, let $H$ be a group with  property (T). Then
\begin{itemize}
	\item [(iv)]  if $H$ is MAP, then for any MAP group $G$,  $H\rtimes G$ is MAP;
	
	\item [(v)] if $C^*(H)$  is RFD, then for any MAP amenable group $G$,  $C^*(H\rtimes G)$ is RFD.
\end{itemize}
\end{corx}

In \cite[Th. 2.8]{LubotzkyShalom} Lubotzky and Shalom proved that semidirect products of $F_n$ and of surface groups with $\Z$ have the FD, but nothing was known regarding the question whether the FD is closed under semidirect products  with $\Z$ in general. In  the statement (i) above we proved that the FD is stable under   semidirect products  not only with $\Z$ but with any amenable RF group. Applying our results together with a recent resolution of the Baumslag's conjecture in \cite{KL} we can further deduce that $C^*(G)$ is RFD for any $G$ which is a 1-relator group with torsion.

One more  consequence  of the general results of our paper concerns wreath products. There is a well-known characterization of the RF property of wreath products  obtained by Gruenberg \cite{Grunberg}.
Here we obtain a characterization of the MAP property of wreath products:  $H \wr G$ is MAP if and only if either $H$ is MAP and $G$ is finite, or $H$ is trivial and $G$ is MAP,   or $H$ is abelian and $G$ is RF. We would like to note that the latter result, and  the characterizations of MAP and RF properties mentioned above  were also very recently obtained by Bekka by different methods in \cite{BekkaPreprint}.

\medskip

We now turn to discussing the RFD property for C*-algebras of amenable \'etale groupoids.
The latter algebras have recently taken a very central role in the classification programme for nuclear C*-algebras and the study of the UCT problem (\cite{Re}, \cite{BaL}). First we give  full description of their finite-dimensional representations (Theorem \ref{th:finitegpd}) and then use it to investigate when C*-algebras of amenable \'etale groupoids are RFD.  It turns out that the sufficient condition for the RFD property of the groupoid $C^*$-algebra is that the isotropy groups are MAP, and the action on the unit space has a dense set of finite orbits; we formulate it in the next theorem, and refer to Section \ref{Sec:grpd} for the detailed definitions.

\begin{thmx} \label{thm:C}
	 Let $\GG$ be an amenable \'etale groupoid with the locally compact unit space $X$.
	If  $C^*(\GG)$ is RFD then $X$ admits a dense set of periodic points; and if $X$ admits a dense set of periodic points with MAP isotropy subgroups, then $C^*(\GG)$ is RFD.
\end{thmx}

The latter result leads almost immediately to a characterisation of the RFD property for the crossed products arising from amenable actions of discrete groups on locally compact spaces.  This again generalizes Tomiyama's result and Bekka's theorem.

\medskip

The detailed plan of the paper is as follows: after this introduction, in Section \ref{Sec:grpd} we study the RFD property of (amenable) groupoid $C^*$-algebras. We provide a full description of their finite-dimensional representations in Theorem \ref{th:finitegpd} and prove Theorem C above (Theorem \ref{theorem:main}).  We also provide in Proposition \ref{prop:cntrex} an example of an amenable \'etale groupoid with a non-MAP isotropy subgroup and with an RFD C*-algebra, and ask whether the second implication in Theorem C can be reversed.  Section \ref{Sec:crossedprod} is devoted to the study of RFD property of C*-algebraic crossed products. We begin by discussing several equivalent formulations of `approximating representations of a $G$-C*-algebra by finite-dimensional representations with finite orbits' in Theorem \ref{thm:justact}, Remark 	\ref{rem:triv}  and Theorem \ref{thm:justactseparable}. We then proceed to construct and analyse finite-dimensional representations of the relevant crossed products and prove Theorem \ref{thm:A} (Theorem \ref{th:maincrossed}). The next subsection is devoted to discussing several consequences of that result, and we end Section \ref{Sec:crossedprod} by showing that in the context we consider the amenability of the action in fact implies the amenability of the acting group (Corollary \ref{cor:amen}). A short Section \ref{sec:Semidirect} is devoted to abstract characterisations of the MAP, RF, RFD and FD property of semidirect products. Finally in Section \ref{sec:Examples} we discuss several concrete examples of groups fitting in the context of our study.  In particular Corollary \ref{corB} is a combination of Theorem \ref{thm:FDsemid}, Theorem \ref{FD} and Corollary \ref{cor:T}.

\section{RFD Property for groupoid $C^*$-algebras}\label{Sec:grpd}

In this section we discuss the RFD property for $C^*$-algebras of amenable \'etale groupoids. All the topological groupoids considered in this paper will be locally compact \'etale Hausdorff second-countable; for brevity we will just say -- as above -- that $\GG$ is \'etale (and tacitly assume Hausdorffness, local compactness, and second-countability).

\medskip

Let $\GG$ be a groupoid with a locally compact unit space $\GG^{(0)}:=X$. Recall that for $x,y \in \GG^{(0)}$ we write $\GG_x^y:=\{\gamma \in \GG: s(\gamma)=x, r(\gamma) = y\}$, and call  $\GG_x^x$ the \emph{isotropy subgroup of $\GG$ at $x$}. Introduce the following  relation on $\GG^{(0)}$: for $x,y \in \GG^{(0)}$ we have $x\approx y$ if $\GG_x^y\neq \emptyset$. It is easy to check that $\approx$ is an equivalence relation; we shall call its equivalence classes \emph{orbits} of $\GG$ (inside $\GG^{(0)}$). A point $x\in \GG^{(0)}$ is said to be \emph{periodic} if its orbit is finite.

Note the following easy fact: if $x,y \in \GG^{(0)}$, $x\approx y$, then the isotropy subgroups
$\GG_x^x$ and $\GG_y^y$ are isomorphic: more specifically any $\gamma \in  \GG_x^y$ determines an isomorphism
\[ \GG_x^x \ni g \mapsto  \gamma g \gamma^{-1}\in \GG_y^y.\]

\subsection*{Finite-dimensional representations of groupoid $C^*$-algebras}

From now on we will assume that we work with a fixed  \'etale groupoid $\GG$  and use the notation and terminology of \cite{Aidan}.

Let us look first at finite-dimensional 
representations of $C^*(\GG)$.

\begin{lemma} \label{lem:finiteorbit}
	Suppose  that $\pi:C^*(\GG)\to B(H)$ is an irreducible finite-dimensional representation. Then there exists a finite orbit $F\subset  X$ and Hilbert spaces $(H_x)_{x \in F}$ such that $H \cong \bigoplus_{x\in F} H_x$ and $\pi|_{C_0(X)} = \bigoplus_{x \in F} \textup{ev}_x 1_{H_x}$.
\end{lemma}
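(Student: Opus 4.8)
The plan is to analyze the restriction of $\pi$ to the commutative subalgebra $C_0(X) \subset C^*(\GG)$ and then use the groupoid structure to show that the spectrum of this restriction is a finite orbit. First, since $\pi$ is finite-dimensional, $\pi|_{C_0(X)}$ is a finite-dimensional representation of the commutative C*-algebra $C_0(X)$, hence decomposes as a finite direct sum of evaluation characters: there is a finite set $S \subset X$ and Hilbert spaces $(H_x)_{x \in S}$ with $H \cong \bigoplus_{x \in S} H_x$ and $\pi|_{C_0(X)} = \bigoplus_{x \in S} \ev_x \, 1_{H_x}$ (here I take $S = \{x : \ev_x \text{ occurs}\}$, so each $H_x \neq 0$). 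It remains to show that $S$ is contained in (in fact equal to) a single orbit $F$, and that $S$ is all of $F$ — i.e. that $S$ is a finite orbit.

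Next I would show that $S$ is invariant under the groupoid action in the relevant sense. The key tool is that for an open bisection $U \subseteq \GG$, the associated element $\mathbbm{1}_U \in C_c(\GG) \subset C^*(\GG)$ (or a continuous compactly supported function supported in $U$) acts as a partial isometry-like element whose source and range projections in $C_0(X)$ are supported on $s(U)$ and $r(U)$ respectively, and conjugation by it implements the homeomorphism $s(U) \to r(U)$ on the level of $C_0(X)$. Concretely, if $f \in C_c(\GG)$ is supported in a bisection $U$ with $s(\gamma) = x$, $r(\gamma) = y$ for some $\gamma \in U$, then $\pi(f)$ maps $H_x$ into $H_y$, and $\pi(f)^*\pi(f)$, $\pi(f)\pi(f)^*$ lie in $\pi(C_0(X))$ with spectral projections onto the $H_x$- and $H_y$-summands. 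Using irreducibility of $\pi$, I would argue that every $H_y$ with $y \in S$ must be reachable from every $H_x$ with $x \in S$ by a chain of such partial isometries — otherwise the span of a proper sub-collection of the $H_x$ would be a nonzero proper invariant subspace, since $C_0(X)$ acts diagonally and the bisection elements $C_c(\GG)$ generate $C^*(\GG)$ together with $C_0(X)$. Reachability of $H_y$ from $H_x$ by a bisection element supported near some $\gamma$ forces $\GG_x^y \neq \emptyset$, i.e. $x \approx y$; hence $S$ lies in a single orbit $F$.

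Finally I would show $S = F$, so that $F$ is automatically finite. If $y \in F$ but $y \notin S$, pick $x \in S$ and $\gamma \in \GG_x^y$; since $\GG$ is étale, choose an open bisection $U \ni \gamma$ and $f \in C_c(\GG)$ with $f(\gamma) \neq 0$ supported in $U$. Then $\pi(f)$ restricted to $H_x$ would land in the $\ev_y$-isotypic part of $H$, which is $0$; but also $\pi(f^* * f) \in \pi(C_0(X))$ is (up to the homeomorphism $s(U) \to \GG^{(0)}$) a nonzero function near $x$, so $\pi(f^* * f)$ acts nontrivially on $H_x$, giving $\pi(f)|_{H_x} \neq 0$, a contradiction. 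Hence $S = F$ and $F$ is a finite orbit, completing the proof. The main obstacle I anticipate is making the bookkeeping with bisections and the convolution algebra precise — in particular verifying carefully that conjugation by bisection elements implements the orbit equivalence on the spectrum of $\pi|_{C_0(X)}$, and handling the case where several points of the orbit collapse or where the bisection elements are not literally partial isometries but only become so after functional calculus; this is routine given the étale structure but requires care with the $C_c(\GG)$-algebra relations.
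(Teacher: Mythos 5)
Your plan is correct and, in its core, follows the same route as the paper: decompose $\pi|_{C_0(X)}$ over a finite set of evaluation characters, and use functions supported on open bisections together with irreducibility to show that any two points carrying nonzero summands must satisfy $x\approx y$ (the paper does this by producing, for $x\not\approx y$, cut-off functions $g,h\in C_c(X)$ with $g\star f\star h=0$ while $\pi(g),\pi(h)$ act as the corresponding projections). The one place where you go beyond the paper's written proof is your final paragraph: the paper's argument only shows that the support of $\pi|_{C_0(X)}$ is contained in a single orbit, and does not explicitly verify that it exhausts that orbit (equivalently, that the orbit is finite), which is what the statement asserts and what later applications use -- e.g.\ the necessity direction of the main theorem of that section needs every point in the support to have a finite orbit. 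Your argument for this -- take $\gamma\in\GG_x^y$ with $x$ in the support and $y$ outside it, and $f$ supported on a bisection through $\gamma$ with $f(\gamma)\neq 0$; then $1_{H_z}\pi(f)1_{H_x}=0$ for every $z$ in the support since no element of the bisection has source $x$ and range $z$, yet $1_{H_x}\pi(f^*\star f)1_{H_x}=|f(\gamma)|^2 1_{H_x}\neq 0$ -- is sound and supplies exactly the missing step.
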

\begin{proof}
It is clear that a restriction of $\pi$ to $C_0(X)$ (which is a $C^*$-subalgebra of $C^*(\GG)$, see for example \cite[Proposition 4.2.6 and Corollary 9.3.4]{Aidan})
decomposes as above for a certain finite set $F\subset X$. As we assumed irreducibility, it suffices to show that if $x, y \in F$ and $x\not \approx y$ 	then $P_x \pi(f) P_y = 0$ for each $f \in C_c(\GG)$, where we write $P_x:=1_{H_x}$, $P_y:=1_{H_y}$.

It suffices to prove the above fact for $f\in C_c(\GG)$ supported on an open bisection $U$. If $x \notin r(U)$ then we can find a function $g\in C_c(X)\subset C_c(\GG)$ such that $g|_{r(\supp(f))}=1$ and $g(x)=0$ (we are separating a compact set from a point). But then $g \star f = f$, and $P_x \pi(g) =0$, so that we have $P_x \pi(f) P_y = P_x \pi(g) \pi(f) P_y = 0$.

Assume then that $x \in r(U)$; let $\gamma\in U$ be such that $r(\gamma)=x$. By our assumption ($\G_y^x=\emptyset$) we have $s(\gamma)\neq y$. Let then $V_1$, $V_2$ be disjoint open sets in $X$ such that $y\in V_1$, $x\in V_2$.  Let then $V_x$ be an open neighbourhood of $x$ contained in $r(s^{-1}(V_2) \cap U)$, and $V_y$ an open neighbourhood of $y$ contained in $V_1$ such that in addition $V_x \cap F = \{x\}$, $V_y \cap F =\{y\}$. Consider functions $g,h\in C_c(\GG)$ with $\supp(g)\subset V_x$ and $\supp(h)\subset V_y$, $g(x)=h(y)=1$. Then we have on one hand
$P_x \pi(g)=P_x$,  $\pi(h) P_y =P_y$ and on the other hand $g\star f \star h =0$. Let us clarify the last statement: suppose that $g\star f \star h(\gamma) \neq 0$ for some $\gamma \in U$. Then we have $r(\gamma) \in V_x$ and $s(\gamma) \in V_y$; the first condition implies  that $s(\gamma)\in V_2$ and we arrive at a contradiction.
Thus finally we have $P_x \pi(f) P_y = P_x \pi(g) \pi(f) \pi(h) P_y = 0$.
\end{proof}	

We will continue with two lemmas, the first of which is essentially a consequence of the proof above.

\begin{lemma} \label{lem:disjoint}
	Suppose  that $\pi:C^*(\GG)\to B(H)$ is a representation for which there exists a finite orbit $F\subset  X$ such that $H \cong \bigoplus_{x\in F} H_x$ and $\pi|_{C_0(X)} = \bigoplus_{x \in F} \textup{ev}_x 1_{H_x}$. Let $x,y \in F$, and let $f \in C_c(\GG)$ be supported on an open bisection $U$. If $f|_{\GG_x^y}=0$ then $1_{H_y} \pi(f) 1_{H_x} =0$.
\end{lemma}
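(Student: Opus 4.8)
The plan is to mimic the localisation argument in the proof of Lemma~\ref{lem:finiteorbit}, using two standard facts about \'etale groupoids: for $g,h\in C_0(X)$ and $f\in C_c(\GG)$ supported on an open bisection $U$ one has $(h\star f\star g)(\gamma)=h(r(\gamma))\,f(\gamma)\,g(s(\gamma))$ for $\gamma\in U$ and $0$ off $U$; and a function in $C_c(\GG)$ supported on an open bisection has $C^*(\GG)$-norm equal to its supremum norm (see \cite{Aidan}; the latter because $a^*\star a$ then lies in $C_0(X)$, a $C^*$-subalgebra of $C^*(\GG)$). Write $P_x:=1_{H_x}$ for $x\in F$; the hypothesis on $\pi|_{C_0(X)}$ ensures that $\pi(g)=P_x$ for every $g\in C_c(X)$ with $g(x)=1$ that vanishes on the finite set $F\setminus\{x\}$, and likewise for $y$.

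Since $U$ is a bisection there is at most one $\gamma_x\in U$ with $s(\gamma_x)=x$, and if moreover $r(\gamma_x)=y$ then $\gamma_x\in\GG_x^y$, so the hypothesis forces $f(\gamma_x)=0$. I would distinguish two cases. If $\gamma_x$ exists with $f(\gamma_x)\neq0$, then $z:=r(\gamma_x)\neq y$ and one finishes exactly as in the second part of the proof of Lemma~\ref{lem:finiteorbit} (with source and range interchanged): using Hausdorffness of $X$, continuity of $r\circ(s|_U)^{-1}$ at $x$, and finiteness of $F$, pick $g,h\in C_c(X)$ with $g(x)=h(y)=1$ supported in neighbourhoods of $x$ and $y$ small enough that they meet $F$ only in $x$, resp.\ $y$, and that $\gamma\in U$, $s(\gamma)\in\supp g$ forces $r(\gamma)$ near $z$, hence away from $\supp h$; the support formula then gives $h\star f\star g\equiv0$, so $P_y\pi(f)P_x=\pi(h)\pi(f)\pi(g)=0$. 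In the complementary case ($\gamma_x$ absent, or $f(\gamma_x)=0$) $f$ vanishes at every $\gamma\in U$ with $s(\gamma)=x$, and I would argue by approximation: given $\varepsilon>0$, put $N_\varepsilon:=\{\gamma\in\supp f:|f(\gamma)|\geq\varepsilon\}$, a compact set with $x\notin s(N_\varepsilon)$ (the only element of $U$ with source $x$, if any, is $\gamma_x$, where $f$ vanishes); since $s(N_\varepsilon)$ is compact and $X$ Hausdorff, choose an open $V_x^\varepsilon\ni x$ disjoint from $s(N_\varepsilon)$ with $V_x^\varepsilon\cap F=\{x\}$, and $g_\varepsilon\in C_c(X)$ with $0\leq g_\varepsilon\leq1$, $\supp g_\varepsilon\subseteq V_x^\varepsilon$, $g_\varepsilon(x)=1$. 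Then $\pi(g_\varepsilon)=P_x$, the element $f\star g_\varepsilon$ is supported on $U$, and $\|f\star g_\varepsilon\|_\infty\leq\varepsilon$ (wherever $g_\varepsilon(s(\gamma))\neq0$ one has $\gamma\notin N_\varepsilon$, so $|f(\gamma)|<\varepsilon$); hence $\|\pi(f)P_x\|=\|\pi(f\star g_\varepsilon)\|\leq\|f\star g_\varepsilon\|_{C^*(\GG)}=\|f\star g_\varepsilon\|_\infty\leq\varepsilon$. Letting $\varepsilon\to0$ gives $\pi(f)P_x=0$, hence $P_y\pi(f)P_x=0$.

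The only genuinely new point compared with Lemma~\ref{lem:finiteorbit}, and the step I expect to require the most care, is the second case: the hypothesis merely says that $f$ \emph{vanishes at} the (at most one) point of $\GG_x^y$ lying on $U$, not that $\supp f$ avoids it, so that point may lie in $\supp f$ and a direct separation of $x$ from $y$ is impossible. The remedy is the truncation by $N_\varepsilon$ together with the identification of the $C^*(\GG)$-norm with the supremum norm on a bisection, which converts smallness of $f$ near that point into smallness of $\|\pi(f)P_x\|$; everything else is routine manipulation of neighbourhoods in the Hausdorff space $X$ and of the convolution formula above.
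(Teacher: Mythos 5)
Your proof is correct, and it is in fact more complete than the argument the paper itself gives, which consists of the single sentence ``one can simply repeat the second part of the proof of the previous lemma''. You have correctly isolated the point at which a literal repetition breaks down: the separation-of-supports argument from Lemma \ref{lem:finiteorbit} needs the unique $\gamma\in U$ with $s(\gamma)=x$ to satisfy $r(\gamma)\neq y$, which follows there from the assumption $\GG_x^y=\emptyset$ but not here, where the hypothesis only forces $f(\gamma)=0$ while $\gamma$ may still lie in $\overline{\{f\neq 0\}}$. This case genuinely occurs in the way the lemma is used later: in the proof of Theorem \ref{th:finitegpd} it is applied to differences such as $f-f(\gamma_0)f_0$, which vanish at the relevant point of $\GG_x^y$ without their supports avoiding it. Your $\varepsilon$-truncation via $N_\varepsilon$, combined with the identity $\|a\|_{C^*(\GG)}=\|a\|_\infty$ for $a\in C_c(\GG)$ supported on an open bisection (valid because $a^*\star a\in C_0(X)$, which sits isometrically in $C^*(\GG)$), closes this gap cleanly and even yields the stronger conclusion $\pi(f)1_{H_x}=0$ in that case; one could equivalently approximate $f$ uniformly by functions cutting off a small neighbourhood of $\gamma$ and then quote the separation argument, but the content is the same. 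The remaining steps --- the convolution formula $(h\star f\star g)(\gamma)=h(r(\gamma))f(\gamma)g(s(\gamma))$, the identities $\pi(g_\varepsilon)=1_{H_x}$ and $\pi(h)=1_{H_y}$, compactness of $N_\varepsilon$ and the verification that $x\notin s(N_\varepsilon)$ --- all check out.
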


\begin{proof}
If $x \neq y$, one can simply repeat the second part of the proof of the previous lemma. Consider then the case where $y=x$. As in the previous lemma one can assume that $x \in r(U)$. Let then $\gamma_0 \in U$ be such that $r(\gamma_0)=x$.
If $s(\gamma_0)\neq x$, we can again proceed as in the previous lemma. On the other hand if $s(\gamma_0)=x$, we have $\gamma_0 \in \G_x^x$ so that $f(\gamma_0)=0$; and as $f$ is continuous, for every $\epsilon>0$ one can find a neighbourhood $V$ of $\gamma_0$ such that $|f(\gamma)|< \epsilon$ for every $\gamma \in V$.  We can then find functions $g,h \in C_c(X)$ bounded by $1$, supported inside respectively $r^{-1}(V)$ and $s^{-1}(V)$, such that $g(x)=h(x)=1$ and
$P_x \pi(g) = P_x$, $\pi(h) P_x = P_x$. It is easy to check  that $|g \star f \star h|$ is bounded by $\epsilon$. But as for functions supported on bisections the norm in  $C^*(\GG)$ is bounded by the uniform norm (see for example \cite[Proposition 9.2.1]{Aidan}), and representations are contractive,
we obtain
\[ \| P_x \pi(f) P_x\| = \|P_x \pi(g) \pi(f) \pi(h) P_x \| \leq \|g \star f \star h\|_{\infty} \leq \epsilon,\]
which ends the proof.
	\end{proof}

\begin{lemma} \label{lem:unique}
	Suppose  that $\pi:C^*(\GG)\to B(H)$ is a representation for which there exists a finite orbit $F\subset  X$ such that $H \cong \bigoplus_{x\in F} H_x$ and $\pi|_{C(X)} = \bigoplus_{x \in F} \textup{ev}_x 1_{H_x}$. Let $x,y \in F$, and let $\gamma \in \GG_{x}^y$. If $f_1, f_2 \in C_c(\GG)$ are functions supported on open bisections (say $U_1$ and $U_2$) such that $f_1(\gamma)=f_2(\gamma)=1$, then $1_{H_y} (\pi(f_1)-\pi(f_2)) 1_{H_x} =0$.
\end{lemma}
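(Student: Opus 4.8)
The plan is to deduce this from Lemma \ref{lem:disjoint}. The obstacle to a direct application is twofold: first, $f_1-f_2$ is supported on $U_1\cup U_2$, which need not be a bisection, so Lemma \ref{lem:disjoint} does not apply to it as it stands; and second, on the isotropy coset $\GG_x^y$ we only control the difference at the single point $\gamma$, whereas Lemma \ref{lem:disjoint} wants vanishing on all of $\GG_x^y$. Both difficulties evaporate once everything is cut down to a small neighbourhood of $\gamma$ lying inside $U_1\cap U_2$, where a bisection automatically meets $\GG_x^y$ in at most one point.

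First I would localise using cutoffs on the unit space. Set $V:=U_1\cap U_2$, an open bisection containing $\gamma$, and choose open neighbourhoods $V_x$ of $x=s(\gamma)$ and $V_y$ of $y=r(\gamma)$ with $V_x\subseteq s(V)$ and $V_y\subseteq r(V)$. Pick $g,h\in C_c(X)$ with $\supp g\subseteq V_x$, $\supp h\subseteq V_y$ and $g(x)=h(y)=1$. Since $\pi|_{C(X)}$ has the assumed diagonal form, $\pi(g)1_{H_x}=g(x)1_{H_x}=1_{H_x}$ and $1_{H_y}\pi(h)=h(y)1_{H_y}=1_{H_y}$, so for $i=1,2$
\[ 1_{H_y}\pi(f_i)1_{H_x}=1_{H_y}\pi(h)\pi(f_i)\pi(g)1_{H_x}=1_{H_y}\pi(h\star f_i\star g)1_{H_x}. \]

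Next I would check that each $h\star f_i\star g$ is supported on the \emph{single} open bisection $W:=V\cap r^{-1}(V_y)\cap s^{-1}(V_x)$. Indeed, if $(h\star f_i\star g)(\delta)\neq 0$ then $\delta\in U_i$, $r(\delta)\in V_y\subseteq r(V)$ and $s(\delta)\in V_x$; as $V\subseteq U_i$ is a bisection, the (unique) element of $V$ with range $r(\delta)$ must coincide with $\delta$ by injectivity of $r$ on $U_i$, whence $\delta\in W$. Moreover on $W$ we have $(h\star f_i\star g)(\delta)=h(r(\delta))\,f_i(\delta)\,g(s(\delta))$, so in particular $(h\star f_i\star g)(\gamma)=1$. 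Now put $\psi:=h\star f_1\star g-h\star f_2\star g\in C_c(\GG)$: it is supported on $W$ and $\psi(\gamma)=0$. Since $W$ is a bisection containing $\gamma$, injectivity of $s$ on $W$ forces $W\cap\GG_x^y=\{\gamma\}$, and therefore $\psi|_{\GG_x^y}=0$ (it vanishes off $W$ and at $\gamma$). Lemma \ref{lem:disjoint} then yields $1_{H_y}\pi(\psi)1_{H_x}=0$, i.e.\ $1_{H_y}\pi(h\star f_1\star g)1_{H_x}=1_{H_y}\pi(h\star f_2\star g)1_{H_x}$, and combining this with the displayed identity gives $1_{H_y}(\pi(f_1)-\pi(f_2))1_{H_x}=0$.

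The only genuinely delicate point — and the one I would flag as the main (if modest) obstacle — is the bookkeeping in the previous paragraph: verifying that after the cutoffs the two functions $h\star f_i\star g$ actually live on one common bisection $W$, and that their difference vanishes on the whole coset $\GG_x^y$ and not merely at $\gamma$. Once this is established the conclusion is immediate. I would also remark that no finiteness of the orbit $F$ is used here beyond what is needed to make sense of the orthogonal decomposition $H\cong\bigoplus_{x\in F}H_x$, so the lemma holds verbatim for any, not necessarily finite, invariant set of units carrying such a decomposition.
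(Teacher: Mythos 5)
Your proof is correct and follows essentially the same route as the paper's: cut $f_1,f_2$ down by unit-space functions supported near $y$ and $x$, observe that the resulting convolutions live on a common open bisection inside $U_1\cap U_2$ and that their difference vanishes on $\GG_x^y$ (since a bisection meets that coset only at $\gamma$, where the values agree), and then apply Lemma \ref{lem:disjoint}. Your write-up is in fact slightly more explicit than the paper's at the one delicate point, namely why the difference vanishes on all of $\GG_x^y$ and not just at $\gamma$.
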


\begin{proof}
The assumptions of the lemma guarantee that $\gamma \in U_1 \cap U_2$. Find an open neighbourhood $W$ of $\gamma$ contained in $U_1 \cap U_2$, and then open sets $V_1$  and $V_2$ in $X$ such that $V_1 \cap F= \{y\}$, $V_2 \cap F=\{x\}$ and $V_1 \subset r(W)$. For $i=1,2$ let $g_i \in C_c(X)\subset C_c(\GG)$ be supported on $V_i$ and such that $g_1(x)=g_2(y)=1$, so that we have $\pi(g_1) = 1_{H_y}$, $\pi(g_2) = 1_{H_x}$.
Consider the functions $k:=g_1 \star f_1 \star g_2, l:=g_1 \star f_2 \star g_2$, both  in $C_c(\GG)$. If $\zeta\in \GG$ and $k(\zeta) \neq0$, then $\zeta \in U_1$ and $r(\zeta)\in V_1$, so that $\zeta \in W \subset U_1 \cap U_2$. Similarly if $l(\zeta)\neq 0$ then $\zeta \in U_1 \cap U_2$. This implies that $k-l$ is a function supported on an open bisection, and we can apply the last lemma to see that  $1_{H_y} \pi(k-l) 1_{H_x} =0$.
But   $1_{H_y} \pi(k-l) 1_{H_x} = 1_{H_y} \pi(g_1) \pi(f_1-f_2) \pi(g_2) 1_{H_x} =
1_{H_y}  \pi(f_1-f_2)  1_{H_x}$, which ends the proof.
\end{proof}

\begin{lemma}\label{lem:concreteform}
Suppose that $F\subset X$ is a finite orbit of the form $\{x_1, \ldots, x_n\}$, where $n = |F|$. For all $i,j=1, \ldots, n$ write $\GG_{j}^i:= \GG_{x_j}^{x_{i}}$, set $\gamma_{1}=e_{\GG_1^1}$   and for $i=2,\ldots, n$ fix  an element $\gamma_i \in \GG_{1}^{i}$, writing $\overrightarrow{\gamma}=(\gamma_1, \ldots, \gamma_n)$. Consider an arbitrary representation $\rho: \GG_{1}^1 \to B(\Hil_1)$, where $\Hil_1$ is a Hilbert space. Further assume we have a collection of unitaries ${\bf{z}}:=(z_i)_{i=1}^{n}$ such that $z_1=1_{H_i}$ and $z_i:\Hil_1\to \Hil_{i}$ for $i=2,\ldots, n$, where $\Hil_2=\cdots=\Hil_n = \Hil_1$. Finally
introduce a new Hilbert space $\mathcal{H}:=\bigoplus_{i=1}^n \Hil_i$. Then the `block-matrix' formula
\[ (\pi_{F, \rho, \bf{z}, \overrightarrow{\gamma}} (f))_{i,j} = \sum_{g \in \GG_j^i} f(g) z_{i} \rho (\gamma_{i}^{-1}g \gamma_{j}) z_j^*\in B(\mathcal{H}),  \]
$f \in C_c(\GG)$, $i,j=1, \ldots, n$,
defines a representation of $C^*(\GG)$ on $B(\mathcal{H})$.
	\end{lemma}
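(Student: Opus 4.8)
The plan is to verify directly that the block-matrix formula defines a $*$-homomorphism from $C_c(\GG)$ (with its convolution product and involution) into $B(\mathcal{H})$ that is bounded for the $C^*$-norm, hence extends to $C^*(\GG)$. First I would check that $\pi := \pi_{F,\rho,\bf{z}}$ is linear and well-defined: each entry $(\pi(f))_{i,j}$ is a finite sum since $\GG_j^i$ is finite (the orbit $F$ is finite and $\GG$ is étale, so each $\GG_{x_j}^{x_i}$ is a finite set — this uses that the isotropy group $\GG_1^1$ may itself be infinite, but then one should note the sum is over $g$ with $f(g)\neq 0$, i.e.\ $g\in\supp f$, which is compact, hence meets the discrete fibre $\GG_j^i$ in finitely many points). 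So linearity and well-definedness are immediate.

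Next I would check multiplicativity, $\pi(f_1 \star f_2) = \pi(f_1)\pi(f_2)$. Writing out the $(i,j)$ entry of the right-hand side gives $\sum_k \sum_{g\in\GG_k^i}\sum_{h\in\GG_j^k} f_1(g)f_2(h)\, z_i\rho(\gamma_i^{-1}g\gamma_k)z_k^* z_k \rho(\gamma_k^{-1}h\gamma_j)z_j^*$; the $z_k^* z_k = \id_{H_1}$ collapses, $\rho$ is multiplicative so $\rho(\gamma_i^{-1}g\gamma_k)\rho(\gamma_k^{-1}h\gamma_j) = \rho(\gamma_i^{-1}gh\gamma_j)$, and the double sum over $g\in\GG_k^i,h\in\GG_j^k$ with the substitution $g' = gh\in\GG_j^i$ reorganises (since $(gh)(h) \mapsto$ pairs $(g',h)$ with $h\in\GG_j^k$, $g'h^{-1}\in\GG_k^i$, ranging over all $k$) into $\sum_{g'\in\GG_j^i}\big(\sum_{h: s(h)=x_j,\, g'h^{-1}\in\GG}(f_1\star f_2)\text{-coefficient}\big)$, which is exactly $\sum_{g'\in\GG_j^i}(f_1\star f_2)(g')\,z_i\rho(\gamma_i^{-1}g'\gamma_j)z_j^*$. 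The key point is that the convolution $(f_1\star f_2)(g') = \sum_{h} f_1(g'h^{-1})f_2(h)$ is summed over $h$ in a single $s$-fibre, which decomposes over $F$ exactly as the sum over $k$ and $h\in\GG_j^k$; since all these sums are finite there are no convergence issues. Checking $\pi(f^*) = \pi(f)^*$ is a similar but shorter computation using $f^*(g) = \overline{f(g^{-1})}$, $\rho(g^{-1}) = \rho(g)^*$, unitarity of the $z_i$, and $\gamma_1 = e$.

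For boundedness, rather than estimate directly I would identify $\pi$ with a representation induced/built from the groupoid structure: the restriction $\pi|_{C_0(X)} = \bigoplus_{i=1}^n \ev_{x_i} 1_{H_i}$ by inspection of the diagonal (for $f\in C_c(X)$, $\GG_j^i$ is empty unless $i=j$, where it is $\{x_i\}$ and $\gamma_i^{-1}x_i\gamma_i = x_i \in \GG_1^1$ maps to $\rho(e) = \id$). Then $\pi$ is visibly of the form covered by Lemmas \ref{lem:disjoint} and \ref{lem:unique}, so to see it is a genuine (bounded) representation it suffices to realise it as the integrated form of a groupoid representation on a Hilbert bundle over $F$: take the bundle with fibre $H_i$ over $x_i$, and for $g\in\GG_j^i$ let $g$ act by $z_i\rho(\gamma_i^{-1}g\gamma_j)z_j^* : H_j\to H_i$; one checks this is a unitary representation of the (finite, since $F$ is finite and isotropy is a group) transitive groupoid $\GG|_F := \{g\in\GG: s(g),r(g)\in F\}$, whose integrated form on $L^2$ of the bundle — which with counting measure on $F$ is exactly $\mathcal{H}$ — is precisely $\pi$. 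Since integrated forms of unitary groupoid representations are automatically $*$-representations of $C^*(\GG|_F)$, and the quotient map $C^*(\GG)\to C^*(\GG|_F)$ (restriction to the closed invariant set $F$, cf.\ \cite{Aidan}) is a $*$-homomorphism, composition gives the bounded representation of $C^*(\GG)$. The main obstacle is bookkeeping: making the reindexing in the multiplicativity computation fully rigorous, and being careful that when $\GG_1^1$ is infinite the formula still makes sense (handled by restricting attention to $g\in\supp f$). Once multiplicativity and $*$-preservation on $C_c(\GG)$ are in hand, boundedness is free, either from the $C^*(\GG|_F)$ identification above or simply from the universal property of $C^*(\GG)$ as the enveloping $C^*$-algebra of $C_c(\GG)$, since any $*$-homomorphism from $C_c(\GG)$ to a $C^*$-algebra extends continuously.
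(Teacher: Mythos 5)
Your proposal is correct and follows essentially the same route as the paper: a direct verification that the block-matrix formula is a $*$-homomorphism on $C_c(\GG)$ (finiteness of the sums from compact support meeting the discrete fibres $\GG_j^i$, the adjoint computation, and multiplicativity via the decomposition of the convolution fibre over $F$ and the same reindexing $g'=gh$), after which passing to $C^*(\GG)$ is automatic. The only additions are your explicit discussion of boundedness (via the restriction to $\GG|_F$ or the universal property), which the paper leaves implicit, and a harmless slip in saying $\GG_j^i$ is empty for $i\neq j$ when $f\in C_c(X)$ — you mean $\GG_j^i\cap X$ is empty.
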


\begin{proof}
The proof is a simple check. Choose $(F, \rho, \bf{z}, \overrightarrow{\gamma})$ as above and  write $\pi:=\pi_{F, \rho, \bf{z}, \overrightarrow{\gamma}}$.
	
First note that each of the sums in the displayed formula is in fact finite, as $\GG$ is \'etale, so each $\GG_j^i$ is discrete (and $f$ is compactly supported). The correspondence $f\to \pi(f)$ is clearly linear.

Fix $i,j=1, \ldots, n$. We have  $(\pi(1_X))_{i,j}= \delta_{i,j} z_i \rho(\gamma_i^{-1}\gamma_i) z_i^* = \delta_{i,j} 1_{H_i}$. If $f \in C_c(\GG)$ then
\begin{align*}  (\pi (f^*))_{i,j} &= \sum_{g \in \GG_j^i} f^*(g) z_{i} \rho (\gamma_{i}^{-1}g \gamma_{j}) z_j^*
= \sum_{g \in \GG_j^i} \overline{f(g^{-1})} z_{i} \rho (\gamma_{i}^{-1}g \gamma_{j}) z_j^*
\\& = \sum_{g \in \GG_i^j} \overline{f(g)} z_{i} \rho (\gamma_{i}^{-1}g^{-1} \gamma_{j}) z_j^* =
 \sum_{g \in \GG_i^j} \overline{f(g)} z_{i} \rho ((\gamma_{j}^{-1}g \gamma_{i})^{-1}) z_j^*
\\&=
\sum_{g \in \GG_i^j} \overline{f(g)} z_{i} \rho ((\gamma_{j}^{-1}g \gamma_{i}))^* z_j^*
=
 \left( \sum_{g \in \GG_i^j} f(g) z_{j} \rho ((\gamma_{j}^{-1}g \gamma_{i})) z_i^* \right)^*
\\&= \left((\pi (f))_{j,i}\right)^*, \end{align*}
so that $\pi (f^*) = \left(\pi (f)\right)^*$.
Finally if $f, h\in C_c(\GG)$ and $g \in \GG_j^i$ then we have, setting $\GG^i=\{g \in \GG: r(g) =x_i\}$,
\[ (f\star h)(g) = \sum_{\kappa \in \GG^i} f(\kappa)h(\kappa^{-1}g) = \sum_{k=1}^n
\sum_{\kappa \in \GG_k ^i} f(\kappa)h(\kappa^{-1}g),\]
as if $r(\kappa)= x_i$ then $s(\kappa) \in F$. Thus
\begin{align*} (\pi (f\star h))_{i,j} &= \sum_{g \in G_j^i} (f \star h)(g) z_{i} \rho (\gamma_{i}^{-1}g \gamma_{j}) z_j^*
= \sum_{g \in G_j^i} \sum_{k=1}^n
\sum_{\kappa \in \GG_k^i} f(\kappa)h(\kappa^{-1}g) z_{i} \rho (\gamma_{i}^{-1}g \gamma_{j}) z_j^*.
\end{align*}
On the other hand
\begin{align*} (\pi (f)  \pi (h))_{i,j}
	&= \sum_{k=1}^n 	(\pi_{F, \rho, \bf{z}} (f))_{i,k}  (\pi_{F, \rho, \bf{z}} (h))_{k,j}
	\\&= \sum_{k=1}^n \sum_{g \in \GG_k^i} f(g) z_{i} \rho (\gamma_{i}^{-1}g \gamma_{k}) z_k^*
	\sum_{g'\in \GG_j^k} h(g') z_{k} \rho (\gamma_{k}^{-1}g' \gamma_{i}) z_j^*
	\\&=
\sum_{k=1}^n \sum_{g \in \GG_k^i} 	\sum_{g'\in G_j^k} f(g) h(g')  z_i \rho (\gamma_{i}^{-1}g \gamma_{k}) \rho (\gamma_{k}^{-1}g' \gamma_{i}) z_j^*
\\&=
\sum_{k=1}^n \sum_{g \in \GG_k^i} 	\sum_{g'\in G_j^k} f(g) h(g')  z_i \rho (\gamma_{i}^{-1}g g' \gamma_{i}) z_j^*.
\end{align*}	
A straightforward reindexing argument shows that the two  sums displayed above coincide.
\end{proof}

The result below could be deduced from the disintegration theorem, but a direct proof presented here avoids any measure-theoretic considerations (as expected due to finite-dimensionality).

\begin{theorem}\label{th:finitegpd}
	Suppose that $\GG$ is an  \'etale groupoid.
	Every irreducible finite dimensional representation of $C^*(\GG)$ arises via the construction in Lemma \ref{lem:concreteform} (naturally with $\Hil_1$ finite-dimensional).
\end{theorem}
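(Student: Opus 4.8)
The plan is to start from an arbitrary irreducible finite-dimensional representation $\pi\colon C^*(\GG)\to B(H)$ and to reconstruct from it the data $(F,\rho,\mathbf z)$ appearing in Lemma \ref{lem:concreteform}. First I would invoke Lemma \ref{lem:finiteorbit} to obtain a finite orbit $F=\{x_1,\dots,x_n\}\subset X$ together with a decomposition $H\cong\bigoplus_{i=1}^n H_i$ diagonalising $\pi|_{C_0(X)}$ as $\bigoplus_i \mathrm{ev}_{x_i}1_{H_i}$. Next, fix $\gamma_1=e_{\GG_1^1}$ and choose $\gamma_i\in\GG_1^i$ for $i\ge 2$ (nonempty since $F$ is a single orbit). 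For each $\gamma\in\GG_j^i$ pick a function $f\in C_c(\GG)$ supported on an open bisection with $f(\gamma)=1$, and define an operator $T_\gamma:=1_{H_i}\,\pi(f)\,1_{H_j}\colon H_j\to H_i$; Lemma \ref{lem:unique} guarantees that $T_\gamma$ is independent of the choice of $f$, so this is well defined. The key point will then be to show that $\gamma\mapsto T_\gamma$ is "multiplicative along the groupoid", i.e. $T_{\gamma\delta}=T_\gamma T_\delta$ whenever $s(\gamma)=r(\delta)$, and $T_{\gamma^{-1}}=T_\gamma^*$, and that each $T_\gamma$ is a \emph{unitary} $H_j\to H_i$ (not merely a partial isometry).

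To get these algebraic properties I would argue as follows. Multiplicativity: given $\gamma\in\GG_k^i$ and $\delta\in\GG_j^k$, choose bisection-supported $f,h$ with $f(\gamma)=h(\delta)=1$; then $f\star h$ is supported on a bisection and $(f\star h)(\gamma\delta)=1$ (after shrinking supports, as in the proof of Lemma \ref{lem:unique}, to kill any other contributions), so $T_{\gamma\delta}=1_{H_i}\pi(f\star h)1_{H_j}$. Inserting $\sum_{m}1_{H_m}=1_H$ between $\pi(f)$ and $\pi(h)$ and using Lemma \ref{lem:disjoint} to discard all blocks except $m=k$ (because $f$ is supported near $\gamma$, hence off $\GG_{x_j}^{x_m}$ for $m\ne k$, etc.) yields $T_{\gamma\delta}=1_{H_i}\pi(f)1_{H_k}\,1_{H_k}\pi(h)1_{H_j}=T_\gamma T_\delta$. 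The adjoint relation $T_{\gamma^{-1}}=T_\gamma^*$ comes from $f^*(\gamma^{-1})=\overline{f(\gamma)}=1$ and $1_{H_j}\pi(f^*)1_{H_i}=(1_{H_i}\pi(f)1_{H_j})^*$. Combining, $T_\gamma T_\gamma^*=T_{\gamma\gamma^{-1}}=T_{e_{\GG_i^i}}$, and one checks $T_{e_{\GG_i^i}}=1_{H_i}$ by taking $f\in C_c(X)$ with $f(x_i)=1$ supported near $x_i$; symmetrically $T_\gamma^*T_\gamma=1_{H_j}$, so $T_\gamma$ is unitary. In particular $n\cdot\dim H_1=\dim H$ forces all $H_i$ of equal dimension, so we may and do identify $H_2=\cdots=H_n=H_1$.

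Now define $\rho\colon\GG_1^1\to B(H_1)$ by $\rho(g):=T_g$; by the above this is a (finite-dimensional) unitary representation of the group $\GG_1^1$. Define the unitaries $z_i:=T_{\gamma_i}\colon H_1\to H_i$ (so $z_1=1_{H_1}$). It remains to verify that $\pi=\pi_{F,\rho,\mathbf z}$, i.e. that for every $f\in C_c(\GG)$ and all $i,j$ one has $1_{H_i}\pi(f)1_{H_j}=\sum_{g\in\GG_j^i}f(g)\,z_i\rho(\gamma_i^{-1}g\gamma_j)z_j^*$. By linearity and a partition-of-unity argument it suffices to treat $f$ supported on a single open bisection $U$; then $\GG_j^i\cap U$ is either empty — in which case Lemma \ref{lem:disjoint} gives $1_{H_i}\pi(f)1_{H_j}=0$, matching the empty sum — or a single point $\gamma$, and by a further partition of unity (writing $f$ as a finite sum of functions supported on smaller bisections, each meeting $\GG_j^i$ in at most one point) one reduces to $f(\gamma)=1$, where $1_{H_i}\pi(f)1_{H_j}=T_\gamma$. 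On the other side, the multiplicativity of $T$ gives $z_i\rho(\gamma_i^{-1}\gamma\gamma_j)z_j^*=T_{\gamma_i}T_{\gamma_i^{-1}\gamma\gamma_j}T_{\gamma_j}^{-1}=T_{\gamma_i(\gamma_i^{-1}\gamma\gamma_j)\gamma_j^{-1}}=T_\gamma$, so the two sides agree. Scaling by $f(\gamma)$ handles the general bisection-supported case, and additivity finishes it.

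I expect the main obstacle to be the bookkeeping in the multiplicativity step: ensuring, by shrinking the bisection neighbourhoods of $\gamma$ and $\delta$, that $f\star h$ is both supported on a bisection and takes value exactly $1$ at $\gamma\delta$ with no spurious contributions from other elements of $\GG^i$ whose range/source lie in $F$ — this is exactly the type of separation-of-compacta argument already carried out in the proofs of Lemmas \ref{lem:finiteorbit} and \ref{lem:unique}, so it is routine but must be done carefully. A secondary subtlety is checking that $T_{e_{\GG_i^i}}=1_{H_i}$ rather than merely a projection; this uses that $\pi|_{C_0(X)}$ acts as $\mathrm{ev}_{x_i}$ on the block $H_i$. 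Everything else is formal manipulation of the block-matrix formula, essentially the reverse of the computation already performed in the proof of Lemma \ref{lem:concreteform}.
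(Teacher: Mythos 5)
Your proposal is correct and follows essentially the same route as the paper's proof: Lemma \ref{lem:finiteorbit} for the orbit decomposition, Lemmas \ref{lem:unique} and \ref{lem:disjoint} for well-definedness of the block operators and vanishing of the irrelevant blocks, and multiplicativity via convolution of bisection-supported functions. Your uniform family $T_\gamma$ merely packages the paper's $\rho_i$ and $z_i$ into one object, and your explicit blockwise verification fills in the assembly step that the paper leaves to the reader.
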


\begin{proof}
	Consider an irreducible representation $\pi:C^*(\GG) \to B(\mathcal{H})$, where $\mathcal{H}$ is a finite-dimensional Hilbert space.  Lemma \ref{lem:finiteorbit} provides us with a finite orbit $F\subset X$ of the form $\{x_1, \ldots, x_n\}$, with  $\mathcal{H} \cong \bigoplus_{i=1}^n \Hil_i$ and $\pi|_{C_0(X)} = \bigoplus_{i=1}^n \textup{ev}_{x_i} 1_{\Hil_i}$. For $i,j=1,\ldots, n$ we will write $P_i:=1_{\Hil_i}$  and $\GG_{j}^i:= \GG_{x_j}^{x_{i}}$.
	

Take any $\gamma\in G_1^1$. Consider a function $f\in C_c(\GG)$ supported on an open bisection $U$ and such that $f(\gamma)=1$ and set $\rho_1(\gamma)= P_1 \pi(f) P_1$. Lemma  \ref{lem:unique} implies that $\rho_1(\gamma)$ -- which we will view as an operator on $\Hil_1$ -- does not depend on the choice of $f$ as above.  Lemma \ref{lem:disjoint} implies then that for any $f \in C_c(\GG)$ supported on an open bisection we have
\[ P_1\pi(f) P_1 = \sum_{\gamma \in \GG_1^1} f(\gamma) \rho_1(\gamma);\]
and as the above formula is clearly linear we can immediately deduce that it remains valid for arbitrary $f \in C_c(G)$.

Note that Lemma \ref{lem:disjoint} implies that if  $f\in C_c(\GG)$ is supported on an open bisection $U$ such that $s(U) \cap F = \{x_i\}$ and $r(U) \cap F = \{x_j\}$ (for some $i,j\in \{1, \ldots,n\}$) then $\pi(f) = P_j \pi(f) P_i$. Thus given $\lambda_1, \lambda_2 \in \GG^1_1$ and choosing suitable functions $f_1,f_2 \in C_c(\GG)$ supported on sufficiently small open bisections and such that $f_1(\lambda_1)=1$, $f_2 (\lambda_2)=1$ we may consider $f_1 \star f_2$ and note that we have
\[ \rho_1(\lambda_1 \lambda_2) = P_1 \pi(f_1 \star f_2) P_1 = P_1 \pi(f_1) \pi(f_2) P_1 =
P_1\pi(f_1) P_1\pi(f_2) P_1 = \rho_1(\lambda_1) \rho_1(\lambda_2).\]
This implies that $\rho_1:\GG_1^1 \to B(\Hil_1)$ is  a representation. Naturally we can repeat this argument and construct for each $i=1,\ldots,n$ representations $\rho_i:\GG_i^i \to B(\Hil_i)$ such that for any $f \in C_c(\GG)$ we have
\[ P_i\pi(f) P_i = \sum_{\gamma \in G_i^i} f(\gamma) \rho_i(\gamma).\]
Consider now $i\in\{2,\ldots,n\}$, choose some $\gamma_i \in \GG_{1}^{i}$  and suppose that $f\in C_c(\GG)$ is supported on an open bisection $U$ and such that $f(\gamma_i)=1$. Set $z_i= P_i \pi(f) P_1$. Lemma  \ref{lem:unique} implies that $z_i$ does not depend on the choice of $f$ as above. The same argument can be applied to $\gamma_i \gamma$ for a fixed $\gamma \in \GG_1^1$, allowing us further to deduce that for any $f\in C_c(\GG)$ supported on an open bisection $U$ and such that $f(\gamma_i\gamma)=1$ we must have $P_i \pi(f) P_1 = z_i \rho_1(\gamma)$. This in turn implies -- again first arguing for functions supported on bisections -- that in fact for $f \in C_c(\GG)$ we have
\[ P_i\pi(f)P_1 = \sum_{\gamma\in \GG_1^1} f(\gamma_i \gamma) z_i \rho_1(\gamma)= \sum_{\kappa\in \GG_1^i} f(\kappa) z_i \rho_1(\gamma_{i}^{-1} \kappa) .\]
It remains to note that using analogous reasoning we can deduce that in fact $z_i$ is a partial isometry with the source space $\Hil_1$ and the range space $\Hil_i$, and further the representations $\rho_1$ and $\rho_i$ are naturally connected via the isomorphism relating $\GG_1^1$ and $\GG_i^i$. By the last statement we mean that for every $\kappa \in \GG_i^i$ we have
\[\rho_i(\kappa) =z_i^* \rho_1(\gamma_i^{-1} \kappa \gamma_i) z_i.\]
We leave the rest of the argument (which amounts to setting $\rho:=\rho_1$, identifying the Hilbert spaces $\Hil_i$ with each other and somewhat tediously collecting the pieces) to the reader.
\end{proof}

\subsection{RFD property for groupoid $C^*$-algebras -- positive results}

In this subsection we will provide some sufficient and necessary results for the group C*-algebra of an amenable \'etale groupoid to be RFD. Note that by \cite[Proposition 6.1.8]{ADR} if $\GG$ is (topologically) amenable then the reduced and universal groupoid C*-algebras of $\GG$ are canonically isomorphic.

Before we formulate the next lemma, recall that $\Phi:C^*_r(\GG)\to C_0(X)$ denotes the standard faithful conditional expectation, which for $f\in C_c(\GG)$ is simply given by evaluation:
$\Phi(f) = f|_{X}$ (see \cite[Proposition 4.2.6]{Aidan}).

\begin{lemma}\label{mainrevisited} Let $\GG$ be an amenable \'etale groupoid, and let $x\in X$ be a point with a finite orbit  $F = \{x_1, \ldots,x_n\}$. Assume that $\GG_{x_1}^{x_1}$ is MAP.   Then for any $\epsilon>0$ and $f \in C_c(\GG)$ there exists a finite-dimensional representation $\pi$ of $C^*(\GG)$ such that $$\left| \frac{\Phi(f)(x_1)+\ldots +\Phi(f)(x_n)}{n} - \tr \, \pi(f)\right|<\epsilon.$$
\end{lemma}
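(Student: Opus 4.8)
The plan is to produce the required finite-dimensional representation by combining the structural description of Theorem \ref{th:finitegpd} with the MAP property of the isotropy group and an averaging (tracial) argument. First I would restrict attention to the finite orbit $F=\{x_1,\dots,x_n\}$ and the isotropy group $\Gamma:=\GG_{x_1}^{x_1}$. Fix $f\in C_c(\GG)$ and $\epsilon>0$. Using the construction of Lemma \ref{lem:concreteform}, for any finite-dimensional unitary representation $\rho:\Gamma\to B(H_1)$ and any choice of connecting unitaries $\mathbf{z}=(z_i)$ (all $z_i=1_{H_1}$ will do, since we are free to identify the $H_i$) we obtain a finite-dimensional representation $\pi_{F,\rho,\mathbf{z}}$ of $C^*(\GG)$. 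The key point is to compute its normalized trace: from the block-matrix formula, $(\pi_{F,\rho,\mathbf{z}}(f))_{i,i}=\sum_{g\in\GG_i^i}f(g)\,z_i\rho(\gamma_i^{-1}g\gamma_i)z_i^*$, so the diagonal-of-diagonal terms that survive in the trace are exactly those with $g=e_{\GG_i^i}$, i.e. $g\in X$, contributing $f(x_i)=\Phi(f)(x_i)$ times $\mathrm{id}_{H_1}$, together with correction terms of the form $\tr(\rho(\gamma_i^{-1}g\gamma_i))$ for the finitely many non-unit $g\in\GG_i^i\cap\supp(f)$.

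Hence, writing $d=\dim H_1$,
\[
\tr\,\pi_{F,\rho,\mathbf{z}}(f)=\frac{1}{nd}\Bigl(\sum_{i=1}^n\sum_{g\in\GG_i^i}f(g)\,\mathrm{Tr}\,\rho(\gamma_i^{-1}g\gamma_i)\Bigr)
=\frac{\Phi(f)(x_1)+\cdots+\Phi(f)(x_n)}{n}+\frac{1}{nd}\sum_{i=1}^n\sum_{g\in\GG_i^i\setminus X}f(g)\,\mathrm{Tr}\,\rho(\gamma_i^{-1}g\gamma_i),
\]
using $\GG_i^i\cong\Gamma$ and normalizing the trace so that $\tr\,1_{\mathcal H}=1$. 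Thus it suffices to arrange that the correction sum has modulus less than $n d\,\epsilon$. Note that only finitely many group elements $h_1,\dots,h_m\in\Gamma\setminus\{e\}$ appear (the images of $g\in\GG_i^i\cap\supp(f)$, $g\notin X$, under the isomorphisms $\GG_i^i\cong\Gamma$), with fixed scalar coefficients coming from $f$. Since $|\mathrm{Tr}\,\rho(h)|\le d$ always, a crude bound is not enough; instead I would use that $\Gamma$ is MAP to find a finite-dimensional unitary representation $\rho$ separating each $h_j$ from $e$, or more efficiently, pick $\rho$ factoring through a finite quotient on which the $h_j$ map to nontrivial elements. The cleanest route is to observe that $\frac{1}{d}\mathrm{Tr}\,\rho(h)$ is the value of the normalized character $\chi_\rho$, and that the finite-dimensional representations of an amenable MAP group have the property (via Bekka/Følner-type arguments, or directly since $\Gamma$ MAP amenable means $C^*(\Gamma)$ is RFD with a separating family of f.d. representations) that one can make $\frac{1}{d}\mathrm{Tr}\,\rho(h)$ uniformly small for the finitely many $h\ne e$ at hand: concretely, take $\rho$ a direct sum of enough finite-quotient representations so that the regular-representation-like cancellation forces $|\chi_\rho(h_j)|<\delta$ for all $j$, where $\delta$ is chosen so that $\sum_j |c_j|\,\delta<\epsilon$ with $c_j$ the relevant combined coefficients from $f$ and the factor $\tfrac1n$.

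The main obstacle is precisely this last step: producing a single finite-dimensional representation $\rho$ of $\Gamma$ whose normalized character is simultaneously small on the finite set $\{h_1,\dots,h_m\}\subset\Gamma\setminus\{e\}$. MAP alone gives, for each $h_j$, a finite-dimensional $\rho_j$ with $\rho_j(h_j)\ne 1$, but that only yields $\chi_{\rho_j}(h_j)\ne 1$, not small. Here amenability should be used: since $\Gamma$ is amenable, one can take matrix coefficients of finite-dimensional representations to approximate the trivial character weakly, hence by a standard tensor-power / direct-sum trick (à la Bekka's proof that RFD $=$ MAP for amenable groups) build $\rho$ with $\chi_\rho(h_j)$ arbitrarily close to $0$ for all $j$ at once — this is essentially the content of \cite{BekkaLouvet}/\cite{BekkaForum}. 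I would then assemble $\pi:=\pi_{F,\rho,\mathbf{z}}$ with this $\rho$, verify via Lemma \ref{lem:concreteform} that it is a genuine (finite-dimensional) representation of $C^*(\GG)$, and conclude that the displayed inequality holds. Finally, I should remark that amenability of $\GG$ is what allows us to identify $C^*(\GG)$ with $C^*_r(\GG)$ and to speak unambiguously of $\Phi$, though the construction of $\pi$ itself does not otherwise need it beyond passing through $\Gamma$ amenable.
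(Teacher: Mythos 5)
Your construction and trace computation coincide with the paper's proof: both use $\pi_{F,\rho,\mathbf{z}}$ from Lemma \ref{lem:concreteform} with trivial $\mathbf{z}$, split the normalized trace into the unit-element contribution $\frac{1}{n}\sum_i\Phi(f)(x_i)$ plus a correction indexed by the finitely many non-unit $g\in\GG_i^i\cap\supp(f)$, and reduce everything to making the normalized character of $\rho$ small on a finite subset of $\GG_{x_1}^{x_1}\setminus\{e\}$. The one place you diverge is the step you call ``the main obstacle'': you claim amenability of the isotropy group is needed to get a single $\rho$ with $|\tr\rho(h_j)|<\delta$ for all $j$, and you appeal to Bekka--Louvet-type weak approximation. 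That is heavier than necessary, and it also quietly requires the extra observation that amenability of $\GG$ forces amenability of $\GG_{x_1}^{x_1}$ (the lemma itself only assumes MAP). The paper gets this step from MAP alone by the elementary trick you half-name but do not carry out: take $\tilde\rho$ finite-dimensional with $\tilde\rho(h_j)\neq 1$ for all $j$ (a direct sum of separating representations), note that the \emph{normalized} trace of $\tilde\rho\oplus\chi$ at each $h_j$ is an average of unimodular eigenvalues that are not all equal (the trivial summand guarantees the eigenvalue $1$ occurs alongside some eigenvalue $\neq 1$), hence has modulus strictly less than $1$, and then take a large tensor power, whose normalized character is the corresponding power and so is as small as you like. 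So your argument is correct but routes the key estimate through amenability where only MAP is used in the paper; with the tensor-power observation made explicit, your proof closes without invoking amenability of the isotropy group at all.
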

\begin{proof} Fix $f \in  C_c(\GG)$ and $\epsilon>0$. Set as usual $\GG_i^j:= \GG_{x_i}^{x_j}$ for all $i,j=1\ldots, n$ and note first that the restrictions $f|_{\GG_i^i}$ are finitely supported. Set further $\gamma_1:=e_{\GG_1^1}$, fix for each $i=2,\ldots,n$ an element $\gamma_i\in \GG_1^i $ and put $E=\{g \in \GG_1^1: \exists_{i\in \{1,\ldots,n\}} f(\gamma_ig\gamma_i^{-1}) \neq 0\}$. The set $E$ is finite, so we can further put $M:=\max_{\gamma \in E, i=1,\ldots,n} |f(\gamma_i\gamma\gamma^{-1}_i)|$.

As $\GG_1^1$ is assumed to be MAP
there is a finite-dimensional representation $\tilde{\rho}$ of $\GG_1^1$ such that
	$\tilde{\rho}(h) \neq 1$ for any $e\neq h\in E$. Let $\chi$ be the trivial representation of $\GG_1^1$. Then, denoting by $\tr$ the normalised matrix trace, we have
	$$|\tr (\tilde{\rho}\oplus \chi)(h)|\neq 1,$$ for any $e\neq h\in E$. Taking an appropriate tensor power of $\tilde{\rho}\oplus \chi$ we obtain a finite-dimensional representation $\rho: \GG_1^1\to B(\Hil_1)$ such that $$|\tr  \rho(h)|<\frac{\epsilon}{\sharp(E)M},$$ for any $e\neq h\in E$.

	Using $\rho$, $F$, a trivial collection $\bf{z}=(\id_{\Hil_1}, \ldots,\id_{\Hil_1})$  and $\overrightarrow{\gamma}=(\gamma_1, \ldots, \gamma_n)$ we construct a finite-dimensional representation $\pi:=\pi_{F, \rho, \bf{z}, \overrightarrow{\gamma}}$ of $C^*(\GG)$ as in Lemma \ref{lem:concreteform}. The explicit form of $\pi$ implies that we have	
\begin{align*}\tr (\pi(f)) &= \frac{1}{n}\sum_{i=1}^n \tr (\pi(f)_{i,i}) =
\frac{1}{n} \sum_{i=1}^n  \sum_{g \in \GG_i^i} f(g) \tr (\rho (\gamma_{i}^{-1}g \gamma_{i}))
\\&= \frac{1}{n} \left( \sum_{i=1}^n f(x_i) + \sum_{i=1}^n  \sum_{g \in \GG_i^i\setminus\{x_i\}, f(g)\neq 0}  f(g) \tr (\rho (\gamma_{i}^{-1}g \gamma_{i})) \right).
\end{align*}
It thus follows that
\begin{align*}
\left|\tr (\pi(f)) - \frac{\Phi(f)(x_1)+\ldots +\Phi(f)(x_n)}{n} \right| &\leq \frac{1}{n}
 \sum_{i=1}^n  \sum_{g \in \GG_1^1, g \neq e, f(\gamma_ig\gamma_i^{-1})\neq 0}  \left|f(\gamma_i g \gamma_i^{-1}) \tr (\rho (g))\right| \\&\leq \sum_{g \in E} M \tr (\rho(g)<  M \frac{\epsilon}{M}= \epsilon.
\end{align*}	

\end{proof}

The following is the main result of this Section.

\begin{theorem}\label{theorem:main}
	Let $\GG$ be an amenable \'etale groupoid.
	If  $C^*(\GG)$ is RFD then $X$ admits a dense set of periodic points; and if $X$ admits a dense set of periodic points with MAP isotropy subgroups, then $C^*(\GG)$ is RFD.
\end{theorem}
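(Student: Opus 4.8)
The plan is to prove the two implications separately, using the structural description of finite-dimensional representations from Theorem \ref{th:finitegpd} together with Lemma \ref{mainrevisited}.

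\medskip

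\noindent\textbf{Necessity of dense periodic points.} Suppose $C^*(\GG)$ is RFD. First I would record the observation, following from Lemma \ref{lem:finiteorbit} (and the fact that the restriction of any finite-dimensional representation to $C_0(X)$ decomposes into evaluations at points of finite orbits), that if $\pi$ is a finite-dimensional representation of $C^*(\GG)$ then the characters $\ev_x$ appearing in $\pi|_{C_0(X)}$ run over periodic points only. Now take any non-empty open set $V\subseteq X$. Choose $0\neq f\in C_c(X)\subseteq C^*(\GG)$ with $\supp f\subseteq V$ and $f\geq 0$; then $f\neq 0$ as an element of $C^*(\GG)$, so by RFD there is a finite-dimensional representation $\pi$ with $\pi(f)\neq 0$. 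Decomposing $\pi|_{C_0(X)}=\bigoplus \ev_{x} 1$ over a finite set of periodic points, $\pi(f)\neq 0$ forces $f(x)\neq 0$ for at least one such periodic $x$, whence $x\in V$. Thus $V$ contains a periodic point, and periodic points are dense.

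\medskip

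\noindent\textbf{Sufficiency.} Assume periodic points with MAP isotropy are dense in $X$; I want to separate points of $C^*(\GG)$ by finite-dimensional representations, i.e.\ to show that for every nonzero $a\in C^*(\GG)$ there is a finite-dimensional representation not annihilating $a$. Since $\GG$ is amenable, $C^*(\GG)=C^*_r(\GG)$ and we have the faithful conditional expectation $\Phi:C^*(\GG)\to C_0(X)$. Given $0\neq a$, the element $a^*a$ is nonzero and positive, so $\Phi(a^*a)\in C_0(X)$ is a nonzero positive function; let $V=\{x\in X:\Phi(a^*a)(x)>\tfrac12\|\Phi(a^*a)\|\}$, a nonempty open set. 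By density choose a periodic point $x_1\in V$ with finite orbit $F=\{x_1,\dots,x_n\}$ and MAP isotropy group $\GG_{x_1}^{x_1}$. Approximate $a^*a$ by some $f\in C_c(\GG)$ with $\|a^*a-f\|$ small; then the orbit average $\frac1n\sum_i \Phi(f)(x_i)$ is close to $\frac1n\sum_i\Phi(a^*a)(x_i)$, which is $\geq \Phi(a^*a)(x_1)/n > \|\Phi(a^*a)\|/(2n)>0$ (using $\Phi(a^*a)\geq 0$ so all terms are nonnegative). Apply Lemma \ref{mainrevisited} to this $f$, $F$ and a sufficiently small $\epsilon$ to obtain a finite-dimensional representation $\pi$ of $C^*(\GG)$ with $\tr\pi(f)$ within $\epsilon$ of that orbit average; choosing the approximation and $\epsilon$ small enough we get $\tr\pi(f)>0$, hence $\tr\pi(a^*a)>0$ (the trace is norm-continuous on the finite-dimensional target), hence $\pi(a^*a)\neq 0$ and so $\pi(a)\neq 0$. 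Therefore finite-dimensional representations separate points of $C^*(\GG)$.

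\medskip

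\noindent\textbf{Main obstacle.} The genuine content is entirely packaged in Lemma \ref{mainrevisited}: producing, from MAP-ness of the isotropy group, a finite-dimensional representation whose normalised trace sees the diagonal ($C_0(X)$-)part of $f$ at the chosen orbit while suppressing the off-unit contributions of $f$ restricted to the isotropy group. Given that lemma, the only things to be careful about here are (i) passing from $a^*a$ to a compactly supported approximant without losing positivity of the relevant orbit average --- this works because $\Phi$ is contractive and positive, so $\Phi(f)$ stays close to the nonnegative function $\Phi(a^*a)$ and its values at the finitely many orbit points stay bounded below; and (ii) choosing $x_1$ in the open set where $\Phi(a^*a)$ is bounded away from $0$, which is exactly where density of periodic points is used. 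No further difficulty is anticipated.
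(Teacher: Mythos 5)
Your proposal is correct and follows essentially the same route as the paper: the first implication is the contrapositive of the paper's argument via Lemma \ref{lem:finiteorbit}, and the second implication is the paper's argument of approximating a nonzero positive element by a compactly supported function, using faithfulness of $\Phi$ and density of periodic points with MAP isotropy to locate a finite orbit where the orbit average of $\Phi$ is bounded below, and then invoking Lemma \ref{mainrevisited} to produce a finite-dimensional representation with strictly positive trace. The only cosmetic difference is that you reduce to elements of the form $a^*a$ rather than stating that separating positive elements suffices, which is equivalent.
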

\begin{proof}
	The first statement is a consequence of Lemma \ref{lem:finiteorbit}. Indeed, if $V\subset X$ is a non-empty open set such that no element of $V$ has a finite orbit, then for any $f \in C_c(X)\subset C_c(\GG)$ supported on $V$ and for any finite-dimensional representation $\pi:C^*(\GG) \to B(\Hil)$ we have $\pi(f)=0$.
	
	Assume then that $X$ admits a dense set of periodic points with MAP isotropy subgroups.

	 Let $0\neq \tilde{f}$ be a positive element in $C^*(\GG)$. Since $\GG$ is amenable, the non-negative continuous function $\Phi(\tilde{f})$ is  not everywhere zero. Hence there is $x\in X$ with finite orbit $F=\{x_1, \ldots,x_n\}$ such that $\Phi(\tilde{f})(x_1)\neq 0$. Then
	$$C:= \frac{\Phi(\tilde{f})(x_1) +\ldots + \Phi(\tilde{f})(x_n)}{n}>0.$$
	We find $f\in \mathcal C_c(\GG)$ such that $\|f - \tilde{f}\|<C/4.$  By Lemma \ref{mainrevisited} there is a finite-dimensional representation $\pi$ of $C^*(\GG)$ such that
$$\left| \frac{\Phi(f)(x_1)+\ldots +\Phi(f)(x_n)}{n} - \tr\, \pi(f)\right|<\frac{C}{4}.$$
	Then \begin{multline*} \tr \, \pi(\tilde{f}) \ge \tr\, \pi(f) - \|\pi(f- \tilde{f})\| > \tr\, \pi(f) - C/4\\
	>\frac{\Phi(f)(x_1)+\ldots +\Phi(f)(x_n)}{n}  -2C/4\\ > \frac{\Phi(\tilde{f})(x_1)+\ldots +\Phi(\tilde{f})(x_n)}{n}  -3C/4= C/4 \neq 0.
	\end{multline*}
	Hence $\pi(\tilde{f})\neq 0$. Thus finite-dimensional representations of $C^*(\GG)$  separate positive elements and therefore $C^*(\GG)$  is RFD.
\end{proof}

It is not clear whether the second condition appearing in the theorem is in fact neccessary for $C^*(\GG)$ being RFD. We will return to this in the next Subsection: see especially Question \ref{quest}.

Let us now turn to corollaries. The notion of amenable action we use below is the one which appears in the compact case for example in  \cite[Section 4.4]{BO}
(and in \cite[Remark 9.17]{Dana} for actions on locally compact spaces). But first, a remark.

\begin{remark}\label{rem:mult}
	If $A$ is an RFD $C^*$-algebra, so is its multiplier C*-algebra $M(A)$.
\end{remark}
\begin{proof}
Suppose that $(\pi_i:A \to B(\Hil_i))_{i \in I}$ is a separating family of finite-dimensional representations for $A$. One can assume that each $\pi_i$ is non-degenerate. It is then easy to check that the canonical extensions $(\tilde{\pi}_i:M(A) \to B(\Hil_i))_{i \in I}$ separate points of $M(A)$.
\end{proof}

\begin{corollary}\label{TransformationGroupoid} Let $G$ be a discrete  group, $X$ a non-empty locally compact  Hausdorff   second-countable space and $\alpha$ an amenable action of $G$ on $X$. The following are equivalent:
	\begin{itemize}
		\item[(i)] $G$ is MAP and the union of finite orbits of the $G$-action is dense in $X$;
		\item[(ii)]  $C_0(X)\rtimes G$ is RFD.
	\end{itemize}
The conditions above  imply that $G$ itself is amenable.
\end{corollary}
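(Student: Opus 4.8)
The plan is to deduce the Corollary from Theorem \ref{theorem:main} applied to the transformation groupoid $\GG = X \rtimes G$, and then argue the amenability of $G$ separately. First I would recall that for the transformation groupoid $\GG = X \rtimes G$ one has $C^*(\GG) \cong C_0(X) \rtimes G$ canonically, that the unit space of $\GG$ is $X$, that the orbits of $\GG$ inside $X$ (in the sense of the $\approx$-relation of Section \ref{Sec:grpd}) are exactly the $G$-orbits, and that the isotropy subgroup $\GG_x^x$ at a point $x \in X$ is precisely the stabiliser $G_x = \{g \in G : g\cdot x = x\}$. Moreover topological amenability of the action $\alpha$ is exactly topological amenability of $\GG$, so Theorem \ref{theorem:main} applies. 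Thus $C_0(X)\rtimes G$ is RFD if $X$ has a dense set of periodic points with MAP isotropy, and only if $X$ has a dense set of periodic points.

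Now I would establish the equivalence of (i) and (ii). For the implication (i)$\Rightarrow$(ii): if $G$ is MAP, then every subgroup of $G$ — in particular every stabiliser $G_x$ — is MAP, so the density of finite $G$-orbits immediately gives a dense set of periodic points with MAP isotropy, and Theorem \ref{theorem:main} yields that $C_0(X)\rtimes G$ is RFD. For the implication (ii)$\Rightarrow$(i): if $C_0(X)\rtimes G$ is RFD, then by Theorem \ref{theorem:main} the set of periodic points — i.e.\ the union of finite $G$-orbits — is dense in $X$; it remains to show $G$ is MAP. For this I would use that $C_0(X)\rtimes G$ RFD implies $M(C_0(X)\rtimes G)$ RFD by Remark \ref{rem:mult}, and observe that $M(C_0(X)\rtimes G)$ contains a copy of the group ring $\C[G]$ via the canonical unitaries $(u_g)_{g\in G}$ implementing the action (if $X$ is compact one already has $1\in C_0(X)\rtimes G$ and $\C[G]\subset C_0(X)\rtimes G$ directly); the finite-dimensional representations of $M(C_0(X)\rtimes G)$ restrict to finite-dimensional unitary representations of $G$, and since they separate the $u_g$ they separate the points of $G$, so $G$ is MAP. (Alternatively, one can take a point $x$ in a finite orbit and restrict to the stabiliser; but the multiplier-algebra argument is cleaner and works uniformly.)

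Finally, for the last assertion that (i)/(ii) imply $G$ is amenable: here I would invoke the key structural fact that an amenable action of a \emph{discrete} group admitting \emph{a point with finite stabiliser} forces $G$ to be amenable. More precisely, under (i) there is a point $x\in X$ whose $G$-orbit is finite, so the stabiliser $G_x$ has finite index in $G$; topological amenability of the $G$-action restricts to an amenable action of $G_x$ on $X$ fixing the point $x$, hence (an action fixing a point being amenable forces) $G_x$ is an amenable group, and amenability passes to the finite-index overgroup $G$. I expect the verification that amenability of the action descends to amenability of the finite-index stabiliser $G_x$ — equivalently that a topologically amenable action of a discrete group with a fixed point has amenable acting group, which is where one genuinely uses discreteness — to be the main technical point; it is essentially \cite[Proposition 5.1.1]{ADR} applied to the isotropy, combined with the standard fact that amenability is inherited by finite-index overgroups. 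Everything else is a bookkeeping translation between the groupoid picture and the transformation-group picture.
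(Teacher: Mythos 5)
Your proof is correct and follows essentially the same route as the paper: both directions are reduced to Theorem \ref{theorem:main} via the transformation groupoid (with isotropy groups identified with stabilisers, which are MAP since MAP passes to subgroups), and the MAP property of $G$ is extracted from the fact that $M(C_0(X)\rtimes G)$ is RFD by Remark \ref{rem:mult} and contains the canonical unitaries $u_g$. The only (harmless) deviation is the final amenability claim, where the paper notes directly that the uniform probability measure on a finite orbit is invariant, so $G$ is amenable by \cite[Exercise 4.4.4]{BO}, whereas you pass through amenability of the finite-index stabiliser and then to the overgroup; both arguments work.
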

\begin{proof}
	
(i)$\Longrightarrow$ (ii)	

Note that if  $x\in X$ belongs to a finite orbit, and $\GG$ is the groupoid associated to the action in question, then $\GG_x^x$ is the stabiliser of $x$ with respect to the $G$-action. This implies that $\GG_x^x$ is MAP and the implication follows from the second part of Theorem \ref{theorem:main}.

(ii)$\Longrightarrow$ (i)

Theorem \ref{theorem:main} implies that the action of $G$ on $X$ admits a dense set of periodic orbits. If an amenable action admits a finite orbit, say $\{x_1, \ldots,x_n\}$, it also admits an invariant probability measure $\sum_{i=1}^n \delta_{x_i}$, so $G$ itself must be amenable (\cite[Exercise 4.4.4]{BO}). Thus we have a natural inclusion $C^*_r (G)\subset M(C_0(X) \rtimes G)$, and $M(C_0(X) \rtimes G)$ must be RFD by Remark \ref{rem:mult}. But then $G$ is MAP and (i) holds.

The last statement follows from the argument above.

\end{proof}

We would like to note that the last result generalises \cite[Theorem 4.1.10]{TomiyamaBook} which proves the same result under the assumption that $G$ is virtually abelian (i.e.\ all irreducible representations of $G$ are finite-dimensional); this is of course  stronger than $G$ being MAP.
The corollary implies in particular that in some contexts the RFD property of the crossed product is generic; we will formulate it rigorously in a slightly more general context in Proposition \ref{prop:generic} below.

The last corollary implies in particular that if  $H, G$ are discrete groups, $H$ is abelian and $G$ amenable, then the semidrect product $H \rtimes G$ is MAP if and only if $G$ is MAP and the induced action of $G$ on $\hat{H}$ has a dense set of periodic points.
This was already quoted and applied in \cite{ES}. Later in Proposition \ref{MAPversion} we will see a generalisation of this fact.


\subsection{Example of RFD amenable groupoid with a non-MAP isotropy subgroup}

We begin with describing a general construction, in a sense resembling the HLS groupoids of
\cite{Rufus}.

\begin{proposition} \label{prop:indgrp}
Let $\Gamma$ be a countable amenable group with an increasing sequence of subgroups $(\Gamma_n)_{n=1}^\infty$ such that $\Gamma=\bigcup_{n\in \N} \Gamma_n$. Set $X =\N \cup \{\infty\}$, the one-point compactification of $\N$ at infinity, with the usual order. Let $\GG= \bigsqcup_{x \in X} \Gamma_x$ be the bundle of isotropy groups with $\Gamma_\infty=\Gamma$. This means that $\GG$ is a groupoid with the unit space $X$, with the source and range maps coinciding, and the product defined pointwise in fibers as the relevant group multiplication.
Equip $\GG$ with the topology given by the basis $\{U_{g,n}: n \in \N, g \in \Gamma_n\}$,
 with $U_{g,n}=\{(g,m): m \in X, m \geq n\}$.
 Then $\GG$ (which we will also denote $\GG(\Gamma, (\Gamma_n)_{n=1}^\infty)$ to signify the dependence on the choice of the relevant groups) is an amenable \'etale groupoid, and
 \[C^*(\G) \approx \left\{(a_n)_{n=1}^\infty \in \prod_{n=1}^\infty C^*(\Gamma_n): (\iota_n(a_n) )_{n=1}^\infty \textup{ converges in }C^*(\Gamma)\right\},\]
where $\iota_n:C^*(\Gamma_n) \to C^*(\Gamma)$ denote the canonical inclusions.
\end{proposition}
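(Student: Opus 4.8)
## Proof plan for Proposition~\ref{prop:indgrp}

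\textbf{Setting up the groupoid structure.} First I would verify that $\GG := \bigsqcup_{x \in X} \Gamma_x$, with the stated basis, is indeed a locally compact Hausdorff second-countable \'etale groupoid. The groupoid operations are the obvious fibrewise ones: $(g,m)$ and $(h,m)$ are composable (both lying over the unit $(e,m)$), with product $(gh,m)$ and inverse $(g^{-1},m)$; the unit space $\GG^{(0)}$ is $\{(e,m):m\in X\}\cong X$. One checks that each $U_{g,n}$ is a bisection: source and range both map $U_{g,n}$ homeomorphically onto $U_{e,n} \cong \{m \in X : m \ge n\}$, which is open in $X$. Hausdorffness and second countability are immediate since $X$ is so and the fibres are countable; local compactness follows because the $U_{e,n}$ form a neighbourhood basis of $\infty$ by compact-open sets (each $\Gamma_n$ is discrete, hence $U_{g,n}$ is compact iff $\{m \ge n\}$ is, which holds as it is closed in the compact $X$). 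The key continuity point to check is that multiplication and inversion are continuous in this topology — inversion sends $U_{g,n}$ to $U_{g^{-1},n}$, and for multiplication one uses that if $gh \in \Gamma_n$ with $g,h \in \Gamma_n$ then the product of $U_{g,n}$ and $U_{h,n}$ sits inside $U_{gh,n}$; continuity at $\infty$ is where the nesting $\Gamma_n \subset \Gamma_{n+1}$ is used.

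\textbf{Amenability.} For amenability I would invoke the construction of the approximate invariant means directly: since $\Gamma$ is amenable, it has a F{\o}lner-type approximation, and one transports it to $\GG$ fibrewise. More efficiently, $\GG$ is an increasing union / inverse-limit type object built from the amenable groups $\Gamma_n$ and $\Gamma$; since each isotropy group is amenable and the unit space is, in a strong sense, "small", one can cite \cite[Proposition~5.1.1]{ADR} (amenability is detected on isotropy together with behaviour on the unit space) or simply exhibit a sequence of continuous positive-type functions of compact support approximating $1$ — take $g_k$ supported on $U_{e,1}$-translates coming from a F{\o}lner sequence of $\Gamma$, restricted to each fibre. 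I expect this to be routine modulo choosing the right reference; the cleanest route is to note $\GG$ is an HLS-type groupoid as in \cite{Rufus} where amenability of such groupoids (given amenability of the ambient group) is established.

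\textbf{Identifying $C^*(\GG)$.} This is the main point. Since $\GG$ is amenable, $C^*(\GG) = C^*_r(\GG)$. I would first describe $C_c(\GG)$: a compactly supported continuous function $f$ on $\GG$ is a finite linear combination of indicators of basic bisections, hence is determined by data $(f_m)_{m \in X}$ where $f_m \in \C[\Gamma_m]$ is finitely supported, with the constraint that for $m$ large enough (beyond the compact support) $f_m$ is the "same" group algebra element $f_\infty$ viewed in the larger $\Gamma_m$, and $f_\infty \in \C[\Gamma]$ is its stabilised value; convolution on $\GG$ is exactly fibrewise convolution. Thus there is a natural $*$-homomorphism $\Psi: C_c(\GG) \to \prod_n \C[\Gamma_n]$, $f \mapsto (f_n)_n$, landing in the subalgebra where $(\iota_n(f_n))_n$ is eventually constant in $C^*(\Gamma)$, hence certainly convergent. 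To see $\Psi$ extends to an isometric isomorphism of $C^*(\GG)$ onto the stated algebra $B$, I would argue: (a) for each $m \in X$, evaluation/restriction to the fibre over $m$ is a $*$-representation $\mathrm{ev}_m : C^*(\GG) \to C^*(\Gamma_m)$ (using that the isotropy group $\GG_m^m = \Gamma_m$ and the regular representation of the groupoid restricted to the orbit $\{m\}$ — a fixed point — is exactly the left regular representation of $\Gamma_m$), so $\prod_m \mathrm{ev}_m$ gives a $*$-homomorphism $C^*(\GG) \to \prod_{m \in X} C^*(\Gamma_m)$ which is injective because $\sum_m \mathrm{ev}_m$ already recovers the regular representation of $\GG$ (the points of $X$ exhaust the unit space and $\Phi$ is faithful); (b) its image lands in $B$ because the $\infty$-fibre is the "limit" of the finite fibres — continuity of $f$ at $\infty$ forces $\iota_n(f_n) \to f_\infty$ for $f \in C_c(\GG)$, and this passes to the closure; (c) surjectivity onto $B$: given $(a_n)_n \in B$ with $\iota_n(a_n) \to a_\infty$, approximate by $C_c(\GG)$-elements — for eventually-constant sequences this is exactly $\Psi(C_c(\GG))$, and a density argument (together with a norm estimate $\|f\|_{C^*(\GG)} = \sup_{m} \|f_m\|_{C^*(\Gamma_m)}$, which itself follows from (a) and the faithfulness of $\prod \mathrm{ev}_m$) finishes it.

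\textbf{Expected main obstacle.} The crux is step (a)--(b): pinning down precisely that $\mathrm{ev}_\infty$ and the family $(\mathrm{ev}_m)_{m \in \N}$ together constitute a \emph{faithful} representation of $C^*(\GG)$, and that the norm on $C^*(\GG)$ is the supremum of the fibre norms (equivalently, that $C^*(\GG) \hookrightarrow \prod_{m\in X} C^*(\Gamma_m)$ isometrically). The subtlety is that the orbits here are singletons $\{m\}$, so the "representation at $m$" is genuinely just $C^*(\Gamma_m)$, and one must check the groupoid regular representation decomposes accordingly and misses nothing — this is where amenability (to pass between full and reduced) and the faithfulness of the conditional expectation $\Phi$ onto $C_0(X)$ are used. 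Once the isometric embedding into $\prod_{m \in X} C^*(\Gamma_m)$ is established, identifying the image with $B$ via the continuity-at-$\infty$ condition is a clean topological argument on $C_c(\GG)$ extended by density.
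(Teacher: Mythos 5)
Your plan follows essentially the same route as the paper: the fibrewise decomposition of the regular representation yielding an isometric embedding of $C^*(\GG)=C^*_r(\GG)$ into $\prod_{n}C^*(\Gamma_n)$ with norm the supremum of the fibre norms, the containment of the image in the convergent-sequence algebra via continuity at $\infty$, and surjectivity by approximating a convergent sequence with a $C_c(\GG)$-element (built from conditional expectations onto the $\C[\Gamma_n]$) plus a finitely supported correction. The one caveat concerns your amenability references: \cite[Proposition 5.1.1]{ADR} gives the implication from groupoid amenability to amenability of the isotropy groups (the wrong direction), and \cite{Rufus} does not establish amenability of HLS-type groupoids (its point is a non-amenable example); the paper instead invokes \cite[Theorem 3.5]{Renault}, though your direct F{\o}lner-type construction of positive-type functions would also work.
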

\begin{proof}
It is easy to check that the proposed basis of neighbourhoods defines a locally compact topology on $\GG$ (note that the sets $U_{g,n}$ are compact). Remark also that if $F\subset \GG$ is compact then $\{g \in \Gamma: (g,m)\in F \textup{ for some }m \in X\}$ is finite (otherwise we could cover $F$ by infinitely many disjoint open sets). Each isotropy group is discrete in the relative topology, and continuity of groupoid operations is clear. Thus $\GG$ becomes an \'etale groupoid. It is amenable by \cite[Theorem 3.5]{Renault}.

In the following it will be slightly easier to work with the reduced picture. Given $f \in C_c(\GG)$ define $f_x:=f|_{\GG_x}$.
The reduced representation $\pi:C_c(\GG)\to B(\Hil)$ is a direct sum of representations indexed by $x \in X$, more specifically for $f \in C_c(\GG)$ we have
$\pi(f) = \oplus_{x \in X} \pi_x (f_x) \in \prod_{x \in X} C^*(\Gamma_x) \subset \prod_{x \in X} B(\ell^2(\Gamma_x))$, where $\pi_x$ denotes the left regular representation of $\Gamma_x$.  By the statement in the second sentence of the proof above we have that if we define $f_x:=f|_{\Gamma_x}$ then all functions $f_x$ are supported in the same finite subset $K\subset \Gamma$; moreover we have the pointwise convergence of $(f_n(g))_{n \in \N}$ to $f_\infty(g)$ for each $g \in K$. This implies that the sequence $(\iota_n(\pi_n(f_n)))_{n \in \N}$ converges  to $\pi_\infty(f_\infty)$ inside $C^*_r(\Gamma)$. As the maps $\iota_n$ are isometric we deduce that for each $f \in C_c(\G)$ we have
$\|\pi(f)\|_{B(\Hil)} = \sup_{n \in \N} \pi_n(f_n)$. Thus we have an injective embedding
$\alpha: C^*_r(\GG) \to \prod_{n\in \N} C^*(\Gamma_n)$.

By the argument above we have that for every $f \in C_c(\GG)$ the sequence $\iota_n(\alpha(f)_n)$ converges inside $C^*(\Gamma)$. The standard Cauchy argument -- and again the fact that $\iota_n$ are isometries -- implies that in fact $\iota_n(\alpha(f)_n)$ converges inside $C^*(\Gamma)$ for every $f \in C^*_r(\GG)$.

 Suppose that we have a sequence $(x_n)_{n=1}^\infty \in \prod_{n\in \N} C^*(\Gamma_n)$ such that $(\iota_n(x_n))_{n=1}^\infty$ converges to $z \in C^*(\Gamma)$. For any $\epsilon >0$ we have $z' \in \C[\Gamma]$ such that $\|z'-z\|_{C^*(\Gamma)}< \epsilon$. Let $P_n:C^*(\Gamma) \to C^*(\Gamma_n)$ denote the usual conditional expectation given by the (extension of) restriction of functions from $\Gamma$ to $\Gamma_n$. It is easy to see that the sequence $(\iota_n\circ P_n (z'))$ is eventually constant, and that if we view $P_n(z')$ as a function on $\Gamma_n$ then the sequence $((P_n z')_{n \in \N}, z')$ defines a function $f \in C_c(\GG)$. Let $N\in \N$ be such that $\|\iota_n(x_n) - z\| < \epsilon$ and $\iota_n\circ P_n (z') = z'$ for $n\geqslant N$. It is easy to see that the image of $\alpha$ contains all finite sequences in $\prod_{n\in \N}C^*(\Gamma_n)$, so we can find $h\in C^*_r(\GG)$ such that
\[ \alpha(h)_n = \begin{cases} x_n & \textup { if }  n <N, \\ 0 & \textup { if }  n \geqslant N.  \end{cases}\]
 Set then $g \in C_c(\GG)$ as follows:
 \[ g_n = \begin{cases} 0 & \textup { if }  n <N, \\ f_n &\textup { if }  n \geqslant N.  \end{cases}\]
We want to analyse the distance of $\alpha(g+h)$ from $(x_n)_{n=1}^\infty$. For $n <N$ we have
\[ \alpha(g+h)_n - x_n = 0\]
and for $n \geq N$
\begin{align*} \|\alpha(g+h)_n - x_n \|&=\|g_n - x_n\| = \|P_n z' - x_n \|  = \|\iota_n(P_n z') - \iota_n (x_n)\| \\&\leq \|\iota_n(P_n z') - z \|+\|z - \iota_n(x_n)\|
=\|z - z' \| +\|z - \iota_n(x_n)\| < 2 \epsilon.\end{align*}
Thus $\|\alpha(g+h) - (x_n)_{n=1}^\infty \| \leq 2 \epsilon$ and
as the image of $\alpha$ is closed we have that  $ (x_n)_{n=1}^\infty \in \alpha(C^*(\GG))$ and the proof is finished.
\end{proof}

We will now apply the construction in the context of groupoid C*-algebras with the RFD property.
\begin{proposition}\label{prop:cntrex}
Let $\Gamma$ be a countable non-MAP nilpotent group, e.g.\ $$\mathbb H_3(\mathbb Q) = \left\{  \left( \begin{array}{ccc}  1 & x & y \\ 0 & 1 & z\\ 0 & 0 & 1 \end{array}  \right) : x,y,z\in \mathbb Q \right\}$$  (see \cite[Section 4]{ES}). Consider any increasing sequence of finitely generated subgroups  $(\Gamma_n)_{n=1}^\infty$ such that $\Gamma=\bigcup_{n\in \N} \Gamma_n$.
Let $\GG:=\GG(\Gamma, (\Gamma_n)_{n=1}^\infty)$ be constructed as in Proposition \ref{prop:indgrp}. Then $\GG$ is an amenable \'etale groupoid with a non-MAP isotropy subgroup
and $C^*(\GG)$ is a RFD C*-algebra.
\end{proposition}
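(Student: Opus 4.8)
The plan is to verify the three claimed properties of $\GG := \GG(\Gamma, (\Gamma_n)_{n=1}^\infty)$ separately, each of which is now essentially a bookkeeping exercise given the machinery already built. First, $\GG$ is an amenable \'etale groupoid: this is immediate from Proposition \ref{prop:indgrp}, which establishes precisely this for \emph{any} countable amenable $\Gamma$ with an increasing exhausting sequence of subgroups. Here $\Gamma$ is nilpotent, hence amenable, so the hypothesis of Proposition \ref{prop:indgrp} is met; one should just remark that the $\Gamma_n$ being finitely generated is harmless (it is only needed for the RFD conclusion, not for the construction).

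Next, $\GG$ has a non-MAP isotropy subgroup. The isotropy group at the point $\infty \in X = \N \cup \{\infty\}$ is $\Gamma_\infty = \Gamma$ itself, which is non-MAP by hypothesis (the explicit example being $\mathbb H_3(\mathbb Q)$, which is nilpotent of class $2$ and whose center contains a copy of $\mathbb Q$ that is not separated from the identity by finite-dimensional representations, as recalled from \cite[Section 4]{ES}). So there is nothing further to prove here beyond identifying the isotropy group at $\infty$.

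The substance is the RFD claim, and here the plan is to apply Theorem \ref{theorem:main} (equivalently Theorem \ref{thm:A}): since $\GG$ is amenable, it suffices to exhibit a dense set of periodic points in $X$ with MAP isotropy subgroups. I would take the dense set to be $\N \subset X$ (which is indeed dense in the one-point compactification, and is open). Each $n \in \N$ is its own orbit — the groupoid is a bundle of \emph{isotropy} groups, there are no arrows between distinct points of $X$ — so every point of $\N$ is periodic (orbit of size one). The isotropy group at $n$ is $\Gamma_n$, a finitely generated nilpotent group, hence residually finite by a theorem of Mal'cev, hence in particular MAP (finite-index subgroups, being cofinal, yield finite-dimensional representations separating points). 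Feeding this into the second implication of Theorem \ref{theorem:main} gives that $C^*(\GG)$ is RFD.

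The only mild subtlety — and the place I would be most careful — is confirming that finitely generated nilpotent implies MAP in the strong sense needed: one wants finite-\emph{dimensional} representations (not merely finite quotients) separating points, but residual finiteness gives exactly finite quotients, and each finite quotient of a group is itself MAP with separating finite-dimensional (indeed finite) representations, so composing gives what is required; thus MAP holds for each $\Gamma_n$. One should also double-check that "$\Gamma$ countable" together with "$\Gamma_n$ finitely generated, increasing, exhausting" is consistent and non-vacuous — it is, since any countable group is the increasing union of the subgroups generated by its first $n$ elements under some enumeration, and for $\mathbb H_3(\mathbb Q)$ these are finitely generated. With these checks in place the proof is complete by invoking Proposition \ref{prop:indgrp} and Theorem \ref{theorem:main}.
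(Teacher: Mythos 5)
Your proof is correct, but for the RFD claim you take a genuinely different route from the paper. The paper does not invoke Theorem \ref{theorem:main} at all here: it uses the explicit identification of $C^*(\GG)$ from Proposition \ref{prop:indgrp} as a subalgebra of $\prod_{n\in\N} C^*(\Gamma_n)$, notes that each $\Gamma_n$ is finitely generated nilpotent, hence RF and MAP, so that $C^*(\Gamma_n)$ is RFD by Bekka--Louvet, and concludes since RFD passes to products and subalgebras. You instead apply the second implication of Theorem \ref{theorem:main} directly, taking the dense set of periodic points to be $\N\subset X$ (each a fixed point, as the groupoid is a bundle of isotropy groups) with MAP isotropy groups $\Gamma_n$. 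Both arguments are valid; your checks (density of $\N$ in the one-point compactification, triviality of all orbits, RF $\Rightarrow$ MAP for the finitely generated nilpotent $\Gamma_n$, identification of $\GG_\infty=\Gamma$) are all sound, and your application of Theorem \ref{theorem:main} is legitimate since Proposition \ref{prop:indgrp} supplies amenability and the \'etale structure. What each approach buys: the paper's embedding argument is more concrete and reuses the structural description it has already established, while yours makes transparent that the example sits squarely inside the scope of the general sufficient condition of Theorem \ref{theorem:main} --- which is in fact the conceptual point of the example, namely that a dense set of MAP-isotropy periodic points can coexist with a non-MAP isotropy group elsewhere, so the proposition does not by itself settle whether the MAP hypothesis in Theorem \ref{theorem:main} is necessary. (Minor attribution note: the paper cites Hirsch rather than Mal'cev for residual finiteness of finitely generated nilpotent groups; either is acceptable.)
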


\begin{proof}
Since each $\Gamma_n$ is a finitely generated  nilpotent group, it is RF (\cite{Hirsch}), hence also MAP. By \cite[Theorem 4.3]{BekkaLouvet} $C^*(\Gamma_n)$ is RFD.
Proposition \ref{prop:indgrp} implies that $C^*(\GG)$ embeds into $\prod_{n \in \N} C^*(\Gamma_n)$, so it is also RFD.

Finally the isotropy subgroup $\GG_\infty = \Gamma$ is non-MAP.	
\end{proof}

The constructions  above can be upgraded (for example using the Cantor set as the base space $X$) so that they yield an amenable \'etale groupoid $\GG$ which is a bundle of isotropy groups such that
\begin{itemize}
\item[(i)] the set $\{x\in X: \GG_x \textup{ is not MAP}\}$ is dense in $X$;
\item[(ii)]	$C^*(\GG)$ is RFD.
	\end{itemize}

This of course does not yet imply that the second implication of Theorem \ref{theorem:main} cannot be reversed. Let us then state explicitly a question the answer to which would go towards settling the general problem.

\begin{quest} \label{quest}
Does there exist an amenable \'etale groupoid which is a bundle of isotropy groups, such that:
\item[(i)] no isotropy group is MAP;
\item[(ii)]	$C^*(\GG)$ is RFD?
\end{quest}

\section{A characterization of RFD crossed products} \label{Sec:crossedprod}

In this section we shift our attention to study the RFD property of $C^*$-algebraic crossed products.

Let $\alpha$ be an action of a discrete group $G$ on a $C^*$-algebra $A$, fixed throughout this section. Then there is a natural action  of $G$ on the family of all  representations of $A$  (denoted $\Rep(A)$),  defined by the formula
$$(g \cdot \rho)(a) := \rho(\alpha(g^{-1})(a)),  \;\;g \in G, \rho \in \Rep(A), a\in A.$$
It is clear that this action restricts to the set of irreducible representations of $A$, denoted $\Irr(A)$. It is easy to check that it also induces a $G$-action on $\Rep(A)/_\approx$, where $\approx$ denotes the unitary equivalence, on $\hat{A}$, i.e.\ the set of equivalence classes of irreducible representations in $\Rep(A)/_\approx$, and also on $\Prim(A)$, the set of primitive ideals of $A$ (note that the latter can be again viewed as a quotient space of $\Irr(A)$, but the equivalence relation is coarser: two irreducible representations yield the same element of $\Prim(A)$ if and only if their kernels coincide). As is customary, we equip $\Prim(A)$ with the Jacobson topology.

Given  a representation $\rho \in \Irr(A)$, and an action $\alpha: G \to \textup{Aut}(A)$ it may well happen that the class of $\rho$ in $\textup{Prim}(A)$ has a finite orbit, and the class of $\rho$ in $\hat{A}$ has an infinite orbit. This however does not happen for finite-dimensional representations, for a simple reason: if we have two irreducible non-degenerate representations $\rho,\pi$ in $\Irr (A)$ and we know that $\rho$ is finite-dimensional, then $\Ker(\rho) = \Ker(\pi)$ if and only if $\pi \approx \rho$. This is well-known and very easy to check, and will be used later without any comment. Note that  we will use the conventions of \cite{ExelLoring}, i.e.\ by a finite-dimensional representation we understand a possibly degenerate representation $\rho:A \to B(\Hil)$ whose essential space $\overline{\rho(A)\Hil}$ is finite-dimensional. Let us also record here the following fact (see for example \cite[Lemma 5.44, Remark 5.45]{DanaIan}).

\begin{lemma}\label{homeoPrim}
	If $A$ is a C*-algebra equipped with an action of a discrete group $G$, then $G$ acts on $\Prim(A)$ via homeomorphisms.
\end{lemma}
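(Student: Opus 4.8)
The plan is to reduce the statement to the observation that the $G$-action on $\Prim(A)$ is implemented by the $*$-automorphisms $\alpha(g)$ acting on ideals, together with the fact that the Jacobson topology depends only on the lattice of closed two-sided ideals of $A$.

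First I would make the action explicit. For $\rho \in \Irr(A)$ and $g \in G$ one has $\Ker(g\cdot\rho) = \{a \in A : \rho(\alpha(g^{-1})(a)) = 0\} = \alpha(g)(\Ker\rho)$, so the induced action on $\Prim(A)$ is simply $g\cdot P = \alpha(g)(P)$. In particular it does not depend on the choice of $\rho$ with $\Ker\rho = P$, and $(gh)\cdot P = \alpha(g)(\alpha(h)(P))$; the well-definedness of this action on $\Prim(A)$ is already granted in the text, but it is convenient to record this formula. Note also that $\alpha(g)$, being a $*$-automorphism, restricts to an inclusion-preserving bijection of the set of closed two-sided ideals of $A$ onto itself, carrying primitive ideals to primitive ideals (since $a + \alpha(g^{-1})(P) \mapsto \alpha(g)(a) + P$ is a $*$-isomorphism $A/\alpha(g^{-1})(P) \cong A/P$).

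Next, recall that the closed subsets of $\Prim(A)$ in the Jacobson topology are exactly the hulls $\mathrm{hull}(I) := \{P \in \Prim(A) : P \supseteq I\}$, as $I$ runs over the closed two-sided ideals of $A$. Fix $g \in G$. For such an $I$ one computes
\[
\alpha(g)\bigl(\mathrm{hull}(I)\bigr) = \{\alpha(g)(P) : P \supseteq I\} = \{Q \in \Prim(A) : Q \supseteq \alpha(g)(I)\} = \mathrm{hull}(\alpha(g)(I)),
\]
using that $P \supseteq I \iff \alpha(g)(P) \supseteq \alpha(g)(I)$ and that $\alpha(g)(I)$ is again a closed ideal. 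Hence the bijection $P \mapsto \alpha(g)(P)$ of $\Prim(A)$ sends closed sets to closed sets.

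Applying this both to $g$ and to $g^{-1}$ shows that $P \mapsto \alpha(g)(P)$ and its inverse $P \mapsto \alpha(g^{-1})(P)$ are both closed maps, hence both continuous, so $g$ acts on $\Prim(A)$ as a homeomorphism. I do not anticipate a genuine obstacle: the only points requiring care are the bookkeeping with the $g^{-1}$ appearing in the definition of the action and the reminder that the Jacobson topology is defined purely in terms of the order structure of the ideal lattice, so that any automorphism of $A$ automatically induces a homeomorphism of $\Prim(A)$.
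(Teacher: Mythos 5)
Your proof is correct: the identification $\Ker(g\cdot\rho)=\alpha(g)(\Ker\rho)$, the observation that closed sets are exactly hulls of closed ideals, and the computation $\alpha(g)(\mathrm{hull}(I))=\mathrm{hull}(\alpha(g)(I))$ applied to both $g$ and $g^{-1}$ give exactly what is needed. The paper does not prove this lemma but delegates it to \cite[Lemma 5.44, Remark 5.45]{DanaIan}, and your argument is essentially the standard one found there.
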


Finally recall that $G$ acts also on the state space $S(A)$, with $g \cdot \omega = \omega \circ \alpha_{g^{-1}}$ for $g \in G$, $\omega \in S(A)$. This action restricts to $P(A)$, the pure state space, but  also descends to $S(A)/_\approx$, where for two states $\omega_1, \omega_2 \in S(A)$ we write $\omega_1 \approx \omega_2$ if the GNS representations of $\omega_1$ and $\omega_2$ are unitarily equivalent. We say that a state $\omega \in S(A)$ is \emph{finite-dimensional} if its GNS representation acts on a finite-dimensional space.

Given $\rho \in \Rep(A)$ we denote by $G_{\rho}$ the stabilizer subgroup of the class of $\rho$ in $\Rep(A)/_\approx$, that is
\begin{equation}G_{\rho} = \{ g\in G \;|\; g\cdot\rho \approx\rho\}. \label{Grho}\end{equation}

\subsection{Approximations via periodic finite-dimensional representations}

Before we consider the general characterisation of the RFD property of crossed products, we need to establish connections between several properties expressing a possibility of approximating any representation/state of a given C*-algebra by relevant finite-dimensional and periodic objects.
 We begin with some basic remarks.

\begin{remark}\label{remark:trivial2}
	Suppose that $\rho:A \to B(\Kil)$ is a finite dimensional representation, which decomposes into irreducibles as $\rho\approx \rho_1 \oplus \cdots \oplus \rho_n$. If the orbit of $\rho$ in  $\Rep(A)/_\approx$ is finite, then for every $i=1,\ldots, n$ the orbit of $\rho_i$ in $\Rep(A)/_\approx$ is finite.
	Moreover a finite direct sum of finite dimensional representations with finite orbits in  $\Rep(A)/_\approx$ also has a finite orbit in $\Rep(A)/_\approx$.
	
\end{remark}
\begin{proof}
	Assume first that $G_\rho=G$. Then fix $g \in G$ and observe the following:
	\[\rho_1 \oplus \cdots \oplus \rho_n \approx \rho \approx g \cdot \rho \approx g \cdot\rho_1 \oplus \cdots \oplus g \cdot \rho_n. \]
	By the uniqueness of the decomposition into irreducibles (up to unitary equivalence and permutations), see \cite[Subsection 2.3.5]{Dixmier}, we have
	$g \cdot\rho_1 \approx \rho_k$ for some $k\in \{1, \ldots, n\}$. The claim follows.
	
	The argument in the general case of the first statement is completely analogous; and similarly one can show the second statement.
\end{proof}

\begin{remark}\label{remark:FE}
	Let  $F$ be the set of all finite-dimensional states on $A$ with finite orbits in $S(A)/_\approx$  and $E$ the set of all pure finite-dimensional states on $A$ with finite orbits in $P(A)/_\approx$.
	Then  $F$ is the convex hull of $E$.
\end{remark}

\begin{proof}  Let $\omega$ be a finite-dimensional state on $A$ with finite orbit in $S(A)/_\approx$ and let $(\pi,\Hil, \xi)$ be its GNS triple.  Since $\pi$ is a finite-dimensional representation, there are irreducible finite-dimensional representations $\rho_1, \ldots, \rho_k$ such that
	$\pi \approx \oplus_{i=1}^k \rho_i.$ By Remark \ref{remark:trivial2} each $\rho_i$ has finite orbit in  $\Rep(A)/_\approx$. Let $\xi = (\xi_1, \ldots, \xi_k)$ be the corresponding orthogonal decomposition of $\xi$ (into non-zero vectors).  For each $i \in \{1, \ldots,k\}$ set $\lambda_i = \|\xi_i\| $ and $\widetilde{\xi_i}: = \lambda_i^{-1} \xi_i$, so that $\|\widetilde{\xi_i}\|=1$. For any $a \in A$ we have
	\begin{equation}\label{eqClaim} \omega(a)  =  \langle\xi, \pi(a)\xi\rangle = \sum_{i=1}^k \langle\xi_i, \rho_i(a)\xi_i\rangle = \sum_{i=1}^k \lambda_i^{2} \langle\widetilde{\xi_i}, \rho_i(a) \widetilde{\xi_i}\rangle.\end{equation}
	Each of the states $\omega_i:=\omega_{\widetilde{\xi_i}} \circ \rho_i$  is pure, as   $\rho_i$ is irreducible; it is also easy to see that $\rho_i$ is a GNS representation of $\omega_i$. Thus each $\omega_i$ is finite-dimensional and has a finite orbit in  $S(A)/_\approx$. 
	Since $\xi$ is a unit vector,  by (\ref{eqClaim}) we decomposed $\omega$ into a convex combination of pure states with desired properties.
	
	On the other hand it is easy to see that a convex combination of states with finite orbits in $S(A)/_\approx$ itself has a finite orbit.
\end{proof}

Let $\Hil$ be a  Hilbert space, $\Hil_n\subseteq \Hil$ ($n\in \mathbb N$)  its closed subspaces, $\pi: A \to B(\Hil)$ and $ \pi_n: A \to B(H_n)$ representations. If for any $a\in A$
$$\pi_n(a) \to \pi(a) \;(SOT),$$
then we will say that {\it $\pi$ is approximated by $\pi_n$ in SOT} (that is the strong operator topology).

\begin{remark}\label{remark:afd}
	Suppose that for each $n \in \N$ we have a representation $\pi_n:A\to B(\Kil_n)$ which can be approximated in SOT topology by finite-dimensional representations. Then $\bigoplus_{n \in \N} \pi_n:A \to B(\bigoplus_{n \in \N} \Kil_n)$ can be approximated in the SOT topology by finite-dimensional representations. Further if $\pi:A \to B(\Hil)$ is a representation of a separable C*-algebra $A$ on a separable Hilbert space which is approximately unitarily equivalent to a representation $\rho:A \to B(\Kil)$ which can be approximated in SOT topology by a sequence of finite-dimensional representations, then $\pi$ itself can be approximated in SOT topology by a sequence of  finite-dimensional representations, each of which is equivalent to one of these approximating $\rho$.
\end{remark}
\begin{proof}
	The first part is standard, using finite direct sums of finite-dimensional representations approximating each $\pi_n$.
	
	For the second assume  that $\rho:A \to B(\Kil)$ can be approximated in the SOT topology by the net $(\rho_i:A \to B(\Kil))_{i \in I}$ of finite-dimensional representations and  that $(V_n)_{n \in \N}$ is a sequence of unitaries from $\Hil$ to $\Kil$ such that for every $a \in A$ we have $\pi(a) = \lim_{n \to \infty} V_n^* \rho(a) V_n$. Set $\pi_{n,i}:A \to B(\Hil)$, $\pi_{n,i} = V_n^* \rho_i(\cdot) V_n$.
	By the usual indexing argument using countable dense subsets in unit balls of $\Hil$ and $A$ it suffices to show that for any $\epsilon>0$, a finite set $F\subset A$  and $ G \subset \Hil$ we can find some $\pi_{n,i}$ such that $ \|\pi_{n,i}(a) \xi - \pi(a)\xi\| < \epsilon$ for all $a \in F$, $\xi \in G$. So we just consider the inequality
	\begin{align*} \|\pi_{n,i}(a) \xi - \pi(a)\xi\| &=  \| V_n^* \rho_i(a) V_n \xi - \pi(a)\xi \|\\&\leq
	\| V_n^* \rho_i(a) V_n \xi - V_n^* \rho(a) V_n \xi\| + \| V_n^* \rho(a) V_n \xi-  \pi(a)\xi \|,	
	\end{align*}
	and given $\epsilon>0$ first choose $n\in \N$ big enough so that the second part is smaller then $\epsilon/2$ for all $a \in F$, $\xi \in G$ and then for this $n$ choose $i\in I$ big enough so that the first part is also smaller than $\epsilon/2$ for $a \in F$, $\xi \in G$.
\end{proof}



The next result is the key technical result of this subsection; we will see it further simplifies in the separable case. 

Given two representations $\pi, \rho$ of a C*-algebra we say that \emph{$\pi$ is a corner of $\rho$} if $\pi$ is a restriction of $\rho$ to an invariant subspace, or in other words, if $\pi$ is a direct summand of $\rho$.

\begin{theorem}\label{thm:justact}
	Let $G$ be a discrete  group, $A$ a C*-algebra and $\alpha$ an action of $G$ on $A$. The following conditions are equivalent:
	\begin{itemize}	
		\item[(i)] any representation of $A$ is a corner  of a representation which can be approximated in the SOT-topology by finite-dimensional representations whose orbits in $\Rep(A)/_\approx$ are finite; 	
		\item[(ii)]every element of $\Prim(A)$ can be approximated in the Jacobson topology by finite-dimensional elements of $\Prim(A)$ whose orbits are finite;
		\item[(iii)]  every state of $A$ can be approximated in weak$^*$-topology by finite-dimensional  states whose orbits in $S(A)/_\approx$ are finite;
		\item[(iv)] every pure state of $A$ can be approximated in weak$^*$-topology by finite-dimensional  pure states whose orbits in $P(A)/_\approx$ are finite.
\end{itemize}

\noindent The conditions above imply also that
\begin{itemize}

		\item[(v)] any infinite-dimensional irreducible representation of $A$ can be approximated in the SOT-topology by finite-dimensional irreducible  representations whose orbits in $\Rep(A)/_\approx$ are finite.
		
	\end{itemize}
	
\end{theorem}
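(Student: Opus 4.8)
The plan is to read Theorem \ref{thm:justact} as a $G$-equivariant refinement of the Exel--Loring characterisation of RFD algebras (\cite[Theorem 2.4]{ExelLoring}): I would run the usual Exel--Loring cycle of implications, carrying throughout the extra bookkeeping that every finite-dimensional object produced has a finite orbit. The tools for this bookkeeping are already in place: Remark \ref{remark:trivial2} shows that finiteness of orbits passes to irreducible constituents and survives finite direct sums, Remark \ref{remark:FE} identifies the finite-orbit finite-dimensional states as the convex hull of the pure ones, and Remark \ref{remark:afd} controls how SOT-approximability behaves under direct sums and approximate unitary equivalence. Concretely I would establish the cycle (i)$\Rightarrow$(iii)$\Rightarrow$(iv)$\Rightarrow$(ii)$\Rightarrow$(iii) together with (iii)$\Rightarrow$(i), and then deduce the supplementary (v) from (iv).

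I expect the following three implications to be soft. For (i)$\Rightarrow$(iii): given a state $\omega$ with GNS triple $(\pi_\omega,H_\omega,\xi_\omega)$, realise $\pi_\omega$ as a corner of an approximable $\rho$ with finite-dimensional finite-orbit $\rho_\lambda\to\rho$ in SOT; since $H_\omega$ is $\rho$-invariant we have $\rho(a)\xi_\omega=\pi_\omega(a)\xi_\omega$, so $\langle\xi_\omega,\rho_\lambda(\cdot)\xi_\omega\rangle\to\omega$ weak$^*$, and the cyclic subrepresentation of $\rho_\lambda$ generated by $\xi_\omega$ is finite-dimensional with finite orbit by Remark \ref{remark:trivial2}. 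For (iii)$\Rightarrow$(iv) I pass to the quasi-state space $Q(A)$, which is weak$^*$-compact and convex with extreme points the pure states together with $0$: condition (iii) combined with Remark \ref{remark:FE} shows that $Q(A)$ is the weak$^*$-closed convex hull of the set $E$ of pure finite-dimensional finite-orbit states, so Milman's partial converse to Krein--Milman places every extreme point, hence every pure state, in $\overline{E}$, which is exactly (iv). For (iv)$\Rightarrow$(ii) I use that $P(A)\to\Prim(A)$, $\phi\mapsto\ker\pi_\phi$, is continuous from the weak$^*$ to the Jacobson topology; a finite-dimensional irreducible representation $\sigma$ with finite orbit has $g\cdot\ker\sigma=\ker(g\cdot\sigma)$ (Lemma \ref{homeoPrim}), so its kernel has finite $\Prim(A)$-orbit, and continuity carries the weak$^*$-approximation of a pure state with prescribed GNS kernel to a Jacobson-approximation of that primitive ideal by finite-dimensional finite-orbit ideals.

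The implication (ii)$\Rightarrow$(iii) is where I invoke Fell's theory of weak containment, and (iii)$\Rightarrow$(i) is the main obstacle. Since a subset of $\Prim(A)$ is dense exactly when the intersection of its members is the zero ideal, (ii) is equivalent to saying that the family $\mathcal{E}$ of finite-dimensional irreducible finite-orbit representations separates the points of $A$, i.e.\ that every representation is weakly contained in $\mathcal{E}$. By Fell's characterisation of weak containment, every state, viewed as a vector state of its GNS representation, is then a weak$^*$-limit of finite sums $\sum_k\langle\zeta_k,\sigma_k(\cdot)\zeta_k\rangle$ with $\sigma_k\in\mathcal{E}$; each such sum is associated to a subrepresentation of $\bigoplus_k\sigma_k$, finite-dimensional with finite orbit by Remark \ref{remark:trivial2}, and after normalisation this gives (iii). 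Closing the cycle at (iii)$\Rightarrow$(i) is the technical heart: for an arbitrary $\pi$ I must manufacture an SOT-approximable enveloping representation having $\pi$ as a corner. Using that every representation is a direct sum of cyclic ones, that corners assemble under direct sums, and that direct sums of approximable representations are approximable (Remark \ref{remark:afd}, first part), I reduce to a single cyclic $\pi_\omega$. When $A$ is separable there is a clean route: form the faithful $\Sigma=\bigoplus_{\sigma\in\mathcal{E}}\sigma$, note that $\Sigma^{(\infty)}$ is faithful, contains no nonzero compacts, and is approximated in SOT by its finite-dimensional finite-orbit subsums; Voiculescu's theorem gives that $\pi\oplus\Sigma^{(\infty)}$ is approximately unitarily equivalent to $\Sigma^{(\infty)}$, whence it is itself approximable by the second part of Remark \ref{remark:afd}, and $\pi$ is a corner of it. In the general non-separable setting Voiculescu is unavailable, and I expect to need a direct gluing (Berg-type) construction: using $\omega=\lim_\lambda\omega_\lambda$ and the convergence of matrix coefficients $\omega(a^*b)=\lim_\lambda\omega_\lambda(a^*b)$ to build partial isometries approximately matching the cyclic data $(\pi_\omega,\xi_\omega)$ with the finite-dimensional GNS data $(\tau_\lambda,\eta_\lambda)$, and assembling them into one representation that has $\pi_\omega$ as a corner and is an SOT-limit of finite-dimensional finite-orbit representations. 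Keeping the finite-orbit property intact through this construction is the delicate point.

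Finally, to derive (v) from (iv) I would use Kadison's transitivity theorem to upgrade weak$^*$-approximation of a single pure state into SOT-approximation of a whole infinite-dimensional irreducible $\pi$. Fixing finitely many vectors $\xi_1,\dots,\xi_m$ and choosing, by transitivity, elements $a_{i1}\in A$ with $\pi(a_{i1})\xi_1=\xi_i$, I approximate the pure state $\langle\xi_1,\pi(\cdot)\xi_1\rangle$ by pure finite-dimensional finite-orbit states $\phi_n$ with GNS irreducibles $\sigma_n\in\mathcal{E}$ and cyclic vectors $\zeta_n$; setting $\zeta_i^{(n)}=\sigma_n(a_{i1})\zeta_n$, the convergence $\langle\zeta_i^{(n)},\sigma_n(a)\zeta_j^{(n)}\rangle=\phi_n(a_{i1}^{*}a\,a_{j1})\to\langle\xi_i,\pi(a)\xi_j\rangle$ shows that the matrix coefficients of $\sigma_n$ converge to those of $\pi$ on the chosen vectors, which (after a Gram--Schmidt correction of the nearly orthonormal $\zeta_i^{(n)}$) is exactly SOT-approximation of $\pi$ by the finite-dimensional irreducible finite-orbit representations $\sigma_n$.
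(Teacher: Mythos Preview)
Your cycle (i)$\Rightarrow$(iii)$\Rightarrow$(iv)$\Rightarrow$(ii)$\Rightarrow$(iii) together with (iii)$\Rightarrow$(i) is a valid reordering of the paper's (i)$\Rightarrow$(ii)$\Rightarrow$(iii)$\Rightarrow$(iv)$\Rightarrow$(i); the Milman and Fell steps are identical to the paper's, and your direct (i)$\Rightarrow$(iii) and (iv)$\Rightarrow$(ii) are correct shortcuts. Your (iv)$\Rightarrow$(v) via Kadison transitivity is a legitimate alternative to the paper's direct invocation of the Exel--Loring coisometry construction, once you enlarge $\{\xi_i\}$ to contain the images $\pi(a)\xi_j$ for $a$ in your finite test set, so that convergence of matrix coefficients plus convergence of norms upgrades to SOT convergence.

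The only genuine gap is (iii)$\Rightarrow$(i) in the non-separable case. You correctly identify this as the crux, but your belief that Voiculescu's theorem is unavailable there is mistaken: Hadwin proved a non-separable version \cite[Theorem 3.14]{HadwinNonseparable}, namely that two representations on the same Hilbert space are approximately unitarily equivalent if and only if $\rank\pi(a)=\rank\rho(a)$ for every $a\in A$. The paper uses this to bypass any Berg-type gluing entirely: given an arbitrary $\rho$ on $H$, choose an infinite cardinal $m\ge\dim H$, extract from (iv) a separating family $\{\pi_\alpha\}$ of finite-dimensional irreducible finite-orbit representations, set $\pi=\bigoplus_\alpha\pi_\alpha^{(m)}$, and observe that $\rank\pi(a)\ge m\ge\rank\rho(a)$ for every $a$ with $\pi(a)\ne0$, whence $\rank(\rho\oplus\pi)(a)=\rank\pi(a)$ and Hadwin's theorem gives $\rho\oplus\pi$ approximately unitarily equivalent to $\pi$. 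Since $\pi$ is SOT-approximated by its finite subsums (each a finite-dimensional finite-orbit representation), Remark \ref{remark:afd} transfers this to $\rho\oplus\pi$, of which $\rho$ is a corner. This works uniformly in all cardinalities, so your reduction to cyclic representations becomes unnecessary and your proposed direct construction can be dropped.
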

\begin{proof}
	$(i)\Longrightarrow(ii)$:
	Let $\rho \in \Irr(A)$. By (i) the representation $\rho$ is a corner of a representation $\pi:A \to B(\Kil)$ which can be approximated in the SOT topology by finite-dimensional representations $\pi_i:A \to B(\Kil)$, $i \in I$, whose orbits in $\Rep(A)/_\approx$ are finite. Decompose each $\pi_i$ into irreducibles $\pi_i^1, \ldots, \pi_i^{n_i}$. By Remark \ref{remark:trivial2} each $[\pi_i^j]$ has finite orbit in $\Irr(A)/_\approx$, hence in $\Prim(A)$. Moreover for any $i_0 \in I$  we have
	\[ \bigcap_{i \in I, i \geq i_0} \bigcap_{j=1}^{n_i} \Ker (\pi_i^j) = \bigcap_{i \in I, i \geq i_0}  \Ker (\pi_i) \subset \Ker(\pi) \subset \Ker (\rho),\]
	which means that the net $(\Ker(\pi_i^j))_{i \in I, j=1, \ldots, n_i}$ approximates $\Ker(\rho)$ in the Jacobson topology.
	
	\medskip

	$(ii)\Longrightarrow (iii)$:
	Suppose that (iii) does not hold. Since by Remark \ref{remark:FE} finite-dimensional states with finite orbits form  a convex set, by Hahn-Banach theorem we can find an element $a \in A_+$, a state $\omega \in S(A)$ and number $M>0$ such that $\omega(a)>M$ and $\mu(a) \leqslant M$ for every finite-dimensional  state $\mu$ whose orbit in $S(A)/_\approx$ is finite.  Note that we must have $\|a\|> M$, so that there is a vector state $\tilde{\omega}$ associated to an irreducible representation of $A$ such that $\tilde{\omega}(a)>M$. By
	\cite[Theorem 1.4]{Fell} the assumption (ii) implies that there exist some states $\mu_i$ associated with finite-dimensional irreducible representations with finite orbits in $\Rep(A)/_\approx$ such that $\mu_i(a)>M$. But then $\mu_i$ are finite-dimensional states which  have finite orbits in $S(A)/_\approx$ and we have reached a contradiction.
	
		\medskip

$(iii)\Longrightarrow(iv) $:
Let $F$ and $E$ be as in Remark \ref{remark:FE}. Then (iii) implies that $\mathcal P(A) \subset \overline F$. Thus $$S(A) = \overline{conv \; \mathcal P(A)} \subset \overline{conv \; \overline F} = \overline F.$$  By Remark  \ref{remark:FE} $\overline F = \overline {conv \; E}$. Thus  $S(A) =  \overline {conv \; E}$.  Then by Milman's converse to Krein-Milman theorem
$\overline E$ contains all  extreme points of $S(A)$, that is all pure states. This is precisely what (iv) says.
	
\medskip

$(iv)\Longrightarrow(i) $: Let $\{a_{\alpha}\}_{\alpha\in I}$ be any dense subset of $A$. For any $\alpha\in I$ there is a pure state $\chi_{\alpha}\in P(A)$ such that $|\chi_{\alpha}(a_{\alpha})| \ge  \frac{\|a_{\alpha}\|}{2}$. Statement (iv) implies that there is a  finite-dimensional (pure) state $\tilde \chi_{\alpha} \in P(A)$  whose orbit in $S(A)/_\approx$ is finite and such that $|\tilde \chi_{\alpha}(a_{\alpha})|\ge \frac{\|a_{\alpha}\|}{4}$; denote its GNS representation by $\pi_{\alpha}$. Then  $\{\pi_{\alpha}\}_{\alpha\in I}$ is a separating family of finite-dimensional representations of $A$ with finite orbits in $\Rep(A)/_\approx$.

Now let $\rho$ be a representation of $A$ on a Hilbert space $\Hil$.  We are going to show that $\rho$ is a corner of representation that can be approximated by finite-dimensional representations in SOT topology.\footnote{We will use here arguments that are somewhat similar to arguments Hadwin used to obtain his lifiting characterization of RFD property \cite{Hadwin}.}  We can assume that $\rho$ is non-degenerate.  Let $\it m$ be any infinite cardinality such that $\it m \ge \dim \Hil$. For each $\alpha \in I$ let $\pi_{\alpha}^{(\it m)}$ be the direct sum of $\it m$ copies of $\pi_{\alpha}$ and let
$$\pi = \oplus_{\alpha\in I} \pi_{\alpha}^{(\it m)}.$$
Then the dimension of $\pi$ (that is the dimension of the Hilbert space on which it acts) is larger or equal than the dimension of $\rho$. Hence
$$\dim (\rho\oplus\pi)  = \dim \rho + \dim \pi = \max\{\dim \rho, \dim \pi\} = \dim \pi, $$
so we can consider $\rho\oplus\pi$ and $\pi$ as representations on the same Hilbert space (recall that if $k$ and $m$ are two cardinal numbers and at least one of them is infinite, then $k+m = \max\{k, m\}$).
Let $a\in A$. There is $\alpha\in I$ such that $\pi_{\alpha}(a)\neq 0$.  Then $\rank \pi(a) \ge \it m.$ On the other hand $\rank  \rho(a) \le \dim \Hil \le \it m.$ Hence
$$\rank (\rho \oplus \pi)(a) = \rank  \rho(a) + \rank  \pi(a) = \max\{\rank \rho(a), \rank \pi(a)\} = \rank  \pi(a).$$ By a version of Voiculescu's theorem for general, not necessarily separable, spaces and C*-algebras provided in \cite[Th. 3.14]{HadwinNonseparable} we conclude that $\rho\oplus \pi$ and $\pi$ are approximately unitarily equivalent. Since  by its construction  $\pi$ can be approximated in SOT topology by finite dimensional representations with finite orbits, by Remark \ref{remark:afd} (or rather its non-separable version) the same is true for  $\rho \oplus \pi$. Since $\rho$ is a corner of $\rho \oplus \pi$, the statement (i) is proved.

\medskip

To prove the last statement we show that $(iv)\Longrightarrow(v) $: Let $\rho: A \to B(\Kil)$ be an irreducible infinite-dimensional representation. Then $\rho = \pi_\omega$, for some pure state $\omega\in P(A)$. By (iv) we have finite-dimensional pure states  $(\omega_i)_{i \in I}$  with finite orbits in $S(A)/_{\approx}$
	such that  $\omega_i\longrightarrow_{i \in I} \omega$ in the weak*-topology.  Let $(\pi_{\omega_i}, \Hil_i, \Omega_i)$ and $(\pi_\omega, \Hil, \Omega)$ be the corresponding GNS triples.
	By \cite[proof of (a) $\Rightarrow$ (b) in Theorem 2.4]{ExelLoring} 
	there exist coisometries $V_i^j : \Hil \to \Hil_{i}$
	and a net $(\pi_j)_{j \in \mathcal J}$ of representations of $A$ on $\Hil$ which converges to $\rho$ in the pointwise  SOT topology and such that each $\pi_j$ is of the form  $(V_i^j)^* \pi_{\omega_i}(\cdot)V_i^j$ for some $i \in I$.
 For every $i \in I$ since  $\omega_i$ is pure, finite-dimensional, and has finite orbit in $S(A)/_{\approx}$, the representation $\pi_{\omega_i}$ is irreducible, finite-dimensional and has finite orbit in $\Rep(A)/_{\approx}$ and the same is true for $(V_i^j)^* \pi_{\omega_i}(\cdot)V_i^j$.
	
	We need to add a comment here: the proof in \cite{ExelLoring} assumes in addition that $A$ is unital. This is however not an issue, as we can first extend $\pi_{\omega_i}, \pi_\omega$ to unital representations of the minimal unitisation $\tilde{A}$ and then note that the proof in \cite{ExelLoring} still yields for a given finite subset $X \subset \tilde{A}$ the existence of coisometries $V_i^X:\Hil \to \Hil_i$ such that for any $a \in X$ we have $\lim_{i \in I} (V_i^X)^*\tilde{\pi}_{\omega_i}(a) V_i^X\Omega = \tilde{\pi}_\omega (a) \Omega$. But then we can just consider what the last statement says about finite subsets of $A$ and continue the proof as in \cite{ExelLoring}.

\end{proof}

\begin{remark}
	\label{rem:triv}
	In the conditions (i) and (iii) of Theorem \ref{thm:justact} finite orbits can be replaced by trivial orbits. Indeed,  assume  that (i) holds.	As the relation `being a corner of' is transitive, it suffices to consider a representation $\rho$  of $A$ which can be approximated in the SOT-topology by $(\pi_i)_{i \in I}$, finite-dimensional representations whose orbits in $\Rep(A)/_\approx$ are finite.
	Define $\tilde{\rho}:= \bigoplus_{g \in G} g \cdot \rho$, so that we can view $\tilde{\rho}$ as a representation of $A$ on $\Hil_\rho \otimes \ell^2(G)$, of which $\rho$ is a corner. We will show that $\tilde{\rho}$ can be approximated in the SOT-topology by finite-dimensional representations whose orbits in $\Rep(A)/_\approx$ are trivial.
	
To this end we need to show that for all  finite sets $S\subset A$, $F \subset   \Hil_\rho \otimes \ell^2(G)$ and $\epsilon>0$ there exists a finite-dimensional representation $\pi:A \to B(\Hil_\rho \otimes \ell^2(G))$ whose orbit in $\Rep(A)/_\approx$ is trivial and such that
	\begin{equation}\|\tilde{\rho}(a) \xi - \pi(a)\xi \|<\epsilon, \;\;\; a \in S, \xi \in F. \label{approxlocal}\end{equation} 	
By a standard approximation (and modifying $\epsilon>0$) we can assume that all $\xi \in F$ 
are finitely supported with respect to $G$ (say in a non-empty finite set $Z$)
.  Our initial  assumption implies that we can find $i \in I$ such that $\|(g \cdot\rho)(a) \xi_g -  (g \cdot\pi_i)(a)\xi_g \|<\epsilon'$ for all $g \in Z$, $ a\in S$, $\xi \in F$, where $\epsilon ' = \epsilon |Z|^{-\frac{1}{2}}$.

Choose a finite set $V \subset G$ of representatives of cosets $G/G_i$, where $G_i:= \{g \in G: g \cdot \pi_i \approx \pi_i\}$. Note that the representation $\tilde{\pi}_i:=\bigoplus_{g \in V} g \cdot \pi_i$ is a finite-dimensional representation of $A$ with trivial orbit in $\Rep(A)/_\approx$. Further choose another finite set $W \subset G$ with the following properties: the sets $(wV)_{w \in W}$ are pairwise disjoint and together cover $Z$. Finally define $\pi:= \bigoplus_{w \in W} \bigoplus_{g \in V} (wg)\cdot \pi_i\approx \bigoplus_{w \in W} w \cdot \tilde{\pi}_i$. The second expression shows that $\pi$ has a trivial orbit in  $\Rep(A)/_\approx$, the first allows us to view $\pi$ as a representation of $A$ on $\Hil_\rho \otimes \ell^2(G)$. We then have
\begin{align*} \|\tilde{\rho}(a) \xi - \pi(a)\xi \|^2 = \sum_{g \in Z} \|(g \cdot\rho)(a) \xi_g - (g \cdot\pi_i)(a)\xi_g \|^2 \leq \epsilon'^2 |Z| =\epsilon^2.
\end{align*}
Now assume that (iii) holds. Let $\omega \in S(A)$ be a state, and let $(\rho, \Omega, \Hil)$ be its GNS triplet. By what is proved above we have a representation $\pi:A \to B(\Kil)$ in which $\rho$ is a corner and a net of finite-dimensional representations $(\pi_i:A \to B(\Kil))_{i \in I}$ which approximate $\pi$ in the SOT topology and which have trivial orbits in $\Rep(A)_\approx$. Consider the non-negative vector functionals $\omega_i = \langle \Omega, \pi_i(\cdot) \Omega \rangle$. We have $\|\omega_i\| \leq 1$ and $\lim_{i \in I} \|\omega_i\|=1$. Assume then without loss of generality that $\omega_i \neq 0$ and consider the normalised version of $\omega_i$, denoted further $\tilde{\omega}_i$; we then have $\lim_{i \in I}\tilde{\omega}_i = \omega$. Fix then $i \in I$. Naturally $\tilde{\omega}_i$ is still a vector functional (for a rescaled vector $\Omega_i$). Set $\Kil_i:= \overline{\pi(A) \Omega_i}$, let $P_i:\Kil \to \Kil_i$ be the orthogonal projection onto $\Kil_i$ and let $\tilde{\pi}_i:A \to B(\Kil_i)$ be the corestriction of $\pi_i$.  It is easy to check that $(\tilde{\pi}_i, P \Omega_i, \Kil_i)$ is a GNS triple for $\tilde{\omega}_i$; thus the latter is a finite-dimensional state. Moreover it is easy to see that for every $g \in G$ if we have $g \cdot \pi_i \approx \pi_i$, then naturally also $g \cdot \tilde{\pi}_i \approx \tilde{\pi}_i$, and further $g \cdot \tilde{\omega}_i \approx \tilde{\omega}_i$ (as $(g \cdot\tilde{\pi}_i, P \Omega_i, \Kil_i)$  is a GNS triple for $ g \cdot\tilde{\omega}_i$).
\end{remark}


The condition (i) in Theorem \ref{thm:justact} can be improved when $A$ is separable, as we show in the next theorem.

\begin{theorem}\label{thm:justactseparable}
	Let $G$ be a discrete  group, $A$ a separable C*-algebra and $\alpha$ an action of $G$ on $A$.  Then the equivalent conditions from Theorem \ref{thm:justact}  are equivalent to the following:

\medskip

 any  representation of $A$  on a separable infinite-dimensional Hilbert space can be approximated in the SOT-topology by finite-dimensional representations whose orbits in $\Rep(A)/_\approx$ are trivial.

\end{theorem}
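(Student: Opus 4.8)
My plan is to show that the displayed condition is equivalent to condition (ii) of Theorem~\ref{thm:justact}; one implication is essentially formal, and the other rests on a Voiculescu-type rotation argument that exploits infinite-dimensionality of $H$.

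For the formal implication I would assume the displayed condition and deduce (ii). Given $\mathfrak p\in\Prim(A)$, choose an irreducible representation $\rho$ with $\Ker\rho=\mathfrak p$; since $A$ is separable $\rho$ acts on a separable Hilbert space, and after passing to the infinite amplification $\rho^{(\infty)}$ when $\rho$ is finite-dimensional (which does not change the kernel) I may assume $\rho$ acts on a separable \emph{infinite}-dimensional Hilbert space. The hypothesis then provides a sequence $(\phi_n)$ of finite-dimensional representations with trivial orbits converging to $\rho$ in SOT. Decomposing each $\phi_n$ into irreducibles $\phi_n\approx\bigoplus_j\phi_n^j$ and applying Remark~\ref{remark:trivial2}, each $\phi_n^j$ is a finite-dimensional irreducible representation with finite orbit, so each $\Ker\phi_n^j$ is a finite-dimensional element of $\Prim(A)$ with finite orbit. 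From $\phi_n(a)\to\rho(a)$ (SOT) one gets $\bigcap_{n,j}\Ker\phi_n^j\subseteq\Ker\rho=\mathfrak p$, i.e.\ $\mathfrak p$ lies in the Jacobson closure of $\{\Ker\phi_n^j\}$, which is exactly what (ii) asserts.

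For the substantial implication I would assume the equivalent conditions of Theorem~\ref{thm:justact}. By Remark~\ref{rem:triv} every representation of $A$ is a corner of a representation approximated in SOT by finite-dimensional representations with \emph{trivial} orbits. Fix $\rho\colon A\to B(H)$ with $H$ separable and infinite-dimensional, and realise $\rho$ as a reducing summand of such a representation $\sigma$ acting on $H\oplus H'$, with $\mu_i\to\sigma$ (SOT), each $\mu_i$ finite-dimensional of trivial orbit acting on a finite-dimensional subspace $L_i\subseteq H\oplus H'$; I regard each $\mu_i(a)$ as an operator on $H\oplus H'$ via $\mu_i(a)P_{L_i}$. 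The heart of the matter is to produce, for finite $S\subseteq A$, $F\subseteq H$ and $\varepsilon>0$, a finite-dimensional representation of trivial orbit on a finite-dimensional subspace of $H$ that is $\varepsilon$-close to $\rho$ on $S$ and $F$. I would pick $i$ with $\|\mu_i(a)P_{L_i}\xi-\rho(a)\xi\|<\varepsilon$ for all $a\in S$, $\xi\in F$ (viewing $H\hookrightarrow H\oplus H'$); let $E$ be the linear span of $F\cup\{\rho(a)\xi: a\in S,\ \xi\in F\}\subseteq H$, let $L'=L_i+E$, and extend $\mu_i$ to the finite-dimensional representation $\mu':=\mu_i\oplus 0$ on $L'$, which still has trivial orbit since the zero representation is $G$-fixed. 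Because $H$ is infinite-dimensional we have $\dim(H\ominus E)\ge\dim(L'\ominus E)$, so there is a unitary $W$ of $H\oplus H'$ fixing $E$ pointwise and carrying $L'$ into $H$ (rotate $L'\ominus E$ isometrically into $H\ominus E$ and extend). Then $\phi:=W\mu'(\cdot)W^\ast$ is a finite-dimensional representation of trivial orbit supported on $W(L')\subseteq H$, and since $W^\ast$ fixes $\xi$ and $W$ fixes $\rho(a)\xi$ (both lie in $E$) one computes $\phi(a)\xi=W\mu_i(a)P_{L_i}\xi$ and hence $\|\phi(a)\xi-\rho(a)\xi\|=\|\mu_i(a)P_{L_i}\xi-\rho(a)\xi\|<\varepsilon$. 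Running this over a countable cofinal family of triples $(S,F,\varepsilon)$ (possible as $A$ and $H$ are separable) and diagonalising yields a sequence of finite-dimensional representations with trivial orbits on subspaces of $H$ converging to $\rho$ in SOT, as required.

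I expect the main obstacle to be precisely this last rotation step. Theorem~\ref{thm:justact} only delivers $\rho$ as a \emph{corner} of a pointwise-SOT limit of finite-dimensional representations, and in general a direct summand of such a representation need not itself be a pointwise-SOT limit of finite-dimensional representations on the prescribed space; this is exactly why infinite-dimensionality of $H$ cannot be dropped, as it is what allows one to conjugate each (finite-dimensional!) approximant into $H$ while keeping fixed the finitely many vectors relevant to a given approximation. One must also check, as indicated above, that neither the $\oplus 0$ padding nor the unitary conjugation disturbs triviality of orbits, which holds because both operations commute up to unitary equivalence with the $G$-action on $\Rep(A)/_{\approx}$.
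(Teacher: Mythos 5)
Your argument is correct and follows the same two-step architecture as the paper's proof: both directions hinge on Theorem \ref{thm:justact} together with Remark \ref{rem:triv}, realising $\rho$ as a corner of a representation $\sigma$ on $H\oplus H'$ that is SOT-approximated by finite-dimensional representations with trivial orbits, and then moving the approximants into $H$ using infinite-dimensionality of $H$. Where you genuinely diverge is in how that transfer is effected. The paper cites a lemma of Hadwin (\cite[Lemma 2.4]{CS}) to produce unitaries $W_k:H\to H\oplus H'$ with $W_k^*\sigma(\cdot)W_k\to\rho$ in SOT, and then conjugates the approximants and extracts a convergent subfamily as in Remark \ref{remark:afd}. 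You instead perform an explicit finite rotation for each triple $(S,F,\varepsilon)$: you fix pointwise the finite-dimensional space $E=\operatorname{span}\bigl(F\cup\{\rho(a)\xi: a\in S,\ \xi\in F\}\bigr)$ and swing the essential space of the chosen approximant into $H\ominus E$, which is possible exactly because $H\ominus E$ is infinite-dimensional while everything being moved is finite-dimensional. This is more elementary and self-contained --- in particular it sidesteps the question of whether the ambient space supplied by Remark \ref{rem:triv} is separable, which the existence of unitaries $H\to H\oplus H'$ would implicitly require --- at the price of a somewhat longer verification; your checks that zero-padding and unitary conjugation preserve triviality of orbits are the right ones. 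For the converse you derive condition (ii) of Theorem \ref{thm:justact} through kernels of the irreducible summands of the approximants, whereas the paper derives condition (i) by reducing to cyclic representations and amplifying the finite-dimensional ones; both routes are routine and equivalent.
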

\begin{proof}
		Let $\rho:A \to B(\Hil)$ be a representation of $A$ on an infinite-dimensional  separable Hilbert space. Condition (i)  implies that $\rho$ is a corner inside a representation $\pi:A \to B(\Kil)$ which can be approximated in the SOT topology by finite-dimensional representations $\pi_i$ with trivial orbits in $\Rep(A)/_\approx$.
	As $\rho$ is infinite-dimensional, by a result of Hadwin (see \cite[Lemma 2.4]{CS}) there exist unitary operators $W_k:\Hil \to \Kil$, $k \in \N$, such that
	$$\rho = SOT-lim \; W_k^*\pi (\cdot)W_k.$$
	Let then $\rho_{k,i}:=W_k^* \pi_i(\cdot) W_k$, $k \in \N, i \in I$. Since all $\pi_i$ have trivial orbits in $\Rep(A)/_\approx$, so do all $\rho_{k,i}$.
	Arguing as in the proof of the second part of the Remark \ref{remark:afd} we can extract a subsequence from the set $\{\rho_{k,i}: k \in \N, i \in I\}$ which converges to $\rho$ in the SOT topology.	
	
For the converse implication start from $\rho$, a representation of $A$. Since any representation is the direct sum of cyclic ones, we can assume that $\rho$ is cyclic. Then it acts on a separable Hilbert space. If the carrier space is infinite-dimensional, then the statement follows. If it is finite-dimensional, then $\rho^{(\infty)}$, that is  the direct sum of countably many copies of $\rho$ (in which $\rho$ is a corner), is an infinite-dimensional representation on a separable Hilbert space and hence can be approximated by finite-dimensional representations whose orbits in $\Rep(A)/_\approx$ are trivial.
\end{proof}

\begin{remark}\label{conditionii} Since in the rest of the paper we will very often use the condition (ii) of Theorem \ref{thm:justact}, let us discuss its meaning. If $\mathcal F$ is a family of irreducible representations of $A$, then, by definition of Jacobson topology,   $\{\ker \rho\;|\; \rho\in \mathcal F\}$ is dense in $\Prim(A)$ if and only $\mathcal F$ separates points of $A$.
Thus the condition (ii) of Theorem \ref{thm:justact} means that finite-dimensional (irreducible) representations whose orbits in $\Prim(A)$ are finite separate points of $A$. In the case
of trivial $G$ it just means to be RFD.
\end{remark}

\begin{remark}\label{DynamicalVersionEL} Recall that (the main part of) Exel-Loring characterization of RFD C*-algebras states that  a  C*-algebra is  RFD iff any its representation can be approximated in SOT-topology by finite-dimensional ones \cite{ExelLoring}. The condition (i) of Theorem \ref{thm:justact} also means approximation of a given representation in SOT-topology by finite-dimensional representations which in addtion have finite orbits, only those representations live on subspaces of a  bigger space than the space of  given representation. Thus the implication  (ii)$\Longrightarrow$(i) of Theorem \ref{thm:justact} can be considered as a dynamical version (although not exactly a generalization) of Exel-Loring result. Moreover in the separable case it becomes an  actual generalization by Theorem \ref{thm:justactseparable}. Indeed  it says that if $G$ acts on $A$  with sufficiently many finite-dimensional irreducible representations with finite orbits in $\Prim(A)$ (which in the case of trivial $G$ just means that $A$ is RFD), then we can approximate in SOT-topology  by finite-dimensional representations which in addition have finite orbits.
\end{remark}

\subsection{Finite-dimensional (and not only) representations of $A \rtimes G$}

In this subsection we will construct some representations of the universal crossed product algebra $A \rtimes G$. Recall that such representations arise from covariant pairs, i.e.\ from pairs $(\pi,U)$, where $\pi:A\to B(\Kil)$ and $U:G \to B(\Kil)$ are respectively a representation of $A$ and a unitary representation of $G$ on the same Hilbert space $\Kil$ such that for every $g \in G$ and $a \in A$ we have $(g\cdot \pi)(a) = U_g \pi(a) U_g^*$; see \cite{Dana1} for the details.

We begin with a simple lemma; recall the definition of $G_\rho$ from \eqref{Grho}.

\begin{lemma}\label{lem:implement}
	Let $\rho:A \to B(\Hil_\rho)$ be an irreducible representation. Then there exists a unitary representation $V:G_\rho \to B(\Hil_\rho \otimes \overline{H_\rho})$ such that
	\begin{equation} g \cdot \rho(\cdot) \otimes I_{\overline{\Hil_\rho}} = V(g)^* (\rho(\cdot) \otimes I_{\overline{\Hil_\rho}}) V(g), \;\;\; g \in G_\rho. \label{implementrep}\end{equation}
\end{lemma}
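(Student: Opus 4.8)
The plan is to convert the defining property of $G_\rho$ into a \emph{projective} unitary representation of $G_\rho$ on $H_\rho$ via Schur's lemma, and then to kill the resulting $2$-cocycle obstruction by tensoring with the conjugate representation on $\overline{H_\rho}$.

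First I would fix, for each $g \in G_\rho$, a unitary $U_g \in B(H_\rho)$ implementing the equivalence $g\cdot\rho \approx \rho$, i.e.\ with $(g\cdot\rho)(a) = U_g^* \rho(a) U_g$ for all $a \in A$; such a $U_g$ exists precisely because $g \in G_\rho$, and one may normalise $U_e = I_{H_\rho}$. Since $\rho$ is irreducible, Schur's lemma gives $\rho(A)' = \C I_{H_\rho}$, so $U_g$ is determined up to a scalar in $\mathbb{T}$. Hence for $g,h \in G_\rho$ both $U_g U_h$ and $U_{gh}$ implement $(gh)\cdot\rho \approx \rho$, so there is $\sigma(g,h) \in \mathbb{T}$ with $U_g U_h = \sigma(g,h)\, U_{gh}$; associativity of multiplication in $B(H_\rho)$ makes $\sigma$ a (normalised) $2$-cocycle on $G_\rho$. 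Thus $g \mapsto U_g$ is a projective unitary representation of $G_\rho$ whose obstruction to being an honest representation is the cohomology class of $\sigma$.

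Next I would introduce the conjugate Hilbert space $\overline{H_\rho}$ and, for $T \in B(H_\rho)$, the conjugate operator $\overline{T} \in B(\overline{H_\rho})$ given by $\overline{T}\,\overline{\xi} := \overline{T\xi}$, recalling the standard identities $\overline{ST} = \overline{S}\,\overline{T}$, $(\overline{T})^* = \overline{T^*}$, $\overline{\lambda T} = \overline{\lambda}\,\overline{T}$ for $\lambda \in \C$, and that $\overline{T}$ is unitary whenever $T$ is. Then I would set
$$ V(g) := U_g \otimes \overline{U_g} \in B(H_\rho \otimes \overline{H_\rho}), \qquad g \in G_\rho. $$
The scalar $\sigma(g,h)$ appearing in $U_g U_h$ is cancelled by the $\overline{\sigma(g,h)}$ appearing in $\overline{U_g}\,\overline{U_h} = \overline{U_g U_h} = \overline{\sigma(g,h)}\,\overline{U_{gh}}$, so $V(g)V(h) = |\sigma(g,h)|^2\,(U_{gh} \otimes \overline{U_{gh}}) = V(gh)$, i.e.\ $V$ is a genuine unitary representation of $G_\rho$. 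Finally, since $\overline{U_g}^*\overline{U_g} = \overline{U_g^*U_g} = I_{\overline{H_\rho}}$, a direct computation gives $V(g)^*\bigl(\rho(a) \otimes I_{\overline{H_\rho}}\bigr)V(g) = \bigl(U_g^*\rho(a)U_g\bigr) \otimes I_{\overline{H_\rho}} = (g\cdot\rho)(a) \otimes I_{\overline{H_\rho}}$ for all $a \in A$ and $g \in G_\rho$, which is exactly \eqref{implementrep}.

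I do not expect a serious obstacle here: the only conceptual point is recognising the $2$-cocycle obstruction to lifting the projective representation $g \mapsto U_g$ and the standard device of passing to $H_\rho \otimes \overline{H_\rho}$ with $U_g \otimes \overline{U_g}$ (equivalently, that $[\overline{\sigma}] = -[\sigma]$ in $H^2(G_\rho,\mathbb{T})$, so the tensor trivialises the class). The remaining items — that $\sigma$ is a cocycle, the conjugate-operator identities, and the two displayed verifications — are routine.
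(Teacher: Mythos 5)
Your proposal is correct and follows essentially the same route as the paper: fix implementing unitaries $U_g$ via Schur's lemma, observe the scalar multiplier $\sigma(g,h)\in\mathbb{T}$, and cancel it by tensoring with the conjugate, setting $V(g)=U_g\otimes\overline{U_g}$. The paper does exactly this (writing $\widetilde{V_g}$ and $\lambda(g_1,g_2)$ in place of your $U_g$ and $\sigma$), only without the explicit cohomological framing, so there is nothing to add.
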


\begin{proof}
	Definition of $G_\rho$ implies that for any $g \in G_\rho$ we have a unitary $\widetilde{V_g} \in B(\Hil)$ such that
	\[ g \cdot \rho = \widetilde{V_g}^*\rho(\cdot) \widetilde{V_g}.\]
	Considering $g_1, g_2 \in G_\rho$ we obtain the following  	
	\[ \widetilde{V_{g_1g_2}}^*\rho(\cdot) \widetilde{V_{g_1g_2}} =   \widetilde{V_{g_2}}^* \widetilde{V_{g_1}}^* \rho(\cdot) \widetilde{V_{g_1}} \widetilde{V_{g_2}}. \]
	This means that  $\widetilde{V_{g_2}}^* \widetilde{V_{g_1}}^* \widetilde{V_{g_1g_2}} \in \rho(A)'$. As $\rho$ is  assumed to be irreducible, we deduce that
	\[ \widetilde{V_{g_1g_2}} = \lambda(g_1, g_2) \widetilde{V_{g_1}} \widetilde{V_{g_2}},\]	
	where $\lambda(g_1, g_2) \in \mathbb{T}$.
	Set then $V(g) = \widetilde{V_g} \otimes \overline{\widetilde{V_g}} \in B(\Hil_\rho \otimes\overline{ \Hil_\rho})$, where naturally for any $\zeta \in \Hil_\rho$ we have $\overline{\widetilde{V_g}} \overline{\zeta} = \overline{\widetilde{V_g} \zeta}$. The formula \eqref{implementrep} is easy to check; moreover for any $g_1, g_2 \in G_\rho$ we have
	\begin{align*} V(g_1 g_2) &= \widetilde{V_{g_1 g_2}} \otimes \overline{\widetilde{V_{g_1 g_2}}} =
		\lambda(g_1, g_2) \widetilde{V_{g_1}} \widetilde{V_{g_2}} \otimes \overline{\lambda(g_1, g_2) \widetilde{V_{g_1}} \widetilde{V_{g_2}}} =  \widetilde{V_{g_1}} \widetilde{V_{g_2}} \otimes \overline{ \widetilde{V_{g_1}} \widetilde{V_{g_2}}} \\&= V(g_1)V(g_2),	
	\end{align*}	
	so that $V:G \to B(\Hil_\rho \otimes\overline{ \Hil_\rho})$ is a unitary representation.
\end{proof}

The next lemma introduces an additional representation $U$ of $G_\rho$ and uses the above fact to produce a representation of $A \rtimes G$ out of $\rho$ and $U$.

\begin{proposition} \label{prop:repconstruction}
	Let $\rho:A \to B(\Hil_\rho)$ be an irreducible representation and let $U:G_\rho \to B(\Kil)$ for some Hilbert space $\Kil$	be a unitary representation. Choose representatives  of cosets in $G/G_\rho$, and denote them $(s_i)_{i \in I}$. Write $K_\rho= \Hil_\rho \otimes \overline{\Hil_\rho}$, and let $V:G_\rho \to B(\Kil_\rho)$ be a representation constructed in Lemma \ref{lem:implement}. Set $\Hil = \ell^2(I) \otimes \Kil_\rho \otimes \Kil$. Finally for $a \in A$, $\xi \in \Kil_\rho$, $\eta \in \Kil$, $g \in G$, $i \in I$ set
	\[ \pi(a) (e_i \otimes \xi \otimes \eta) = e_i \otimes ((s_i \cdot \rho)(a) \otimes I_{\overline{\Hil_\rho}}) \xi \otimes \eta,\]
	\[ L(g) (e_i \otimes \xi\otimes \eta) = e_j \otimes V(t) \xi  \otimes U(t) \eta,\]
	where $j\in I$ and $t \in G_\rho$ are such that $gs_i = s_jt$ (note that this determines $j$ and $t$ uniquely).
	Then the above formulas determine a covariant pair $(\pi, L)$ of $(A, G, \alpha)$ on $\Hil$. We will write $\gamma_{\rho,U}:A \rtimes G \to B(\Hil)$ for the representation associated to the pair $(\pi, L)$.
\end{proposition}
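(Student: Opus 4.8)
The plan is to verify directly that $(\pi, L)$ is a covariant pair, which amounts to three checks: that $\pi$ is a $*$-representation of $A$, that $L$ is a unitary representation of $G$, and that the covariance relation $L(g)\pi(a)L(g)^* = (g\cdot\pi)(a)$ holds. The representation $\pi$ is essentially a (possibly infinite) direct sum over $i \in I$ of the representations $(s_i\cdot\rho)(\cdot)\otimes I_{\overline{H_\rho}}$, each amplified by $I_K$, so that $\pi$ being a $*$-homomorphism is immediate; one only needs to note that $\pi$ is bounded, which follows since each summand has norm at most $\|a\|$.

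Next I would check that $L$ is a unitary representation. For each $g$ and each $i$, the element $j$ and $t \in G_\rho$ with $gs_i = s_j t$ are uniquely determined by the coset decomposition, so $L(g)$ is well-defined; since $V(t)$ and $U(t)$ are unitary and $i \mapsto j$ is a bijection of $I$ (it is the permutation of cosets induced by left multiplication by $g$), $L(g)$ is unitary. To see $L(g_1)L(g_2) = L(g_1 g_2)$, fix $i$ and write $g_2 s_i = s_k t_2$ and $g_1 s_k = s_j t_1$ with $t_1, t_2 \in G_\rho$; then $g_1 g_2 s_i = g_1 s_k t_2 = s_j t_1 t_2$, so the coset bookkeeping for $L(g_1 g_2)$ produces the same target index $j$ and the group element $t_1 t_2$. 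Applying $L(g_1)L(g_2)$ to $e_i\otimes\xi\otimes\eta$ gives $e_j \otimes V(t_1)V(t_2)\xi \otimes U(t_1)U(t_2)\eta$, and since $V$ and $U$ are representations of $G_\rho$ this equals $e_j \otimes V(t_1 t_2)\xi\otimes U(t_1 t_2)\eta = L(g_1 g_2)(e_i\otimes\xi\otimes\eta)$.

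The covariance relation is the step requiring the most care, since it is where Lemma \ref{lem:implement} enters. Fix $g \in G$, $a \in A$, and a basis vector $e_i \otimes \xi \otimes \eta$; write $gs_i = s_j t$ with $t\in G_\rho$. Applying $L(g)\pi(a)L(g)^{-1}$ and tracking the index moves (note $L(g)^{-1}$ sends $e_j$ back to $e_i$ up to the unitaries $V(t)^*$, $U(t)^*$), the $K$-component cancels and one is left needing
\[ ((s_j\cdot\rho)(a)\otimes I_{\overline{H_\rho}})\, V(t) = V(t)\,\bigl((s_i\cdot\rho)(\alpha_{g^{-1}}(a))\otimes I_{\overline{H_\rho}}\bigr). \]
Since $s_j t = g s_i$, we have $s_i = g^{-1} s_j t$, so $(s_i\cdot\rho)(\alpha_{g^{-1}}(a)) = \rho(\alpha_{s_i^{-1}g^{-1}}(a)) = \rho(\alpha_{t^{-1}s_j^{-1}}(a)) = (t\cdot(s_j\cdot\rho))(a)$; comparing with Lemma \ref{lem:implement} applied to the irreducible representation $s_j\cdot\rho$ (whose stabilizer also contains $t$, conjugate to $G_\rho$) gives exactly the intertwining identity above. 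The main obstacle is thus purely organizational: keeping the coset representatives, the induced permutation of $I$, and the twisting elements $t\in G_\rho$ consistent across the two sides of the covariance equation, and making sure the slightly delicate point that $V$ was built for $G_\rho$ but is being used along the whole orbit is handled correctly (which is why the construction tensors with $\overline{H_\rho}$ to kill the scalar cocycle $\lambda$). Once covariance is established, the universal property of $A\rtimes G$ furnishes the representation $\gamma_{\rho,U}$, completing the proof.
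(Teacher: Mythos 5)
Your proof follows essentially the same route as the paper's: $\pi$ is a direct sum of amplified translates of $\rho$, the multiplicativity of $L$ is the coset bookkeeping $g_1g_2s_i=s_j(t_1t_2)$, and covariance reduces to the shifted intertwining relation $\bigl((s_j\cdot\rho)(\cdot)\otimes I_{\overline{H_\rho}}\bigr)V(t)=V(t)\bigl(((s_jt)\cdot\rho)(\cdot)\otimes I_{\overline{H_\rho}}\bigr)$, which is exactly the identity the paper isolates. One small correction: this relation should not be justified by ``applying Lemma \ref{lem:implement} to $s_j\cdot\rho$'' --- the stabiliser of $s_j\cdot\rho$ is $s_jG_\rho s_j^{-1}$, which need not contain $t$, and the lemma applied there would produce a different unitary --- but simply by evaluating the original relation $t\cdot\bigl(\rho(\cdot)\otimes I_{\overline{H_\rho}}\bigr)=V(t)^*\bigl(\rho(\cdot)\otimes I_{\overline{H_\rho}}\bigr)V(t)$ at $\alpha_{s_j^{-1}}(a)$, which keeps the same $V$; also note that the identity you actually verify corresponds to the convention $L(g)\pi(a)L(g)^*=\pi(\alpha_g(a))$ rather than the $\alpha_{g^{-1}}$ version you announce, a harmless sign slip (present in the paper's own prose) since verifying either for all $g$ gives the other.
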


\begin{proof}
	For simplicity whenever $a \in A$  we will write $\tilde{\rho}(a) = \rho(a) \otimes I_{\overline{\Hil_\rho}} \in
	B(\Hil_\rho \otimes\overline{ \Hil_\rho})$.
	Lemma \ref{lem:implement} says that for every $t \in G_\rho$
	\[ t \cdot \tilde{\rho} = V(t)^* \tilde{\rho}(\cdot) V(t).\]
	The first observation we need is the following: for any $j \in I$, $t \in G_\rho$ we still have
	\[ (s_j t \cdot) \tilde{\rho} = V(t)^* (s_j \cdot \tilde{\rho}) V(t), \]
	or in other words
	\begin{equation} V(t) (s_j t \cdot \tilde{\rho})(\cdot)  =  (s_j \cdot \tilde{\rho})(\cdot) V(t); \label{shiftedimplement}\end{equation}
	this is indeed easy to check.
	We then verify the following:
	\begin{itemize}
		\item $\pi$ is a representation: this is clear, as $\pi$ can be viewed simply as a direct sum of the form $\bigoplus_{i \in I} (s_i \cdot \tilde{\rho}) \otimes \id_{\Kil}$;
		\item $L:G \to B(\Hil)$ defines a representation: again, it is easy to see that the linear extension of the formula above defines a unitary $L(g)$, and also if $gs_j = s_k r$ and $hs_i= s_jt$ ($g,h\in G$, $r,t \in G_\rho$, $i,j,k \in I$) we have $(gh) s_i = s_k(rt)$, so that
		\begin{align*} L(gh) (e_i \otimes \xi \otimes \eta) &= e_k \otimes V(rt) \xi \otimes U(rt)(\eta),\end{align*}
		whereas
		\begin{align*} L(g)(L(h) (e_i \otimes \xi \otimes \eta) )&= L(g) (e_j \otimes V(t) \xi \otimes U(t)\eta) =  e_k \otimes V(r)V(t) \xi \otimes U(r)U(t)(\eta) ;\end{align*}
		\item the pair $(\pi, L)$ forms a covariant representation. To that end we just compute ($a\in A, g \in G, i \in I, \xi \in \Kil_\rho, \eta \in \Kil$, with $j \in I$ and $t \in G_\rho$ such that $gs_i = s_j t$):
		\begin{align*} L(g) \pi(a) (e_i \otimes \xi \otimes \eta) &= L(g) (e_i \otimes (s_i \cdot \tilde{\rho})(a) \xi \otimes \eta) =   e_j \otimes V(t) (s_i \cdot \tilde{\rho})(a) \xi \otimes U(t) \eta,\end{align*}		
		whereas		
		\begin{align*} \pi(\alpha_g(a)) L(g) & (e_i \otimes \xi \otimes \eta) = \pi(\alpha_g(a)) (e_j \otimes V(t) \xi  \otimes U(t) \eta) \\&=   e_j \otimes ((s_j \cdot \tilde{\rho})(\alpha_g(a))) V(t)  \xi \otimes U(t) \eta =  e_j \otimes V(t)((s_j t \cdot \tilde{\rho})(\alpha_g(a)))   \xi \otimes U(t) \eta \\& =   e_j \otimes V(t)((gs_i \cdot \tilde{\rho})(\alpha_g(a)))   \xi \otimes U(t) \eta =
			e_j \otimes V(t)((s_i \cdot \tilde{\rho})(a))   \xi \otimes U(t) \eta,\end{align*}			
	\end{itemize}
	where we have used \eqref{shiftedimplement} in the third equality.
\end{proof}

We will now compute the relevant trace of the images of the elements of $C_c(G,A)$ with respect to $\gamma_{\rho,U}$.

\begin{lemma}\label{lem:trace}
	Let $\rho:A \to B(\Hil_\rho)$ be an irreducible representation and let $U:G_\rho \to B(\Kil)$ for some Hilbert space $\Kil$	be a unitary representation. The representation  $\gamma_{\rho,U}:G \rtimes A \to B(\Hil)$ constructed in Proposition \ref{prop:repconstruction} is finite-dimensional if and only if $\Hil_\rho$ and $\Kil$ are finite-dimensional and the orbit of $[\rho]$ in $\Rep(A)/_{\approx}$ is finite. In that case for any $a \in A$ and $g \in G$ we have
	\[ \textup{tr} (\gamma_{\rho,U} (a\delta_g)) = \frac{1}{|I|} \sum_{i \in I: s_i^{-1}gs_i \in G_\rho}
	\textup{tr} (( (s_{i} \cdot \rho)(a) \otimes I_{\overline{\Hil_\rho}})V(s_i^{-1}g s_i))	\textup{tr}(U(s_i^{-1}g s_i)).\]	
\end{lemma}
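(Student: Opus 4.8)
The plan is to read off everything from the explicit formulas for $\pi$ and $L$ in Proposition \ref{prop:repconstruction} and compute the trace directly on the generators $a\delta_g$, $a\in A$, $g\in G$.

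First I would settle the finite-dimensionality criterion. The Hilbert space is $H=\ell^2(I)\otimes K_\rho\otimes K$ with $K_\rho=H_\rho\otimes\overline{H_\rho}$, so $\dim H<\infty$ precisely when $|I|<\infty$, $\dim H_\rho<\infty$ and $\dim K<\infty$. Now $|I|$ is the index $[G:G_\rho]$, which is exactly the cardinality of the orbit of $[\rho]$ in $\Rep(A)/_{\approx}$; so finiteness of $I$ is finiteness of that orbit. For the converse direction one notes that $\pi$ restricted to $A$ contains $s_i\cdot\rho$ as a subrepresentation (a corner), so if $\gamma_{\rho,U}$ is finite-dimensional then $\rho$ is finite-dimensional, $I$ is finite, and since $K$ embeds (tensor a fixed nonzero vector of $K_\rho$ against a basis of $K$) one also gets $\dim K<\infty$. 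This is all routine.

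Next, the trace computation. Write $\gamma:=\gamma_{\rho,U}$ and compute $\gamma(a\delta_g)=\pi(a)L(g)$ acting on the orthonormal basis $\{e_i\otimes\xi_p\otimes\eta_q\}$, where $\{\xi_p\}$, $\{\eta_q\}$ are orthonormal bases of $K_\rho$, $K$. By definition $L(g)(e_i\otimes\xi\otimes\eta)=e_j\otimes V(t)\xi\otimes U(t)\eta$ where $j,t$ are determined by $gs_i=s_jt$; then $\pi(a)$ acts on the $e_j$-leg by $(s_j\cdot\rho)(a)\otimes I_{\overline{H_\rho}}$. So the $e_i\mapsto e_j$ block is nonzero only when $j=i$, i.e.\ when $gs_i=s_it$ for some $t\in G_\rho$, equivalently $s_i^{-1}gs_i\in G_\rho$; and in that case $t=s_i^{-1}gs_i$. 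For those $i$ the diagonal block is $((s_i\cdot\rho)(a)\otimes I_{\overline{H_\rho}})\,V(s_i^{-1}gs_i)$ on the $K_\rho$-leg tensored with $U(s_i^{-1}gs_i)$ on the $K$-leg. Since the normalised trace of a block-diagonal operator on $\ell^2(I)\otimes(\cdots)$ is $\frac{1}{|I|}$ times the sum over $i$ of the (unnormalised) traces of the blocks, and the trace of a tensor product is the product of traces, I get
\[
\tr(\gamma(a\delta_g))=\frac{1}{|I|}\sum_{i\in I:\ s_i^{-1}gs_i\in G_\rho}\tr\big(((s_i\cdot\rho)(a)\otimes I_{\overline{H_\rho}})V(s_i^{-1}gs_i)\big)\,\tr\big(U(s_i^{-1}gs_i)\big),
\]
which is exactly the claimed formula. (A small care point: the two $\tr$ symbols on the right are the normalised traces on $B(K_\rho)$ and on $B(K)$ respectively, while the left is the normalised trace on $B(H)$; the bookkeeping of normalisation constants is where one must be slightly careful, but it works out because $\dim H=|I|\dim K_\rho\dim K$.)

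I do not expect a genuine obstacle here — the statement is essentially a direct unwinding of the construction. The only mildly delicate point is making the block-diagonal trace argument precise: one should observe that $\gamma(a\delta_g)$ maps the $i$-th summand $e_i\otimes K_\rho\otimes K$ into the $\sigma(i)$-th summand, where $\sigma$ is the permutation of $I$ induced by left multiplication by $g$ on $G/G_\rho$, so only the fixed points of $\sigma$ (equivalently the $i$ with $s_i^{-1}gs_i\in G_\rho$) contribute to the trace, and on each such summand the operator factors as a tensor product over the three legs with the $\ell^2(I)$-leg being the rank-one identity on $\C e_i$. Collecting these contributions and converting between normalised and unnormalised traces gives the formula.
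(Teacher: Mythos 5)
Your argument is correct and is essentially identical to the paper's proof: the paper likewise dismisses the finite-dimensionality criterion as immediate from $|I|$ being the orbit cardinality, and then computes $\tr(\gamma_{\rho,U}(a\delta_g))$ by summing $\langle e_i\otimes f_k\otimes h_l,\cdot\,\rangle$ over an orthonormal basis, observing that only indices $i$ with $gs_i\in s_iG_\rho$ contribute and that the diagonal block factors as a tensor product, with the normalisation constants $\frac{1}{|I|N^2M}$ recombining into the product of normalised traces exactly as you describe. No differences worth noting.
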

\begin{proof}
	The first part of the lemma is obvious (as the cardinality of the orbit in question equals the cardinality of the set $I$ in the last proposition).
	
	Use the notation of the last proposition and fix orthonormal bases $\{f_1,\cdots, f_{N^2}\}$ in $\Kil_\rho$ and $\{h_1,\cdots, h_{M}\}$, with $N=\textup{dim } \Hil_\rho$, $M = \textup{dim } \Kil$. Then (for every $i \in I$ setting $j_i\in I$ and $t_i \in G_\rho$ so that $gs_i = s_{j_i}t_i$)
	\begin{align*} \textup{tr} (\gamma_{\rho,U} &(a\delta_g)) =
		\frac{1}{|I|N^2 M} \sum_{i \in I} \sum_{k=1}^{N^2} \sum_{l=1}^M
		\langle e_i \otimes f_k \otimes h_l, e_{j_i} \otimes( (s_{j_i} \cdot \rho)(a) \otimes I_{\overline{\Hil_\rho}})V(t_i)f_k \otimes U(t_i)h_l \rangle
		\\&= 	\frac{1}{|I|N^2 M} \sum_{i \in I: gs_i \in s_i G_\rho}
		\sum_{k=1}^{N^2} \sum_{l=1}^M 	
		\langle f_k \otimes h_l,  ( (s_{i} \cdot \rho)(a) \otimes I_{\overline{\Hil_\rho}})V(s_i^{-1}g s_i)f_k \otimes U(s_i^{-1}g s_i)h_l \rangle
		\\&= \frac{1}{|I|} \sum_{i \in I: gs_i \in s_i G_\rho}
		\textup{tr} (( (s_{i} \cdot \rho)(a) \otimes I_{\overline{\Hil_\rho}})V(s_i^{-1}g s_i))	\textup{tr}(U(s_i^{-1}g s_i)).
	\end{align*}	
	
\end{proof}

We are now ready for the key estimate. The following lemma is the analogue of Lemma \ref{mainrevisited} in Section 2. Let $\Phi:G \rtimes A\to A$ denote the standard conditional expectation and $C_c(A, G)$ the convolution algebra. As usual we denote by $\tr$ the normalised trace on any matrix algebra.

\begin{lemma}\label{MainLemma} Let a discrete MAP group $G$ act on a C*-algebra  $A$,  and let $\rho$ be a finite--dimensional irreducible representation of $A$ such that the orbit of $[\rho]$ in $\Rep(A)/_{\approx}$ is finite, say given by  $\{s_1\cdot [\rho], s_2\cdot[\rho], \ldots, s_N\cdot[\rho]\},$ with $s_1=e, s_2, \ldots, s_N \in G$. Then for any  $b\in C_c(G, A)$ and $\epsilon >0$ there is a finite-dimensional representation $\tilde\pi$ of $A\rtimes G$ such that
	$$\left|\frac{\tr ((s_1 \cdot\rho)(\Phi(b)))+\ldots + \tr((s_N \cdot\rho)(\Phi(b)))}{N} - \tr\,  \tilde\pi(b)\right| < \epsilon.$$
\end{lemma}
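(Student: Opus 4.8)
\textbf{Proof proposal for Lemma \ref{MainLemma}.}

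The plan is to imitate the groupoid argument of Lemma \ref{mainrevisited}, using the concrete representation $\gamma_{\rho,U}$ of Proposition \ref{prop:repconstruction} together with its trace formula from Lemma \ref{lem:trace}. The only free parameter in that construction is the unitary representation $U:G_\rho \to B(K)$; the idea is to choose $U$ finite-dimensional so that $\tr U(t)$ is very small for all the finitely many nontrivial group elements $t$ that actually show up when we evaluate $\tr(\gamma_{\rho,U}(b))$, while $\tr U(e)=1$. This is exactly where maximal almost periodicity of $G$ (hence of the subgroup $G_\rho$, which is automatically MAP as a subgroup of a MAP group) enters: it lets us separate any finite set of nontrivial elements from the identity by a finite-dimensional representation, and then by passing to a tensor power we can make the normalised traces of those elements as small as we like.

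First I would set up the bookkeeping. Since $b\in C_c(G,A)$, write $b=\sum_{g\in T} a_g\delta_g$ for a finite set $T\subset G$ and $a_g\in A$, so that $\Phi(b)=a_e$. Apply Lemma \ref{lem:trace} with the coset representatives $s_1=e,s_2,\dots,s_N$ of $G/G_\rho$ (which exist and are finite in number precisely because the orbit of $[\rho]$ is finite): then
\[ \tr(\gamma_{\rho,U}(b)) = \frac1N \sum_{g\in T}\ \sum_{i:\,s_i^{-1}g s_i\in G_\rho} \tr\big(((s_i\cdot\rho)(a_g)\otimes I_{\overline{H_\rho}})\,V(s_i^{-1}gs_i)\big)\,\tr\big(U(s_i^{-1}gs_i)\big). \]
Observe that $s_i^{-1}gs_i\in G_\rho$ with $g=e$ forces $s_i^{-1}gs_i=e$, and then $V(e)=I$, $U(e)=I$, so the $g=e$ contribution is exactly $\frac1N\sum_{i=1}^N \tr((s_i\cdot\rho)(a_e)\otimes I_{\overline{H_\rho}}) = \frac1N\sum_{i=1}^N \tr((s_i\cdot\rho)(\Phi(b)))$, which is the target quantity (note $\tr$ is unaffected by tensoring with $I_{\overline{H_\rho}}$). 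So it remains to control the finitely many remaining terms, those with $g\in T\setminus\{e\}$; let $E=\{\,s_i^{-1}gs_i : i=1,\dots,N,\ g\in T\setminus\{e\},\ s_i^{-1}gs_i\in G_\rho\,\}\subset G_\rho$, a finite set of elements, each of which is $\neq e$ (since $g\neq e$). Set $M=\max_{g\in T}\|a_g\|\cdot N^2$ (or any convenient bound), noting each term in the remaining sum is bounded in absolute value by $\|a_g\|\cdot N^2\cdot|\tr U(s_i^{-1}gs_i)|$ because $\|V(t)\|=1$ and $|\tr(XV)|\le \|X\|$ for a unitary $V$ acting on an $N^2$-dimensional space, with $\|(s_i\cdot\rho)(a_g)\otimes I\|\le\|a_g\|$.

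Now the construction of $U$: since $G$ is MAP, so is the subgroup $G_\rho$, hence there is a finite-dimensional representation $\tilde U$ of $G_\rho$ with $\tilde U(t)\neq I$ for every $t\in E$; adding the trivial representation, $|\tr(\tilde U\oplus\chi)(t)|<1$ for all $t\in E$, and passing to a sufficiently high tensor power we obtain a finite-dimensional $U:G_\rho\to B(K)$ with $|\tr U(t)| < \epsilon/(N\,|E|\,M+1)$ for all $t\in E$, while $\tr U(e)=1$. Feed this $U$ into Proposition \ref{prop:repconstruction} to get $\tilde\pi:=\gamma_{\rho,U}$; by Lemma \ref{lem:trace} it is finite-dimensional since $H_\rho$, $K$ are finite-dimensional and the orbit of $[\rho]$ is finite. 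Then by the decomposition above,
\[ \left| \tr\tilde\pi(b) - \frac{\sum_{i=1}^N \tr((s_i\cdot\rho)(\Phi(b)))}{N}\right| \le \frac1N\sum_{\substack{i,\,g\in T\setminus\{e\}\\ s_i^{-1}gs_i\in G_\rho}} M\,|\tr U(s_i^{-1}gs_i)| \le \frac1N\cdot N\,|E|\,M\cdot\frac{\epsilon}{N|E|M+1} < \epsilon, \]
which is the claimed estimate. I expect the main obstacle to be purely clerical rather than conceptual: getting the constants $M$, $|E|$ and the tensor-power bound to line up cleanly, and double-checking via Lemma \ref{lem:trace} that the $g=e$ part of the trace really does reproduce $\frac1N\sum \tr((s_i\cdot\rho)(\Phi(b)))$ with no stray factors (in particular that $V(e)=I$ and that the tensor factor $\overline{H_\rho}$ contributes trivially to the normalised trace). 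The genuinely substantive input — that finitely many nontrivial elements of $G_\rho$ can be made to have tiny normalised trace — is exactly the MAP hypothesis, used just as in Lemma \ref{mainrevisited}.
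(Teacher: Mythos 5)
Your proposal is correct and follows essentially the same route as the paper: split the trace formula of Lemma \ref{lem:trace} for $\gamma_{\rho,U}$ into the $g=e$ contribution (which reproduces the target average $\frac1N\sum_i \tr((s_i\cdot\rho)(\Phi(b)))$) and the remaining terms, then use the MAP property to choose a finite-dimensional $U$ whose normalised traces are tiny on the finitely many nontrivial conjugates $s_i^{-1}gs_i\in G_\rho$. The only clerical slip is that the number of leftover terms should be bounded by $N\,|T\setminus\{e\}|$ rather than $N|E|$ (distinct pairs $(i,g)$ can yield the same conjugate, so $|E|$ may undercount the terms), but this only changes the constant in the choice of $U$ and does not affect the argument.
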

\begin{proof}
	Write $b$ as $$b = \sum_{g\in E} a_g g,$$ where $E\subset G$ is finite, $e \in E$, $ a_g \in A$ for each $g \in E$. Let
	\begin{equation}\label{M} M = \max_{g\in E} \|a_g\|.\end{equation}
	Since $G$ is MAP, there is  a finite-dimensional representation   $u$
	of $G$ such that $u(h) \neq \id$, for any $e\neq h\in G_{\rho}\bigcap \{s_l^{-1}ts_l\;|\; l\le N,\; t\in E\}$. Let $\chi$ be the trivial representation of $G$. Then we have
	$$|\tr ( u\oplus \chi)(h)|\neq 1,$$ for any $e\neq h\in G_{\rho}\bigcap \{s_l^{-1}ts_l\;|\; l\le N,\; t\in E\}$. Taking an appropriate tensor power of $ u \oplus \chi$ we obtain a finite-dimensional representation $U$ of $G$ such that
	\begin{equation}|\tr (U(h))|<\frac{\epsilon}{NM|E|},\label{trestimate}\end{equation} for any $e\neq h\in G_{\rho}\bigcap \{s_l^{-1}ts_l\;|\; l\le N,\; t\in E\}$.
	Set $\tilde{\pi}:=\gamma_{\rho, U}$, following Proposition \ref{prop:repconstruction}.
	
	By Lemma \ref{lem:trace} we then have the following:
	
	\begin{align*} \tr (\tilde \pi(b)) &= \sum_{g\in E}\tr (\gamma_{\rho, U}(a_gL(g)) \\&= \frac{1}{N} \sum_{g\in E} \sum_{i \in I: s_i^{-1}gs_i \in G_\rho}
		\textup{tr} (( (s_{i} \cdot \rho)(a_g) \otimes I_{\overline{\Hil_\rho}})V(s_i^{-1}g s_i))	\textup{tr}(U(s_i^{-1}g s_i))
		\\&= \frac{1}{N} \sum_{i=1}^N  \textup{tr} (( (s_{i} \cdot \rho)(a_e) \otimes I_{\overline{\Hil_\rho}}) \\&\;\;+   \frac{1}{N} \sum_{g\in E, g \neq e} \sum_{i \in I: s_i^{-1}gs_i \in G_\rho}
		\textup{tr} (( (s_{i} \cdot \rho)(a_g) \otimes I_{\overline{\Hil_\rho}})V(s_i^{-1}g s_i))	\textup{tr}(U(s_i^{-1}g s_i)).
	\end{align*}
	
	By \eqref{M} and \eqref{trestimate} we can  estimate the absolute value of the second sum by $\epsilon$. Thus the fact that $a_e = \Phi(b)$ and our normalization convention finally yields
	
	$$\left|\frac{\tr ((s_1 \cdot\rho)(\Phi(b)))+\ldots + \tr((s_N \cdot\rho)(\Phi(b)))}{N} - \tr\,  \tilde\pi(b)\right| < \epsilon.$$
	
\end{proof}

\begin{remark}\label{remark:trivial}
	Suppose that $\pi:A\rtimes G \to B(\Kil)$ is a representation, and $\rho:=\pi|_A$. Then $G_\rho=\{g: g \cdot \rho \approx \rho\}=G$.
\end{remark}
\begin{proof}
	As we are considering the restriction of a representation of the crossed product the unitary equivalence between $\rho$ and $g \cdot \rho$ is simply implemented by the relevant $L(g)$.
\end{proof}

\subsection{Characterisation of the RFD property of crossed products}

We are finally ready for the main result of this section. We will use the notion of an \emph{amenable action} due to Claire Anantharaman-Delaroche (\cite[D\'efinition 4.1]{Claire}, see also \cite{TakaYuhei}). The theorem will be formulated for the universal crossed products, as we need to be able to construct representations from covariant pairs, as in Proposition \ref{prop:repconstruction}, but the assumption of the amenability of the action implies that the reduced and universal crossed products coincide, which indirectly plays a role in the proof -- see Remark \ref{rem:nonamen} below.

The characterization of the RFD property of crossed products below is given in terms of primitive ideals but we recall that by Theorem \ref{thm:justact} it admits reformulations in terms of states, pure states, representations etc. These reformulations are sometimes very useful (see e.g. subsection 5.3 on free product actions).

\begin{theorem} \label{th:maincrossed}
	Let $G$ be a discrete  group, $A$ a C*-algebra and $\alpha$ an amenable action of $G$ on $A$. Then the following conditions are equivalent:
	\begin{itemize}
		\item [(i)] $A\rtimes G$ is RFD;
		\item[(ii)] $G$ is MAP and every element of $\Prim(A)$ can be approximated in the Jacobson topology by finite-dimensional elements of $\Prim(A)$ whose orbits are finite.	
	\end{itemize}
\end{theorem}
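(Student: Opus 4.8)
The plan is to prove the two implications separately, using the technology developed in the preceding subsections and invoking Theorem \ref{thm:justact} to translate condition (ii) into the more malleable statements about approximating representations and states.

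\textbf{Proof that (ii) implies (i).} I would start from condition (ii) and use Theorem \ref{thm:justact} to replace it by condition (i) of that theorem (equivalently by Remark \ref{rem:triv}, using trivial orbits rather than merely finite ones): every representation of $A$ is a corner of a representation that can be approximated in the SOT-topology by finite-dimensional representations whose orbits in $\Rep(A)/_\approx$ are finite. To show $A \rtimes G$ is RFD, it suffices by the Exel--Loring point of view, or directly, to separate positive elements by finite-dimensional representations; concretely, fix $0 \neq \tilde b \in (A \rtimes G)_+$. Since the action is amenable, the reduced and universal crossed products coincide, so the canonical conditional expectation $\Phi : A \rtimes G \to A$ is faithful, hence $\Phi(\tilde b) \neq 0$, and there is an irreducible representation $\rho_0$ of $A$ with $\rho_0(\Phi(\tilde b)) \neq 0$. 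Using the approximation furnished by (the reformulated) (ii), I would find a finite-dimensional irreducible representation $\rho$ of $A$ with finite orbit in $\Rep(A)/_\approx$ such that the relevant averaged trace $\tfrac{1}{N}\sum_{i} \tr((s_i \cdot \rho)(\Phi(\tilde b)))$ is bounded away from $0$ — here one approximates $\tilde b$ by an element $b \in C_c(A,G)$ with $\|b - \tilde b\|$ small, and runs the estimate chain exactly as in the proof of Theorem \ref{theorem:main}, but with Lemma \ref{MainLemma} in place of Lemma \ref{mainrevisited}. Since $G$ is MAP, Lemma \ref{MainLemma} then yields a finite-dimensional representation $\tilde\pi$ of $A \rtimes G$ with $\tr\,\tilde\pi(b)$ close to that nonzero average, so $\tilde\pi(\tilde b) \neq 0$. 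Thus finite-dimensional representations separate positive elements of $A \rtimes G$, i.e.\ (i) holds.

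\textbf{Proof that (i) implies (ii).} Assume $A \rtimes G$ is RFD. First, $G$ is MAP: there is a canonical unital embedding $C^*(G) \hookrightarrow M(A \rtimes G)$ (using an approximate unit of $A$), and by Remark \ref{rem:mult} the multiplier algebra of an RFD C*-algebra is RFD, so $C^*(G)$ is RFD, whence $G$ is MAP. Second, I must produce the dense set of finite-dimensional primitive ideals of $A$ with finite orbits. The idea is: an irreducible finite-dimensional representation $\tilde\pi$ of $A \rtimes G$ restricts to a representation $\rho := \tilde\pi|_A$ of $A$, which is automatically finite-dimensional, and by Remark \ref{remark:trivial} satisfies $G_\rho = G$; decomposing $\rho$ into irreducibles $\rho_1, \dots, \rho_n$, Remark \ref{remark:trivial2} gives that each $[\rho_j]$ has finite (indeed, the orbit is controlled) orbit in $\Rep(A)/_\approx$, hence each $\ker \rho_j$ has finite orbit in $\Prim(A)$. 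Since $A \rtimes G$ is RFD, its finite-dimensional irreducible representations separate points, so their restrictions to $A$, and hence the collection of all such $\ker \rho_j$, separate points of $A$; by Remark \ref{conditionii} this is exactly the density statement in condition (ii) of Theorem \ref{thm:justact}, and invoking that theorem (the equivalence (ii)$\Leftrightarrow$ the displayed approximation statement) gives precisely (ii) of the present theorem. One subtle point to be careful about: $\ker\rho$ need not equal any single $\ker\rho_j$, but $\ker\rho = \bigcap_j \ker\rho_j$, and separating points by the family $\{\ker\rho : \tilde\pi \text{ fin.-dim.\ irred.}\}$ forces the finer family $\{\ker\rho_j\}$ to separate points too, which is all that is needed.

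\textbf{Main obstacle.} The genuinely substantive direction is (ii)$\Rightarrow$(i): the heart of the matter is the construction and trace estimate for finite-dimensional representations of $A \rtimes G$ out of finite-dimensional representations of $A$ with finite orbit together with a suitable finite-dimensional representation of $G$ coming from the MAP property — but this is already packaged in Proposition \ref{prop:repconstruction}, Lemma \ref{lem:trace} and especially Lemma \ref{MainLemma}, so in the present proof the main work is merely to assemble the $\varepsilon$-estimate in the style of Theorem \ref{theorem:main} while keeping track of the faithfulness of $\Phi$ (which is where amenability of the action enters, cf.\ the remark on reduced versus universal crossed products). The converse direction is comparatively soft, the only care needed being the passage from restrictions of irreducible representations of the crossed product to their irreducible components, handled by Remarks \ref{remark:trivial}, \ref{remark:trivial2} and \ref{conditionii}.
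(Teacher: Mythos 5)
Your proof follows essentially the same route as the paper: for (ii)$\Rightarrow$(i) one uses faithfulness of $\Phi$ (from amenability of the action), picks a finite-dimensional irreducible $\rho$ with finite orbit not killing $\Phi(x)$, and runs the trace estimate of Lemma \ref{MainLemma}; for (i)$\Rightarrow$(ii) one restricts finite-dimensional representations of $A\rtimes G$ to $A$ and invokes Remarks \ref{remark:trivial}, \ref{remark:trivial2} and \ref{conditionii}. The only inaccuracy is your claim that $C^*(G)$ embeds into $M(A\rtimes G)$: the canonical map $C^*(G)\to M(A\rtimes G)$ need not be injective in general (e.g.\ $A=c_0(G)$ with the translation action gives the left regular representation), but this does not matter, since to conclude that $G$ is MAP one only needs that the group elements $u_g$, $g\in G$, are distinct in $M(A\rtimes G)$ and are separated by finite-dimensional representations, which is exactly how the paper argues via $\C[G]\subset M(A\rtimes G)$ and Remark \ref{rem:mult}.
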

\begin{proof} $(i)\Rightarrow (ii)$:
		As we have $\C[G] \subset M(A\rtimes G)$, by Remark \ref{rem:mult} it is clear that $G$ must be MAP. So, by Remark \ref{conditionii} it is sufficient to show that finite-dimensional irreducible representations of $A$ with finite orbits in $\Prim(A)$ separate points of $A$. Since $A\rtimes G$ is RFD, the restrictions of finite-dimensional representations of $A\rtimes G$ to $A$ separate points of $A$. The restriction of a representation of $A\rtimes G$ to $A$ has trivial orbit in $\Rep(A)/_\approx$. Hence for any $a\neq 0$ there is a finite-dimensional representation $\rho$ of $A$ with finite orbit in $\Rep(A)/_\approx$ such that $\rho(a)\neq 0$.  Passing to an appropriate irreducible summand of $\rho$ and using Remark \ref{remark:trivial2} we complete the proof.

	\medskip

	$(ii)\Rightarrow (i)$:
	Let $0\neq x$ be a positive element in $A\rtimes G$. Since the action is amenable, the conditional expectation $\Phi: A\rtimes G \to A$ is faithful (it is faithful on the reduced crossed product by \cite[Proposition 4.1.9]{BO} and amenability of the action guarantees that the full and reduced crossed products coincide, see (\cite[Proposition 4.8]{Claire})). Hence $\Phi(x)\neq 0$. 	

Then (ii) implies that there exists an irreducible  finite-dimensional representation $\rho: A \to B(\Hil_\rho)$ with finite orbit in $\Prim(A)$ such that   $\rho(\Phi(x))\neq 0$.
	 Say that this orbit is given by $\{s_1\cdot [\rho], s_2\cdot[\rho], \ldots, s_N\cdot[\rho]\},$ with $s_1=e, s_2, \ldots, s_N \in G$.  We then naturally have (as $s_1=e$)
	\[ \frac{(s_1 \cdot\rho)(\Phi(x))+\ldots + (s_N \cdot\rho)(\Phi(x))}{N}\gneq 0\]
	so also
	\[ C: = \frac{\tr ((s_1 \cdot\rho)(\Phi(x)))+\ldots + \tr((s_N \cdot\rho)(\Phi(x)))}{N}>0.\]

	There is $b\in C_c(G, A)$ such that $$\|x-b\|\le \frac{C}{4}.$$ For $\epsilon = \frac{C}{4}$, $b$ and $ \rho$ let  $\tilde\pi: A \rtimes G \to B(\Kil)$ be a finite-dimensional representation satisfying the requirements of Lemma \ref{MainLemma}. Then
	\begin{align*} \tr (\tilde\pi(x))& \ge \tr (\tilde\pi(b)) - |\tr (\tilde\pi(b-x))|
		\\&  \ge \tr \left(\frac{ \rho(\Phi(b))+\ldots + (s_N \cdot\rho)(\Phi(b))}{N} \right)  - \epsilon - |\tr (\tilde\pi(b-x))| \\ &=\tr \left(\frac{ \rho(\Phi(x))+\ldots + (g_N\cdot\rho)(\Phi(x))}{N} \right)  + \tr \left(\frac{ \rho(\Phi(b-x))+\ldots + (g_N\cdot\rho)(\Phi(b-x))}{N} \right) \\ &- \epsilon - |\tr (\tilde\pi(b-x))|\;  \ge \; C - \frac{3C}{4} > 0.
	\end{align*}
	Therefore $\tilde \pi(x) \neq 0$. Thus finite-dimensional representations of $A\rtimes G$ separate positive elements and hence all elements of $A\rtimes G$.
	
\end{proof}

\begin{remark}\label{rem:nonamen}
	The proof above could be also formulated for non-necessarily amenable actions. Condition (i) would then have to be replaced by saying that the reduced norm on the convolution algebra $C_c(G;A)$ is dominated by the supremum over the norms given by all finite-dimensional representations (see \cite[Proposition 1]{BekkaForum} for an analogous statement in the context of trivial $A$).
\end{remark}

\subsection{Some consequences}

Theorem \ref{th:maincrossed} formally generalises \cite[Theorem 4.3]{BekkaLouvet} and its predecessor \cite[Proposition 1]{BekkaForum}; and indeed, as already mentioned in the introduction, the ideas present in the proofs of these results were an inspiration for a proof of Theorem \ref{th:maincrossed}. Theorem \ref{th:maincrossed} also contains \cite[Cor. 4.1.11]{TomiyamaBook} as a particular case and also formally yields a different proof of Corollary  \ref{TransformationGroupoid}, although the methods we used in the proofs of Theorems \ref{theorem:main}  and  \ref{th:maincrossed} are ultimately similar.

Also, in addition to dynamical versions of Exel-Loring result discussed in Remark \ref{DynamicalVersionEL},  Theorem \ref{th:maincrossed}  gives one more related statement.


\begin{corollary}
	Let $G$ be a discrete  group, $A$ a C*-algebra and $\alpha$ an  action of $G$ on $A$. If 	 $A\rtimes G$ is RFD, then
	any infinite-dimensional irreducible representation of $A$ can be approximated in the SOT-topology by finite-dimensional irreducible  representations whose orbits in $\Rep(A)/_\approx$ are finite.
\end{corollary}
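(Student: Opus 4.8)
The plan is to reduce the statement to the chain of equivalences already established, specifically to the implication $(ii)\Longrightarrow(v)$ of Theorem~\ref{thm:justact}. Thus the only thing to verify is that the hypothesis ``$A\rtimes G$ is RFD'' forces condition $(ii)$ of that theorem, i.e.\ --- by Remark~\ref{conditionii} --- that the finite-dimensional irreducible representations of $A$ whose orbit in $\Prim(A)$ is finite separate the points of $A$. No amenability of $\alpha$ enters here: this is precisely the content of the (unconditional) implication $(i)\Rightarrow(ii)$ appearing in the proof of Theorem~\ref{th:maincrossed}.

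First I would note that the canonical $*$-homomorphism $A\to A\rtimes G$ is injective, since its composition with the canonical surjection $A\rtimes G\to A\rtimes_r G$ is the faithful embedding $A\hookrightarrow A\rtimes_r G$. Hence, assuming $A\rtimes G$ is RFD, for every $0\neq a\in A$ there is a finite-dimensional representation $\pi$ of $A\rtimes G$ with $\pi(a)\neq 0$; set $\rho:=\pi|_A$, a finite-dimensional representation of $A$ with $\rho(a)\neq 0$. By Remark~\ref{remark:trivial} the orbit of $[\rho]$ in $\Rep(A)/_{\approx}$ is trivial, in particular finite. Decomposing the essential part of $\rho$ into irreducible summands $\rho_1,\dots,\rho_n$ and applying Remark~\ref{remark:trivial2}, each $[\rho_k]$ has finite orbit in $\Rep(A)/_{\approx}$, and at least one of them, say $\rho_{k_0}$, satisfies $\rho_{k_0}(a)\neq 0$. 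As $\rho_{k_0}$ is finite-dimensional and irreducible, ``finite orbit in $\Rep(A)/_{\approx}$'' coincides with ``finite orbit in $\Prim(A)$'' (two irreducible representations with equal kernels, one of them finite-dimensional, are unitarily equivalent). Letting $a$ range over the nonzero elements of $A$ we obtain a separating family of finite-dimensional irreducible representations of $A$ with finite orbit in $\Prim(A)$, which by Remark~\ref{conditionii} is exactly condition $(ii)$ of Theorem~\ref{thm:justact}.

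It then remains to invoke Theorem~\ref{thm:justact}: condition $(ii)$ implies condition $(v)$, i.e.\ every infinite-dimensional irreducible representation of $A$ can be approximated in the SOT-topology by finite-dimensional irreducible representations with finite orbits in $\Rep(A)/_{\approx}$ --- which is the assertion of the corollary.

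I do not anticipate any genuine obstacle: the argument is an extraction of the first half of the proof of Theorem~\ref{th:maincrossed} followed by a citation of Theorem~\ref{thm:justact}. The only points warranting a moment's care are the passage between ``finite orbit in $\Prim(A)$'' and ``finite orbit in $\Rep(A)/_{\approx}$'' (harmless for finite-dimensional irreducible representations, as recalled above) and the fact that the finite-dimensional representations produced are allowed to be degenerate; the latter causes no trouble, since one simply works with the finite-dimensional essential subspace of $\rho=\pi|_A$.
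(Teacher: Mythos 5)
Your proposal is correct and follows the paper's own route exactly: the paper proves this corollary by combining the implication (i)$\Rightarrow$(ii) of Theorem \ref{th:maincrossed} (noting, as you do, that this direction needs no amenability) with the implication (ii)$\Longrightarrow$(v) of Theorem \ref{thm:justact}. The details you supply for the first step (restricting a finite-dimensional representation of $A\rtimes G$ to $A$, invoking Remark \ref{remark:trivial} for triviality of the orbit and Remark \ref{remark:trivial2} for the irreducible summands) are precisely the paper's argument for that implication.
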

\begin{proof}
	The statement is a combination of the implication (i)$\Longrightarrow$(ii) of Theorem \ref{th:maincrossed} (note that this implication does not require amenability of action) and of the implication  (ii)$\Longrightarrow$(v) of Theorem \ref{thm:justact}.
%
\end{proof}

In the case of trivial $G$ we obtain yet a new characterisation of the RFD property of C*-algebras (compare \cite[Theorem 2.4]{ExelLoring}).

\begin{corollary}\label{RFDnew}
A $C^*$-algebra $A$ is RFD if and only if any irreducible representation of $A$ can be approximated in the SOT-topology by finite-dimensional irreducible  representations.
\end{corollary}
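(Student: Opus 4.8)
The plan is to deduce this from Theorem \ref{thm:justact} applied to the trivial group $G=\{e\}$. When $G$ is trivial, the action $\alpha$ is the trivial action, every orbit in $\Rep(A)/_\approx$, in $\Prim(A)$, and in $S(A)/_\approx$ is automatically a singleton, hence finite; thus the ``finite orbit'' conditions become vacuous. Consequently condition (ii) of Theorem \ref{thm:justact}, which by Remark \ref{conditionii} asserts that finite-dimensional irreducible representations whose orbits in $\Prim(A)$ are finite separate points of $A$, reduces in this case to saying simply that $A$ is RFD. On the other hand, condition (v) of Theorem \ref{thm:justact} becomes: every infinite-dimensional irreducible representation of $A$ can be approximated in the SOT-topology by finite-dimensional irreducible representations.

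So the forward implication is immediate: if $A$ is RFD, then (ii) holds for the trivial group, hence by Theorem \ref{thm:justact} so does (v), which is exactly the statement that infinite-dimensional irreducible representations can be SOT-approximated by finite-dimensional irreducible ones; and finite-dimensional irreducible representations are trivially approximated by themselves, so \emph{every} irreducible representation has this property. For the converse, suppose every irreducible representation of $A$ is an SOT-limit of finite-dimensional irreducible representations. Take $0 \neq a \in A$; choose an irreducible representation $\rho$ with $\rho(a) \neq 0$, and a vector $\xi$ with $\rho(a)\xi \neq 0$. By hypothesis there is a net $(\pi_i)$ of finite-dimensional irreducible representations with $\pi_i(a)\xi \to \rho(a)\xi \neq 0$ (here one must be slightly careful that the $\pi_i$ live on subspaces of the same Hilbert space $H_\rho$, which is exactly the setup of the SOT-approximation notion recalled before Remark \ref{remark:afd}), so $\pi_i(a) \neq 0$ for $i$ large. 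Hence finite-dimensional (irreducible) representations separate points of $A$, i.e.\ $A$ is RFD.

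The main point to get right is the bookkeeping around the definition of SOT-approximation: the approximating finite-dimensional representations act on closed subspaces of $H_\rho$, and one should observe that $\pi_i(a)\xi \to \rho(a)\xi$ for a fixed nonzero vector $\xi$ in the common ambient space forces $\pi_i(a) \neq 0$ eventually. This is elementary. One could alternatively phrase the whole corollary as a direct specialisation of Theorem \ref{thm:justact} to $G = \{e\}$ together with the trivial remark that in that case condition (ii) is the RFD property (Remark \ref{conditionii}) and conditions (i), (v) both say that irreducible (resp.\ all) representations are SOT-approximable by finite-dimensional irreducibles; I do not expect any genuine obstacle here beyond stating the trivial-group specialisation cleanly.
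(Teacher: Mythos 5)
Your proposal is correct and follows essentially the same route as the paper: the forward direction is the specialisation of the implication (ii)$\Longrightarrow$(v) of Theorem \ref{thm:justact} to trivial $G$ (the paper reaches this via the preceding corollary on crossed products, which for trivial $G$ collapses to exactly your argument), and your converse via separating points is the same elementary observation the paper phrases through the formula $\|a\|=\sup_\pi\|\pi(a)\|$ over irreducible $\pi$.
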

\begin{proof}
The forward implication is a direct consequence of the previous corollary (since the case of a finite-dimensional  irreducible representation follows trivially here); the converse follows from the fact that a norm of an element $a \in A$ equals the $\sup\{\pi(a)\}$, where the supremum is taken over irreducible representations.
\end{proof}

We finish this subsection by a few more corollaries related to the case of FDI C*-algebras, i.e.\ C*-algebras whose all irreducible representations are finite-dimensional (see \cite[Theorem 4.4]{CSFDI} for equivalent characterisations). Recall Lemma \ref{homeoPrim}, and note that  if $\hat{A}$ is Hausdorff, $\Prim(A)= \hat{A}$ is a locally compact space (see \cite[Subsection 5.1]{DanaIan}). Thus we obtain the following `commutative' characterisation.

\begin{corollary} \label{cor:comm}
	Suppose that $G$ is an amenable discrete MAP group, $A$ is an FDI C*-algebra with Hausdorff spectrum (so for example a homogeneous C*-algebra or a unital continuous trace C*-algebra) and we have an action $\alpha:G \to \textup{Aut}(A)$. Then the following are equivalent:
	\begin{itemize}
		\item[(i)] $A \rtimes G$ is RFD;
		\item[(ii)] the induced action $\hat{\alpha}$ of $G$ on $\hat{A}$ has a dense set of periodic points;
		\item[(iii)] $C_0(\hat{A})\rtimes_{\hat{\alpha}} G$ is RFD.
	\end{itemize}	
\end{corollary}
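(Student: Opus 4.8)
The plan is to deduce both equivalences directly from Theorem~\ref{th:maincrossed}, exploiting that the FDI hypothesis trivialises the ``finite-dimensional'' clause of its condition~(ii) and that the Hausdorff hypothesis lets us identify $\Prim(A)$ with $\hat A$. First I would record the standing simplifications. Since $G$ is amenable, every action of $G$ on a C*-algebra is amenable in the sense of Anantharaman-Delaroche, so Theorem~\ref{th:maincrossed} applies both to $\alpha$ on $A$ and to $\hat\alpha$ regarded as an action on $C_0(\hat A)$. Because $\hat A$ is Hausdorff we have $\Prim(A)=\hat A$ as topological spaces (the Jacobson topology coincides with the spectrum topology, by the remark preceding the statement), the $G$-action on $\Prim(A)$ provided by Lemma~\ref{homeoPrim} is exactly $\hat\alpha$, and a primitive ideal has finite orbit precisely when the corresponding point of $\hat A$ is periodic. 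Finally, since $A$ is FDI every irreducible representation of $A$ is finite-dimensional, so \emph{every} element of $\Prim(A)$ is a finite-dimensional element in the sense of Theorem~\ref{th:maincrossed}.

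For (i)$\Leftrightarrow$(ii): by Theorem~\ref{th:maincrossed}, $A\rtimes G$ is RFD if and only if $G$ is MAP (true by hypothesis) and the finite-dimensional elements of $\Prim(A)$ with finite orbits are dense in $\Prim(A)$; by the previous paragraph the latter condition says exactly that the $\hat\alpha$-periodic points are dense in $\hat A$, i.e.\ (ii). For (ii)$\Leftrightarrow$(iii): apply Theorem~\ref{th:maincrossed} to the commutative C*-algebra $C_0(\hat A)$ equipped with $\hat\alpha$. Here $\Prim(C_0(\hat A))=\hat A$, every irreducible representation is one-dimensional (hence finite-dimensional), and the induced $G$-action on $\Prim(C_0(\hat A))$ is again $\hat\alpha$; thus $C_0(\hat A)\rtimes_{\hat\alpha}G$ is RFD iff $G$ is MAP and the periodic points of $\hat\alpha$ are dense in $\hat A$, which is (ii). (Alternatively, since $\hat A$ is a locally compact Hausdorff space carrying an amenable $G$-action, one may cite Corollary~\ref{TransformationGroupoid} directly for this equivalence.)

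I do not expect a genuine obstacle here; the whole content is the combinatorial/topological book-keeping of the first paragraph. The only points needing care are: that the Jacobson topology on $\Prim(A)$ agrees with the spectrum topology of the (locally compact Hausdorff) space $\hat A$, that the abstract $G$-action on $\Prim(A)$ from Lemma~\ref{homeoPrim} coincides with $\hat\alpha$, and that amenability of $G$ indeed yields amenability of both actions so that Theorem~\ref{th:maincrossed} may legitimately be invoked twice. The parenthetical examples (homogeneous C*-algebras, unital continuous-trace C*-algebras) only need the standard fact that these are FDI with Hausdorff spectrum.
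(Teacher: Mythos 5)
Your proof is correct and follows essentially the same route as the paper: the authors likewise obtain the corollary by applying Theorem~\ref{th:maincrossed} twice, once to the action of $G$ on $A$ and once to the induced action on $\hat{A}\cong\Prim(A)$ (equivalently, citing Corollary~\ref{TransformationGroupoid}), using that FDI makes every primitive ideal finite-dimensional and that the Hausdorff hypothesis identifies $\Prim(A)$ with the locally compact space $\hat{A}$. Your explicit remarks that amenability of $G$ makes both actions amenable and that the action of Lemma~\ref{homeoPrim} is $\hat{\alpha}$ are exactly the ``comments before the statement'' the paper relies on.
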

\begin{proof}
		First note that the spectrum of a continuous trace C*-algebra (a particular case of which is a homogeneous C*-algebra) is Hausdorff  \cite[section IV.1]{Blackadar}.
		
		In view of the comments before the statement the corollary itself is an immediate consequence of the equivalence (i)$\Longleftrightarrow$(ii) in Theorem \ref{th:maincrossed}, applied once to the action of $G$ on $A$, and then to the action $\hat{\alpha}$ of $G$ on $\hat{A}$.		
\end{proof}

One could also ask about a natural counterpart of the above result in the non-Hausdorff case. We will present below a significantly weaker statement valid without the Hausdorff assumption.

Recall that any topological space $X$ admits a unique (up to homeomorphism) Stone-\u{C}ech compactification (so in particular a compact Hausdorff space) $\beta X$. The universal property defining $\beta X$ guarantees that any action of a discrete group $G$ on $X$ by homeomorphisms uniquely determines an action of $G$ on $\beta X$ by homeomorphisms; if $\beta: X \to \beta X$ denotes the canonical continuous map, then for any $g \in G$ and $x \in X$ the induced action is determined by the natural formula $g (\beta x) = \beta (gx)$.

Results of \cite{Nilsen} show that in fact for any C*-algebra $A$ we have $C(\beta \Prim(A)) \cong ZM(A)$. Thus we have the following corollary (we can check that the identifications of \cite{Nilsen} guarantee that if we begin with an  action of $G$ on $A$ by automorphisms then the two natural paths of constructing an action on $ZM(A)$ yield the same answer).

\begin{corollary} \label{cor:commnH}
	Suppose that $G$ is a discrete amenable MAP group, $A$ is an FDI C*-algebra and we have an action $\alpha: G \to \textup{Aut}(A)$. Consider the following conditions:
	\begin{itemize}
		\item[(i)] $A \rtimes G$ is RFD;
		\item[(ii)] the action $\hat{\alpha}$ of $G$ on $\beta\Prim(A)$ has a dense set of periodic points;
		\item[(iii)] $ZM(A)\rtimes_{\alpha} G$ is RFD.
	\end{itemize}	
	Then we have the following implications: (i)$\Longrightarrow$(ii)$\Longleftrightarrow$(iii).
\end{corollary}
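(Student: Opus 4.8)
The strategy is to reduce Corollary \ref{cor:commnH} to Corollary \ref{cor:commnH}'s own ingredients: the characterisation in Theorem \ref{th:maincrossed} applied to $A$, together with the identification $C(\beta\Prim(A)) \cong ZM(A)$ from \cite{Nilsen} and the observation that RFD-ness passes between $A\rtimes G$ and the multiplier algebra. The non-Hausdorff nature of $\Prim(A)$ is precisely why (ii) need \emph{not} imply (i): there is no reason for periodic points of the action on the compactification $\beta\Prim(A)$ to come from finite-dimensional elements of $\Prim(A)$ with finite orbits, which is what condition (ii) of Theorem \ref{th:maincrossed} actually demands.

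\textbf{Step 1: (i)$\Longrightarrow$(ii).} Assume $A\rtimes G$ is RFD. Since $ZM(A) \subseteq M(A)$ is a $G$-invariant C*-subalgebra, the crossed product $ZM(A)\rtimes_\alpha G$ sits inside $M(A)\rtimes_\alpha G \subseteq M(A\rtimes G)$; by Remark \ref{rem:mult} the latter is RFD, hence so is $ZM(A)\rtimes_\alpha G$, giving (iii) (this is most of the work for (i)$\Rightarrow$(iii), and (ii) will then follow from the equivalence proved in Step 2). One subtlety to check is that $ZM(A) \rtimes_\alpha G$ really does embed as a C*-subalgebra of $M(A)\rtimes_\alpha G$ and that the latter embeds in $M(A\rtimes G)$; for the first point one uses that the inclusion $ZM(A)\hookrightarrow M(A)$ is $G$-equivariant and injective on an action, and for the second the standard fact (see \cite{Dana1}) that multipliers of $A$ and the group both map into $M(A\rtimes G)$, compatibly with the covariance relations, so that $M(A)\rtimes_\alpha G \to M(A\rtimes G)$ is a well-defined injective $*$-homomorphism. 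Alternatively, and more directly, one can observe that $ZM(A)$ is an FDI commutative (unital) C*-algebra $C(\beta\Prim(A))$, so $ZM(A)\rtimes_\alpha G = C(\beta\Prim(A))\rtimes_{\hat\alpha} G$ is a transformation-group C*-algebra, and appeal to Remark \ref{rem:mult} to pull RFD-ness of $A\rtimes G$ down to this subalgebra.

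\textbf{Step 2: (ii)$\Longleftrightarrow$(iii).} This is the heart of the statement, but by design it is an instance of Corollary \ref{TransformationGroupoid}. Indeed, $ZM(A)\cong C(\beta\Prim(A))=C_0(\beta\Prim(A))$ (the space is compact Hausdorff), and the induced action of $G$ on $ZM(A)$ corresponds — by the compatibility remark of \cite{Nilsen} quoted before the statement — to the action $\hat\alpha$ of $G$ on $\beta\Prim(A)$ by homeomorphisms. Now $G$ is amenable, and \emph{any} action of an amenable discrete group on a locally compact Hausdorff space is amenable in the sense used in Corollary \ref{TransformationGroupoid} (the acting group being amenable already forces topological amenability of the transformation groupoid, cf.\ \cite{ADR}). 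Hence Corollary \ref{TransformationGroupoid} applies verbatim: $C_0(\beta\Prim(A))\rtimes_{\hat\alpha} G$ is RFD if and only if $G$ is MAP and the union of finite orbits of $\hat\alpha$ is dense in $\beta\Prim(A)$. Since $G$ is MAP by hypothesis, this reads exactly as condition (ii). The only thing to verify carefully is second-countability / separability hypotheses: Corollary \ref{TransformationGroupoid} is stated for second-countable spaces, whereas $\beta\Prim(A)$ is typically far from second-countable. One way around this is to note that the relevant implication of Corollary \ref{TransformationGroupoid} used here — namely that RFD-ness of $C_0(X)\rtimes G$ for an amenable action is equivalent to $G$ MAP plus density of finite orbits — goes through for general (locally) compact Hausdorff $X$: the positive direction uses only Theorem \ref{theorem:main}'s construction of finite-dimensional representations at periodic points (no separability needed there), and the reverse direction uses Remark \ref{rem:mult} plus the fact that an amenable action with a finite orbit admits an invariant probability measure, forcing $G$ amenable, and then $C^*_r(G)\subseteq M(C_0(X)\rtimes G)$; none of this requires second-countability. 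One should state this mild strengthening explicitly, or else restrict to the case where $\Prim(A)$ is second-countable (equivalently $A$ is separable), which suffices for all examples.

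\textbf{Expected main obstacle.} The genuinely delicate point is \emph{not} the logic but the bookkeeping in Step 2 around topological hypotheses, and the verification that the two candidate constructions of the $G$-action on $ZM(A)$ — one via functoriality of $\beta\Prim(\cdot)$ applied to the action on $\Prim(A)$, the other via functoriality of $ZM(\cdot)$ applied to the action on $A$ — genuinely agree; the parenthetical remark before the statement asserts this, and one should either cite \cite{Nilsen} precisely for it or supply the short naturality argument. I would also flag prominently that the converse (ii)$\Rightarrow$(i) fails in general: density of periodic points in $\beta\Prim(A)$ says nothing about \emph{finite-dimensional} primitive ideals with finite orbits in $\Prim(A)$ itself, which is what Theorem \ref{th:maincrossed}(ii) requires — hence only the stated implications hold, and the $FDI$ hypothesis, while used to identify $ZM(A)$ with a commutative algebra, does not rescue the missing direction.
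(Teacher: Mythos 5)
Your proof is correct and its second half coincides with the paper's, but your route to (ii) is genuinely different. The paper proves (i)$\Longrightarrow$(ii) directly: Theorem \ref{th:maincrossed} together with the FDI hypothesis (which makes every element of $\Prim(A)$ finite-dimensional) gives density of the points with finite orbit in $\Prim(A)$, and since $\beta\colon \Prim(A)\to\beta\Prim(A)$ is continuous, equivariant and has dense image, that density pushes forward to $\beta\Prim(A)$; the equivalence (ii)$\Longleftrightarrow$(iii) is then obtained exactly as in your Step 2, from Corollary \ref{TransformationGroupoid} and $C(\beta\Prim(A))\cong ZM(A)$. You instead prove (i)$\Longrightarrow$(iii) first, via an embedding $ZM(A)\rtimes G\hookrightarrow M(A\rtimes G)$ and Remark \ref{rem:mult}, and recover (ii) afterwards. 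This works, but it is the more expensive route: the canonical map from the \emph{full} crossed product $ZM(A)\rtimes G$ into $M(A\rtimes G)$ induced by the covariant pair is not automatically injective, so one has to pass to the reduced picture and use that amenability of $G$ makes every action of $G$ (in particular the one on $ZM(A)$) amenable, whence full equals reduced and the concrete realization on $H\otimes\ell^2(G)$ yields injectivity; you flag this subtlety but do not quite pin down that amenability of $G$ is what closes it. The paper's $\Prim(A)$-level argument avoids the issue entirely. On the other hand, your treatment of Step 2 is more scrupulous than the paper's on one point: Corollary \ref{TransformationGroupoid} carries a second-countability hypothesis that $\beta\Prim(A)$ essentially never satisfies, and your observation that its proof (finite-dimensional representations at periodic points with MAP stabilizers in one direction, Remark \ref{rem:mult} plus an invariant probability measure in the other) does not actually need second countability is precisely the repair required, whereas the paper applies the corollary without comment. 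Your remarks on the naturality of the two constructions of the $G$-action on $ZM(A)$, and on why (ii) does not imply (i), match the paper's own parenthetical discussion.
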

\begin{proof}
	The implication (i)$\Longrightarrow$(ii) of Theorem \ref{th:maincrossed}, together with the comments in the beginning of this section implies that if (i) holds, then every point of $\Prim(A)$ can be approximated by points with finite orbits.
	As the image of $\beta$ is dense in $\beta \Prim(A)$, it is easy to see that if every point of $\Prim(A)$ can be approximated by a net of points with finite orbits with respect to the $G$-action, the same statement remains true for points in $\beta\Prim(A)$. This yields (ii).
	
	The equivalence (ii)$\Longleftrightarrow$(iii) follows already from Corollary \ref{TransformationGroupoid} and the identification $C(\beta \Prim(A)) \cong ZM(A)$ mentioned above.
\end{proof}

It is tempting to ask whether the implication (i)$\Longrightarrow$(ii) in the above corollary can be reversed. This is however not so clear, as formally there is no reason why in the context of the discussion before the theorem knowing that for a given action of $G$ on $X$ its `Stone-\v{C}ech extension' has a dense set of periodic points should guarantee that the same is true for the original action. To see the latter just imagine $X=\Z$ with a trivial topology and $\Z$ acting on $X$ by shifts. Then $\beta X = \{pt\}$ and the `Stone-\v{C}ech extension' becomes in fact the trivial action. Note however that
in this case $C(X) = \mathbb C$, so  $A \rtimes G$ is RFD.

\bigskip

As mentioned earlier, in some cases the property of having a dense set of periodic points for a  homeomorphism in a given class is generic, see for example \cite{DF} (or the article \cite{AHK}, or the survey in \cite{Bhishan}). Our results allow that to interpret this as saying that for certain crossed products
the RFD property is indeed generic. For any (say unital) C*-algebra $A$ equip the automorphism group $\textup{Aut}(A)$ with the topology of the point norm convergence.

Recall that given a topological manifold $X$ a probability measure $\mu$ on $X$ is said to be an \emph{OU (Oxtoby/Ulam) measure} if it is faithful, nonatomic and vanishes on the boundary of $X$.

\begin{proposition}\label{prop:generic}
	Let $X$ be a compact connected topological manifold of dimension at least 2, and let $\mu \in \textup{Prob}(X)$ be an OU (Oxtoby-Ulam) measure. Let $n \in \N$ and set $A= C(X;M_n)$; equip $A$ with a faithful trace $\omega$ given by the formula
	\[ \omega (f) = \int_X \textup{tr}(f(x)) d\mu(x), \;\;\; f \in A.\]
	Then the set $\textup{Aut}_\omega(A)$ of the automorphisms  of $A$ fixing $\omega$ contains a dense $G_\delta$-subset $Z$ such that for every $\alpha \in Z$ the crossed product $A \rtimes_\alpha \Z$ is RFD.   	
\end{proposition}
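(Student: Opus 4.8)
The plan is to combine our characterisation of RFD crossed products (Corollary~\ref{cor:comm}) with a genericity result for homeomorphisms of manifolds, translated through the structure theory of automorphisms of homogeneous C*-algebras. First I would observe that $A=C(X;M_n)$ is a homogeneous C*-algebra with $\hat A = X$ (Hausdorff!), so by Corollary~\ref{cor:comm}, for $\alpha \in \textup{Aut}(A)$ the crossed product $A \rtimes_\alpha \Z$ is RFD if and only if the induced homeomorphism $\hat\alpha$ of $X$ has a dense set of periodic points (here $G=\Z$ is amenable and MAP, so both hypotheses of that corollary are met). Thus the statement reduces to finding a dense $G_\delta$-subset $Z$ of $\textup{Aut}_\omega(A)$ on which the induced map $\alpha \mapsto \hat\alpha \in \textup{Homeo}(X)$ lands inside the set of homeomorphisms with dense periodic points.

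Next I would recall the structure of $\textup{Aut}(C(X;M_n))$: every automorphism is (up to the pointwise-unitary part) of the form $f \mapsto u(\cdot)\, f(\varphi^{-1}(\cdot))\, u(\cdot)^*$ for some $\varphi \in \textup{Homeo}(X)$ and a continuous $u: X \to PU_n$ obstruction class, and the map $\alpha \mapsto \hat\alpha = \varphi$ is a continuous, open, surjective group homomorphism $\textup{Aut}(A) \to \textup{Homeo}(X)$ (continuity from point-norm to the compact-open/uniform topology on $\textup{Homeo}(X)$ is the routine part). Moreover $\alpha$ fixes $\omega$ precisely when $\varphi$ preserves $\mu$ (the unitary conjugation part does not affect $\omega$ because of the trace), so $\textup{Aut}_\omega(A)$ surjects continuously and openly onto $\textup{Homeo}_\mu(X)$, the $\mu$-preserving homeomorphisms. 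Now invoke the Oxtoby--Ulam theorem: since $\mu$ is an OU measure on the compact connected manifold $X$ of dimension $\geq 2$, the space $\textup{Homeo}_\mu(X)$ (with the uniform topology) is homeomorphic to the space of measure-preserving homeomorphisms for Lebesgue measure, which is a Polish (in particular Baire) space, and by the results of \cite{DF} (and \cite{AHK}) the set $\mathcal{P} \subset \textup{Homeo}_\mu(X)$ of homeomorphisms with a dense set of periodic points is a dense $G_\delta$. Here I should note that $\textup{Aut}_\omega(A)$ with the point-norm topology is itself Polish, so Baire category arguments are legitimate in it.

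The conclusion then follows by pulling back: set $Z := \{\alpha \in \textup{Aut}_\omega(A) : \hat\alpha \in \mathcal{P}\}$, the preimage of $\mathcal{P}$ under the continuous open surjection $p:\textup{Aut}_\omega(A) \to \textup{Homeo}_\mu(X)$. Since $p$ is continuous, $Z = p^{-1}(\mathcal P)$ is $G_\delta$; since $p$ is open and surjective and $\mathcal P$ is dense, $Z$ is dense (the preimage of a dense set under a continuous open surjection is dense). By Corollary~\ref{cor:comm}, for each $\alpha \in Z$ the crossed product $A \rtimes_\alpha \Z$ is RFD, which is exactly the claim. The main obstacle I anticipate is not any single deep step but rather assembling the pieces carefully: verifying that the point-norm topology on $\textup{Aut}_\omega(A)$ is Polish, that $\alpha \mapsto \hat\alpha$ is genuinely continuous and open onto $\textup{Homeo}_\mu(X)$ equipped with the topology for which the genericity results of \cite{DF,AHK} are stated, and checking the precise form of the Oxtoby--Ulam normalisation (faithful, nonatomic, zero on the boundary) that makes $\textup{Homeo}_\mu(X)$ the correct Baire space; once these identifications are in place, the Baire-category transfer and the appeal to Corollary~\ref{cor:comm} are formal.
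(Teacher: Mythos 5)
Your proposal is correct and follows essentially the same route as the paper: reduce via Corollary~\ref{cor:comm} to density of periodic points for the induced homeomorphism $\alpha_*$ of $X=\hat A$, use the structure of automorphisms of $C(X;M_n)$ to see that $\alpha\mapsto\alpha_*$ is a continuous surjection onto $\textup{Homeo}_\mu(X)$ compatible with $\omega\leftrightarrow\mu$, and pull back the dense $G_\delta$ of \cite{DF}. The only (cosmetic) difference is that you deduce density of the preimage from openness of this map, whereas the paper exhibits the explicit lift $T_i\mapsto\alpha_i$ (keeping the unitary part fixed), which is exactly the verification that openness would require anyway.
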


\begin{proof}
As easily checked (see for example \cite[Sections 4,5]{DanaIan}), any automorphism $\alpha \in \textup{Aut}(A)$ is given by the following formula:
 \begin{equation} \label{alphaTU} (\alpha(f)) (x)  = U_x f(Tx) U_x^*, \;\;\;\; x \in X, f \in A, \end{equation}
where $T:X \to X$ is a homeomorphism (which can be identified with the action induced by $\alpha$ on $\Prim(A)$), $(U_x)_{x\in X}$ is a family of unitaries in $M_n$  and $x \mapsto Ad_{U_x} \in \textup{Aut}(M_n)$ is a continuous map. As is customary, we will write $T= \alpha_*$ and note that the map $\textup{Aut}(A) \ni \alpha \mapsto \alpha_* \in \textup{Homeo}(X)$ is continuous and surjective. By \cite{DF} the space $\textup{Homeo}_\mu(X)$ (consisting of these homeomorphisms of $X$ which preserve $\mu$) contains a dense $G_\delta$-subset $V$ such that every $T\in V$ has a dense set of periodic orbits. Set $Z=\{\alpha \in \textup{Aut}(A): \alpha_*\in V\}$.
It is easy to check that $\alpha$ preserves $\omega$ if and only if $\alpha_*$ preserves $\mu$; thus $Z\subset \textup{Aut}_\omega(A)$, and naturally $Z$ is a $G_\delta$-set.   Given an automorphism $\alpha\in \textup{Aut}_\omega(A)$ given by  \eqref{alphaTU} we can approximate $T$ by homeomorphisms $T_i \in V$. It is then again easy to verify that the automorphisms $\alpha_i\in \textup{Aut}_\omega(A)$ given by
 \[(\alpha_i(f)) (x)  = U_x f(T_ix) U_x^*, \;\;\;\; x \in X, f \in A, \]
converge to $\alpha$; thus $Z$ is a dense $G_\delta$-set in  $\textup{Aut}_\omega(A)$. Finally Corollary \ref{cor:comm} shows that if $\alpha \in Z$ then
$A \rtimes_\alpha \Z$ is RFD.
\end{proof}

\subsection{Automatic amenability}

In Theorem \ref{th:maincrossed} above we worked with amenable actions, but similarly as in Section \ref{Sec:grpd}, the equivalent conditions stated there in fact imply that $G$ itself must be amenable.  We begin with a basic lemma.

\begin{lemma} \label{lem:basic}
Let $A$ be a C*-algebra, $a \in Z(A^{**})$ and assume that $\omega, \mu \in P(A)$ are two states with equivalent GNS representations. Then $\tilde{\omega}(a)= \tilde{\mu}(a)$, where $\tilde{\omega}, \tilde{\mu}$ denote the weak$^*$-extensions of respective states to $A^{**}$.
\end{lemma}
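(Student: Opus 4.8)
The plan is to reduce everything to a single observation: the GNS representation of a pure state is irreducible, so it sends a central element of $A^{**}$ to a scalar, and that scalar is a unitary invariant of the representation.

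First I would use the standard fact that every $*$-representation $\pi$ of $A$ extends uniquely to a normal unital $*$-homomorphism $\bar\pi\colon A^{**}\to\pi(A)''$ (normality being here the same as $\sigma(A^{**},A^*)$-continuity, since the predual of $A^{**}$ is $A^*$). Apply this to the GNS triples $(\pi_\omega,H_\omega,\xi_\omega)$ and $(\pi_\mu,H_\mu,\xi_\mu)$ of $\omega$ and $\mu$. The functionals $x\mapsto\langle\xi_\omega,\bar\pi_\omega(x)\xi_\omega\rangle$ and $x\mapsto\langle\xi_\mu,\bar\pi_\mu(x)\xi_\mu\rangle$ on $A^{**}$ are normal, hence weak$^*$-continuous, and restrict to $\omega$, $\mu$ on $A$; as $A$ is weak$^*$-dense in $A^{**}$, these are exactly the weak$^*$-extensions $\tilde\omega$, $\tilde\mu$ appearing in the statement.

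Next, since $\omega$ and $\mu$ are pure, $\pi_\omega$ and $\pi_\mu$ are irreducible. For $a\in Z(A^{**})$ the element $a$ commutes with $A\subset A^{**}$, so $\bar\pi_\omega(a)$ commutes with $\pi_\omega(A)$, whence by irreducibility $\bar\pi_\omega(a)\in\pi_\omega(A)'=\C I_{H_\omega}$; write $\bar\pi_\omega(a)=\lambda_\omega I_{H_\omega}$, and likewise $\bar\pi_\mu(a)=\lambda_\mu I_{H_\mu}$. Since $\xi_\omega$ and $\xi_\mu$ are unit vectors, $\tilde\omega(a)=\lambda_\omega$ and $\tilde\mu(a)=\lambda_\mu$.

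Finally I would invoke the hypothesis: choose a unitary $U\colon H_\omega\to H_\mu$ with $\pi_\mu(\cdot)=U\pi_\omega(\cdot)U^*$. Then $\Ad_U\circ\bar\pi_\omega$ is a normal $*$-homomorphism $A^{**}\to B(H_\mu)$ extending $\pi_\mu$, so by uniqueness of the normal extension it coincides with $\bar\pi_\mu$; evaluating at $a$ gives $\lambda_\mu I_{H_\mu}=\bar\pi_\mu(a)=U\bar\pi_\omega(a)U^*=\lambda_\omega I_{H_\mu}$, so $\lambda_\omega=\lambda_\mu$ and $\tilde\omega(a)=\tilde\mu(a)$. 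I expect no serious obstacle here; the only point requiring a little care is the first step, matching the abstractly defined "weak$^*$-extension" with the concrete normal state built from the GNS data, which is entirely standard.
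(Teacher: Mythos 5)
Your argument is correct and follows essentially the same route as the paper's proof: extend the GNS representations normally to $A^{**}$, observe that the central element is sent to a scalar by irreducibility, and transport that scalar through the intertwining unitary before evaluating on the cyclic vectors. The only cosmetic difference is that you justify scalarity via commutation with $\pi_\omega(A)$ while the paper uses $\widetilde{\pi_\omega}(a)\in\widetilde{\pi_\omega}(A^{**})'=\C I$; these are the same observation.
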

\begin{proof}
Let $(\pi_\omega, \Hil_\omega, \Omega_\omega)$, 	$(\pi_\mu, \Hil_\mu, \Omega_\mu)$ be the corresponding GNS triples, and let $U:\Hil_\omega \to \Hil_\mu$ be the unitary such that $\pi_\omega (\cdot) = U^* \pi_\mu(\cdot)U$.
Denote the canonical normal representations of $A^{**}$ extending $\pi_\omega$ and $\pi_\mu$ by $\widetilde{\pi_\omega}$, $\widetilde{\pi_\mu}$. Then clearly $(\widetilde{\pi_\omega}, \Hil_\omega, \Omega_\omega)$, 	$(\widetilde{\pi_\mu}, \Hil_\mu, \Omega_\mu)$ are GNS triples for states $\tilde{\omega}, \tilde{\mu}$ on $A^{**}$, and we have $\widetilde{\pi_\omega} (\cdot) = U^* \widetilde{\pi_\mu}(\cdot)U$.

We further have $\widetilde{\pi_\omega}(a) \in \widetilde{\pi_\omega}(A^{**})' =\C I_{\Hil_\omega}$; set $\lambda \in \C$ so that
$\widetilde{\pi_\omega}(a) = \lambda I_{\Hil_\omega}$. Then
\begin{align*} \tilde{\mu}(a) &= \langle \Omega_\mu, \widetilde{\pi_\mu}(a) \Omega_\mu \rangle = \langle \Omega_\mu, U \widetilde{\pi_\omega}(a)U^* \Omega_\mu \rangle = \langle \Omega_\mu, \lambda \Omega_\mu \rangle = \lambda = \langle \Omega_\omega, \lambda  \Omega_\omega \rangle \\&= \langle \Omega_\omega, \widetilde{\pi_\omega}(a)  \Omega_\omega \rangle\tilde{\omega}(a).
 \end{align*}
\end{proof}

\begin{proposition} \label{prop:amenable}
Suppose that  $G$ is a discrete  group, $A$ a C*-algebra and $\alpha$ an amenable action of $G$ on $A$.	If there exists a pure state $\omega \in P(A)$ with finite orbit in $P(A)/_{\approx}$ then $G$ itself is amenable.
\end{proposition}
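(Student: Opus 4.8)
The plan is to produce a $G$-invariant state on the centre $Z(A^{**})$ out of the finite orbit of $\omega$, and then to argue, just as in the commutative case (\cite[Exercise 4.4.4]{BO}, used in Corollary \ref{TransformationGroupoid}), that an amenable action carrying such an invariant state forces $G$ to be amenable.

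First I would build the invariant state. Write the orbit of $[\omega]$ in $P(A)/_\approx$ as $\{[\omega_1],\dots,[\omega_n]\}$ with each $\omega_j$ a pure state (the $G$-action preserves $P(A)$), and let $\widetilde{\omega_j}$ be the unique \emph{normal} extension of $\omega_j$ to $A^{**}$. By Lemma \ref{lem:basic} the restriction $\widetilde{\omega_j}|_{Z(A^{**})}$ depends only on $[\omega_j]$, so
\[ \tau:=\frac1n\sum_{j=1}^n \widetilde{\omega_j}\big|_{Z(A^{**})} \]
is a well-defined normal state on $Z(A^{**})$. To see it is $G$-invariant, fix $g\in G$: then $\widetilde{\omega_j}\circ\alpha_g^{**}$ is the normal extension of $\omega_j\circ\alpha_g=g^{-1}\cdot\omega_j$, whose class lies in the same orbit, say $[g^{-1}\cdot\omega_j]=[\omega_{\sigma(j)}]$ for a permutation $\sigma$ of $\{1,\dots,n\}$, so Lemma \ref{lem:basic} gives $\widetilde{\omega_j}\circ\alpha_g^{**}|_{Z(A^{**})}=\widetilde{\omega_{\sigma(j)}}|_{Z(A^{**})}$; averaging over $j$ yields $\tau\circ\alpha_g^{**}=\tau$ on $Z(A^{**})$. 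Purity is used decisively here, since for a non-pure state the restriction to $Z(A^{**})$ is not an invariant of the unitary-equivalence class of its GNS representation.

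Next I would invoke the definition of an amenable action (\cite{Claire}): there is a net $(g_i)$ of finitely supported positive-type functions $g_i\colon G\to Z(M(A))\subseteq Z(A^{**})$ with $g_i(e)\le1$ and $g_i\to1$ pointwise in the strict topology of $M(A)$. Such a bounded net also converges pointwise in the $\sigma(A^{**},A^*)$-topology, and $\tau$ is normal, so the scalar functions $\phi_i(s):=\tau(g_i(s))$ are finitely supported and satisfy $\phi_i(s)\to1$ for every $s\in G$. The crux is that each $\phi_i$ is positive definite: for $s_1,\dots,s_m\in G$ the matrix $\bigl[\alpha_{s_k}^{**}\bigl(g_i(s_k^{-1}s_l)\bigr)\bigr]_{k,l}$ is positive in $M_m(Z(A^{**}))$ by the positive-type condition, applying the state $\tau$ entrywise preserves positivity, and the $G$-invariance of $\tau$ removes the twisting automorphisms $\alpha_{s_k}^{**}$, leaving $\bigl[\phi_i(s_k^{-1}s_l)\bigr]_{k,l}\ge0$. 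Renormalising by $\phi_i(e)$ for large $i$, I obtain a net of finitely supported normalised positive-definite functions on $G$ converging pointwise to $1$, which is a standard characterisation of amenability of the discrete group $G$; this finishes the proof.

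The main obstacle is the bookkeeping against Anantharaman-Delaroche's precise definition: one must check that the twist by $\alpha$ in the positive-type condition is exactly what the invariance of $\tau$ annihilates, and that the convergence clause of the definition together with normality of $\tau$ really yields \emph{pointwise} convergence $\phi_i\to1$ and not merely $\phi_i(e)\to1$ (the latter alone would not suffice). An equivalent route avoiding the averaging is to replace $G$ by the finite-index subgroup $H=\{g\in G: g\cdot\omega\approx\omega\}$: the restricted action of $H$ on $A$ is still amenable, $\widetilde{\omega}|_{Z(A^{**})}$ is genuinely $H$-invariant by Lemma \ref{lem:basic}, so the same argument shows $H$ is amenable, and hence $G$ is amenable because $[G:H]\le n$.
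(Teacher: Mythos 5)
Your proposal is correct and follows essentially the same route as the paper: both average the finite orbit and use Lemma \ref{lem:basic} to produce a $G$-invariant normal state on $Z(A^{**})$. The only divergence is the endgame --- the paper composes this state with the $G$-equivariant conditional expectation $\mathbb{E}\colon \ell^{\infty}(G)\,\overline{\otimes}\,Z(A^{**})\to Z(A^{**})$ supplied by amenability of the action to obtain an invariant mean on $\ell^{\infty}(G)$ in one line, whereas you run the equivalent positive-type-function characterization down to positive-definite functions on $G$ (which works, provided the functions are taken with values in $Z(A^{**})$ and the convergence ultraweakly, as you yourself flag); your closing alternative via the finite-index stabilizer is essentially the paper's own second proof, which invokes \cite[Corollaire 4.4]{Claire} to get amenability of the stabilizer of the factorial representation $\pi_\omega$ directly.
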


\begin{proof}
Definition of amenable action \cite[D\'efinition and Theorem 3.3]{Claire} implies that there exists a $G$-invariant conditional expectation $\mathbb{E}: \ell^{\infty}(G) \overline{\otimes} Z(A^{**}) \to Z(A^{**})$.
Let $(\omega_1, \ldots, \omega_n)$ be the representatives of the orbit of $\omega \in P(A)/_{\approx}$. Define $\mu=\frac{1}{n}\sum_{k=1}^n \omega_k$. Denote the canonical weak$^*$-extensions of states on $A$ to $A^{**}$ by adorning them with tildes, so that for example $\tilde{\mu} \in S(A^{**})$ is the canonical weak$^*$-extension of $\mu$. Fix $g \in G$. Then for every $k\in\{1,\ldots,n\}$ there exists a unique $g(k)\in \{1, \ldots,n\}$ such that $\omega_k \circ \alpha_g \approx \omega_{g(k)}$.  Lemma \ref{lem:basic} implies that $\widetilde{\omega_k \circ \alpha_g} (a) = \widetilde{\omega_{g(k)}}(a)$ for every $a \in Z(A^{**})$. It is then easy to see that $\tilde{\mu}|_{Z(A^{**})}$ is a $G$-invariant state; thus the formula $f \to (\tilde{\mu} \circ \mathbb{E})(f \otimes 1_{Z(A^{**})})$ defines a $G$-invariant state on  $\ell^{\infty}(G)$, so that $G$ is amenable.

Alternatively one can deduce the result from \cite[Corollaire 4.4]{Claire}. Let $\rho:A \to B(\Hil_\omega)$ denote the GNS-representation of $\omega$. It is then a non-zero factorial representation; \cite[Corollaire 4.4]{Claire} implies that $G_\rho:=\{g \in G: g \cdot \rho \approx_{qe} \rho\}$ is amenable. But for irreducible representations quasi-equivalence coincides with unitary equivalence so the assumptions of the proposition imply that $G_\rho$ is a finite index subgroup of $G$. Hence $G$ is amenable too.
\end{proof}	

We thus obtain the following result.

\begin{corollary}\label{cor:amen}
Suppose that  $G$ is a discrete  group, $A$ a C*-algebra 	and $\alpha$ is an amenable action of $G$ on $A$ such that $A \rtimes G$ is RFD. Then $G$ is amenable.
\end{corollary}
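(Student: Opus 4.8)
The plan is to deduce Corollary \ref{cor:amen} directly from Proposition \ref{prop:amenable}, so the entire task reduces to producing a single pure state of $A$ with finite orbit in $P(A)/_\approx$. First I would invoke the implication $(i)\Rightarrow(ii)$ of Theorem \ref{th:maincrossed}, which does not require amenability of the action and gives in particular that the finite-dimensional elements of $\Prim(A)$ with finite orbits are dense in $\Prim(A)$; in particular this set is non-empty (as $A\neq 0$, since an RFD algebra is in particular non-zero). Pick such a primitive ideal and let $\rho:A\to B(H_\rho)$ be an irreducible finite-dimensional representation with $\ker\rho$ having finite orbit in $\Prim(A)$.

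The next step is to pass from ``finite orbit in $\Prim(A)$'' to ``finite orbit in $P(A)/_\approx$'' for a suitable vector state. Because $\rho$ is finite-dimensional, the comments at the start of Section \ref{Sec:crossedprod} tell us that for finite-dimensional irreducibles the kernel determines the unitary equivalence class; hence the stabiliser of $[\ker\rho]$ in $\Prim(A)$ coincides with $G_\rho$, and $G_\rho$ has finite index in $G$. Now choose any unit vector $\xi\in H_\rho$ and set $\omega:=\omega_\xi\circ\rho\in P(A)$; it is a pure state with GNS representation (unitarily equivalent to) $\rho$. For $g\in G_\rho$ the state $g\cdot\omega$ has GNS representation $g\cdot\rho\approx\rho$, so all $G_\rho$-translates of $\omega$ lie in finitely many unitary equivalence classes (in fact the single class $[\rho]$); combined with $[G:G_\rho]<\infty$ this shows the orbit of $\omega$ in $P(A)/_\approx$ is finite. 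Finally, apply Proposition \ref{prop:amenable} to conclude that $G$ is amenable.

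I do not expect a genuine obstacle here: every ingredient is already in place. The one point requiring a little care is the reduction from $\Prim(A)$ to $P(A)/_\approx$ --- one must make sure the finite-index statement for $G_\rho$ really follows from finiteness of the orbit in $\Prim(A)$, which is exactly where finite-dimensionality of $\rho$ is used (for infinite-dimensional irreducibles a finite $\Prim(A)$-orbit need not give a finite orbit in $\hat A$, as flagged in the text). Alternatively, one can bypass the orbit-in-$P(A)$ formulation entirely and use the second route of the proof of Proposition \ref{prop:amenable}: $\rho$ is a non-zero factorial representation and \cite[Corollaire 4.4]{Claire} directly gives amenability of $G_\rho=\{g:g\cdot\rho\approx_{qe}\rho\}$, which for an irreducible representation equals the ordinary stabiliser and hence has finite index, so $G$ is amenable. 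Either way the proof is a two-line assembly of earlier results.

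\begin{proof}
By the implication $(i)\Rightarrow(ii)$ of Theorem \ref{th:maincrossed} (which does not use amenability of $\alpha$) and Remark \ref{conditionii}, finite-dimensional irreducible representations of $A$ with finite orbits in $\Prim(A)$ separate the points of $A$; as $A$ is RFD it is in particular non-zero, so there exists at least one such representation $\rho:A\to B(H_\rho)$, with $[\ker\rho]\in\Prim(A)$ having finite orbit. Since $\rho$ is finite-dimensional and irreducible, its kernel determines its unitary equivalence class, so the stabiliser of $[\rho]$ in $\Rep(A)/_\approx$, namely $G_\rho$, coincides with the stabiliser of $[\ker\rho]$ in $\Prim(A)$; hence $[G:G_\rho]<\infty$. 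Fix a unit vector $\xi\in H_\rho$ and put $\omega:=\omega_\xi\circ\rho\in P(A)$, a pure state whose GNS representation is $\rho$. For $g\in G_\rho$ the GNS representation of $g\cdot\omega$ is $g\cdot\rho\approx\rho$, so the $G_\rho$-orbit of $\omega$ in $P(A)/_\approx$ reduces to a single class; combined with $[G:G_\rho]<\infty$, the orbit of $\omega$ in $P(A)/_\approx$ is finite. Proposition \ref{prop:amenable} now gives that $G$ is amenable.
\end{proof}
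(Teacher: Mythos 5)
Your proof is correct and follows essentially the same route as the paper: both reduce the statement to Proposition \ref{prop:amenable} by exhibiting a pure state of $A$ with finite orbit in $P(A)/_\approx$. The paper's one-line proof obtains such a state via the approximation condition of Theorem \ref{thm:justact}, whereas you construct it directly from a finite-dimensional irreducible representation with finite orbit in $\Prim(A)$, correctly using finite-dimensionality to pass from $\Prim(A)$-orbits to $\Rep(A)/_\approx$-orbits --- a legitimate, slightly more explicit filling-in of the same argument.
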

\begin{proof}
Follows from implications (i)$\Longrightarrow$(ii) of Theorem \ref{th:maincrossed}, (ii)$\Longrightarrow$(iv) of Theorem \ref{thm:justact} and from Proposition \ref{prop:amenable}.
\end{proof}

\medskip

 \subsection{Non-amenable case}

	The assumption of amenability of (an action of) $G$ in Theorem  \ref{th:maincrossed} likely cannot be replaced by the assumption that $C^*(G)$ is RFD.  For example, let $G = F_2$ act trivially on $A = C^*(F_2)$. By \cite{Choi} $C^*(G) = C^*(F_2)$ is RFD. Hence finite-dimensional representations are dense in $\Prim(A) = \Prim(C^*(F_2))$. Since the action is trivial, all orbits in $\Prim(A)$ are 1-point sets. Thus finite-dimensional representations with finite orbits are dense in $\Prim(A)$. Since the action is trivial, $A\rtimes G = C^*(F_2\times F_2)$. The question whether this C*-algebra  is RFD is nowadays called Kirchberg's conjecture and the positive answer is known to be equivalent to a positive solution of the Connes Embedding Problem (see \cite{Taka}). Thus a negative solution to the latter proposed in \cite{CEP} suggests that $A\rtimes G $ is not RFD.

\medskip

	It would be interesting to know whether the amenability assumption can be replaced by the assumption that $C^*(G)$ is RFD in the case of commutative $A$.  Here is an interesting example to look at. Let $SL_2(\mathbb Z)$ act canonically on $\mathbb Z^2$.  Then $ C^*(\mathbb Z^2) \rtimes SL_2(\mathbb Z) =  C^*(\mathbb Z^2\rtimes SL_2(\mathbb Z))$.  It is known that $C^*(SL_2(\mathbb Z))$ is RFD. It is easy to check that periodic points are dense in $\mathbb T^2 = \hat{\mathbb Z}^2 = \Prim(C^*(\mathbb Z^2))$.
	
	\medskip
	
	{\bf Question:} Is $C^*(\mathbb Z^2\rtimes SL_2(\mathbb Z))$ RFD?
	
	\medskip

\medskip

\section{Semi-direct products of groups} \label{sec:Semidirect}

Since C*-algebras of semi-direct products of groups are particular cases of crossed products, we state here consequences of our results for the study of semi-direct products.

\subsection{RFD and MAP}

Given a discrete group $H$ we denote by $\Rep(H)$ ($\widehat H$) the set of equivalence classes of unitary representations of $H$ (irreducible unitary representations of $H$, respectively) with Fell topology.  Fell topology on $\widehat H$ coincides with Jacobson's topology (\cite{BookPropT}, p. 438).  If another group acts on $H$ by automorphisms, this also induces the action on  $\Rep(H)$ and  the action on $\widehat H$ by homeomorphisms (see Lemma \ref{homeoPrim} above). Note that we shall often identify (unitary) representations of $H$ with representations of the full group C*-algebra $C^*(H)$ or of the group ring $\C[H]$ without further comments.

\begin{corollary}\label{SemidirectProductsVersion} Suppose that $H, G$ are discrete groups, $G$ is amenable and acts on $H$ by automorphisms. Then $C^*(H \rtimes G)$ is RFD if and only if $G$ is MAP and every element of $\widehat H$ can be approximated in the Fell topology by finite-dimensional elements of $\widehat H$ whose orbits with respect to the action of $G$ are finite.	
\end{corollary}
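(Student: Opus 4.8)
The plan is to deduce Corollary \ref{SemidirectProductsVersion} as a direct specialization of Theorem \ref{th:maincrossed} applied to the crossed product $C^*(H) \rtimes G$. First I would recall the standard identification $C^*(H \rtimes G) \cong C^*(H) \rtimes_\alpha G$, where $\alpha$ is the action of $G$ on $C^*(H)$ induced by the action of $G$ on $H$ by automorphisms; this is a well-known functoriality property of full group C*-algebras of semidirect products. Since $G$ is amenable, every action of $G$ is amenable in the sense of Anantharaman-Delaroche (the trivial submean on $G$ works, or one invokes that amenable groups have amenable actions on any $G$-C*-algebra), so the hypothesis of Theorem \ref{th:maincrossed} is satisfied and it applies verbatim: $C^*(H)\rtimes G$ is RFD if and only if $G$ is MAP and every element of $\Prim(C^*(H))$ can be approximated in the Jacobson topology by finite-dimensional elements of $\Prim(C^*(H))$ with finite orbits.

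The remaining work is purely a matter of translating the condition on $\Prim(C^*(H))$ into the stated condition on $\widehat{H}$. Here I would use two facts. First, for the group C*-algebra $C^*(H)$, the spectrum $\widehat{C^*(H)}$ is canonically identified with $\widehat{H}$, the irreducible unitary representations of $H$ up to equivalence, and under this identification the Fell topology on $\widehat{H}$ coincides with the pullback of the Jacobson topology on $\Prim(C^*(H))$ — this is the statement quoted in the excerpt (\cite{BookPropT}, p.~438) that Fell topology agrees with Jacobson's topology. Second, a finite-dimensional point of $\Prim(C^*(H))$ corresponds exactly to a finite-dimensional irreducible unitary representation of $H$, and as noted in the excerpt, for finite-dimensional irreducibles the distinction between $\Prim$ (kernels coinciding) and $\widehat{}$ (unitary equivalence) disappears, so finite orbits in $\Prim(C^*(H))$ match finite orbits of the corresponding classes in $\widehat H$. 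Finally, the $G$-action on $\Prim(C^*(H))$ induced by $\alpha$ is intertwined with the $G$-action on $\widehat{H}$ induced by the action on $H$ by automorphisms, so "orbits are finite" translates correctly. Combining these identifications, condition (ii) of Theorem \ref{th:maincrossed} becomes precisely "$G$ is MAP and every element of $\widehat H$ can be approximated in the Fell topology by finite-dimensional elements of $\widehat H$ whose $G$-orbits are finite," which is what we want.

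I do not expect a genuine obstacle here; the corollary is a dictionary translation. The one point requiring a line of care is the compatibility of the $G$-actions: one should check that the homeomorphism $\widehat H \cong \widehat{C^*(H)} \to \Prim(C^*(H))$ is $G$-equivariant when $G$ acts on $\widehat H$ by $g\cdot[\sigma] = [\sigma \circ \gamma_{g^{-1}}]$ (for $\gamma$ the automorphic action on $H$) and on $\Prim(C^*(H))$ via the extension of $\gamma$ to $C^*(H)$; this is immediate from the universal property of $C^*(H)$ but worth stating. A second minor point is to make explicit why $C^*(H\rtimes G)$ is isomorphic to $C^*(H)\rtimes G$ rather than merely Morita equivalent — this follows by comparing universal properties of covariant representations, or can simply be cited. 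With these in place, the proof is a one-paragraph invocation of Theorem \ref{th:maincrossed}.

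\begin{proof}
Recall the canonical isomorphism $C^*(H\rtimes G)\cong C^*(H)\rtimes_\alpha G$, where $\alpha:G\to \operatorname{Aut}(C^*(H))$ is induced by the action of $G$ on $H$ by automorphisms (this follows by matching the universal properties: unitary representations of $H\rtimes G$ correspond bijectively to covariant pairs for $(C^*(H),G,\alpha)$). Since $G$ is amenable, the action $\alpha$ is amenable in the sense of Anantharaman-Delaroche, so Theorem \ref{th:maincrossed} applies and gives that $C^*(H)\rtimes G$ is RFD if and only if $G$ is MAP and every element of $\Prim(C^*(H))$ can be approximated in the Jacobson topology by finite-dimensional elements of $\Prim(C^*(H))$ whose $G$-orbits are finite.

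It remains to rephrase the second condition. The spectrum of $C^*(H)$ is canonically $\widehat{H}$, and by \cite[p.~438]{BookPropT} the Fell topology on $\widehat{H}$ agrees with the Jacobson topology pulled back from $\Prim(C^*(H))$. A point of $\Prim(C^*(H))$ is finite-dimensional precisely when the corresponding irreducible representation of $H$ is finite-dimensional, and for finite-dimensional irreducible representations the passage from $\widehat{C^*(H)}$ to $\Prim(C^*(H))$ is injective, so $G$-orbits correspond. Moreover the identification $\widehat H\cong\widehat{C^*(H)}\to\Prim(C^*(H))$ is $G$-equivariant for the action of $G$ on $\widehat H$ induced by the automorphic action on $H$ and the action of $G$ on $\Prim(C^*(H))$ induced by $\alpha$, by the universal property of $C^*(H)$. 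Hence condition (ii) of Theorem \ref{th:maincrossed} is equivalent to: $G$ is MAP and every element of $\widehat H$ can be approximated in the Fell topology by finite-dimensional elements of $\widehat H$ whose $G$-orbits are finite. This completes the proof.
\end{proof}
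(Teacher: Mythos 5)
Your proposal is correct and is exactly the paper's route: the authors prove this corollary by a one-line invocation of the equivalence in Theorem \ref{th:maincrossed} applied to $A=C^*(H)$, relying on the same identifications ($C^*(H\rtimes G)\cong C^*(H)\rtimes G$, amenability of any action of an amenable group, and the Fell/Jacobson topology agreement already recorded before the corollary's statement). The only difference is that you spell out the dictionary the paper leaves implicit, which is harmless.
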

\begin{proof}
This is equivalence (i)$\Longleftrightarrow$(ii) in Theorem \ref{th:maincrossed}.
\end{proof}

	Below we give a characterization of when a general semi-direct product is MAP. This is a consequence not of the statement of  Theorem \ref{th:maincrossed} but of its proof.

First, a lemma. It uses arguments similar to ones  Bekka and Louvet had in their proof of equivalence of MAP and RFD for amenable groups (\cite{BekkaLouvet}) and  which we already used here several times. As kindly pointed out by Bachir Bekka, it can be also deduced from more general \cite[Corollary 1.8]{BekkaInvariant}.

\begin{lemma}\label{SeparatingPointsOfGroupAlgebra} Let $\mathcal F$ be a family of finite-dimensional representations of a group $G$ that is closed under tensor products and under  direct sum with the trivial representation. If $\mathcal F$ separates points of $G$, then it separates points of the group algebra $\mathbb C G$.
\end{lemma}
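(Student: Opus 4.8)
The plan is to exploit the fact that $\mathbb C G$ is spanned by the group elements, so separating points of $\mathbb C G$ amounts to showing that for a nonzero $a = \sum_{g \in S} c_g g$ (with $S \subset G$ finite and all $c_g \neq 0$) there is some $\pi \in \mathcal F$ with $\pi(a) \neq 0$. The key device, already used in Lemma \ref{mainrevisited} and Lemma \ref{MainLemma}, is to arrange a finite-dimensional representation whose normalised trace \emph{detects} the coefficient of the identity, by making the traces of all other relevant group elements small; then a standard trick of multiplying $a$ by suitable group elements reduces the general case to the case where $e \in S$.

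First I would reduce to detecting the coefficient $c_e$ of the identity: if $e \notin S$, pick any $g_0 \in S$ and replace $a$ by $g_0^{-1} a$, which lies in $\mathbb C G$, is still nonzero, and now has $e$ in its support with coefficient $c_{g_0} \neq 0$; a representation separating $g_0^{-1}a$ from $0$ also separates $a$ from $0$ since $\pi(g_0)$ is invertible. So assume $e \in S$, write $E = S \setminus \{e\}$ (a finite set of non-identity elements) and $M = \max_{g \in S} |c_g|$. Since $\mathcal F$ separates points of $G$ and is closed under direct sums with the trivial representation, for each $h \in E$ there is a representation in $\mathcal F$ whose value at $h$ has a trivial-representation summand but is not the identity, hence has $|\tr \pi(h)| < 1$; taking the direct sum over $h \in E$ of these (which lies in $\mathcal F$, being a tensor power argument or simply — here I would instead directly take, for a single well-chosen $\rho \in \mathcal F$ separating all of $E$ simultaneously, the representation $\rho \oplus \chi$ with $\chi$ trivial) we get $\rho_0 \in \mathcal F$ with $|\tr \rho_0(h)| < 1$ for all $h \in E$.

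Next I would amplify: replacing $\rho_0$ by a sufficiently high tensor power $\rho = \rho_0^{\otimes k} \in \mathcal F$ (using closure under tensor products), since $|\tr(\rho_0(h))| < 1$ and $\tr$ is multiplicative on tensor powers, $|\tr \rho(h)| = |\tr \rho_0(h)|^k$ can be made smaller than $\tfrac{|c_e|}{M \cdot |E|}$ for every $h \in E$ simultaneously (there are only finitely many such $h$). Then
\[
|\tr \rho(a)| \;\geq\; |c_e| \cdot |\tr \rho(e)| - \sum_{h \in E} |c_h|\,|\tr \rho(h)| \;>\; |c_e| - |E| \cdot M \cdot \frac{|c_e|}{M\,|E|} \;=\; 0,
\]
using $\tr \rho(e) = 1$. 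Hence $\rho(a) \neq 0$, so $\mathcal F$ separates points of $\mathbb C G$.

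I do not expect a genuine obstacle here; the only point requiring mild care is the construction of the single $\rho_0 \in \mathcal F$ that works for all $h \in E$ at once. One takes for each $h \in E$ a representation $\sigma_h \in \mathcal F$ with $\sigma_h(h) \neq \mathrm{id}$, forms a tensor product of the $\sigma_h \oplus \chi$ (closure under tensor products and under direct sum with the trivial representation), and observes that in such a tensor product the value at each fixed $h \in E$ still fails to be the identity — indeed it has $\sigma_h(h)$ tensored with identities as one "diagonal block" coming from the trivial summands in the other factors — so after adding one more trivial summand the normalised trace at each $h \in E$ has modulus strictly less than $1$. This is exactly the pattern of Lemma \ref{mainrevisited} and Lemma \ref{MainLemma}, transcribed to the purely group-theoretic setting, so I would simply cite that mechanism.
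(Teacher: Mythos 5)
Your proposal is correct, and its engine is the same as the paper's: take a single $\rho_0\in\mathcal F$ that is non-trivial on every non-identity element of the relevant finite set (obtained, exactly as you describe, by tensoring the $\sigma_h\oplus\chi$ together), add a trivial summand so that the normalised trace at each such element has modulus strictly less than $1$, and amplify by tensor powers to make those traces as small as needed. Where you diverge is in the endgame. The paper packages the mechanism as the statement that $\delta_e$ is a pointwise limit of normalised traces of representations in $\mathcal F$, and then finishes by noting that the extension $\tilde\delta_e$ to $\mathbb C G$ is faithful on positive elements, so some $\tr\rho(x)>0$ for every $0\lneq x\in\mathbb C G$; separating positive elements then separates everything. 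You instead left-translate an arbitrary nonzero $a$ so that $e$ lies in its support (harmless, since $\pi(g_0)$ is invertible) and run a direct triangle-inequality estimate on $\tr\rho(a)$ calibrated to the coefficients of $a$. Both are sound; your route avoids any appeal to positivity and faithfulness of the canonical trace, at the cost of the translation step and of handling the (trivial) case $E=\emptyset$ separately, while the paper's route isolates the reusable fact that $\delta_e$ is a pointwise limit of traces from $\mathcal F$. The only places needing a word of care in your write-up are the ones you already flag: the simultaneous choice of $\rho_0$ via the tensor-product-of-$(\sigma_h\oplus\chi)$ construction, and the strictness of the final inequality, both of which go through.
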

\begin{proof} Let $\delta_e$ denote the standard delta-function at the neutral element of $G$  and let $\tilde \delta_e$ be its extension to a functional on $\mathbb CG$. First, we show that $\delta_e$ is a pointwise limit of traces of representations from $\mathcal F$ and therefore so is $\tilde \delta_e$.

 Let $\epsilon>0$, $g_{1}, \ldots, g_{n} \in G$, $g_{i} \neq e$ for all $i=1,
\ldots, n$. Since $\mathcal F$ separates points of $G$, we can find $\pi\in \mathcal F$ such that $\pi(g_{i}) \neq1$ for all
$i=1, \ldots, n$. Let $\chi: G \to \mathbb{C}$ be the trivial representation,
$\tilde \pi= \pi \oplus \chi$. Then
\[
|\tr (\tilde \pi(g_{i}))| = \left| \frac{(\dim\,\pi)\tr (\pi(g_{i})) +1}{\dim\,\pi+
1}\right|  < 1,
\]
since this is the absolute value of the average of numbers of absolute value not
larger than 1, not all of which are equal.

Let $\tilde \pi^{\otimes N}$ be the N-th tensor power of the representation
$\tilde \pi$. Then
\[
\tr (\tilde \pi^{\otimes N} (g_{i})) = (\tr (\tilde \pi(g_{i})))^{N} < \epsilon
\]
if $N$ is large enough. Thus
\[
|\tr (\tilde \pi^{\otimes N}(g_{i})) - \delta_{e}(g_{i})| < \epsilon
\]
for $i = 1, \ldots, n$.

As $\tilde\delta_e$ is known to be faithful (that is sends non-zero positive elements of $\mathbb C G$ to positive numbers), for any $0\lneq x \in \mathbb CG$ there is $\rho\in \mathcal F$ such that $\tr (\rho(x)) > 0$. Hence $\rho(x) \neq 0$. Thus representations from $\mathcal F$ separate positive elements of $\mathbb CG$  and hence all elements of $\mathbb CG$.
\end{proof}

The next remark follows from Remark \ref{remark:trivial2}.

\begin{remark}\label{DecompositionIrreducible}  Let $G$ act on $H$ by automorphisms, let $\rho_k$, $k=1, \ldots, N$, be irreducible finite-dimensional representations of $H$, and let $\rho = \oplus_{k=1}^N \rho_k$. Then $\rho$ has a finite orbit in $\Rep(H)$ if and only if each $\rho_k$ has a finite orbit in $\widehat H$.
\end{remark}

\begin{proposition}\label{MAPversion}  Suppose that $H, G$ are discrete groups and $G$ acts on $H$ by automorphisms.  The following are equivalent:

\begin{itemize}
		\item [(i)] $H\rtimes G$ is MAP;

\medskip

\item[(ii)]  $G$ is MAP and irrreducible finite-dimensional representations of $H$ whose orbits in $\widehat H$ are finite  separate points of $H$.
\end{itemize}
\end{proposition}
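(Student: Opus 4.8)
The plan is to mimic the structure of the proof of Theorem \ref{th:maincrossed}, replacing the C*-algebraic crossed product $A\rtimes G$ by the group algebra $\mathbb C[H\rtimes G]$ and the state-separation argument by a point-separation argument on $\mathbb C[H\rtimes G]$. Recall that $H\rtimes G$ is MAP if and only if finite-dimensional unitary representations of $H\rtimes G$ separate points of $H\rtimes G$; and since any such representation restricts to a finite-dimensional representation of $H$ whose orbit under $G$ is finite (the equivalences $g\cdot\rho\approx\rho$ for $g\in G$ being implemented by the ambient representation, exactly as in Remark \ref{remark:trivial}), one direction is close to formal.

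For (i)$\Rightarrow$(ii): since $G$ is a quotient of $H\rtimes G$, MAP passes to $G$. Given $e\neq h\in H$, pick a finite-dimensional unitary representation $\sigma$ of $H\rtimes G$ with $\sigma(h)\neq I$; then $\sigma|_H$ is finite-dimensional, separates $h$ from $e$, and has finite orbit in $\Rep(H)$ because the operators $\sigma(g)$, $g\in G$, implement the equivalences $g\cdot(\sigma|_H)\approx\sigma|_H$. Decomposing $\sigma|_H$ into irreducibles and invoking Remark \ref{DecompositionIrreducible}, some irreducible summand $\rho$ still has $\rho(h)\neq I$ and has finite orbit in $\widehat H$. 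Ranging over $h$ gives (ii).

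For (ii)$\Rightarrow$(i): this is where the proof of Theorem \ref{th:maincrossed} gets adapted. Take $e\neq w\in H\rtimes G$ and write $w = h g$ with $h\in H$, $g\in G$. If $g\neq e$, then $w$ maps to a nontrivial element of $G$ under the quotient map, and since $G$ is MAP we get a finite-dimensional representation of $H\rtimes G$ separating $w$ from $e$ by pulling back. So assume $g=e$, i.e. $e\neq w = h\in H$. By (ii) choose an irreducible finite-dimensional representation $\rho$ of $H$ with $\rho(h)\neq I$ and with finite orbit $\{s_1\cdot[\rho],\ldots,s_N\cdot[\rho]\}$ in $\widehat H$, $s_1=e$. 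Now apply the representation-construction of Proposition \ref{prop:repconstruction} and the trace estimate of Lemma \ref{MainLemma} (with $A=C^*(H)$, $b=\delta_h\in\mathbb C[H]\subset C_c(H\rtimes G)$) to produce, for suitable auxiliary finite-dimensional $U:G_\rho\to B(K)$ coming from the MAP property of $G$, a finite-dimensional representation $\tilde\pi=\gamma_{\rho,U}$ of $\mathbb C[H\rtimes G]$ whose trace on $\delta_h$ is close to $\frac1N\sum_{i=1}^N\tr((s_i\cdot\rho)(\delta_h))$. The point is to choose the data so that this averaged trace is bounded away from $1$: since $\rho(h)\neq I$ is unitary, $|\tr(\rho(h))|<1$, and tensoring $\rho$ (together with its $G$-orbit structure preserved) against a trivial summand and taking powers drives $\frac1N\sum_i\tr((s_i\cdot\rho)(\delta_h))$ below, say, $\tfrac12$, while $\tr(\tilde\pi(\delta_e))=1$; hence $\tilde\pi(h)\neq\tilde\pi(e)$. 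This shows finite-dimensional representations separate points of $H\rtimes G$, i.e. $H\rtimes G$ is MAP. Alternatively — and this is cleaner — one can package the whole point-separation step through Lemma \ref{SeparatingPointsOfGroupAlgebra}: the family $\mathcal F$ of all finite-dimensional representations of $H\rtimes G$ obtained as $\gamma_{\rho,U}$ for $\rho$ as in (ii) and $U$ finite-dimensional on $G_\rho$, augmented by pullbacks from $G$, can be arranged to be closed under tensor products and under adding the trivial representation, and (ii) together with MAP of $G$ shows it separates points of $H\rtimes G$; Lemma \ref{SeparatingPointsOfGroupAlgebra} then upgrades this to separation of points of $\mathbb C[H\rtimes G]$, which in particular gives MAP.

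The main obstacle I expect is the bookkeeping in the (ii)$\Rightarrow$(i) direction: one must make sure that the orbit $\{s_1\cdot[\rho],\ldots,s_N\cdot[\rho]\}$ and the stabilizer $G_\rho$ behave well under the operation of tensoring $\rho$ with a trivial representation and taking tensor powers — i.e. that the resulting representation of $H$ still has a finite $G$-orbit — and that the construction $\gamma_{\rho,U}$ of Proposition \ref{prop:repconstruction} remains finite-dimensional (by Lemma \ref{lem:trace} this needs $H_\rho$, $K$ finite-dimensional and the orbit of $[\rho]$ finite, all of which hold). None of this is deep, but it is exactly the kind of place where the argument for amenable $G$ in Theorem \ref{th:maincrossed} was used, and the present statement needs it without the amenability hypothesis, so one should double-check that amenability was only used in the $(ii)\Rightarrow(i)$ direction of Theorem \ref{th:maincrossed} to pass from traces to separation of \emph{positive} elements of a C*-algebra — here we only need separation of \emph{group elements} inside the purely algebraic $\mathbb C[H\rtimes G]$, which is precisely what Lemma \ref{SeparatingPointsOfGroupAlgebra} handles, so no amenability is required.
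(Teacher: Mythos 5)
Your overall strategy for (ii)$\Rightarrow$(i) --- reduce to an element of $H$, pick an irreducible finite-dimensional $\rho$ with finite orbit detecting it, and feed it into Proposition \ref{prop:repconstruction} and Lemma \ref{MainLemma}, observing that amenability entered Theorem \ref{th:maincrossed} only through faithfulness of the conditional expectation --- is the right idea and matches the paper's proof in spirit, but the key estimate as you wrote it has a genuine gap. First, the assertion that $\rho(h)\neq I$ unitary forces $|\tr(\rho(h))|<1$ is false: $\rho(h)$ could be a nontrivial scalar, e.g.\ $-I$. Second, your repair --- replacing $\rho$ by tensor powers of $\rho\oplus\chi$ to drive the averaged trace down --- cannot be fed back into Proposition \ref{prop:repconstruction} and Lemma \ref{MainLemma}, because these require $\rho$ \emph{irreducible} (Lemma \ref{lem:implement} uses Schur's lemma to build the projective representation $V$), and $(\rho\oplus\chi)^{\otimes k}$ is not; decomposing it again into irreducibles loses control of the trace at $h$. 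Even granting the construction, the average $\frac1N\sum_i\tr(\rho(\alpha(s_i^{-1})(h)))$ need not become small, since $\alpha(s_i^{-1})(h)$ may lie in $\ker\rho$ for $i\geq 2$. Third, your ``cleaner alternative'' is circular: to apply Lemma \ref{SeparatingPointsOfGroupAlgebra} to a family of representations of $H\rtimes G$ you must first know that the family separates points of the group $H\rtimes G$, which is exactly the MAP statement being proved. (A minor additional slip: $G$ is MAP because it is a \emph{subgroup} of $H\rtimes G$, not because it is a quotient --- MAP does not pass to quotients.)

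The gap is fixable, and the fix explains why the paper proceeds differently. The cheap repair of your route: for your fixed irreducible $\rho$, the average $C=\frac1N\sum_i\tr(\rho(\alpha(s_i^{-1})(h)))$ is automatically $\neq 1$, because each summand is the normalised trace of a unitary matrix, hence of modulus at most $1$ and equal to $1$ only for the identity matrix; since $s_1=e$ and $\rho(h)\neq I$ the first summand is $\neq 1$, so $C\neq 1$, and taking $\epsilon<|1-C|$ in Lemma \ref{MainLemma} gives $\tr(\tilde\pi(\delta_h))\neq 1=\tr(\tilde\pi(\delta_e))$, whence $\tilde\pi(h)\neq I$. The paper instead avoids all modulus bookkeeping by working with the positive element $x=(a-e)^*(a-e)\in\mathbb{C}[H\rtimes G]$: a direct computation gives $\Phi(x)=2e$ or $2e-h-h^{-1}$, a nonzero positive element of $\mathbb{C}H$, and Lemma \ref{SeparatingPointsOfGroupAlgebra} applied to $H$ (with the family of finite-dimensional representations of $H$ having finite orbits, which is closed under tensor products and adding the trivial representation) produces an irreducible $\rho$ with finite orbit and $\tr(\rho(\Phi(x)))>0$; positivity then makes the averaged trace strictly positive and Lemma \ref{MainLemma} finishes, with no case split on whether $g=e$. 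Your (i)$\Rightarrow$(ii) direction agrees with the paper's.
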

\begin{proof} $ (i) \Longrightarrow  (ii)$: Let $e\neq h \in H$. Since $H\rtimes G$ is MAP, there is a finite-dimensional representation $\pi$ of $H\rtimes G$  such that $\pi(h) \neq \mathbb 1$.  Then $\rho:=\pi|_H$ has a trivial orbit in $Rep(H)$, since
$g \cdot \rho  = \pi(g)^{-1} \rho \pi(g), $ for any $g\in G$. Decompose  $\rho = \oplus_{k=1}^N \rho_k$ as n the above remark. Then there is $1\le k\le N$ such that $\rho_k(h) \neq \mathbb 1$. By Remark \ref{DecompositionIrreducible}  the orbit of $\rho_k$ is finite.

\medskip

 $ (ii) \Longrightarrow  (i)$: Let $e\neq a \in H \rtimes G$. Then $0\neq (a-e)^*(a-e) \in \mathbb C(H\rtimes G).$ The element $a$ is of the form $a= hg$ with $h\in H, g\in G$. Since  the conditional expectation $\Phi: C^*(H \rtimes G) \to C^*(H)$ is defined on elements of $\mathbb C [H\rtimes G]$  as $$\Phi(\sum_{g\in G} h_g g) = h_e, $$ in the case $a= hg$ with $g\neq e$ we have $\Phi( (a-e)^*(a-e))  = 2e$ and in the case $a= h$ we have $\Phi( (a-e)^*(a-e)) = 2e  - h - h^{-1}$, so either way $$0 \neq \Phi( (a-e)^*(a-e)) \in \mathbb CH.$$
 It is straightforward to check that the family of finite-dimensional representations of $H$  with finite orbits  is closed under  tensor products and direct sum with the trivial representation. By Lemma  \ref{SeparatingPointsOfGroupAlgebra}  there is a finite-dimensional representation $\rho$ of $H$ with finite orbit such that $\rho(\Phi\left((a-e)^*(a-e)\right))\neq 0.$ Using Remark \ref{DecompositionIrreducible} we find an irreducible finite-dimensional representation, which we again denote by $\rho$,  whose orbit is finite and $\rho(\Phi\left((a-e)^*(a-e)\right))\neq 0.$ Then the proof  of the implication (ii)$\Longrightarrow$(i) in Theorem \ref{th:maincrossed} applies to $x:= (a-e)^*(a-e)$ and gives a finite-dimensional representation $\tilde \pi$ of $H\rtimes G$ such that $\tilde\pi(x)\neq 0$ and therefore $\tilde\pi(a)  \neq \mathbb 1.$	
\end{proof}

The last result appears also as Corollary E (i) in \cite{BekkaPreprint}.
\medskip

\subsection{FD and RF}

The property FD was introduced by  Lubotzky and Shalom in \cite{LubotzkyShalom}: a group $G$ has FD if its representations with finite image are dense in the unitary dual.

Below we state characterizations of the properties FD and RF for semi-direct products. They can be shown exactly as in the proofs for RFD and MAP properties above, once we note the following facts:

(i)  the representation constructed in Proposition \ref{prop:repconstruction} (but in the context of group representations) has finite image if and only if the building blocks have finite images;

(ii) the class of representations with finite images satisfies the assumptions of Lemma \ref{SeparatingPointsOfGroupAlgebra}.
\smallskip

Let us add more comments on the first statement, as the second is completely obvious. If the representation $\rho$ considered in Lemma \ref{lem:implement} has finite image, then
the projective representation constructed there can be also assumed to have finite image inside some $U(n)$, and then the actual representation $V$ will also have finite image: a finite group $G_1$ inside $U(n^2)$. Similarly  $\rho \otimes I$ has then finite image, say $G_2 \subset U(n^2)$. As the pair above is covariant, the resulting image  of the semidirect product under the corresponding representation will be $G_2\rtimes G_1$, a finite group inside $U(n^2)$.

\begin{proposition}\label{FDversion} Let $G$ be an amenable group and let $G$ act on $H$ by automorphisms. Then $H\rtimes G$ has FD if and only if  $G$ is RF and every element of $\widehat H$ can be approximated in the Fell topology by representations with finite images  whose orbits are finite.
\end{proposition}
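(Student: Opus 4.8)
The strategy is to mimic the proof of the RFD case, i.e.\ the equivalence (i)$\Longleftrightarrow$(iii) of Theorem \ref{th:maincrossed} as applied to semidirect products in Corollary \ref{SemidirectProductsVersion}, but systematically replacing ``finite-dimensional representation'' by ``representation with finite image''. Throughout I would work with $\mathbb{C}[H\rtimes G] \subset C^*(H\rtimes G)$ and exploit the conditional expectation $\Phi:C^*(H\rtimes G)\to C^*(H)$, whose restriction to $\mathbb{C}[H\rtimes G]$ is $\Phi(\sum_{g}h_g g)=h_e$, exactly as in the proof of Proposition \ref{MAPversion}. The two facts (i) and (ii) flagged before the statement are the technical backbone: (i) the representation $\gamma_{\rho,U}$ built in Proposition \ref{prop:repconstruction} (read in the purely group-theoretic setting, with $A=\mathbb{C}[H]$, $\rho$ a unitary representation of $H$ and $U$ of $G_\rho$) has finite image precisely when $\rho$ and $U$ do and the orbit of $[\rho]$ is finite; and (ii) the class of representations of $H$ with finite image is closed under tensor products and under direct sum with the trivial representation, hence Lemma \ref{SeparatingPointsOfGroupAlgebra} applies to it.

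\textbf{Forward direction ($H\rtimes G$ has FD $\Longrightarrow$ the condition).} Since $\mathbb{C}[G]\subset M(C^*(H\rtimes G))$, FD of $H\rtimes G$ forces (by an argument like Remark \ref{rem:mult}, now with ``finite image'' in place of ``finite-dimensional'') that $G$ has FD; but since $G$ is amenable, FD for $G$ is equivalent to RF (this is the amenable case of the Lubotzky--Shalom dichotomy, and in our language it is the statement that finite-image representations are dense in $\widehat{G}$ iff $G$ is RF when $G$ is amenable). Next, density of finite-image representations in $\widehat{H\rtimes G}$ implies that such representations separate points of $C^*(H\rtimes G)$, so their restrictions to $C^*(H)$ separate points of $C^*(H)$; by Remark \ref{remark:trivial} (the restriction of a representation of the crossed product to $A$ has trivial, hence finite, orbit) and Remark \ref{DecompositionIrreducible}, passing to an irreducible summand gives, for each nonzero $x\in C^*(H)$, an irreducible finite-image representation of $H$ with finite orbit in $\widehat{H}$ not annihilating $x$. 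By Remark \ref{conditionii} this says exactly that every element of $\widehat{H}$ is a Fell-limit of finite-image elements of $\widehat{H}$ with finite orbits. (One must check that a subrepresentation of a finite-image representation again has finite image --- this is immediate since the image of a subrepresentation is a quotient of the ambient finite group.)

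\textbf{Backward direction (the condition $\Longrightarrow$ $H\rtimes G$ has FD).} Let $e\neq a\in H\rtimes G$; as in the proof of Proposition \ref{MAPversion}, write $a=hg$ and observe $0\neq \Phi\big((a-e)^*(a-e)\big)\in\mathbb{C}H$ (it equals $2e$ if $g\neq e$ and $2e-h-h^{-1}$ if $g=e$). The hypothesis together with Remark \ref{conditionii} gives that irreducible finite-image representations of $H$ with finite orbits separate points of $C^*(H)$; since this class is closed under tensor products and under direct sum with the trivial representation, Lemma \ref{SeparatingPointsOfGroupAlgebra} yields a finite-image representation $\rho$ of $H$ with finite orbit with $\rho\big(\Phi((a-e)^*(a-e))\big)\neq 0$, and Remark \ref{DecompositionIrreducible} lets us take $\rho$ irreducible with finite orbit. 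Now run the construction of Proposition \ref{prop:repconstruction} with this $\rho$ and an auxiliary finite-image representation $U$ of $G_\rho$ supplied by RF of $G$ (used exactly as MAP of $G$ is used to produce $U$ in Lemma \ref{MainLemma}, only now demanding $U(h)\neq\mathrm{id}$ for the relevant nontrivial $h$ while keeping $U$ with finite image --- a finite quotient of the amenable RF group $G_\rho$ detecting a given nontrivial element); the trace estimate of Lemma \ref{MainLemma} and the argument of the implication $(ii)\Rightarrow(i)$ in Theorem \ref{th:maincrossed} (valid here since an amenable action of an amenable-by-construction group makes full and reduced crossed products coincide) then produce a representation $\tilde\pi$ of $H\rtimes G$ with $\tilde\pi((a-e)^*(a-e))\neq 0$, hence $\tilde\pi(a)\neq\mathbb{1}$; by fact (i) above $\tilde\pi$ has finite image, so finite-image representations separate points of $H\rtimes G$ and, by Lemma \ref{SeparatingPointsOfGroupAlgebra} applied to $H\rtimes G$, of $\mathbb{C}[H\rtimes G]$, which gives density of finite-image representations in $\widehat{H\rtimes G}$.

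\textbf{Main obstacle.} The only genuinely new point, and the one I would be most careful about, is fact (i): verifying that the covariant representation $\gamma_{\rho,U}$ of Proposition \ref{prop:repconstruction} has finite image when $\rho$ and $U$ have finite image and the orbit of $[\rho]$ is finite. The subtlety is that the cocycle $\lambda(g_1,g_2)\in\mathbb{T}$ arising in Lemma \ref{lem:implement} need not take values in a finite subgroup of $\mathbb{T}$, so one cannot directly conclude that the implementing unitaries $\widetilde{V_g}$ generate a finite group; the resolution (already sketched in the paragraph before the statement) is that passing to $V(g)=\widetilde{V_g}\otimes\overline{\widetilde{V_g}}$ kills the scalar ambiguity, so $V$ lands in a finite subgroup of $U(n^2)$, and then covariance forces the image of $H\rtimes G$ under $\gamma_{\rho,U}$ to be (a subgroup of) $G_2\rtimes G_1$ with $G_1,G_2$ finite. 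Making this genuinely precise --- i.e.\ checking that $\{V(t):t\in G_\rho\}$, the finitely many shifted copies indexed by $I$, and the image of $U$ together generate a finite group, and that the coset-permutation part only contributes a finite symmetric-group factor --- is the one place where a careful (but routine) argument is needed; everything else is a transcription of the RFD/MAP proofs with ``finite-dimensional'' replaced by ``finite image'' and ``MAP'' replaced by ``RF''.
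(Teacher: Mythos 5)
Your overall strategy matches the paper's: the paper offers no separate proof of Proposition \ref{FDversion} beyond saying it is obtained ``exactly as'' the RFD and MAP proofs once facts (i) and (ii) are noted, and your forward direction and your discussion of why $\gamma_{\rho,U}$ has finite image (killing the scalar cocycle by passing to $V(g)=\widetilde{V_g}\otimes\overline{\widetilde{V_g}}$) are faithful to that. The problem is the final step of your backward direction. There you follow the template of Proposition \ref{MAPversion} rather than of Theorem \ref{th:maincrossed}: you take a group element $a\neq e$, apply the machinery only to the single positive element $(a-e)^*(a-e)\in\mathbb{C}[H\rtimes G]$, conclude that finite-image representations separate points of the group $H\rtimes G$, and then invoke Lemma \ref{SeparatingPointsOfGroupAlgebra} to get separation of points of $\mathbb{C}[H\rtimes G]$, ``which gives density of finite-image representations in $\widehat{H\rtimes G}$.'' That last inference is false. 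By Remark \ref{conditionii}, density of a family of representations in the Fell (= Jacobson) topology on the dual is equivalent to the family separating points of the \emph{full C*-algebra} $C^*(H\rtimes G)$, and separating points of the group algebra $\mathbb{C}[H\rtimes G]$ is strictly weaker: finite-image representations of $SL_3(\Z)$ separate points of $\mathbb{C}[SL_3(\Z)]$ by Lemma \ref{SeparatingPointsOfGroupAlgebra} (since $SL_3(\Z)$ is RF), yet $SL_3(\Z)$ does not have FD because $C^*(SL_3(\Z))$ is not RFD. What your backward direction actually establishes is Proposition \ref{RFversion} (the RF statement), not Proposition \ref{FDversion}.

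The repair is already contained in the tools you cite: run the implication (ii)$\Rightarrow$(i) of Theorem \ref{th:maincrossed} on an \emph{arbitrary} nonzero positive $x\in C^*(H\rtimes G)=C^*(H)\rtimes G$, not just on $(a-e)^*(a-e)$. Since $G$ is amenable the action on $C^*(H)$ is amenable, so the conditional expectation $\Phi$ is faithful and $\Phi(x)\neq 0$; the hypothesis together with Remark \ref{conditionii} produces an irreducible finite-image $\rho$ with finite orbit in $\widehat H$ and $\rho(\Phi(x))\neq 0$; residual finiteness of $G$ supplies the finite-image $U$ needed in Lemma \ref{MainLemma} (the tensor-power trick preserves finite image); and fact (i) makes $\tilde\pi=\gamma_{\rho,U}$ a finite-image representation with $\tr\,\tilde\pi(x)>0$. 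This shows finite-image representations separate points of $C^*(H\rtimes G)$, which is exactly FD. Lemma \ref{SeparatingPointsOfGroupAlgebra} should play no role in this direction --- it belongs to the MAP/RF arguments. Everything else in your proposal, including the forward direction, is correct and in line with the paper.
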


\begin{proposition}\label{RFversion}  Let $G$ act on $H$ by automorphisms.  Then $H\rtimes G$ is RF if and only if $G$ is RF and elements of $\widehat H$ with finite images  whose orbits are finite separate points of $H$.
\end{proposition}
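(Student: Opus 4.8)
\textbf{Proof plan for Proposition \ref{RFversion}.}

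The plan is to follow the same two-pronged strategy as in the proof of Proposition \ref{MAPversion}, simply replacing everywhere ``finite-dimensional representation'' by ``representation with finite image'' and using the group-theoretic analogue of RFD, namely RF. Recall that a discrete group $L$ is RF if and only if representations with finite image separate the points of $L$, equivalently (by an argument identical to Lemma \ref{SeparatingPointsOfGroupAlgebra}, item (ii) above) they separate the points of $\mathbb{C}L$; I will use this freely.

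For the forward direction $(i)\Rightarrow(ii)$: assume $H\rtimes G$ is RF. Since $G$ is a quotient of $H\rtimes G$, it is RF. Now fix $e\neq h\in H$; choose a representation $\pi$ of $H\rtimes G$ with finite image and $\pi(h)\neq\mathbb{1}$. As in Proposition \ref{MAPversion}, $\rho:=\pi|_H$ has trivial orbit in $\Rep(H)$ (the unitary equivalence $g\cdot\rho\approx\rho$ is implemented by $\pi(g)$), and $\rho$ still has finite image, being a subrepresentation of $\pi$. Decompose $\rho=\bigoplus_{k=1}^N\rho_k$ into irreducibles; each $\rho_k$ has finite image (a subrepresentation of a finite-image representation has finite image, since its image is a quotient of a subgroup of a finite group), and by Remark \ref{DecompositionIrreducible} each $\rho_k$ has finite orbit in $\widehat{H}$. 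Pick $k$ with $\rho_k(h)\neq\mathbb{1}$; this gives an irreducible representation of $H$ with finite image and finite orbit separating $h$ from $e$.

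For the converse $(ii)\Rightarrow(i)$: let $e\neq a\in H\rtimes G$ and set $x:=(a-e)^*(a-e)\in\mathbb{C}[H\rtimes G]$, a nonzero positive element. Writing $a=hg$ with $h\in H$, $g\in G$ and using the formula $\Phi(\sum_{g}h_gg)=h_e$ for the conditional expectation $\Phi:C^*(H\rtimes G)\to C^*(H)$, exactly as in Proposition \ref{MAPversion} we get $0\neq\Phi(x)\in\mathbb{C}H$ (it equals $2e$ when $g\neq e$ and $2e-h-h^{-1}$ when $g=e$). The family of representations of $H$ with finite image and finite orbit is closed under tensor products and under direct sum with the trivial representation (using item (i) of the discussion preceding the statement, or directly: the image of a tensor product of two finite-image representations lands in the tensor product of two finite groups, hence is finite, and finiteness of orbits is preserved by Remark \ref{remark:trivial2}); by hypothesis (ii) it separates points of $H$, hence by Lemma \ref{SeparatingPointsOfGroupAlgebra} it separates points of $\mathbb{C}H$. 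So there is such a representation $\rho$ with $\rho(\Phi(x))\neq 0$, and by Remark \ref{DecompositionIrreducible} we may pass to an irreducible summand, still with finite image and finite orbit, retaining $\rho(\Phi(x))\neq 0$. Now invoke the proof of the implication (v)$\Rightarrow$(i) of Theorem \ref{th:maincrossed} applied to this $x$ and $\rho$: since $G$ is RF it is in particular MAP, and the construction in Lemma \ref{MainLemma} / Proposition \ref{prop:repconstruction} produces $\tilde\pi$ with $\tilde\pi(x)\neq0$; the point is that when $\rho$ and the auxiliary representation $U$ of $G$ both have finite image, the resulting representation $\gamma_{\rho,U}$ of $H\rtimes G$ has finite image as well (by item (i) and the covariance computation sketched before the statement), and since $G$ is RF we may choose $U$ with finite image implementing the required separation of points of the relevant finite subset of $G$. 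Hence $\tilde\pi(x)\neq0$, so $\tilde\pi(a)\neq\mathbb{1}$, and $H\rtimes G$ is RF.

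The main obstacle, and the only place requiring genuine care rather than a transcription of the earlier proof, is verifying that the representation $\gamma_{\rho,U}$ built in Proposition \ref{prop:repconstruction} indeed has finite image when the data $\rho$, $U$ do. This is precisely item (i) in the list preceding the statement: one must check that the projective representation extracted in Lemma \ref{lem:implement} can be taken with finite image (the cocycle takes finitely many values once $\rho$ has finite image, so a suitable central extension lets one choose the $\widetilde{V_g}$ in a finite group), that $V=\widetilde{V}\otimes\overline{\widetilde{V}}$ then has finite image, that $\rho\otimes I$ has finite image, and finally that the covariant pair assembles these into a representation of $H\rtimes G$ whose image is the finite group $G_2\rtimes G_1\subset U(n^2)$. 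Once this is granted, the rest of the argument is the verbatim analogue of Proposition \ref{MAPversion} with ``MAP'' replaced by ``RF'' and ``finite-dimensional'' replaced by ``with finite image''.
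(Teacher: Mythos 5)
Your proposal is correct and follows essentially the same route as the paper: the authors likewise prove Propositions \ref{FDversion} and \ref{RFversion} by transcribing the MAP/RFD arguments with ``finite-dimensional'' replaced by ``finite image'' and ``MAP'' by ``RF'', resting on exactly the two facts you isolate --- that $\gamma_{\rho,U}$ has finite image when $\rho$ and $U$ do (via the finite-image choice of the implementing unitaries in Lemma \ref{lem:implement} and the covariance of the pair), and that the class of finite-image representations satisfies the hypotheses of Lemma \ref{SeparatingPointsOfGroupAlgebra}. Your identification of the finite-image property of $\gamma_{\rho,U}$ as the only point requiring genuine care matches the paper's own emphasis.
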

The last result appears also as Corollary E (ii) in \cite{BekkaPreprint}.

\section{Examples} \label{sec:Examples}

In this section we present several concrete examples of groups which are MAP, RF or which have RFD  full $C^*$-algebras or have FD.

\subsection{Property FD}

\begin{lemma}\label{FDfiniteorbits} Let $H$ be a finitely generated group with property FD and let $G$ act on $H$ by automorphisms. Then the elements of $\widehat H$ with finite images and finite orbits are dense in $\widehat H$.
\end{lemma}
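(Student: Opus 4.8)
The plan is to leverage property FD of $H$ together with a finiteness argument on images. Recall that a finitely generated group has only finitely many subgroups of any given index (since each is determined by the action on the coset space, and there are finitely many homomorphisms from a finitely generated group to a fixed finite symmetric group). Consequently, $H$ has only finitely many \emph{finite} quotients of each fixed cardinality, hence for each $n\in\N$ only finitely many finite subgroups of $U(n)$ (up to the obvious identifications) arise as images of representations of $H$; more usefully, for each $n$ there are only finitely many normal subgroups $N\trianglelefteq H$ with $|H/N|\le n$.

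First I would fix $[\sigma]\in\widehat H$ and a Fell-neighbourhood of it; by property FD there is a representation $\pi$ of $H$ with finite image approximating $[\sigma]$ in that neighbourhood. Decomposing $\pi$ into irreducibles and using that Fell convergence is compatible with passing to subrepresentations, I may assume $\pi$ itself is irreducible (this is the standard reduction, as in Remark \ref{remark:trivial2} and Remark \ref{DecompositionIrreducible}), and it still has finite image, say of dimension $d$ and with $|\pi(H)|=m$. The key point is then: the kernel $N:=\ker\pi$ is a normal subgroup of $H$ of index $m$, and $\pi$ factors through the finite group $H/N$. Now I would show the $G$-orbit of $[\pi]$ is finite. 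For $g\in G$, the representation $g\cdot\pi$ has kernel $\alpha_{g}(N)$, which is again a normal subgroup of index $m$ in $H$. Since $H$ is finitely generated, the set $\mathcal N_m$ of normal subgroups of index exactly $m$ is finite; hence $\{\alpha_g(N):g\in G\}$ is finite, and the stabiliser $G_0=\{g\in G:\alpha_g(N)=N\}$ has finite index in $G$.

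It remains to see that $[g\cdot\pi]$ takes only finitely many values in $\widehat H$ even within a single kernel-class. If $\alpha_g(N)=N$, then $g\cdot\pi$ and $\pi$ are both irreducible representations of the \emph{fixed finite group} $Q:=H/N$ of the \emph{same} dimension $d$; since $Q$ is finite, it has only finitely many irreducible representations altogether, so as $g$ ranges over $G_0$ the class $[g\cdot\pi]$ ranges over a finite subset of $\widehat H$. Combining this with the finiteness of $\{\alpha_g(N):g\in G\}$ from the previous paragraph (each kernel-class contributes finitely many classes), the full orbit $\{[g\cdot\pi]:g\in G\}$ is finite. Thus $\pi$ is a finite-image, finite-orbit element of $\widehat H$ lying in the prescribed neighbourhood of $[\sigma]$, and density follows.

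The main obstacle, such as it is, is the bookkeeping in the reduction to irreducible $\pi$: one must make sure that when $\pi=\bigoplus\rho_k$ with finite image, each $\rho_k$ still has finite image (clear, as a subrepresentation of a finite-image representation) and that at least one of the $\rho_k$ lies Fell-close to $[\sigma]$, which is exactly the content of the behaviour of Fell topology under direct sums together with Remark \ref{DecompositionIrreducible}. Everything else is the elementary but essential fact that a finitely generated group has only finitely many subgroups of bounded index, which converts "finite image" into "finite orbit."
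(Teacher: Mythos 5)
Your proof is correct and follows essentially the same route as the paper: both arguments show that an irreducible representation with finite image automatically has a finite $G$-orbit, because a finitely generated group admits only finitely many homomorphisms to (equivalently, only finitely many normal subgroups yielding) a finite quotient of bounded order, and then invoke property FD for density. The only difference is bookkeeping: the paper tracks the finitely many homomorphisms $\gamma\circ\alpha(g^{-1}):H\to F$ directly, whereas you first bound the possible kernels $\alpha_g(\ker\pi)$ and then use that each finite quotient has finitely many irreducible representations --- the same finiteness mechanism in two steps.
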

\begin{proof} As usual, we denote by $\alpha$  an action of $G$ on $H$. Since $H$ has property (FD), its  representations factorizing through finite groups are dense in $\widehat H$.  We will show that a  representation  of $H$ factorizing through a finite group has a finite orbit in  $\widehat H$. Let then $\pi$ be a representation of $H$ such that there is a finite group $F$, a homomorphism $\gamma: H \to F$  and a representation $\beta$ of $F$ such that $\pi = \beta \circ \gamma$.  Then for any $g\in G$ and  $h\in H$

$$(\hat\alpha (g)\pi)(h) = \pi\left(\alpha (g^{-1})(h)\right) = \beta\left(\gamma(\alpha(g^{-1})(h))\right). $$ Hence $$\hat\alpha(g) \pi = \beta\circ \tilde \gamma, $$ where $\tilde \gamma = \gamma \circ \alpha(g^{-1}): H \to F$.
Since $H$ is finitely generated, there are only finitely many homomorphisms from $H$ to $F$. Therefore the orbit of $\pi$ in $\widehat H$ is finite.
\end{proof}

\begin{theorem} \label{thm:FDsemid}
	Suppose $H$ is finitely generated and $G$ is amenable. Then $H\rtimes G$ has FD if and only if $H$ has FD and $G$ is RF.
\end{theorem}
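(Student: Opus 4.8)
The plan is to deduce this from the two ingredients prepared just above: Proposition \ref{FDversion} (the characterisation of FD for semidirect products by amenable groups) and Lemma \ref{FDfiniteorbits} (finitely generated FD groups have dense finite-image, finite-orbit representations in $\widehat H$). Both directions will be short once these are in place. First I would treat the forward implication. Suppose $H \rtimes G$ has FD. Since $G$ is a quotient of $H \rtimes G$ and FD passes to quotients (a representation with finite image of $G$ pulls back to one of $H\rtimes G$, and conversely any representation of $G$ is a limit of finite-image representations obtained by restricting the approximants for its inflation to $H \rtimes G$), $G$ has FD; but a group with FD is in particular RF, because the regular representation—or any faithful representation—can be approximated by finite-image ones, forcing finitely many elements to survive a finite quotient. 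Similarly, I would argue $H$ has FD: $H$ is a (normal) subgroup of $H \rtimes G$, and restricting the finite-image approximants of representations of $H \rtimes G$ to $H$ shows every representation of $H$ is a Fell-limit of finite-image representations. Here I should double-check that restriction is continuous for the Fell topology (it is, as restriction of functions of positive type) and that finite image is inherited by restriction (obvious). Alternatively, and perhaps more cleanly, I would invoke Proposition \ref{FDversion} directly: FD of $H \rtimes G$ gives that $G$ is RF and that every element of $\widehat H$ is a Fell-limit of finite-image representations (forgetting the finite-orbit refinement), which is exactly FD of $H$.

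For the converse, assume $H$ has FD and $G$ is RF, with $H$ finitely generated. By Lemma \ref{FDfiniteorbits}, the finite-image representations of $H$ with finite orbits under the $G$-action are dense in $\widehat H$. This is precisely the second condition in Proposition \ref{FDversion}; combined with the hypothesis that $G$ is RF and $G$ is amenable, Proposition \ref{FDversion} yields that $H \rtimes G$ has FD. This completes the argument.

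The only genuinely delicate point is the forward direction's claim that FD descends to the subgroup $H$ and the quotient $G$; but for $G$ this is standard (FD is inherited by quotients and RF follows), and for $H$ it is cleanest to read it off Proposition \ref{FDversion} rather than argue by hand with restrictions, since the proposition already packages the statement ``every element of $\widehat H$ is approximable by finite-image representations'' as a consequence of $H\rtimes G$ having FD. So there is essentially no obstacle here: the theorem is a formal corollary of the preceding proposition and lemma, and the proof can simply say so, spelling out that RF of $G$ is forced because FD implies RF for any group.
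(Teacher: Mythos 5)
Your proposal is correct and coincides with the paper's own (one-line) proof: both directions are read off directly from Proposition \ref{FDversion} combined with Lemma \ref{FDfiniteorbits}, exactly as you conclude. The extra discussion of restriction to $H$ and quotienting to $G$ is unnecessary, as you yourself note, since the forward implication already follows from Proposition \ref{FDversion} by forgetting the finite-orbit refinement.
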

\begin{proof} Follows from Proposition \ref{FDversion} and Lemma \ref{FDfiniteorbits}.
\end{proof}

Since finitely generated free groups, surface groups and amenable RF groups have FD by \cite{LubotzkyShalom}, we obtain the following result. It generalises \cite[Th. 2.8]{LubotzkyShalom}, which says that $H\rtimes \mathbb Z$ has FD for $H$ as in the next corollary.

\begin{corollary} Let $H$ be  the free group $F_n$ or a surface group or a finitely generated amenable RF group. Then for any amenable RF group $G$, $H\rtimes G$ has FD.
\end{corollary}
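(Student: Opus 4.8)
The plan is to deduce this corollary directly from Theorem \ref{thm:FDsemid} together with the results of Lubotzky and Shalom quoted in the paragraph preceding the statement. First I would recall that by \cite{LubotzkyShalom} each of the groups listed — the free group $F_n$, a surface group, and a finitely generated amenable RF group — has property FD; this is precisely the hypothesis needed on $H$ in Theorem \ref{thm:FDsemid}. Since $H$ is in each case finitely generated, and $G$ is assumed amenable and RF (hence in particular RF), both hypotheses of Theorem \ref{thm:FDsemid} are met, and we conclude that $H \rtimes G$ has FD. The ``only if'' direction of Theorem \ref{thm:FDsemid} is not needed here; only the ``if'' direction is invoked.

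In slightly more detail, the single step is: verify the hypotheses of Theorem \ref{thm:FDsemid}. The assumption ``$H$ finitely generated'' is part of the statement. The assumption ``$H$ has FD'' is supplied by \cite[Theorems 2.3 and 2.7]{LubotzkyShalom} (free groups and surface groups have FD) and by the fact, also established there, that every finitely generated amenable RF group has FD. The assumption ``$G$ is amenable'' is part of the statement, and ``$G$ is RF'' is part of the statement. Theorem \ref{thm:FDsemid} then yields that $H \rtimes G$ has FD, which is exactly the assertion. Thus the proof is a one-line invocation; I would write it as:
\begin{proof}
By \cite{LubotzkyShalom} the group $H$ has property FD in each of the three cases listed. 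As $H$ is finitely generated and $G$ is amenable and RF, Theorem \ref{thm:FDsemid} applies and gives that $H \rtimes G$ has FD.
\end{proof}

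There is really no obstacle to anticipate: all the work has been done in establishing Theorem \ref{thm:FDsemid} (which itself rests on Proposition \ref{FDversion} and Lemma \ref{FDfiniteorbits}) and in the cited results of Lubotzky–Shalom. The only point that deserves a moment's care is making sure that the group-theoretic version of Proposition \ref{prop:repconstruction} genuinely produces representations with finite image when the data do — but this is exactly the content of observation (i) in the paragraph introducing Proposition \ref{FDversion}, and it has already been checked there, so nothing further is required in the proof of the corollary. One might also remark, as the paper does, that the case $G = \mathbb{Z}$ recovers \cite[Th.~2.8]{LubotzkyShalom}, and that the generalisation here is genuine precisely because amenable RF groups form a strictly larger class than $\mathbb Z$; but that remark belongs to the surrounding text rather than to the proof itself.
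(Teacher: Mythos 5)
Your proposal is correct and matches the paper's argument exactly: the corollary is obtained by combining Theorem \ref{thm:FDsemid} with the Lubotzky--Shalom results that finitely generated free groups, surface groups and finitely generated amenable RF groups have property FD. Nothing further is needed.
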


We also obtain a version of the result above for other properties considered in this paper.

\begin{theorem}\label{FD} Let $H$ be a finitely generated group with the property FD (for example, the free group $F_n$ or a surface group or any finitely generated amenable RF group). Then
 \begin{itemize}
\item [(i)] for any  amenable MAP group $G$, the C*-algebra
$C^*(H\rtimes G)$ is RFD.

\item [(ii)] for any MAP group $G$, the group $H\rtimes G$ is MAP.
\end{itemize}
\end{theorem}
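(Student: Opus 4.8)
The plan is to deduce Theorem \ref{FD} directly from the structural results already established, namely the characterisations of the RFD and MAP properties for semidirect products in Corollary \ref{SemidirectProductsVersion} and Proposition \ref{MAPversion}, combined with the density statement for finite-image/finite-orbit representations in Lemma \ref{FDfiniteorbits}. So the proof is essentially a matter of assembling these pieces, and the only genuine content is checking that the hypotheses of the relevant characterisation are met.

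For part (i): assume $G$ is amenable and MAP. By Corollary \ref{SemidirectProductsVersion}, $C^*(H\rtimes G)$ is RFD if and only if $G$ is MAP and every element of $\widehat H$ can be approximated in the Fell topology by finite-dimensional elements of $\widehat H$ whose $G$-orbits are finite. The first condition holds by assumption. For the second, Lemma \ref{FDfiniteorbits} applies verbatim since $H$ is finitely generated with property FD: it gives that the elements of $\widehat H$ with finite images and finite orbits are dense in $\widehat H$, and such representations are in particular finite-dimensional. Hence the approximation condition holds and $C^*(H\rtimes G)$ is RFD.

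For part (ii): assume $G$ is MAP (no amenability needed here). By Proposition \ref{MAPversion}, $H\rtimes G$ is MAP if and only if $G$ is MAP and the irreducible finite-dimensional representations of $H$ whose orbits in $\widehat H$ are finite separate the points of $H$. Again the first condition is the hypothesis. For the second, Lemma \ref{FDfiniteorbits} provides a dense set of such representations (even with finite image) in $\widehat H$; since a dense subset of $\widehat H$ automatically separates points of $H$ — a nonzero $h\in H$ is not mapped to the identity by some irreducible representation, and by continuity of the relevant functionals on $\widehat H$ this persists under a small perturbation within the dense set, or one simply invokes that the representations factoring through finite quotients already separate points because a finitely generated FD group is in particular MAP — the separation requirement of Proposition \ref{MAPversion} is satisfied. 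Therefore $H\rtimes G$ is MAP.

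Neither part presents a real obstacle: the work has been front-loaded into Corollary \ref{SemidirectProductsVersion}, Proposition \ref{MAPversion}, and Lemma \ref{FDfiniteorbits}. The one point that warrants a word of care is the passage from ``dense set of finite-image finite-orbit representations'' to ``separates points of $H$'' in part (ii): the cleanest way is to note that property FD for $H$ forces $H$ to be MAP (its representations with finite image, hence finite-dimensional, are dense in $\widehat H$ and already separate points), so the finite-image representations of $H$ separate points, and by Lemma \ref{FDfiniteorbits} these all have finite $G$-orbits; this is exactly the hypothesis needed. The parenthetical examples — $F_n$, surface groups, finitely generated amenable RF groups — are known to have property FD by \cite{LubotzkyShalom}, so they fall under the statement.
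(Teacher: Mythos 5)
Your proposal is correct and follows exactly the paper's own route: the paper's proof is literally the one-line citation of Corollary \ref{SemidirectProductsVersion}, Proposition \ref{MAPversion} and Lemma \ref{FDfiniteorbits}, which is precisely what you assemble. Your extra care about passing from density in $\widehat H$ to separation of points of $H$ (via Remark \ref{conditionii} / the fact that FD forces the finite-image representations to separate points) is a valid and worthwhile elaboration of a step the paper leaves implicit.
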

\begin{proof} Follows from Corollary  \ref{SemidirectProductsVersion}, Corollary \ref{MAPversion} and  Lemma  \ref{FDfiniteorbits}.
\end{proof}

\begin{remark} We could also formulate an RF version of the theorem above, but it would  not give any new information, as it is well known that semidirect product of a finitely generated RF group by an RF group is RF. Same refers to the study of the RF property for examples of Subsection \ref{subsec:T} below.
\end{remark}

Finally we would like to add one more corollary.

\begin{corollary}
	If $G$ is a  1-relator group with torsion, then $C^*(G)$ is RFD.
\end{corollary}
\begin{proof}
	The fact that every 1-relator subgroup with torsion is virtually free-by-cyclic, conjectured by Baumslag, was recently proved in \cite{KL}.
	
	Now \cite[Lemma 4.6]{BekkaLouvet} (a result saying that the property of the group $C^*$-algebra of $G$ being RFD can be deduced from the same property for a finite index subgroup of $G$) and Theorem \ref{FD} (i) yield the proof.
	
\end{proof}

\smallskip

\subsection{Property (T)} \label{subsec:T}

\begin{theorem}\label{FinManyIrreps} Let $H$ be a group  such that for each dimension $n$, $n\in \mathbb N$, $H$ has only finitely many pairwise non-unitarily equivalent  irreducible representations of dimension $n$.   Then
 \begin{itemize}
\item [(i)] if  $C^*(H)$ is RFD, then  for any  MAP amenable group   $G$ the C*-algebra   $C^*(H\rtimes G)$ is RFD;

\item [(ii)]  if $H$ is MAP, then for any MAP group $G$,   $H\rtimes G$ is MAP.

\end{itemize}
\end{theorem}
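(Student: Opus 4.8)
The plan is to deduce both statements from the characterizations already proven, namely Corollary \ref{SemidirectProductsVersion} (for (i)) and Proposition \ref{MAPversion} (for (ii)), once we establish the following key fact: if $H$ has only finitely many pairwise non-unitarily-equivalent irreducible representations in each fixed finite dimension $n$, then \emph{every} finite-dimensional irreducible unitary representation of $H$ automatically has a finite orbit in $\widehat H$ under the $G$-action. Indeed, by Lemma \ref{homeoPrim} (applied via the identification of $\widehat H$ with $\Prim(C^*(H))$, or directly) $G$ acts on $\widehat H$, and this action clearly preserves the dimension of a representation: $\dim(g\cdot \rho) = \dim \rho$ for all $g\in G$. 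Hence the orbit of any $\rho \in \widehat H$ with $\dim \rho = n$ is contained in the set of classes of irreducible representations of dimension $n$, which by hypothesis is finite. So every such $\rho$ has a finite (indeed uniformly bounded in size) orbit.

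With this observation in hand, statement (i) follows immediately: assume $C^*(H)$ is RFD and $G$ is an amenable MAP group. Being RFD means exactly that finite-dimensional irreducible representations of $H$ separate points of $C^*(H)$, i.e.\ the set of finite-dimensional elements of $\widehat H$ is dense in $\widehat H$ in the Fell/Jacobson topology (cf.\ Remark \ref{conditionii}). By the observation above, every one of these finite-dimensional elements has a finite orbit, so in particular the finite-dimensional elements of $\widehat H$ with finite orbits are dense in $\widehat H$. Since $G$ is MAP and amenable, Corollary \ref{SemidirectProductsVersion} now gives that $C^*(H\rtimes G)$ is RFD.

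Similarly, for statement (ii) assume $H$ is MAP and $G$ is MAP (no amenability needed here, matching Proposition \ref{MAPversion}). That $H$ is MAP means finite-dimensional (irreducible, after decomposing) representations of $H$ separate the points of $H$; again by the dimension-preservation observation each such irreducible representation has a finite orbit in $\widehat H$. Hence the irreducible finite-dimensional representations of $H$ with finite orbits in $\widehat H$ separate points of $H$, and Proposition \ref{MAPversion} yields that $H\rtimes G$ is MAP. The only mild subtlety — and the one point I would check carefully rather than leave to the reader — is that in decomposing a finite-dimensional representation $\pi$ of $H$ with $\pi(h)\neq \mathbf 1$ into irreducibles $\rho_1\oplus\cdots\oplus\rho_k$, at least one $\rho_j$ satisfies $\rho_j(h)\neq \mathbf 1$ and this $\rho_j$, being finite-dimensional, has finite orbit by the observation; Remark \ref{DecompositionIrreducible} is what makes the bookkeeping of orbits clean, though in fact we do not even need it since finiteness of each irreducible orbit is automatic here. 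I do not anticipate a genuine obstacle: the entire content is the elementary remark that the $G$-action on $\widehat H$ fixes dimensions, combined with the already-established machinery.
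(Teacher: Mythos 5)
Your proposal is correct and follows exactly the paper's argument: the key point in both is that the induced $G$-action on $\widehat H$ preserves dimension, so the finiteness hypothesis forces every finite-dimensional irreducible representation to have a finite orbit, after which Corollary \ref{SemidirectProductsVersion} gives (i) and Proposition \ref{MAPversion} gives (ii). The paper states this in three lines; you merely spell out the same steps in more detail.
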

\begin{proof} The induced action of $G$ on $\widehat H$ preserves the dimension. Therefore each irreducible finite-dimensional representation of $H$ has a  finite orbit in $\widehat H$.  The statements follow now from Corollary \ref{SemidirectProductsVersion} and Corollary \ref{MAPversion}.
\end{proof}

\begin{corollary} \label{cor:T} Let $H$ be a group with  property (T). Then
 \begin{itemize}
\item [(i)]  if $H$ is MAP, then for any MAP group $G$,  $H\rtimes G$ is MAP;

\item [(ii)] if $C^*(H)$  is RFD, then for any MAP amenable group $G$,  $C^*(H\rtimes G)$ is RFD.
\end{itemize}
\end{corollary}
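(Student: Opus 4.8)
The plan is to derive Corollary~\ref{cor:T} as a direct consequence of Theorem~\ref{FinManyIrreps}, so the only real work is checking that a group with property~(T) satisfies the hypothesis of that theorem, namely that for each $n \in \mathbb N$ it has only finitely many pairwise non-unitarily equivalent irreducible representations of dimension~$n$. First I would recall that a finitely generated property~(T) group has, for each fixed $n$, only finitely many inequivalent $n$-dimensional irreducible unitary representations; this is a classical fact (it goes back to the observation that the set of $n$-dimensional representations of a property~(T) group is both open and closed in $\mathrm{Hom}(H,U(n))$ modulo conjugation, together with compactness of $U(n)$ and finite generation — see e.g.\ \cite{BookPropT}). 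Since property~(T) groups are finitely generated, this applies. Hence $H$ meets the hypothesis of Theorem~\ref{FinManyIrreps}.

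Then statement (i) of the corollary is exactly statement (ii) of Theorem~\ref{FinManyIrreps}: if $H$ is MAP, then $H\rtimes G$ is MAP for every MAP group $G$. Likewise statement (ii) of the corollary is exactly statement (i) of Theorem~\ref{FinManyIrreps}: if $C^*(H)$ is RFD, then $C^*(H\rtimes G)$ is RFD for every MAP amenable group~$G$. So the proof reduces to the single verification described above, and can be written in a couple of lines.

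I expect the only point requiring care to be the citation/justification of the ``finitely many $n$-dimensional irreducibles'' property for property~(T) groups; everything else is a formal appeal to Theorem~\ref{FinManyIrreps}. A short proof would read:

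\begin{proof}
A group with property~(T) is finitely generated, and for a finitely generated property~(T) group $H$ the set of (equivalence classes of) $n$-dimensional irreducible unitary representations is finite for every $n\in\mathbb N$; see \cite[Chapter~7]{BookPropT}. Thus $H$ satisfies the hypothesis of Theorem~\ref{FinManyIrreps}, and both statements follow: (i) is Theorem~\ref{FinManyIrreps}(ii), and (ii) is Theorem~\ref{FinManyIrreps}(i).
\end{proof}
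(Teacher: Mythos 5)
Your proposal is correct and follows exactly the paper's own route: the corollary is deduced from Theorem \ref{FinManyIrreps} once one knows that a property (T) group has only finitely many pairwise inequivalent irreducible representations of each fixed dimension. The paper cites Wang \cite{Wang} for that classical fact rather than sketching the finite-generation/isolation argument, but this is an inessential difference.
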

\begin{proof}
It follows from Theorem \ref{FinManyIrreps} and  the fact that a property (T) group has only finitely many pairwisely non-unitarily equivalent  irreducible representations of each dimension \cite[Theorem 2.6]{Wang}.
\end{proof}

It is an open question of whether a  property (T) group can have RFD full C*-algebra.

\bigskip

\subsection{Free product actions} \label{subsec:free}

Exel and Loring's characterization of RFD in terms of approximation of representations was their main tool for proving that RFD passes to free products (\cite{ExelLoring}). Similarly our dynamical version of Exel-Loring result allows to deal with free products of actions.

Let $A$ and $B$ be two C*-algebras and let $i_1: A \to A\ast B$ and $i_2: B \to A\ast B$ denote the canonical embeddings. Suppose $\alpha$ is an action of $G$ on $A$ and $\beta$ is an action of $G$ on $B$.  Then $G$ acts on $A\ast B$ with the action $\gamma$ (which we also will denote by $\alpha \ast \beta$) defined as follows.
For $g\in G$, we define $\gamma_1(g): A \to A\ast B$ and $\gamma_2(g): B \to A\ast B$ by
$$\gamma_1(g)(a) = i_1(\alpha(g)(a)), \; \gamma_2(g)(b) = i_2(\beta(g)(b)),$$
$a\in A, b\in B$. Then $\gamma_i(g)$ is a $\ast$-homomorphism, for $i=1, 2, g\in G$. By the universal property of free products, for each $g\in G$ we have a $\ast$-homomorphism $\gamma(g): A \ast B \to A\ast B$ which is obviously invertible. It is straightforward to check that $\gamma: G \to \textup{Aut} (A\ast B)$ is a group homomorphism.

\medskip

The proof of the following theorem goes along the same lines as the proof of Exel and Loring that RFD passes to free products. Free products can be both unital and non-unital.

\begin{theorem}\label{FreeProductAction} Let $G$ be amenable. Suppose $A\rtimes_{\alpha} G$ and $B\rtimes_{\beta} G$ are RFD. Then the crossed product $(A\ast B)\rtimes_{\alpha\ast \beta} G$ is RFD.
\end{theorem}
\begin{proof} By Theorem \ref{thm:justact} and \ref{th:maincrossed} it is sufficient to show that any representation $\rho$ of $A\ast B$ is a corner of a representation that can be  SOT-approximated by finite-dimensional representations with finite orbits. We can assume that $\rho$ is infinite-dimensional, otherwise we can replace it by $\rho^{\oplus \infty}$.  We write $\rho = \rho_A \ast \rho_B$, where $\rho_A = \rho|_{i_1(A)}, \rho_B = \rho|_{i_2(B)}.$   Since $A\rtimes_{\alpha} G$ and $B\rtimes_{\beta} G$ are RFD, by Theorems \ref{thm:justact} and \ref{th:maincrossed} the representation $\rho_A$ ($\rho_B$, respectively)
is a corner of a representation $\tilde \rho_A$ ($\tilde\rho_B$, respectively) that can be SOT-approximated by finite-dimensional representations with finite orbits. It might happen that $\tilde \rho_A, \tilde \rho_B$ live on spaces of different cardinality, say
$$m:= \dim \tilde \rho_A  > k:= \dim \tilde \rho_B.$$
Since for infinite cardinals one has $km = \max\{k, m\},$ we obtain
$$
\dim \tilde\rho_B^{(\oplus m)} = m \dim \tilde\rho_B = \max\{m, k\} = m.$$
Replacing $\tilde\rho_B$ by $\tilde\rho_B^{(\oplus m)}$, we now can assume that $\tilde \rho_A$ and $\tilde \rho_B$ live on the same space $\Hil$, $\rho_A, \rho_B$ are their corners, and
$$\tilde\rho_A = SOT-\lim \rho_n^A, \; \tilde \rho_B = SOT-\lim \rho_n^B,$$
where $\rho_n^A, \rho_n^B$ are finite-dimensional with finite orbits. A priori $\rho_n^A$ and $ \rho_n^B$ live on different subspaces of $\Hil$, say $E_n$ and $F_n$.

In the non-unital case, by adding zero summands to  $\rho_n^A, \rho_n^B$ we think of them as representations on $\Hil$ and immediately conclude that
$$\tilde\rho_A \ast \tilde\rho_B = SOT-\lim \rho_n^A \ast \rho_n^B.$$

In the unital case, let $\Kil_n$ be a finite-dimensional subspace of $\Hil$ containing both $E_n$ and $F_n$ and with dimension a common multiple of $\dim E_n$ and $\dim F_n$. Let $\bar\rho_n^A$ ($\bar\rho_n^B$, respectively) be the representation of $A$ ($B$, respectively) on $K_n$ obtained by the appropriate multiple of $\rho_n^A$ ($\rho_n^B$, respectively). Then, by \cite[Lemma 3.1]{ExelLoring},  we still have \begin{equation}\label{1} \rho_A = SOT-\lim \bar\rho_n^A, \; \rho_B = SOT-\lim \bar\rho_n^B\end{equation} and $\bar\rho_n^A$, $\bar\rho_n^B$ have finite orbits and live on the same finite-dimensional subspace of $\Hil$. It implies
$$\rho = SOT-\lim \bar\rho_n^A \ast \bar\rho_n^B.$$

It remains to notice that $\rho_n^A \ast \rho_n^B$ ($\bar\rho_n^A \ast \bar\rho_n^B$, respectively) has finite orbit.
Indeed it is straightforward to check that if we have representations, say $\pi_A$ of $A$ and $\pi_B$ of $B$, then $\hat{\alpha\ast\beta}(g)(\pi_A\ast \pi_B) = \hat \alpha(g)(\pi_A)\ast \hat\beta(g)(\pi_B)$.
\end{proof}

We will now present an example of application of the above result.

Let $m \in \N, m \geq 2$. Recall that the \emph{Baumslag- Solitar group} $BS(1, m)= \langle y^{-1}ay= a^m\rangle$ can be written as semidirect product
$$BS(1, m) = \mathbb Z[1/m]\rtimes _{\alpha} \mathbb Z,$$ where $\alpha$ is the multiplication by $m$ and the isomorphism is given by
$$a\mapsto \frac{1}{m}\in \mathbb Z[1/m], \; y \mapsto 1\in \mathbb Z.$$

\begin{proposition}
Let $m, n \in \N$. The group  $BS(1, m) \ast_{\mathbb Z} BS(1, n)$ (where the amalgamation takes place over the copies of $\Z$ given by `$y$-elements') is RF, and $C^*(BS(1, m) \ast_{\mathbb Z} BS(1, n))$ is RFD.
\end{proposition}

\begin{proof}
Since $BS(1, m)$ is amenable and RF, $C^*(\mathbb Z[1/m]\rtimes _{\alpha} \mathbb Z)$ is RFD. By Theorem \ref{FreeProductAction} $C^*((\mathbb Z[1/m]\ast \mathbb Z[1/n])\rtimes \mathbb Z$ is RFD. Since
$$(\mathbb Z[1/m]\ast \mathbb Z[1/n])\rtimes \mathbb Z = \langle y^{-1}ay = a^m, y^{-1}by = b^m\rangle = BS(1, m)\ast_{\mathbb Z} BS(1,n),$$
we conclude  that
	$C^*(BS(1, m) \ast_{\mathbb Z} BS(1, n))$ is RFD. Since $BS(1, m) \ast_{\mathbb Z} BS(1, n)$ is finitely generated, this implies also that it is RF.
\end{proof}


\bigskip

\subsection{Semidirect products of  $\left(\mathbb Z[\frac{1}{p}]\right)^n$}

Let $p\in \mathbb N$ be prime.
We will need the following proposition which in the case $p=2$ can be found in  \cite[Lemma 5.6]{ES}. The proof for arbitrary $p$ is  almost the same, only with a small change. We write it here for the reader convenience.

\begin{proposition}\label{prime} Let $S\leq \mathbb Z[\frac{1}{p}]$ be non-cyclic.  Let $k_0=\min\{ k\in \Z^+: k\in S \}.$  Then $S=k_0\cdot \mathbb Z[\frac{1}{p}] = \langle  \frac{k_0}{p^n}:n=0,1,... \rangle.$
\end{proposition}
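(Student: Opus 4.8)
\textbf{Proof plan for Proposition \ref{prime}.}

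The plan is to prove that every non-cyclic subgroup $S \leq \mathbb{Z}[\frac{1}{p}]$ is of the described form by first isolating the smallest positive integer $k_0$ in $S$ and then showing divisibility by arbitrary powers of $p$ forces the full $p$-divisible hull. First I would observe that $S \cap \Z^+$ is non-empty: pick any nonzero $s \in S$; then $s = a/p^m$ for some $a \in \Z \setminus\{0\}$ and $m \geq 0$, and multiplying $s$ by $p^m$ (i.e.\ adding $s$ to itself $p^m$ times) gives $a \in S$, so $|a| \in S \cap \Z^+$. Hence $k_0$ is well-defined. Write $D := k_0 \cdot \mathbb{Z}[\frac{1}{p}] = \langle k_0/p^n : n = 0,1,\ldots\rangle$; the inclusion $D \subseteq S$ requires showing $k_0/p^n \in S$ for all $n$, which is the crux of the argument, and $S \subseteq D$ is the routine direction.

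For $S \subseteq D$: take $s \in S$, write $s = a/p^m$ in lowest terms. I want $k_0 \mid a$. By Euclidean division $a = q k_0 + r$ with $0 \le r < k_0$; then $r/p^m = s - q(k_0/p^m) \in S$ provided $k_0/p^m \in S$ — so actually this direction also depends on the key claim. Alternatively, and more cleanly: I would first establish the key claim, then deduce $S \subseteq D$ from it. The key claim is that $k_0/p^n \in S$ for every $n \geq 0$. Here is where non-cyclicity enters: since $S$ is not cyclic, $S \neq k_0 \Z$, so there exists some element of $S$ not in $k_0\Z$; such an element has the form $b/p^n$ with $n \geq 1$ and (after reducing) $p \nmid b$. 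I would argue by induction on $n$ that the presence of \emph{some} element with denominator exactly $p^n$ forces $k_0/p^n \in S$. Given $b/p^n \in S$ with $p \nmid b$, since $\gcd(b,p^n)=1$ there exist integers $u,v$ with $ub + vp^n = 1$; but I need to relate $b$ to $k_0$. The right move: consider that $p^{n-1} \cdot (b/p^n) = b/p \in S$, and repeatedly; so without loss of generality I reduce to the case $n=1$, i.e.\ some $b/p \in S$ with $p \nmid b$. Now reduce $b$ modulo $k_0$ using that $k_0 \in S$: replacing $b/p$ by $b/p - \lfloor b/k_0\rfloor k_0 \cdot (1/p)$... — wait, $k_0/p$ need not yet be known to lie in $S$. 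Instead I use: $b/p \in S$ and $k_0 \in S$, so $b/p + t k_0 \in S$ for all $t \in \Z$; choosing $t$ appropriately I can assume $0 < b < p k_0$ (shifting $b$ by multiples of $pk_0$), and since $p \nmid b$ we get that $b$ and $pk_0$... Here I would instead directly compute: from $b/p \in S$ we get $b \in S$, so $\gcd(b, k_0) \in S$ by the Euclidean algorithm run inside $S$ (since $S$ is a subgroup of $(\mathbb{Z}[\frac1p],+)$ containing $b$ and $k_0$, it contains $\gcd(b,k_0)$), forcing $\gcd(b,k_0) \geq k_0$ by minimality, hence $k_0 \mid b$. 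Write $b = c k_0$ with $p \nmid c$ (as $p \nmid b$). Then $b/p = c k_0/p \in S$, and since $\gcd(c,p)=1$ pick $u,v$ with $uc + vp = 1$; then $u \cdot (ck_0/p) + v k_0 = k_0/p \in S$. This gives $k_0/p \in S$. The general case $k_0/p^n \in S$ then follows by the same device applied inductively once we know $S$ contains an element of denominator $p^n$ — and non-cyclicity only guaranteed denominator $p^{n_0}$ for \emph{one} $n_0 \geq 1$; but $k_0/p \in S$ implies $k_0/p^2 \in S$? No — I would instead note that once $k_0/p \in S$, to climb higher I use any original element with larger denominator, OR observe that the subgroup generated by $k_0$ and $k_0/p$ inside $\mathbb{Z}[\frac1p]$ is $\langle k_0/p\rangle \cong \Z$, still cyclic, so non-cyclicity must supply an element of denominator $p^n$ for infinitely many $n$ (if denominators were bounded by $p^N$, then $S \subseteq \frac{1}{p^N}\Z$ and, being a subgroup of the cyclic group $\frac{1}{p^N}\Z \cong \Z$, would be cyclic). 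Hence for every $n$ there is an element of $S$ with denominator divisible by $p^n$; running the above argument at that level yields $k_0/p^n \in S$. This proves $D \subseteq S$.

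Finally, for $S \subseteq D$: given $s = a/p^m \in S$, since $k_0/p^m \in D \subseteq S$, the Euclidean algorithm inside the subgroup $S$ applied to $a/p^m$ and $k_0/p^m$ shows $\gcd(a,k_0)/p^m \in S$, and $\gcd(a,k_0)/p^m \cdot p^m = \gcd(a,k_0) \in S$ would contradict minimality of $k_0$ unless $k_0 \mid \gcd(a,k_0)$, i.e.\ $k_0 \mid a$; hence $s = (a/k_0)(k_0/p^m) \in D$. This completes both inclusions and the proof. The main obstacle is the bookkeeping in the key claim — correctly extracting that non-cyclicity forces unbounded $p$-denominators and then the coprimality manipulation $uc + vp = 1$ to pull $k_0/p^n$ into $S$; everything else is routine subgroup-of-$\Z$ reasoning.
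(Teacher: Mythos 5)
Your argument is correct, and it takes a genuinely different route from the paper's. The paper works entirely with order and divisibility: it first shows $S$ has no least positive element, then defines $x_n = k_n/p^n$ as the minimal positive element of $S \cap p^{-n}\mathbb{Z}$, shows $x_n \to 0$, deduces that each numerator $k_n$ is coprime to $p$, and finally proves $k_i = k_{i+1}$ by a delicate inequality argument (choosing $l$ with $j(l-1) < p < jl$ and deriving a contradiction). Your proof instead leans on two standard structural facts: that every subgroup of $\mathbb{Z}$ is cyclic (used twice — once to get $S \cap \mathbb{Z} = k_0\mathbb{Z}$, and once to show that bounded $p$-denominators would force $S$ to be cyclic, so non-cyclicity supplies elements $b/p^m$ in lowest terms with $m$ arbitrarily large), and Bézout's identity $uc + vp^m = 1$ to convert $ck_0/p^m \in S$ and $k_0 \in S$ into $k_0/p^m \in S$. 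This is shorter and more conceptual; the paper's version is more self-contained and elementary. You correctly identified and closed the one real gap in your sketch — that non-cyclicity a priori only gives one large denominator, which you fix by the boundedness argument. Two small remarks: your reverse inclusion $S \subseteq k_0\mathbb{Z}[\frac{1}{p}]$ can be streamlined (for $s = a/p^m \in S$ one has $a = p^m s \in S \cap \mathbb{Z} = k_0\mathbb{Z}$ directly, with no need for a Euclidean algorithm at level $p^{-m}$), and the exploratory false starts ("wait", abandoned reductions) should be excised from a final write-up, since the clean path is: well-definedness of $k_0$; unbounded denominators; $k_0 \mid b$; Bézout; then the easy inclusion.
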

\begin{proof} Let $S\leq \mathbb Z[\frac{1}{p}]$ and suppose that $S$ has a least positive element $x$. Since $S$ is not cyclic, there is a $y\in S$ with $y>0$ such that $xk\neq y$ for all $k\in\Z^+.$  Hence there is a $k\in \Z^+$ such that $kx<y<(k+1)x.$  Then $0<y-kx<x$ and $y-kx\in S$, which is a contradiction.  We deduce that $S$ has no least positive element.

For $n\geq 0$ define $x_n\in S$ and $k_n\in\Z^+$ such that
\begin{equation*}
x_n:=\frac{k_n}{p^n} = \min\{  x\in S\cap \langle p^{-n} \rangle:x>0 \}.
\end{equation*}
Then $x_n\to 0 .$ Hence there is some index $n\in \N$ such that $\frac{k_{n+1}}{p^{n+1}}<\frac{k_n}{p^n}.$  If $k_{n+1}=pk$ for some integer $k$, then $\frac{k_{n+1}}{p^{n+1}}=\frac{pk}{p^{n+1}}=\frac{k}{p^n}<x_n$, a contradiction. Therefore $k_{n+1}$ is not divisible by $p$.
For $0\leq i<n+1$ we have
\begin{equation*}
\frac{k_{n+1}}{p^i}=x_{n+1}p^{n+1-i}\in S\cap \langle p^{-i} \rangle
\end{equation*}
Hence $k_i|k_{n+1}$ and in particular $k_i$ is not divisible by $p$.  Since $x_n\rightarrow 0$ we can find pairs $x_n>x_{n+1}$ with arbitrarily large $n$  and then repeat the above argument to obtain that $k_m$ is not divisible by $p$ for all $m$.

We now show that $k_i=k_{i+1}$ for all $i\geq 0.$
By the previous paragraph we have $k_{i+1}=k_ij$ for some $j.$  Suppose, for the sake of contradiction, that $j\neq 1$.  Since $k_{i+1}$ is not divisible by $p$, $j$ is also not divisible by $p$.  Then, since $p$ is prime, $p$ is not divisible by $j$. Therefore there is $l\in \mathbb N$ such that
\begin{equation}\label{prime1} jl > p,
\end{equation}
\begin{equation}\label{prime2} j(l-1) < p.
\end{equation}
Since $\frac{jk_i}{p^{i+1}} = x_{i+1}\in S$ and $\frac{pk_i}{p^{i+1}} = x_i\in S$, using (\ref{prime2}) we obtain

$$0<\frac{(p-(l-1)j)k_i}{p^{i+1}} \in S.$$
This implies that
$$(p-(l-1)j)k_i \ge k_{i+1} = jk_i$$ whence  $p - (l-1)j \ge j$ and we obtain $p\ge lj$. This contradicts (\ref{prime1}).

Thus let $k_0: = k_i$. We have shown that $k_0\cdot \mathbb Z[\frac{1}{p}] \subseteq S$. Suppose $S\ni x \notin k_0\cdot \mathbb Z[\frac{1}{p}]$. Then $x = \frac{mk_0+r}{p^i}$, for some $m\in \mathbb Z$, $i, r\in \mathbb N$ with $r<k_0$. Then
$$\frac{r}{p^i} = x - m \frac{k_0}{p^i} \in S $$ and $$\frac{r}{p^i}<\frac{k_0}{p^i} = \frac{k_i}{p^i}.$$ Contradiction.
\end{proof}

\begin{corollary}\label{CorPrime} Let $p$ be a prime.  For any finite group $F$ there are at most finitely many homomorphisms from $\mathbb Z[\frac{1}{p}]$ to $F$.
\end{corollary}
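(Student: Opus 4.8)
The plan is to prove that every homomorphism $\phi\colon\mathbb Z[\frac1p]\to F$ has a \emph{non-cyclic} kernel, and then to observe that there are only finitely many such kernels, each admitting only finitely many compatible maps into $F$. First I would note that since $F$ is finite, the image $\phi(\mathbb Z[\frac1p])$ is a finite abelian group, so $S:=\ker\phi$ has finite index in $\mathbb Z[\frac1p]$. The key point of this step is that $S$ cannot be cyclic: as $\mathbb Z[\frac1p]$ is torsion-free, a cyclic $S$ is either trivial or infinite cyclic; a trivial kernel would make $\phi$ an injection of an infinite group into $F$, while if $S=\langle m/p^k\rangle$ with $m\neq 0$, then the image of $1/p^{k+n}$ in $\mathbb Z[\frac1p]/S$ has order at least $p^n$, so the quotient is infinite --- each case being impossible.

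Next I would invoke Proposition \ref{prime}: since $S$ is non-cyclic, $S=k_0\cdot\mathbb Z[\frac1p]$ with $k_0=\min\{k\in\mathbb Z^+:k\in S\}$. Since $p$ is a unit in $\mathbb Z[\frac1p]$, writing $k_0=p^a b$ with $\gcd(b,p)=1$ gives $S=b\cdot\mathbb Z[\frac1p]$; moreover $b\cdot\mathbb Z[\frac1p]\cap\mathbb Z=b\mathbb Z$ (from $p^n\mid ba\Rightarrow p^n\mid a$), so in fact $k_0=b$ is coprime to $p$ and $b=|\mathbb Z[\frac1p]/S|$. I would then check that the canonical map $\mathbb Z/b\mathbb Z\to\mathbb Z[\frac1p]/b\mathbb Z[\frac1p]$ is an isomorphism: it is surjective because $\gcd(b,p)=1$ forces $\mathbb Z[\frac1p]=\mathbb Z+b\mathbb Z[\frac1p]$, and injective by the same intersection computation. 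Thus each such $\phi$ factors as the canonical quotient $\mathbb Z[\frac1p]\twoheadrightarrow\mathbb Z[\frac1p]/S\cong\mathbb Z/b\mathbb Z$ followed by an embedding into $F$.

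The count then finishes the argument: $b=|\phi(\mathbb Z[\frac1p])|$ divides $|F|$, so $b$ ranges over a finite set, and for each admissible $b$ there are only finitely many homomorphisms $\mathbb Z/b\mathbb Z\to F$, hence only finitely many $\phi$ in total. In fact one can get the sharper bound $|\mathrm{Hom}(\mathbb Z[\frac1p],F)|\le|F|$ by a slightly different route that sidesteps Proposition \ref{prime} altogether: on the finite group $\phi(\mathbb Z[\frac1p])$ multiplication by $p$ is surjective, since $\phi(x)=p\cdot\phi(x/p)$ for every $x$, hence bijective; so $\phi(1/p^n)$ is forced to be the unique preimage of $\phi(1)$ under multiplication by $p^n$, and as the elements $1/p^n$ generate $\mathbb Z[\frac1p]$, the map $\phi$ is completely determined by $\phi(1)\in F$.

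I do not expect a real obstacle here; the only steps needing a line of verification are the exclusion of cyclic kernels and the identification $\mathbb Z[\frac1p]/b\mathbb Z[\frac1p]\cong\mathbb Z/b\mathbb Z$ when $\gcd(b,p)=1$, both of which are elementary.
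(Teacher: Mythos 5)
Your main argument is correct and follows the same route as the paper --- both proofs reduce the statement to Proposition \ref{prime}, which shows that $\mathbb Z[\frac1p]$ has at most one subgroup of each finite index, so that a homomorphism to $F$ is determined by the (finitely many possible) kernels together with the (finitely many) maps from each finite quotient into $F$. You are in fact more careful than the paper on two points: you verify that $\ker\phi$ is non-cyclic, which is the standing hypothesis of Proposition \ref{prime} and is silently assumed in the paper's proof; and you count over all admissible indices $b\mid|F|$ rather than asserting, as the paper loosely does, that ``all homomorphisms have the same kernel'' (homomorphisms with images of different orders have kernels of different index). Your closing alternative argument --- that multiplication by $p$ is bijective on the finite image, so $\phi$ is determined by $\phi(1)$, giving $|\mathrm{Hom}(\mathbb Z[\frac1p],F)|\le|F|$ --- is a genuinely different and more elementary route that bypasses Proposition \ref{prime} entirely; it does need one extra line, since ``the unique preimage of $\phi(1)$ under multiplication by $p^n$'' is a priori unique only inside the image of $\phi$, which itself depends on $\phi$. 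The fix is immediate: the image has order coprime to $p$, hence so does the order of $\phi(1/p^n)$, and since $p^n\phi(1/p^n)=\phi(1)$ with $p^n$ invertible modulo that order, $\phi(1/p^n)$ lies in $\langle\phi(1)\rangle$, where the preimage under multiplication by $p^n$ is indeed unique. With that line added, both of your arguments are complete.
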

\begin{proof}  Since  $\mathbb Z[\frac{1}{p}]$ is abelian, it is suffices to consider only a finite cyclic $F$. By Proposition \ref{prime}, for each $k_0$  there is only one subgroup of index $k_0$ in  $\mathbb Z[\frac{1}{p}]$. Therefore all homomorphisms from $\mathbb Z[\frac{1}{p}]$ to $F$ have the same kernel. Let $f:\mathbb Z[\frac{1}{p}] \to F$  be any of them. Then any other can be written as $\alpha \circ f$, where $\alpha$ is an automorphism of $F$. The result follows.
\end{proof}

\medskip

\noindent Being amenable RF, $ \mathbb Z[\frac{1}{p}]$  has the property FD (\cite[Corollary 2.5]{LubotzkyShalom}).  Taking into account Corollaries \ref{CorPrime},  \ref{SemidirectProductsVersion}, \ref{MAPversion} and \ref{RFversion},  the same arguments as in Lemma \ref{FDfiniteorbits} show the following result.

\medskip


\begin{theorem}\label{Z1p} Let  $H$ be $\mathbb Z[\frac{1}{p}]$ or a direct product of  finitely many copies of $\mathbb Z[\frac{1}{p}]$. Then
 \begin{itemize}
\item [(i)]  for any MAP group $G$,  $H\rtimes G$ is MAP;

\item [(ii)]  for any RF group $G$,  $H\rtimes G$ is RF;

\item [(iii)] for any MAP amenable group $G$,  $C^*(H\rtimes G)$ is RFD.

\end{itemize}

\end{theorem}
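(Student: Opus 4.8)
The plan is to combine the three characterisations of Subsection \ref{sec:Semidirect} with the structural facts about $\mathbb{Z}[\frac1p]$ recorded in Proposition \ref{prime} and Corollary \ref{CorPrime}, following the same pattern as Lemma \ref{FDfiniteorbits} and the proof of Theorem \ref{FD}. Write $H=\mathbb{Z}[\frac1p]^{\,n}$; it is abelian, so $\widehat H$ consists of characters, and being amenable and RF it has property FD by \cite[Corollary 2.5]{LubotzkyShalom}. Hence the characters of $H$ of finite order --- equivalently, those with finite image --- are dense in $\widehat H$. The one extra observation needed is that every such character $\chi$ automatically has a finite $G$-orbit: writing $\chi=\beta\circ\gamma$ with $\gamma\colon H\to F$ for some finite (abelian) group $F$, we get $\hat\alpha(g)\chi=\beta\circ(\gamma\circ\alpha(g^{-1}))$ for $g\in G$, and $\gamma\circ\alpha(g^{-1})$ ranges through the set $\operatorname{Hom}(H,F)$. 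This set is finite by Corollary \ref{CorPrime} together with $\operatorname{Hom}(\mathbb{Z}[\frac1p]^{\,n},F)\cong\operatorname{Hom}(\mathbb{Z}[\frac1p],F)^{\,n}$ (the image of any homomorphism from an abelian group being abelian, one may assume $F$ abelian). This is exactly what replaces the appeal to finite generation in Lemma \ref{FDfiniteorbits}, which is unavailable here since $\mathbb{Z}[\frac1p]$ is not finitely generated.

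Consequently the characters of $H$ with finite image and finite $G$-orbit are dense in $\widehat H$. Clause (iii) is then immediate from Corollary \ref{SemidirectProductsVersion}: its standing hypothesis (amenability of $G$) and both right-hand conditions ($G$ MAP, and the required density of finite-dimensional elements of $\widehat H$ with finite orbits) are satisfied, so $C^*(H\rtimes G)$ is RFD. For (i) and (ii) we use that, $H$ being abelian, $\widehat H=\Prim(C^*(H))$ equipped with the Jacobson topology; hence, by Remark \ref{conditionii}, a dense subfamily of $\widehat H$ separates the points of $C^*(H)$ and therefore of $H$. Thus the irreducible (i.e.\ one-dimensional), finite-image, finite-orbit representations of $H$ separate points of $H$. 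Feeding this into Proposition \ref{MAPversion} (together with the hypothesis that $G$ is MAP) gives (i), and into Proposition \ref{RFversion} (together with the hypothesis that $G$ is RF) gives (ii).

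There is no genuine obstacle here --- the statement is an assembly of results already in place --- and the only points deserving attention are bookkeeping. One must (a) promote Corollary \ref{CorPrime} from a single copy of $\mathbb{Z}[\frac1p]$ to the finite power $H$, which is the elementary remark above; (b) match the hypotheses on $G$ in each clause with what the corresponding characterisation requires --- amenability of $G$ for Corollary \ref{SemidirectProductsVersion}, the MAP property of $G$ for Corollaries \ref{SemidirectProductsVersion} and \ref{MAPversion}, and residual finiteness of $G$ for Proposition \ref{RFversion}, all of which are present; and (c) invoke the routine facts that a character of finite image is finite-dimensional and that the density statement delivered by FD therefore lands precisely among the representations the characterisations ask for, so that no decomposition into irreducible constituents beyond the trivial (abelian) one is needed.
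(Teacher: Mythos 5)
Your proposal is correct and follows essentially the same route as the paper: the paper likewise invokes property FD of $\mathbb Z[\frac1p]$ via \cite[Corollary 2.5]{LubotzkyShalom}, uses Corollary \ref{CorPrime} in place of finite generation to run the argument of Lemma \ref{FDfiniteorbits} and conclude that finite-image characters have finite orbits and are dense in $\widehat H$, and then feeds this into Corollaries \ref{SemidirectProductsVersion}, \ref{MAPversion} and \ref{RFversion}. Your bookkeeping remarks (the reduction of $\operatorname{Hom}(H,F)$ to finitely many homomorphisms via the product decomposition, and the translation from density in $\widehat H$ to separation of points of $H$) are exactly the details the paper leaves implicit.
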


\medskip

The following lemma is undoubtedly known to experts but it is hard to find a reference, so we give a proof here.

\begin{lemma} \label{RingHomomorphisms}  Ring homomorphisms from  $\mathbb Z[\frac{1}{p}]$ to finite rings separate points of   $\mathbb Z[\frac{1}{p}]$.
\end{lemma}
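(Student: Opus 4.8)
The statement asks us to show that ring homomorphisms from $\mathbb Z[\frac1p]$ to finite rings separate points, i.e.\ for any nonzero $x \in \mathbb Z[\frac1p]$ there is a finite ring $R$ and a ring homomorphism $\phi:\mathbb Z[\frac1p] \to R$ with $\phi(x) \neq 0$. The natural candidates for the target rings are the localizations $\mathbb Z/q\mathbb Z$ where $q$ is coprime to $p$: since $p$ is invertible modulo such $q$, the reduction map $\mathbb Z \to \mathbb Z/q\mathbb Z$ extends uniquely to a ring homomorphism $\mathbb Z[\frac1p] \to \mathbb Z/q\mathbb Z$ (sending $\frac1p$ to the inverse of $p$ mod $q$). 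So the first step is to record this construction and observe that these maps are the only ring homomorphisms from $\mathbb Z[\frac1p]$ into $\mathbb Z/q\mathbb Z$ we need.

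The second step is to reduce the separation problem to these maps. A nonzero element of $\mathbb Z[\frac1p]$ has the form $x = \frac{m}{p^k}$ with $m \in \mathbb Z \setminus \{0\}$ and $k \geq 0$. Under the homomorphism $\phi_q:\mathbb Z[\frac1p] \to \mathbb Z/q\mathbb Z$ (with $\gcd(q,p)=1$) we have $\phi_q(x) = m \cdot \overline{p}^{-k}$, and since $\overline p$ is a unit mod $q$, this vanishes if and only if $q \mid m$. Thus it suffices to choose any prime $q \neq p$ (equivalently, any $q$ coprime to $p$) that does not divide $m$; such a $q$ exists because $m$ has only finitely many prime divisors. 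Then $\phi_q(x) \neq 0$, which is exactly what we need.

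There is essentially no obstacle here — the only thing to be a little careful about is the well-definedness of $\phi_q$, namely that the formula $\frac{m}{p^k} \mapsto m\overline p^{-k}$ does not depend on the chosen representation of the fraction (if $\frac{m}{p^k} = \frac{m'}{p^{k'}}$ then $m p^{k'} = m' p^k$, and multiplying both sides by the appropriate power of $\overline p^{-1}$ in $\mathbb Z/q\mathbb Z$ gives $m\overline p^{-k} = m'\overline p^{-k'}$), and that it respects addition and multiplication, which is a routine check using the ring structure of $\mathbb Z/q\mathbb Z$ and the fact that $\overline p$ is central and invertible. So the proof is short: construct $\phi_q$ for $q$ coprime to $p$, note $\ker \phi_q$ meets $\mathbb Z$ in $q\mathbb Z$, and given $x = m/p^k \neq 0$ pick a prime $q \nmid mp$ to conclude $\phi_q(x) \neq 0$.
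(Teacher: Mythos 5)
Your proposal is correct and follows essentially the same route as the paper: both construct the homomorphism $\mathbb Z[\frac{1}{p}]\to \mathbb Z/q\mathbb Z$ for $q$ coprime to $p$ by sending $\frac{m}{p^k}$ to $[m][p]^{-k}$, and then separate a nonzero element $\frac{m}{p^k}$ by choosing $q$ coprime to $p$ that does not divide $m$. Your added remarks on well-definedness are a welcome (if routine) extra detail that the paper leaves as "straightforward to check."
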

\begin{proof} Let $q$ be a number that is mutually prime with $p$. Let $\mathbb Z_q$ denote the ring of remainders for division by $q$. For $x\in \mathbb Z$ we denote  by $[x]$ its image in $\mathbb Z_q$. Let $[x]^{-1}$ denote the multiplicative inverse of $[x]$ (such exists if $x$ is mutually prime with $p$), that is $[x]^{-1} [x] = [1].$  Define $h_q: \mathbb Z[\frac{1}{p}] \to \mathbb Z_q$ by

$$h_q\left(\frac{k}{p^m}\right) = [p]^{-m} [k],$$
for any $m, k\in \mathbb Z, m\ge 0.$ It is straightforward to check that $h_q$ is a ring homomorphism. Now let $k\neq 0$. Since the set of numbers that are mutually prime with $p$ is unbounded, there exists $q$ that is mutually prime with $p$ and does not divide $k$. Then $h_q(\frac{k}{p^m}) \neq 0$.
\end{proof}

Let $n\in \mathbb N$. Consider the canonical action of  $SL_n(\mathbb Z[\frac{1}{p}])$ on $\left(\mathbb Z[\frac{1}{p}]\right)^n$.

\begin{corollary}\label{SLnZ}  For any $n\in \mathbb N$,   $\left(\mathbb Z[\frac{1}{p}]\right)^n\rtimes SL_n(\mathbb Z[\frac{1}{p}])$
is RF.
\end{corollary}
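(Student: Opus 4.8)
The plan is to obtain Corollary \ref{SLnZ} as an instance of Theorem \ref{Z1p}(ii), which already asserts that $\bigl(\mathbb Z[\tfrac1p]\bigr)^n\rtimes G$ is RF for every \emph{residually finite} group $G$ acting on $\bigl(\mathbb Z[\tfrac1p]\bigr)^n$ by automorphisms. Consequently the entire content reduces to a single point: verifying that $G=SL_n(\mathbb Z[\tfrac1p])$ is itself residually finite.

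For this I would reuse the reduction homomorphisms constructed in the proof of Lemma \ref{RingHomomorphisms}. For each positive integer $q$ coprime to $p$, applying the ring homomorphism $h_q\colon\mathbb Z[\tfrac1p]\to\mathbb Z_q$ entrywise yields a group homomorphism from $SL_n(\mathbb Z[\tfrac1p])$ into the finite group $SL_n(\mathbb Z_q)$ (it preserves determinant one since $h_q(1)=1$). If $g\in SL_n(\mathbb Z[\tfrac1p])$ is not the identity, some entry of $g-I$ is a nonzero element of $\mathbb Z[\tfrac1p]$, and by Lemma \ref{RingHomomorphisms} it is not annihilated by some $h_q$, so $g$ has nontrivial image in the corresponding $SL_n(\mathbb Z_q)$. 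Hence these reductions have finite-index kernels intersecting trivially, and $SL_n(\mathbb Z[\tfrac1p])$ is RF. (Alternatively one may simply quote Malcev's theorem, since $SL_n(\mathbb Z[\tfrac1p])$ is a finitely generated linear group over $\mathbb Q$.) Combining this with Theorem \ref{Z1p}(ii) gives that $\bigl(\mathbb Z[\tfrac1p]\bigr)^n\rtimes SL_n(\mathbb Z[\tfrac1p])$ is RF.

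Should one wish to argue self-containedly through Proposition \ref{RFversion} instead, the extra point to check is that the characters of $H=\bigl(\mathbb Z[\tfrac1p]\bigr)^n$ with finite image \emph{and} finite $G$-orbit separate points of $H$. Given $0\neq v\in H$ I would pick a coordinate $v_i\neq 0$, use Lemma \ref{RingHomomorphisms} to find $q$ coprime to $p$ with $h_q(v_i)\neq 0$, take the induced surjection $H\to(\mathbb Z_q)^n$ onto a finite abelian group, and compose it with a character of $(\mathbb Z_q)^n$ that does not vanish on the nonzero image of $v$; this is a character of $H$ with finite image not killing $v$. Its $G$-orbit is finite, because the linear action of $SL_n(\mathbb Z[\tfrac1p])$ on $(\mathbb Z_q)^n$ factors through the finite group $SL_n(\mathbb Z_q)$, so $g\cdot\chi$ depends only on the reduction of $g$ modulo $q$ --- this is exactly the orbit-finiteness encapsulated by Corollary \ref{CorPrime}. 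Together with the residual finiteness of $SL_n(\mathbb Z[\tfrac1p])$, Proposition \ref{RFversion} then delivers the claim. The only genuine obstacle in all of this is the residual finiteness of $SL_n(\mathbb Z[\tfrac1p])$, and even that is routine; everything else is bookkeeping against results already established in the paper.
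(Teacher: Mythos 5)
Your proposal is correct and follows essentially the same route as the paper: invoke Theorem \ref{Z1p}(ii) and reduce everything to the residual finiteness of $SL_n(\mathbb Z[\frac{1}{p}])$, which you establish exactly as the paper does, by applying the homomorphisms $h_q$ of Lemma \ref{RingHomomorphisms} entrywise to land in the finite groups $SL_n(\mathbb Z_q)$. The extra detail you supply (that $h_q$ preserves determinant one and that a nonzero entry of $g-I$ survives some $h_q$) and the alternative via Mal'cev are fine but not needed beyond what the paper records.
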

\begin{proof} By Theorem \ref{Z1p} we only need to check that  $SL_n(\mathbb Z[\frac{1}{p}])$ is RF.
By Lemma \ref{RingHomomorphisms} ring homomorphisms from  $\mathbb Z[\frac{1}{p}]$ to finite rings  separate points. Applying these homomorphisms   entrywise to matrices from $SL_n(\mathbb Z[\frac{1}{p}])$ we see that $SL_n(\mathbb Z[\frac{1}{p}])$ is RF.
\end{proof}

\medskip

There is a well-known fact that a semidirect product of a finitely generated  RF group with  an RF group is RF. We note that this fact does not imply the result above as $\left(\mathbb Z[\frac{1}{p}]\right)^n$ is not finitely generated.

\bigskip

\subsection{Actions of $\mathbb Q$}

Let  $\mathbb Q$  be the group of all rational numbers with multiplication given by the addition. In the following proposition $\mathbb Q$ can be replaced by any other group that does not admit any  non-trivial homomorphisms to finite groups.

\begin{proposition}\label{Q} For any non-trivial action of $\mathbb Q$ on any locally compact space $X$, the crossed product $C_0(X)\rtimes \mathbb Q$ is not RFD.
\end{proposition}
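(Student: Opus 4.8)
The plan is to apply Corollary \ref{TransformationGroupoid} (or equivalently Corollary \ref{SemidirectProductsVersion}/Theorem \ref{th:maincrossed}) and show that the relevant density-of-finite-orbits condition fails. Recall that the action of $\mathbb Q$ on $X$ is amenable: indeed $\mathbb Q$ is abelian, hence amenable, so every action of $\mathbb Q$ is amenable. Thus Corollary \ref{TransformationGroupoid} applies and tells us that $C_0(X)\rtimes \mathbb Q$ is RFD if and only if $\mathbb Q$ is MAP (which it is, being abelian) \emph{and} the union of finite orbits of the $\mathbb Q$-action on $X$ is dense in $X$. So it suffices to prove that the $\mathbb Q$-action has \emph{no non-empty finite orbit at all}; then the union of finite orbits is empty, which is not dense (as $X\neq\emptyset$), and we are done.

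First I would recall that a point $x\in X$ lies on a finite orbit precisely when its stabiliser $\mathbb Q_x=\{q\in\mathbb Q: q\cdot x = x\}$ has finite index in $\mathbb Q$. The key group-theoretic fact is that $\mathbb Q$ (as a group under addition) has no proper finite-index subgroup: any subgroup $N\leq\mathbb Q$ of finite index $n$ would satisfy $n\mathbb Q\subseteq N$, but $n\mathbb Q=\mathbb Q$ since $\mathbb Q$ is divisible, forcing $N=\mathbb Q$. Equivalently, and in the form stated just before the proposition, $\mathbb Q$ admits no non-trivial homomorphism to a finite group. Hence every stabiliser $\mathbb Q_x$ is either all of $\mathbb Q$ (so $x$ is a fixed point, a $1$-point orbit) or has infinite index (so the orbit of $x$ is infinite). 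Therefore the only possible finite orbits are singletons consisting of global fixed points.

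So the problem reduces to: if the $\mathbb Q$-action has a dense set of fixed points, is the action necessarily trivial? Yes — the fixed-point set $\mathrm{Fix}(q)=\{x\in X: q\cdot x=x\}$ is closed for each $q\in\mathbb Q$ (the map $x\mapsto q\cdot x$ is a homeomorphism, so its equaliser with the identity is closed in the Hausdorff space $X$), hence $\mathrm{Fix}(\mathbb Q)=\bigcap_{q\in\mathbb Q}\mathrm{Fix}(q)$ is closed; if it is dense it equals $X$, meaning the action is trivial, contrary to hypothesis. Combining the last two paragraphs: since the action is non-trivial, the union of finite orbits is a non-dense subset of $X$, so by Corollary \ref{TransformationGroupoid} the crossed product $C_0(X)\rtimes\mathbb Q$ is not RFD.

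The main (and essentially only) obstacle is making sure the chain of equivalences is legitimately applicable: one must check that the action is amenable (immediate from amenability of $\mathbb Q$, so that the full and reduced crossed products agree and Corollary \ref{TransformationGroupoid} is available) and that $X$ is a non-empty locally compact Hausdorff second-countable space as required there — which is part of the standing hypotheses on the spaces we consider. Everything else is the elementary observation that a divisible group has no proper finite-index subgroup, so finite orbits must be fixed points, plus the closedness of the common fixed-point set. For the generalisation mentioned in the statement, one replaces ``divisible'' by the hypothesis ``admits no non-trivial homomorphism to a finite group'', which is exactly the condition that rules out proper finite-index (normal, hence by passing to the core, arbitrary) subgroups and thus non-singleton finite orbits, and the rest of the argument is verbatim.
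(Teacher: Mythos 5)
Your proof is correct and follows essentially the same route as the paper: finite orbits force finite-index stabilisers, $\mathbb Q$ has no proper finite-index subgroups, so finite orbits are fixed points, and density of the (closed) fixed-point set would force the action to be trivial; then Corollary \ref{TransformationGroupoid} applies. You merely spell out two steps the paper leaves implicit (amenability of any $\mathbb Q$-action and closedness of the common fixed-point set), which is fine.
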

\begin{proof} A point $x\in X$ has  a finite orbit if and only if its stabilizer  $St_x$ has finite index. Since $\mathbb Q$ does not have non-trivial subgroups of finite index, a point with a finite orbit must be a fixed point. If the set of fixed points is dense in $X$, then all points in $X$ are fixed. Since our action is non-trivial, this implies that the set of points with finite orbits is not dense in $X$. By Corollary \ref{TransformationGroupoid},  $C_0(X)\rtimes \mathbb Q$ is not RFD.
\end{proof}

\medskip

This can be used to show that in Corollary \ref{SLnZ} one cannot replace $\mathbb Z[\frac{1}{p}]$ by  $\mathbb Q$. The latter fact follows however already from the fact that  $SL_n(\mathbb Q)$ is not  MAP   for any $n>1$ (see for example \cite[Proposition 4.C.20]{BookUnitaryDual}).

\bigskip

\subsection{Hall's examples}

The same arguments as for Theorem \ref{Z1p} apply to some other semidirect products. For example, in \cite[p. 349]{Hall}  P.
Hall defined the following action of $\mathbb Z$ on  the Heisenberg group $\mathbb H_3\left(\mathbb Z[\frac{1}{2}]\right)$:

\begin{equation*}
\alpha\left( \left[ \begin{array}{ccc} 1 & x & z\\ 0 & 1 & y\\ 0 & 0 & 1 \end{array}  \right]  \right)=\left[ \begin{array}{ccc} 1 & 2x & z\\ 0 & 1 & \frac{y}{2}\\ 0 & 0 & 1 \end{array}  \right]
\end{equation*}

In \cite[Th. 5.23]{ES} it was proved that the corresponding semidirect product $\mathbb H_3\left(\mathbb Z[\frac{1}{2}]\right)\rtimes \mathbb Z$ is Hilbert-Schmidt stable which is a stronger property than MAP (\cite[Theorem 3 + Proposition 3]{HS}, \cite[page 2]{ES}). However the MAP, even RF,  property of this group can also be obtained by arguments  as above.

\begin{lemma}   Let $p$ be a prime.  For any finite group $F$ there are at most finitely many homomorphisms from $\mathbb H_3\left(\mathbb Z[\frac{1}{p}]\right)$ to $F$.
\end{lemma}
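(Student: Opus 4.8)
The plan is to reduce the statement to Corollary~\ref{CorPrime} by exhibiting a small generating set of $\mathbb{H}_3(\mathbb{Z}[\frac{1}{p}])$ consisting of two copies of $\mathbb{Z}[\frac{1}{p}]$. For $t\in\mathbb{Z}[\frac{1}{p}]$, let $u(t)$, $v(t)$, $w(t)$ denote the elements of $\mathbb{H}_3(\mathbb{Z}[\frac{1}{p}])$ that coincide with the identity matrix except for having entry $t$ in position $(1,2)$, $(2,3)$, $(1,3)$ respectively. A direct matrix computation shows that $U:=\{u(t):t\in\mathbb{Z}[\frac{1}{p}]\}$ and $V:=\{v(t):t\in\mathbb{Z}[\frac{1}{p}]\}$ are subgroups, each isomorphic as a group to the additive group $\mathbb{Z}[\frac{1}{p}]$ via $t\mapsto u(t)$ (resp.\ $t\mapsto v(t)$), that the elements $w(t)$ are central, and that $[u(s),v(t)]=w(st)$ for all $s,t$. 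In particular $w(z)=[u(z),v(1)]\in\langle U\cup V\rangle$ for every $z$, and since an arbitrary element of $\mathbb{H}_3(\mathbb{Z}[\frac{1}{p}])$ with $(1,2)$-entry $x$, $(2,3)$-entry $y$ and $(1,3)$-entry $z$ equals $u(x)\,v(y)\,w(z-xy)$, the set $U\cup V$ generates $\mathbb{H}_3(\mathbb{Z}[\frac{1}{p}])$.

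Now let $F$ be a finite group. Any homomorphism $\varphi:\mathbb{H}_3(\mathbb{Z}[\frac{1}{p}])\to F$ is completely determined by its values on the generating set $U\cup V$, that is, by the pair $(\varphi|_U,\varphi|_V)$. Identifying $U$ and $V$ with $\mathbb{Z}[\frac{1}{p}]$, each of $\varphi|_U$ and $\varphi|_V$ is a group homomorphism $\mathbb{Z}[\frac{1}{p}]\to F$, and by Corollary~\ref{CorPrime} there are only finitely many such homomorphisms. Hence there are only finitely many possibilities for the pair $(\varphi|_U,\varphi|_V)$, and therefore only finitely many homomorphisms $\varphi:\mathbb{H}_3(\mathbb{Z}[\frac{1}{p}])\to F$, as required.

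I do not anticipate a genuine obstacle: the argument only uses the already established Corollary~\ref{CorPrime} together with elementary Heisenberg-group bookkeeping (the two elementary subgroups, their isomorphism type, the commutator identity $[u(s),v(t)]=w(st)$, and the normal form $u(x)v(y)w(z)$). A slightly less efficient route would instead try to control $\ker\varphi$ directly via the classification of finite-index subgroups of $\mathbb{Z}[\frac{1}{p}]$ from Proposition~\ref{prime}, but the generators-and-relations approach above avoids having to track the non-central part of the kernel, so that is the one I would write up.
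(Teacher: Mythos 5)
Your proof is correct and follows essentially the same route as the paper: the paper's proof also reduces to Corollary~\ref{CorPrime} by observing that $\mathbb H_3\left(\mathbb Z[\frac{1}{p}]\right)$ is generated by elementary subgroups isomorphic to $\mathbb Z[\frac{1}{p}]$ (it uses all three, including the central one, while you use only two and recover the center via the commutator identity, a negligible difference).
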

\begin{proof} The group $\mathbb H_3\left(\mathbb Z[\frac{1}{p}]\right)$ is generated by three subgroups

$$\left\{ \left[ \begin{array}{ccc} 1 & \ast & 0 \\ 0 & 1 & 0\\ 0 & 0 & 1 \end{array}  \right]\right\}, \; \left\{ \left[ \begin{array}{ccc} 1 & 0 & 0 \\ 0 & 1 & \ast\\ 0 & 0 & 1 \end{array}  \right]\right\}, \; \left\{ \left[ \begin{array}{ccc} 1 & 0 & \ast \\ 0 & 1 & 0\\ 0 & 0 & 1 \end{array}  \right]\right\} $$
each of which is isomorphic to $\mathbb Z[\frac{1}{p}]$. The result follows now from Corollary \ref{CorPrime}.

\end{proof}

\noindent Taking into account this lemma, the same arguments as  for Theorem \ref{Z1p} imply the result below.

\begin{proposition} Consider the action of $\mathbb Z$ on  the Heisenberg group $\mathbb H_3\left(\mathbb Z[\frac{1}{p}]\right)$ defined by
\begin{equation*}
\alpha\left( \left[ \begin{array}{ccc} 1 & x & z\\ 0 & 1 & y\\ 0 & 0 & 1 \end{array}  \right]  \right)=\left[ \begin{array}{ccc} 1 & px & z\\ 0 & 1 & \frac{y}{p}\\ 0 & 0 & 1 \end{array}  \right].
\end{equation*}
 Then the corresponding semidirect product $\mathbb H_3\left(\mathbb Z[\frac{1}{p}]\right) \rtimes \mathbb Z$ is RF.
\end{proposition}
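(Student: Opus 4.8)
The plan is to apply Proposition \ref{RFversion} to $H = \mathbb{H}_3\left(\mathbb{Z}[\frac{1}{p}]\right)$ and $G = \mathbb{Z}$ acting via $\alpha$. Two things must be checked: that $G$ is RF, which is immediate since $\mathbb{Z}$ is RF, and that the irreducible representations of $H$ with finite image whose $\mathbb{Z}$-orbits are finite separate points of $H$.

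First I would record that $H$ itself is RF, by the same argument used for $SL_n\left(\mathbb{Z}[\frac{1}{p}]\right)$ in Corollary \ref{SLnZ}: Lemma \ref{RingHomomorphisms} gives ring homomorphisms from $\mathbb{Z}[\frac{1}{p}]$ onto finite rings $R$ separating points, and applying such a homomorphism entrywise to a unipotent upper-triangular matrix produces a group homomorphism $\mathbb{H}_3\left(\mathbb{Z}[\frac{1}{p}]\right) \to \mathbb{H}_3(R)$ onto a finite group; these collectively separate points of $H$. Hence for each $e \neq h \in H$ there is a homomorphism $\gamma$ onto a finite group $F$ with $\gamma(h) \neq e$; composing $\gamma$ with the left regular representation of $F$ yields a unitary representation of $H$ with finite image not killing $h$, and passing to an irreducible summand $\rho$ (whose image is a quotient of $F$, hence still finite) we obtain an irreducible finite-image representation with $\rho(h) \neq 1$. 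So irreducible finite-image representations of $H$ separate points.

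Next I would verify that each such representation has a finite orbit. Any representation $\pi$ of $H$ with finite image factors as $\pi = \beta \circ \gamma$ with $\gamma \colon H \to F$ a homomorphism onto a finite group and $\beta$ a representation of $F$. For $n \in \mathbb{Z}$ one has $(n\cdot\pi)(h) = \pi(\alpha^{-n}(h)) = \beta\bigl(\gamma(\alpha^{-n}(h))\bigr)$, so $n\cdot\pi = \beta\circ(\gamma\circ\alpha^{-n})$ factors through the homomorphism $\gamma\circ\alpha^{-n}\colon H \to F$. By the lemma preceding this proposition there are only finitely many homomorphisms from $\mathbb{H}_3\left(\mathbb{Z}[\frac{1}{p}]\right)$ to the fixed finite group $F$, whence $\{\gamma\circ\alpha^{-n} : n \in \mathbb{Z}\}$ is finite and the orbit of $[\pi]$ in $\widehat{H}$ is finite. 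Combining the last two paragraphs, the irreducible finite-image representations of $H$ with finite orbit separate points of $H$, and since $\mathbb{Z}$ is RF, Proposition \ref{RFversion} yields that $\mathbb{H}_3\left(\mathbb{Z}[\frac{1}{p}]\right) \rtimes \mathbb{Z}$ is RF.

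I do not expect a genuine obstacle: the whole argument is modeled on the proof of Theorem \ref{Z1p} (and of Corollary \ref{SLnZ}). The only points that need a little care are the passage from a possibly reducible finite-image representation to an irreducible one while preserving both finiteness of the image and, via Remark \ref{DecompositionIrreducible}, finiteness of the orbit, and the bookkeeping observation that the $\mathbb{Z}$-action on representations is precomposition with powers of $\alpha$, so that finiteness of $\mathrm{Hom}(H,F)$ forces finiteness of orbits.
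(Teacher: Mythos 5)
Your proposal is correct and follows essentially the same route as the paper: the paper's proof consists of invoking Proposition \ref{RFversion} together with the preceding lemma on the finiteness of $\mathrm{Hom}\bigl(\mathbb H_3(\mathbb Z[\tfrac{1}{p}]),F\bigr)$ for finite $F$, exactly as you do, with the finite-orbit argument being the one from Lemma \ref{FDfiniteorbits}. The only difference is that you explicitly verify residual finiteness of $\mathbb H_3(\mathbb Z[\tfrac{1}{p}])$ via entrywise ring homomorphisms (as in Corollary \ref{SLnZ}), a detail the paper leaves implicit in its appeal to ``the same arguments as for Theorem \ref{Z1p}''.
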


\bigskip

\subsection{Wreath products}
Let $H$ and $G$ be groups and let $G$  act on itself by left multiplication. This action extends to an action of $G$ on $\oplus_{g\in G} H$ by defining
$$g_0  \left(h_g\right)_{g\in G} := \left(h_{g_0^{-1}g}\right)_{g\in G}.$$
Recall that the regular wreath product, denoted by $H \wr G$, is the semidirect product
$$H \wr G:= \left(\oplus_{g\in G} H\right) \rtimes G.$$
There is a well-known characterization of when wreath products are RF, due to Gruenberg.

\begin{theorem}\cite{Grunberg} Let $H,G$ be discrete groups. The wreath product $H \wr G$ is RF if and only if either $H$ is RF and $G$ is finite,  or $H$ is abelian RF and $G$ is RF.
\end{theorem}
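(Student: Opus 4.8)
The plan is to deduce the statement from Proposition \ref{RFversion}, applied to the decomposition $H \wr G = N \rtimes G$, where $N := \bigoplus_{g \in G} H$ carries the shift action of $G$. Since $G$ is a quotient of $H \wr G$ and $H$, identified with the coordinate subgroup $H_e \leq N$, is a subgroup, the conditions ``$G$ is RF'' and ``$H$ is RF'' are necessary in every case; the only genuine content is therefore the dichotomy between ``$G$ finite'' and ``$H$ abelian'' in the RF case, together with sufficiency. If $G$ is finite, $N$ is a finite direct sum of copies of $H$, hence RF whenever $H$ is, and all $G$-orbits in $\widehat N$ are automatically finite; Proposition \ref{RFversion} then gives immediately that $H \wr G$ is RF iff $H$ is RF. So from now on assume $G$ is infinite.

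For the forward direction, suppose $H \wr G$ is RF and let $\phi : H \wr G \to F$ be a homomorphism onto a finite group. Write $K := \phi(H_e)$; since $H_g = g H_e g^{-1}$ inside $H \wr G$ we get $\phi(H_g) = \phi(g) K \phi(g)^{-1}$ for every $g \in G$. As $G$ is infinite and $F$ finite, there are $g_1 \neq g_2$ with $\phi(g_1) = \phi(g_2) =: f$, and since $H_{g_1}$ and $H_{g_2}$ commute in $N$ we deduce that $f K f^{-1}$, and hence $K$, is abelian. Thus every finite quotient of $H \wr G$ restricts to an abelian quotient of $H_e \cong H$; since finite quotients of $H \wr G$ separate points of $H_e$, finite abelian quotients separate points of $H$, which forces $[H,H] = \{e\}$, i.e. $H$ is abelian (and RF).

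For the converse, assume $H$ is abelian RF and $G$ is RF; I will verify the hypothesis of Proposition \ref{RFversion}, namely that one-dimensional representations of $N$ with finite image and finite $G$-orbit separate points of $N$. Fix $x = (h_g)_g \in N \setminus \{e\}$, supported on a finite set $S \subset G$ with $h_{s_0} \neq e$. Using residual finiteness of $G$, choose a finite-index normal subgroup $G_0 \trianglelefteq G$ such that the elements of $S$ lie in pairwise distinct cosets of $G_0$, and using residual finiteness of the abelian group $H$ choose a character $\chi : H \to \mathbb{T}$ of finite image with $\chi(h_{s_0}) \neq 1$. For each coset $c \in G/G_0$ define $\psi_c : N \to \mathbb{T}$ by $\psi_c\bigl((k_g)_g\bigr) = \prod_{g \in c} \chi(k_g)$, a finite product; a direct check gives $g_0 \cdot \psi_c = \psi_{g_0 c}$, so the $G$-orbit of $\psi_c$ is $\{\psi_{c'} : c' \in G/G_0\}$, which is finite, and $\psi_c$ has finite image (contained in the finite subgroup $\chi(H) \subset \mathbb{T}$). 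Taking $c_0 = s_0 G_0$, the only element of $S$ lying in $c_0$ is $s_0$, so $\psi_{c_0}(x) = \chi(h_{s_0}) \neq 1$. Hence such representations separate points of $N$, and Proposition \ref{RFversion} shows $H \wr G$ is RF.

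The main obstacle is this last construction: a representation supported on a single coordinate of $N$ has infinite $G$-orbit as soon as $G$ is infinite, so one is forced to build a $G_0$-periodic representation, and it is precisely the abelian hypothesis on $H$ that makes the ``diagonal'' characters $\psi_c$ available — which, from the present viewpoint, also explains why nonabelian $H$ paired with infinite $G$ cannot work. A secondary subtlety is the choice of $G_0$ so that $S$ injects into $G/G_0$, which is what guarantees that $\psi_{c_0}$ reads off exactly the nontrivial coordinate $h_{s_0}$ and no cancelling contributions from other coordinates of $x$.
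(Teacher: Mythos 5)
Your proof is correct, and for the sufficiency direction it follows the paper's intended route, while for the necessity direction it takes a genuinely more elementary path. The paper obtains Gruenberg's theorem by re-running the proof of Theorem \ref{WreathMAP} (Lemmas \ref{Wreath1}--\ref{Wreath4}) with finite-image representations in place of finite-dimensional ones; your periodic characters $\psi_c$ are exactly the representations $\prod\pi_g$ of Lemmas \ref{Wreath1} and \ref{Wreath2}, built from the quotient $G\to G/G_0$ and the character $\chi$, so that half is essentially identical. Where you diverge is in showing that $H$ must be abelian when $G$ is infinite: the paper's Lemma \ref{Wreath3} is forced to work with finite-dimensional unitary representations and rules out noncommutativity by embedding $M_{m^k}$ with $m^k>N$ into the simple algebra's target $M_N$, whereas you exploit the existence of actual finite quotients $\phi:H\wr G\to F$ and observe that the commuting conjugate copies $H_{g_1}$, $H_{g_2}$ with $\phi(g_1)=\phi(g_2)$ force $\phi(H_e)$ to be abelian. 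This is shorter and purely group-theoretic, and it is available precisely because RF (unlike MAP) supplies genuine finite quotients. You also correctly dispense with any analogue of Lemma \ref{Wreath4}, since residual finiteness of $G$ follows from $G$ being a subgroup of $H\wr G$. One small correction: in your first paragraph you justify the necessity of ``$G$ is RF'' by saying $G$ is a quotient of $H\wr G$; residual finiteness does not pass to quotients in general, but the conclusion stands because $G$ is also a (retract) subgroup of $N\rtimes G$, which is the argument you should invoke.
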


Below we give a characterization of when wreath products are MAP. Our methods are based on our characterization of MAP semidirect products and seem to be completely different from  \cite{Grunberg}. Note that this result was independently obtained in \cite[Corollary G]{BekkaPreprint}.

\begin{theorem}\label{WreathMAP}Let $H,G$ be discrete groups. Then $H \wr G$ is MAP if and only if either $H$ is MAP and $G$ is finite, or $H$ is trivial and $G$ is MAP,   or $H$ is abelian and $G$ is RF.
\end{theorem}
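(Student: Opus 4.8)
The plan is to deduce this characterisation from Proposition \ref{MAPversion}, applied to the semidirect product $H \wr G = \left(\oplus_{g\in G} H\right) \rtimes G$, treating $N:=\oplus_{g\in G} H$ as the normal subgroup and $G$ as the acting group. By Proposition \ref{MAPversion}, $H\wr G$ is MAP if and only if $G$ is MAP and the irreducible finite-dimensional representations of $N$ whose orbits in $\widehat N$ under the $G$-shift are finite separate points of $N$. So the whole problem reduces to understanding which $\sigma\in\widehat N$ have finite $G$-orbit, and whether those suffice to separate points. The sufficiency direction (the three listed cases each give MAP) and the necessity direction (MAP forces one of the three cases) will be handled separately.

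First I would record the easy cases. If $G$ is finite, then $N=\oplus_{g\in G}H$ is just a finite direct power $H^{|G|}$, every $G$-orbit in $\widehat N$ is automatically finite, and $H^{|G|}$ is MAP iff $H$ is MAP; since a finite group is MAP, Proposition \ref{MAPversion} immediately gives that $H\wr G$ is MAP iff $H$ is MAP. If $H$ is trivial then $H\wr G = G$, so $H\wr G$ is MAP iff $G$ is. The substantial case is $H$ abelian (and nontrivial) with $G$ infinite RF: then $\widehat N = \widehat H^{\,G} = \prod_{g\in G}\widehat H$ (a compact abelian group), all irreducibles are $1$-dimensional characters $\chi = (\chi_g)_{g\in G}$ with $\chi_g\in\widehat H$, and the $G$-action is the shift. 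A character $\chi$ has finite $G$-orbit iff it is fixed by a finite-index subgroup $G_0\le G$, i.e. $\chi$ is $G_0$-periodic; equivalently $\chi$ is constant on each left coset of $G_0$ and thus is "supported" (in the sense of depending) only on finitely many coordinate-blocks indexed by $G/G_0$ — more precisely $\chi$ factors through the finite quotient map $N=\oplus_G H \to \oplus_{G/G_0}H$ composed with a character of the latter (using that $H$ is abelian so this is a well-defined quotient group). Since $G$ is RF, it has finite-index normal subgroups $G_0$ with $\bigcap G_0 = \{e\}$, and for such $G_0$ the characters of $\oplus_{G/G_0}H$ pulled back to $N$ separate any two elements of $N$ differing in some coordinate — because two distinct $(h_g), (h'_g)\in N$ differ in finitely many coordinates, so for $G_0$ small enough those coordinates lie in distinct cosets of $G_0$ and a character of $\oplus_{G/G_0}H = H^{[G:G_0]}$ (which is MAP, as $H$ abelian, hence has separating characters) distinguishes them. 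This gives enough finite-orbit irreducibles to separate points of $N$, and combined with $G$ MAP (RF $\Rightarrow$ MAP) Proposition \ref{MAPversion} yields that $H\wr G$ is MAP.

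For the necessity direction, suppose $H\wr G$ is MAP; then by Proposition \ref{MAPversion}, $G$ is MAP and finite-orbit irreducibles of $N$ separate points. Assume $G$ is infinite and $H$ is nontrivial; I must show $H$ is abelian and $G$ is RF. First, $G$ RF: embed $H$ diagonally-in-one-coordinate, i.e. fix $e\in G$ and consider elements of $N$ supported at the single coordinate $e$. If $\sigma\in\widehat N$ has finite $G$-orbit, its stabiliser $G_\sigma$ has finite index; I would argue that $\sigma$ must then be trivial on all but finitely many coordinate-blocks (a translate of $\sigma$ by a suitable coset representative "moves the support away" unless the support meets only finitely many cosets — this requires the observation that if a coordinate $g$ is in the support of $\sigma$ then so is every $G_\sigma$-translate of $g$, and there are infinitely many unless... actually the support is a union of $G_\sigma$-orbits, each infinite since $[G:G_\sigma]<\infty$ and $G$ infinite; but $\sigma$ must vanish outside a finite set since $N=\oplus_G H$, forcing the support to be empty). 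Hence every finite-orbit irreducible of $N$ is in fact $G$-invariant and factors through a finite quotient $\oplus_{G/K}H$ for $K\trianglelefteq G$ of finite index; for these to separate points of the $e$-coordinate copy of $H$, we need: (a) $K$ can be taken to have trivial intersection over all choices, giving $\bigcap K = \{e\}$ hence $G$ RF; and (b) the images in the finite quotients, being representations of $\oplus_{G/K}H$ with a symmetry forcing all coordinate-components to coincide under the diagonal action, must still separate $H$ — and a representation of $\oplus_{G/K}H$ invariant under the transitive $G/K$-permutation action of the coordinates, when restricted to one coordinate, decomposes in a way that forces the acting group structure: here is where I expect the argument that $H$ must be \emph{abelian} to live. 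The point: a finite-orbit irreducible $\sigma$ of $N=\oplus_G H$ that is $G$-invariant, restricted to the $e$-th coordinate, is a finite-dimensional rep of $H$; as $\sigma$ ranges over all such, these must separate $H$, so $H$ is MAP; and then one analyses the $G$-invariance constraint (a rep of $\oplus_G H$ invariant under shifting infinitely many tensor factors) to conclude it is forced to be essentially $1$-dimensional on each factor, i.e. a character, which separates $H$ only if $H$ is abelian. The \textbf{main obstacle} I anticipate is precisely this last step: carefully showing that $G$-invariance of an irreducible of the infinite restricted product $\oplus_G H$ forces abelianness of $H$ — one must rule out, e.g., a fixed finite-dimensional irreducible $\pi$ of $H$ appearing "diagonally" in infinitely many tensor slots, which is impossible for dimension reasons ($\dim\pi > 1$ would make the rep infinite-dimensional), so only $1$-dimensional (hence character-valued, hence abelian-factoring) pieces survive; but making "appears in infinitely many slots" precise for a general, possibly infinite-dimensional irreducible of the restricted direct product, and doing so without the machinery of infinite tensor products, is the delicate point and will require the finite-support structure of $N$ together with a pigeonhole/support-tracking argument.
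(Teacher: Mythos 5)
Your overall strategy --- reduce everything to Proposition \ref{MAPversion} for the semidirect product $\left(\oplus_{g\in G}H\right)\rtimes G$ and then analyse which irreducibles of $N=\oplus_{g\in G}H$ have finite $G$-orbit --- is exactly the paper's, and your sufficiency direction (the three cases, with the abelian/RF case handled by characters factoring through $\oplus_{G_0\backslash G}H$ for a finite-index subgroup $G_0$ separating the finitely many coordinates where two elements differ) is correct and matches the paper's Lemmas \ref{Wreath1} and \ref{Wreath2}. The necessity direction, however, has two genuine gaps. The first is the one you yourself flag as ``the main obstacle'': why a finite-orbit irreducible of $N$ cannot behave noncommutatively on a single coordinate subgroup. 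Note that by Proposition \ref{MAPversion} you only ever need to consider \emph{finite-dimensional} irreducibles of $N$ with finite orbit, so your worry about infinite-dimensional irreducibles of the restricted product is moot, and the dimension count you gesture at can be made precise as in the paper's Lemma \ref{Wreath3}: if $\pi$ is such an irreducible of dimension $d$ and $\pi$ restricted to the coordinate subgroup $\Sigma_{e_G}$ generates a noncommutative subalgebra of $M_d$, that subalgebra contains a copy of $M_m$ with $m\ge 2$; since the stabiliser of $[\pi]$ has finite index in the infinite group $G$, infinitely many coordinate subgroups $\Sigma_{g_i}$ generate pairwise commuting isomorphic copies of it, yielding a nonzero homomorphism of the simple algebra $M_{m^k}\cong M_m^{\otimes k}$ into $M_d$ for every $k$ --- impossible once $m^k>d$. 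Without this (or some substitute) the abelianness of $H$ is simply not proved.

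The second gap is in your argument that $G$ is RF. The parenthetical claim that a finite-orbit character of $N$ ``must vanish outside a finite set since $N=\oplus_G H$, forcing the support to be empty'' is false: a character $(\chi_g)_{g\in G}$ of the restricted direct sum may have $\chi_g$ nontrivial for \emph{every} $g$ (the product $\prod_g\chi_g(h_g)$ is finite because the group elements, not the characters, have finite support). Correspondingly, finite-orbit irreducibles need not be $G$-invariant, only invariant under a finite-index subgroup. More importantly, separating points of the $e_G$-coordinate copy of $H$ carries no information about $G$ at all, so your step (a) does not follow from anything you have established: one needs a two-coordinate witness. This is what Lemma \ref{Wreath4} supplies: for $e_G\neq g_0\in G$ and $e_H\neq h_0\in H$, take $y\in N$ with $y(g_0)=h_0$, $y(e_G)=h_0^{-1}$ and $y(g)=e_H$ otherwise; by Lemma \ref{Wreath1} any finite-orbit character $\chi$ is of the form $\chi_g=\rho_{\gamma(g)}$ for a homomorphism $\gamma$ onto a finite group, so $\chi(y)=\rho_{\gamma(g_0)}(h_0)\,\rho_{\gamma(e_G)}(h_0)^{-1}=1$ whenever $\gamma(g_0)=\gamma(e_G)$; hence separating $y$ from the identity forces $g_0\notin\ker\gamma$, and ranging over $g_0$ gives residual finiteness of $G$.
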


The proof will follow from lemmas below. For brevity, we will write $\oplus H$   for  $\oplus_{g\in G} H$ and $(h_g)$  for $(h_g)_{g\in G}$ Given $g' \in G$, $h \in H$ we will also write $x_{g',h}$ for the element of $\oplus H$ given by  \[x_{g',h}(g) = \begin{cases}h & \textup{ if }  g=g', \\ e &  \textup{ if }  g\neq g'.\end{cases}\]

\medskip

Suppose that for each $g \in G$ we are given $\pi_g: H \to \mathbb T$, a one-dimensional representation of $H$. Define a one-dimensional representation $\prod\pi_g$ of $\oplus H$ by
$$\prod\pi_g((h_g)) = \prod_g \pi_g(h_g),$$
for any $ (h_g) \in \oplus H$. Since $ h_g\neq e_H$ only for finitely many $g$, the right-hand side is well-defined.
Since the multiplication of complex numbers is commutative, $\prod\pi_g: \oplus H \to \mathbb T$ is indeed a homomorphism. Moreover, any one-dimensional representation of $\oplus H$ arises in this way, which is easy to check.

\begin{lemma}\label{Wreath1} Let $F$ be a finite group and $\gamma: G \to F$  a homomorphism. For each $i\in F$ let $\rho_i: H \to \mathbb T$ be a one-dimensional representation of $H$ and let $\pi_g = \rho_{\gamma(g)}$.
Then $\prod\pi_g$ has   finite orbit in $\Rep(\oplus H)$. Moreover, any one-dimensional representation of $\oplus H$ with finite orbit arises in this way.
\end{lemma}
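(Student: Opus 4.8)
The plan is to first record how the $G$-action on $\Rep(\oplus H)$ transforms the data parametrising a one-dimensional character. Every one-dimensional character of $\oplus H$ has the form $\prod_g\pi_g$ for some family $(\pi_g)_{g\in G}$ of one-dimensional representations of $H$, and for one-dimensional representations unitary equivalence is simply equality. Evaluating both sides on the generators $x_{g',h}$ of $\oplus H$ and unwinding the definition of the action of $G$ on $\oplus H$ together with that of the action of $G$ on representations, one checks the shift identity $g_0\cdot\prod_g\pi_g=\prod_g\pi_{g_0^{-1}g}$ for every $g_0\in G$. Thus $G$ permutes the parametrising families $(\pi_g)_{g\in G}$ by exactly the left-shift it uses on $\oplus H$; this elementary computation is the only calculational ingredient and I anticipate no difficulty with it.

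For the first assertion I would substitute $\pi_g=\rho_{\gamma(g)}$ into the shift identity, obtaining $g_0\cdot\prod_g\rho_{\gamma(g)}=\prod_g\rho_{\gamma(g_0)^{-1}\gamma(g)}$, which is the character attached to the family $(\rho_{\gamma(g_0)^{-1}i})_{i\in F}$. Since this family, and hence the character, depends on $g_0$ only through the element $\gamma(g_0)\in F$, the orbit of $\prod_g\pi_g$ has at most $|F|$ members and in particular is finite.

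For the converse (``moreover'') part I would start from an arbitrary one-dimensional character $\sigma=\prod_g\pi_g$ with finite orbit and consider its stabiliser $N:=\{g_0\in G:\ g_0\cdot\sigma=\sigma\}$, a subgroup of $G$ of finite index because the orbit is finite. By the shift identity, $g_0\in N$ precisely when $\pi_{g_0 g}=\pi_g$ for all $g\in G$; equivalently, the map $\Psi\colon G\to\widehat H$, $\Psi(g)=\pi_g$, is constant on every coset $Ng$. I then pass to the normal core $M:=\bigcap_{g\in G}gNg^{-1}$: it is contained in $N$ and is a normal subgroup of finite index, being the intersection of the finitely many conjugates of the finite-index subgroup $N$. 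Since $\Psi$ is constant on cosets of $N$ it is a fortiori constant on cosets of $M$, and these are two-sided because $M$ is normal, so $\Psi$ factors as $\Psi=\rho\circ\gamma$ with $\gamma\colon G\to F:=G/M$ the quotient homomorphism and $\rho\colon F\to\widehat H$ a well-defined map. Writing $\rho_i:=\rho(i)$ for $i\in F$ (each a one-dimensional representation of $H$) gives $\pi_g=\rho_{\gamma(g)}$, exhibiting $\sigma$ in the asserted form.

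Once the shift identity is in hand the rest is bookkeeping; the single point that requires a turn is that the stabiliser $N$ need not be normal, which is exactly why one must replace it by its normal core $M$ in order to obtain an honest homomorphism $\gamma$ with the required factorisation. I do not foresee any genuine obstacle.
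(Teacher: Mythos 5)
Your proof is correct and follows essentially the same route as the paper: establish the shift identity (the paper's version $\hat\alpha(g_0)\bigl(\prod\pi_g\bigr)=\prod\pi_{g_0g}$ differs from yours only by the harmless replacement of $g_0$ by $g_0^{-1}$), observe for the first claim that the translated character depends on $g_0$ only through $\gamma(g_0)\in F$, and for the converse read off the $\rho_i$ from the finite-index stabiliser. The one point where you add something is the normal-core step, and it is a genuine improvement rather than a detour: the paper's proof of the ``moreover'' part simply invokes ``the quotient homomorphism $\gamma\colon G\to G/St_{\pi}$'', tacitly treating the stabiliser as normal, which it need not be (for instance $G=S_3$, $H=\mathbb Z/3$, with $\Psi$ taking three distinct values on the three right cosets of $\langle(12)\rangle$, yields a non-normal stabiliser). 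Replacing the stabiliser $N$ by its core $M=\bigcap_{g\in G}gNg^{-1}$, as you do, is exactly what is needed to produce an honest homomorphism onto a finite group as the statement of the lemma requires, so your write-up is, if anything, more complete than the paper's.
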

\begin{proof}  Let $g_0\in G$. For any $(h_g) \in \oplus H$ we have
\begin{multline*}\hat\alpha(g_0)(\prod\pi_g) ((h_g)) = \prod\pi_g\left((h_{{g_0}^{-1}g})\right) = \prod_g \pi_g(h_{{g_0}^{-1}g}) =  \prod_g \pi_{g_0g}(h_g) = \prod\pi_{g_0g} ((h_g)), \end{multline*}
where the third equality follows from a straightforward re-indexing. Therefore
\begin{equation}\label{eq:Wreath1}\hat\alpha(g_0)\left(\prod\pi_g\right)   = \prod\pi_{g_0g}.\end{equation}
We write $G$ as disjoint union of the left cosets $$G = g_1 \Ker\, \gamma \cup \ldots \cup g_N \Ker\, \gamma.$$ Then $g_0 \in g_{k_0} \Ker\, \gamma, $ for some $k_0\le N$, and
$$\pi_{g_0g}  = \rho_{\gamma(g_0g)} = \rho_{\gamma(g_{k_0})\gamma(g)}.$$ Then by \eqref{eq:Wreath1} $\prod\pi_g$ has   finite orbit.

To show the last statement, suppose that a one-dimensional representation $\pi$ of $\oplus H$ has finite orbit. Then the stabilizer $St_{\pi}$ of  $\pi$    is a finite index subgroup of $G$. Let
$\gamma: G \to G/St_{\pi}$ be the corresponding quotient homomorphism and let $g_1, \ldots, g_N$ be representatives of right cosets for $St_{\pi}$. For each $i\le N$ we define a one-dimensional representation $\rho_{\gamma(g_i)}$ of $H$ by
$$\rho_{\gamma(g_i)}(h) = \pi(x_{g_i, h}), \;\;\;h \in H.$$
Further for each $g \in G$ let $\pi_g: = \rho_{\gamma(g)}$. Then, by writing $g\in G$ as $g = g_0g_k$, for some $g_0\in St_{\pi}$, $k\le N$, we obtain
$$\pi_g(h) = \rho_{\gamma(g)}(h) = \rho_{\gamma(g_k)}(h) = \pi(x_{g_k, h}) = \left(\hat\alpha(g_0)\pi\right)(x_{g_0g_k, h}) = \pi(x_{g, h}),$$
for any $h\in H$.  Thus $\prod \pi_g = \pi.$
\end{proof}

\begin{lemma}\label{Wreath2} If $H$ is abelian and $G$ is RF, then $H\wr G$ is MAP.
\end{lemma}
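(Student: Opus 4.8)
The plan is to apply the characterization of MAP semidirect products from Proposition \ref{MAPversion}, taking the ``$H$'' there to be $\oplus H := \oplus_{g\in G} H$ and the ``$G$'' there to be $G$ acting by the wreath shift. Since $G$ is RF it is in particular MAP, so it will suffice to check the other half of condition (ii) of Proposition \ref{MAPversion}: that the irreducible finite-dimensional representations of $\oplus H$ with finite orbit in $\widehat{\oplus H}$ separate the points of $\oplus H$. Because $H$ is abelian, so is $\oplus H$, hence every irreducible representation of $\oplus H$ is one-dimensional; moreover a one-dimensional representation is irreducible, so for such representations having a finite orbit in $\Rep(\oplus H)/_\approx$ and having a finite orbit in $\widehat{\oplus H}$ are the same condition. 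Thus the task reduces to: characters of $\oplus H$ with finite orbit separate the points of $\oplus H$.

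The next step is to invoke Lemma \ref{Wreath1}, which identifies exactly the characters of $\oplus H$ with finite orbit: they are the representations $\prod\pi_g$ with $\pi_g = \rho_{\gamma(g)}$, for some finite group $F$, a homomorphism $\gamma: G\to F$, and a family $(\rho_i)_{i\in F}$ of characters of $H$. So, given $e\neq x=(h_g)_{g\in G}\in \oplus H$, I must produce such data with $\bigl(\prod\pi_g\bigr)(x)\neq 1$. Choose $g_0\in G$ with $h_{g_0}\neq e_H$; since $H$ is abelian, hence MAP, choose a character $\chi:H\to\mathbb{T}$ with $\chi(h_{g_0})\neq 1$. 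Let $g_1=g_0,g_2,\dots,g_k$ enumerate the finite support of $x$. Using that $G$ is RF, choose a homomorphism $\gamma:G\to F$ into a finite group which is injective on $\{g_1,\dots,g_k\}$; this is a standard consequence of residual finiteness, obtained by applying it to each of the finitely many elements $g_ig_j^{-1}\neq e$ with $i\neq j$ and passing to a common finite quotient. Finally set $\rho_{\gamma(g_1)}:=\chi$ and $\rho_i:=$ the trivial character of $H$ for every other $i\in F$; this assignment is well-posed precisely because the $\gamma(g_j)$ are pairwise distinct.

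With $\pi:=\prod\pi_g$ built from $(\gamma,(\rho_i)_{i\in F})$, Lemma \ref{Wreath1} guarantees that $\pi$ has finite orbit, and a direct computation gives $\pi(x)=\prod_{j=1}^k\rho_{\gamma(g_j)}(h_{g_j})=\chi(h_{g_0})\neq 1$, since every factor with $j\geq 2$ involves the trivial character (as $\gamma(g_j)\neq\gamma(g_1)$). Being one-dimensional, $\pi$ is irreducible. Hence we have found an irreducible finite-dimensional representation of $\oplus H$ with finite orbit that is nontrivial on $x$, so such representations separate points of $\oplus H$, and Proposition \ref{MAPversion} yields that $H\wr G$ is MAP. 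I do not anticipate a genuine obstacle here; the only points demanding a little care are the simultaneous separation of the finitely many support elements of $x$ inside a single finite quotient of $G$ (routine from RF) and the well-definedness of the family $(\rho_i)$, which is why the injectivity of $\gamma$ on $\supp(x)$ is arranged.
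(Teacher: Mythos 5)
Your proof is correct and follows essentially the same route as the paper's: reduce via Proposition \ref{MAPversion} to separating points of $\oplus H$ by characters with finite orbit, pick a character $\chi$ of $H$ nontrivial on $h_{g_0}$, use residual finiteness of $G$ to find a finite quotient $\gamma:G\to F$ injective on the support of $x$, and build the separating character via Lemma \ref{Wreath1} with $\rho_{\gamma(g_0)}=\chi$ and all other $\rho_i$ trivial. The only differences are cosmetic (you spell out the well-definedness of $(\rho_i)_{i\in F}$ and the common-quotient step, which the paper leaves implicit).
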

\begin{proof} By  Corollary \ref{MAPversion} we only need to prove that finite-dimensional representations of $\oplus H$ with finite orbits separate points of $\oplus H$.  Let $e_{\oplus H} \neq (h_g) \in \oplus H$. Then there is $g_0\in G$ such that $h_{g_0}\neq e_H$.  Since $H$ is abelian, there is a one-dimensional representation $\rho$ of $H$ such that $\rho(h_{g_0})\neq 1$. Let $$S = \{g\in G \;|\; h_g \neq e_H\}.$$ Since $G$ is RF, there a finite group $F$ and a homomorphism $\gamma: G \to F$ such that $\gamma(s_1) \neq \gamma(s_2), $ for any $s_1\neq s_2$, $s_1, s_2 \in S$.   Then  for any $S \ni s\neq g_0$ we can define  $\rho_{\gamma(s)} $ to be the trivial representation and let
$ \rho_{\gamma(g_0)} = \rho$.  Let $\pi_g$, $g\in G$, be as in Lemma \ref{Wreath1}. Then, by  Lemma \ref{Wreath1}, $\prod\pi_g$ has   finite orbit in $\Rep(\oplus H)$ and we have
$$ \prod\pi_g ((h_g))  = \prod_g \pi_g(h_g) = \prod_g\rho_{\gamma(g)}(h_g) = \rho(h_{g_0}) \neq 1.$$
\end{proof}

\begin{lemma}\label{Wreath3} If $H\wr G$ is MAP and $G$ is infinite, then $H$ is abelian.
\end{lemma}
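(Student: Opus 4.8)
The plan is to argue by contrapositive: assuming $H$ is non-abelian and $G$ is infinite, I will produce an element of $\oplus H$ that cannot be separated from the identity by any finite-dimensional representation of $\oplus H$ with finite orbit in $\Rep(\oplus H)$; by Corollary \ref{MAPversion} (or rather the $[(i)]\Rightarrow[(ii)]$ direction of Proposition \ref{MAPversion}) this shows $H\wr G$ is not MAP. Fix $h_1,h_2\in H$ with $[h_1,h_2]=h_1h_2h_1^{-1}h_2^{-1}\neq e_H$. The natural candidate to be non-separated is the commutator $[x_{g_1,h_1},x_{g_2,h_2}]$ for distinct $g_1,g_2\in G$, but since these two generators live in different coordinates they commute, so instead I would work with a commutator of the form $[x_{g_1,h_1},x_{g_1,h_2}]=x_{g_1,[h_1,h_2]}$, i.e. a single non-trivial coordinate $x_{g_0,c}$ with $c=[h_1,h_2]\neq e_H$, $g_0\in G$ arbitrary. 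The key point to exploit is that $c$ is a commutator in $H$, hence lies in the kernel of every one-dimensional representation of $H$, so in particular $x_{g_0,c}$ lies in the kernel of every one-dimensional representation of $\oplus H$.

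The heart of the argument is then: \emph{every} finite-dimensional representation of $\oplus H$ with finite orbit in $\Rep(\oplus H)$ must kill $x_{g_0,c}$. Let $\pi:\oplus H\to B(K)$ be such a representation; decompose it into irreducibles and use Remark \ref{DecompositionIrreducible} to reduce to the case $\pi$ irreducible with finite orbit in $\widehat{\oplus H}$. Its stabilizer $\mathrm{St}_\pi\leq G$ has finite index, so $\ker(\pi|_{\oplus H})$ is invariant under a finite-index subgroup; now I would restrict $\pi$ to the subgroups $H_g\cong H$ sitting in the $g$-th coordinate of $\oplus H$. Since the $H_g$ for $g$ ranging over a set of coset representatives of $\mathrm{St}_\pi$ mutually commute inside $\oplus H$ and only finitely many coordinates are "seen" up to the symmetry of the action, a counting/commuting argument forces the image $\pi(\oplus H)$ to be generated by commuting finite groups $\pi(H_g)$. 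More precisely: because $G$ is infinite and $[G:\mathrm{St}_\pi]<\infty$, the subgroup $\mathrm{St}_\pi$ is infinite, and for $g,g'$ in the same $\mathrm{St}_\pi$-coset the subrepresentations $\pi|_{H_g}$ and $\pi|_{H_{g'}}$ are conjugate; since $\pi$ is finite-dimensional, only finitely many distinct coordinate subgroups can act non-trivially with essentially distinct images, yet there are infinitely many commuting copies of $H$ permuted transitively within an orbit — this should force $\pi|_{H_g}$ to have abelian image for \emph{each} $g$, because a finite-dimensional irreducible representation of a group generated by infinitely many mutually commuting conjugate copies of a fixed subgroup $H$ restricts on each copy to something that must be simultaneously diagonalizable against all the others. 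An abelian image of $H_g$ means $\pi(c)=\pi([h_1,h_2])$ computed in the $g_0$ coordinate is trivial, hence $\pi(x_{g_0,c})=I$.

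The step I expect to be the main obstacle is exactly this last commuting-copies argument: making rigorous that an irreducible finite-dimensional representation of $\oplus_{g\in G}H$ whose orbit under the $G$-permutation action is finite must restrict to an \emph{abelian}-image representation on each coordinate subgroup $H_g$ when $G$ is infinite. The cleanest route is probably: let $N=[G:\mathrm{St}_\pi]$, pick coset representatives $g_1,\dots,g_N$, and observe that $\oplus H$ is generated by $\bigoplus_{i=1}^N H_{g_i}$ together with translates under $\mathrm{St}_\pi$; since $\mathrm{St}_\pi$ is infinite and fixes the class of $\pi$, infinitely many of these translated copies of $H_{g_i}$ have unitarily equivalent (indeed, for an irreducible $\pi$, conjugate) images under $\pi$ inside the finite-dimensional $B(K)$, and they all mutually commute — so by a pigeonhole/Schur argument the commutators $[\pi(h),\pi(h')]$ for $h,h'\in H_{g_i}$ must be trivial. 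I would isolate this as a short standalone sublemma. Once $\pi(x_{g_0,c})=I$ for all such $\pi$, finite-dimensional representations with finite orbits fail to separate $x_{g_0,c}\neq e_{\oplus H}$ from the identity, so $\oplus H$ — and hence $H\wr G$ — fails the separation condition of Proposition \ref{MAPversion}(ii), completing the contrapositive.
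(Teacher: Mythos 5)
Your overall strategy coincides with the paper's: both arguments reduce, via Proposition \ref{MAPversion}, to showing that a finite-dimensional representation $\pi$ of $\oplus H$ with finite orbit cannot act noncommutatively on a single coordinate copy of $H$ when $G$ is infinite, and both exploit the fact that the stabilizer of $[\pi]$ has finite index in the infinite group $G$, so that infinitely many of the mutually commuting images $\pi(\{x_{g,h}\,:\,h\in H\})$ are unitarily conjugate to the one at $g_0$ inside $B(K)$ with $\dim K = N < \infty$. Up to replacing ``separate $x_{g_0,[h_1,h_2]}$ from $e$'' by ``separate $x_{e,h_1}x_{e,h_2}$ from $x_{e,h_2}x_{e,h_1}$'', this is exactly the paper's setup, so the proposal is not a different route.

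The genuine gap is the step you yourself flag as ``the main obstacle'': you never prove that infinitely many mutually commuting, mutually conjugate, noncommutative $*$-subalgebras cannot coexist in $M_N$. ``Simultaneously diagonalizable against all the others'' is not available --- commuting noncommutative algebras are precisely those that cannot be simultaneously diagonalized --- and ``pigeonhole/Schur'' is a label, not an argument. The paper closes this point with a tensor-product dimension count: each $\mathcal A_{g_i}:=C^*\bigl(\pi(\{x_{g_i,h}\,:\,h\in H\})\bigr)$, being a noncommutative finite-dimensional C*-algebra, contains a full matrix summand $M_m$ with $m\ge 2$ via an embedding $\phi_i$; since the $\mathcal A_{g_i}$ commute pairwise, the assignment $T_1\otimes\cdots\otimes T_k\mapsto \phi_1(T_1)\cdots\phi_k(T_k)$ defines a $*$-homomorphism $M_{m^k}\cong M_m^{\otimes k}\to M_N$, which is impossible once $m^k>N$ because $M_{m^k}$ is simple. (If you prefer to realize your ``pigeonhole'' idea, the following also works: the increasing chain $B_k=C^*(\mathcal A_{g_1},\dots,\mathcal A_{g_k})$ stabilizes at some $B$ by finite-dimensionality, and then every later $\mathcal A_{g_j}$ commutes with all of $B$, hence lies in $B\cap B'=Z(B)$ and is commutative, contradicting $\mathcal A_{g_j}\cong\mathcal A_{g_0}$.) Without an argument of one of these kinds your key sublemma, and hence the proof, is incomplete.
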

\begin{proof} For each $g \in G$ define
$$\Sigma_g = \{x_{g, h}\;|\; h\in H\}.$$
  Suppose for the sake of contradiction that $H$ is not abelian. Then there are $h_1, h_2 \in H$ such that $h_1h_2 \neq h_2h_1$. Then
$x_{e_G, h_1}x_{e_G, h_2}\neq x_{e_G, h_2}x_{e_G, h_1}$. Since $H\wr G$ is MAP, by Corollary \ref{MAPversion} there is a finite-dimensional representation $\pi$ of $\oplus H$ with finite orbit such that
\begin{equation}\label{Wreath3.1} \pi(x_{e_G, h_1}x_{e_G, h_2})\neq \pi(x_{e_G, h_2}x_{e_G, h_1}).\end{equation}
Then $N = \dim \pi \ge 2$. Let $\mathcal A_g \subseteq M_N$ be the C*-algebra generated by  $\pi(\Sigma_g)$. Then by (\ref{Wreath3.1}) $\mathcal A_{e_G}$ is a noncommutative C*-algebra of matrices and therefore is isomorphic to direct sum of full matrix algebras with at least one summand being of dimension $m> 1$.

Since $G$ is infinite and $\pi$ has finite orbit, the stabilizer of $[\pi] \in \Rep(G)/_{\approx}$ is infinite. Thus there exist infinitely many distinct elements $g_1 = e_G, g_2, \ldots$ such that for each $i \in \N$ we have
\begin{equation}\label{Wreath3.2} \hat\alpha(g_i)\pi = u_i^*\pi u_i,\end{equation}
for some unitary $u_i\in M_N$. Therefore
$$\mathcal A_{g_i} = \{u_iau_i^*\;|\; a\in \mathcal A_{e_G}\} \cong \mathcal A_{e_G}$$ and  therefore this algebra is isomorphic to a direct sum of full matrix algebras with at least one summand being of dimension $m> 1$. Let  $\phi_i: M_m \to \mathcal A_{g_i}$ be the corresponding embedding.

 Since $\Sigma_{g_i}$ commutes with $\Sigma_{g_j}$ when $i\neq j$, we have
\begin{equation}\label{Wreath3.3} \mathcal A_{g_i} \;\text{commutes with}\;  \mathcal A_{g_i}, \; \text{when}\; i\neq j.\end{equation}
Let $k\in \mathbb N$ be such that $m^k > N$. By (\ref{Wreath3.3}) we can define a $\ast$-homomorphism $f:  M_m^{\otimes k} \to M_N$ by the formula
$$f(T_1\otimes \ldots \otimes T_k) = \phi_1(T_1)\ldots \phi_k(T_k).$$ However there can be no such homomorphism since $M_m^{\otimes k}  \cong M_{m^k}$ is simple. Contradiction.
\end{proof}

\medskip

\begin{lemma}\label{Wreath4} If $H\wr G$ is MAP and $H$ is non-trivial, then $G$ is RF.
\end{lemma}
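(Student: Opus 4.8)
The plan is to reduce at once to the case in which $\oplus H:=\bigoplus_{g\in G}H$ is abelian, and then to extract a suitable finite-index subgroup of $G$ directly from a character of $\oplus H$ with finite orbit.

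First I would dispose of the trivial case: if $G$ is finite then it is residually finite and there is nothing to do, so assume $G$ is infinite. Then Lemma \ref{Wreath3} applies and yields that $H$ is abelian; hence $\oplus H$ is abelian, so its irreducible finite-dimensional representations are precisely its characters $\chi\colon \oplus H\to\mathbb{T}$. By Pontryagin duality of the restricted product, such a $\chi$ is the same thing as a tuple $(\chi_g)_{g\in G}\in\prod_{g\in G}\widehat H$ via $\chi\big((h_g)_g\big)=\prod_g\chi_g(h_g)$ (a finite product, since $(h_g)_g$ has finite support), and under this identification the wreath-product $G$-action becomes the coordinate shift $g_0\cdot\chi=(\chi_{g_0^{-1}g})_g$. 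Consequently the stabiliser $K_\chi:=\{t\in G:\chi_{tg}=\chi_g\text{ for all }g\in G\}$ is a subgroup of $G$ whose index equals the cardinality of the orbit of $\chi$; in particular ``$\chi$ has finite orbit in $\widehat{\oplus H}$'' is equivalent to ``$K_\chi$ has finite index in $G$''.

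Now fix $e\neq g_0\in G$ and, using that $H$ is nontrivial, fix $e\neq h_0\in H$. The element $w:=x_{e_G,h_0}\,x_{g_0,h_0}^{-1}\in\oplus H$ is nontrivial, since it takes the value $h_0$ at $e_G$ (and $h_0^{-1}$ at $g_0$). As $H\wr G$ is MAP, Proposition \ref{MAPversion} provides an irreducible finite-dimensional -- hence one-dimensional -- representation $\chi$ of $\oplus H$ with finite orbit in $\widehat{\oplus H}$ and with $\chi(w)\neq 1$. Writing $\chi=(\chi_g)_g$ as above we get $1\neq\chi(w)=\chi_{e_G}(h_0)\,\chi_{g_0}(h_0)^{-1}$, so in particular $\chi_{e_G}\neq\chi_{g_0}$. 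If $g_0$ belonged to $K_\chi$ we would have $\chi_{g_0}=\chi_{g_0\cdot e_G}=\chi_{e_G}$, a contradiction; hence $g_0\notin K_\chi$. Thus $g_0$ lies outside the finite-index subgroup $K_\chi$, hence outside its (finite-index, normal) core, so the corresponding finite quotient of $G$ does not kill $g_0$. Since $g_0$ was arbitrary, $G$ is RF.

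The only substantial input here is Lemma \ref{Wreath3}, already proved above, which allows us to assume $H$ abelian; once that is in place the argument is bookkeeping with the duality $\widehat{\bigoplus_{g\in G}H}\cong\prod_{g\in G}\widehat H$ together with the elementary fact that a group each of whose nontrivial elements misses some finite-index subgroup is residually finite. I do not anticipate a genuine obstacle; the point that must be handled carefully is the identification of the wreath-product $G$-action on $\widehat{\oplus H}$ with the coordinate shift on $\prod_{g\in G}\widehat H$, and the resulting description of finite orbits in terms of finite-index stabilisers.
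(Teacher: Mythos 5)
Your proof is correct and follows essentially the same route as the paper's: reduce to $H$ abelian via Lemma \ref{Wreath3}, apply Proposition \ref{MAPversion} to the same test element (supported at $e_G$ and $g_0$ with values $h_0$ and $h_0^{-1}$) to obtain a finite-orbit character $\chi$ with $\chi_{e_G}\neq\chi_{g_0}$, and conclude that $g_0$ survives in a finite quotient. The only difference is cosmetic: where the paper invokes Lemma \ref{Wreath1} to present $\chi$ as $\prod\rho_{\gamma(g)}$ for a homomorphism $\gamma$ to a finite group, you extract the finite quotient directly from the orbit--stabiliser correspondence and the normal core of $K_\chi$, which is equally valid.
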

\begin{proof} We can assume that $G$ is infinite, as finite groups are RF. Then by Lemma \ref{Wreath3} $H$ is abelian. Let $e_G \neq g_0 \in G$. Let $e_H \neq h_0 \in H$ and consider the element $ y: = (h_g) \in \oplus H$ defined by
$$h_g = \begin{cases} h_0 &\textup{ if } g= g_0, \\ h_0^{-1} &\textup{ if } g= e_G, \\ e_H &\textup{ otherwise. }\end{cases} $$
 Then $y\neq e_{\oplus H}$,  and by Corollary \ref{MAPversion} there is an irreducible, hence 1-dimensional, representation $\pi$ of $\oplus H$ with finite orbit such that $\pi(y) \neq 1$.  By Lemma \ref{Wreath1} there is a finite group $F$, a homomorphism $\gamma: G \to F$ and, for each $i\in F$, a 1-dimensional representation $\rho_i$ of $H$ such that $\pi  = \prod \rho_{\gamma(g)}.$
Then
$$1\neq \pi(y) = \rho_{\gamma(g_0)}(h_0)\rho_{e_F}(h_0^{-1}), $$ hence
$$\rho_{\gamma(g_0)}(h_0) \neq \rho_{e_F}(h_0), $$ hence
$\rho_{\gamma(g_0)} \neq \rho_{e_F}$, hence $\gamma(g_0) \neq e_F$. Since $g_0$ was an arbitrary non-trivial element of $G$, we proved that $G$ is RF.
\end{proof}

\medskip

{\it Proof of Theorem \ref{WreathMAP}.}  Suppose $H\wr G$ is MAP.  Then $H$ and $G$ are MAP and either 1) $H$ is trivial or 2) $H$ is non-trivial and $G$ is finite
or 3) $H$ is non-trivial and $G$ is infinite, in which case $H$ is abelian by Lemma \ref{Wreath3} and $G$ is RF by Lemma \ref{Wreath4}.

For the converse note that if $G$ is finite, then all orbits in $\widehat{\oplus H}$ are finite, so if additionally $H$ is MAP, then $H\wr G$ is MAP by Corollary \ref{MAPversion}. If $H$ is trivial  and $G$ is MAP, then $H\wr G = G$ is MAP. If $H$ is abelian and $G$ is RF, then $H\wr G$ is MAP by Lemma \ref{Wreath2}. \qed

\medskip

\begin{remark} In the same way as above we can obtain Gruenberg's  characterization of the RF property for wreath products.
\end{remark}

\medskip

\subsection{Full G-shift}

Let as always G be a countable group  and let A be a finite set with at least two elements. The set $$A^G = \{x: G\to A\}$$
equipped with the product topology is a compact Hausdorff totally disconnected space space  \cite{Ceccherini} (e.g.\ when $A$ is a 2-point set, $A^G$ is the standard presentation of the Cantor set).     The {\it full G-shift} is the action of $G$ on $A^G$ defined by
$$gx(h) = x(g^{-1}h).$$
 By \cite[Th. 2.7.1]{Ceccherini} a group $G$ is RF if and only the set of points of $A^G$ which have finite orbits for this action  is dense in $A^G$ . Therefore by Theorem \ref{th:maincrossed} (or by Corollary  \ref{TransformationGroupoid}) we obtain
that for an amenable group $G$ and a finite set $A$ with at least two elements the corresponding crossed product C*-algebra $C(A^G)\rtimes G$ is RFD if and only if $G$ is RF.

\medskip

\noindent {\bf Acknowledgments. }
A.S.\ was  partially supported by the National Science Center (NCN) grant no. 2020/39/I/ST1/01566. T.S. was partially supported by a grant from the Swedish Research Council. We thank Bachir Bekka for his comments on the first draft of this work, which led to several presentation improvements. We also acknowledge a very detailed report of the referee, which helped us improve the exposition in several places.





\end{document}